\def\N{{\mathbb N}}
\def\be#1{\begin{equation}\label{#1}}
\def\N{{\mathbb N}}
\def\R{{\mathbb R}}
\def\bc{\begin{center}}
\def\ec{\end{center}}
\numberwithin{equation}{section}
\newtheorem{definition}{Definition}[section]
\newtheorem{theorem}{Theorem}
\newtheorem{cor}[definition]{Corollary}
\newtheorem{lemma}[definition]{Lemma}
\newtheorem{proposition}[definition]{Proposition}
\newtheorem{example}[definition]{Example}
\newtheorem{remark}[definition]{Remark}
\newcommand{\sm}{\smallskip}
\newcommand{\ee}{\end{equation}}
\newcommand{\eea}{\end{eqnarray}}
\newcommand{\bean}{\begin{eqnarray*}}
\newcommand{\eean}{\end{eqnarray*}}
\newif\ifpctex
\newcommand{\bew}[1]{\begin{equation*}\label{#1}}
\newcommand{\bea}[1]{\begin{eqnarray}\label{#1}}
\newcommand{\beL}[2]{\begin{lemma}[#2]\label{#1}}
\newcommand{\beD}[2]{\begin{definition}[#2]\label{#1}}
\newcommand{\beT}[2]{\begin{theorem}[#2]\label{#1}}
\newcommand{\beP}[2]{\begin{proposition}[#2]\label{#1}}
\newcommand{\beC}[2]{\begin{cor}[#2]\label{#1}}
    \newcommand{\tvarepsilono}{{_{\displaystyle\longrightarrow\atop \varepsilon\to 0}}}
  \newcommand{\tno}{{_{\displaystyle\longrightarrow\atop n\to\infty}}}
  \newcommand{\tko}{{_{\displaystyle\longrightarrow\atop k\to\infty}}}
\definecolor{nb}{rgb}{.6,.176,1}
\definecolor{sienna}{rgb}{.92,.222,.176}
\definecolor{darkgreen}{rgb}{0,.5,0}
\def\em{\it}
\begin{document}
\author{Siva Athreya}
\address{Siva Athreya \\ Indian Statistical Institute
8th Mile Mysore Road  \\ Bangalore 560059, India.}
\email{athreya@isibang.ac.in}
\thanks{}

\author{Michael Eckhoff}

\author{Anita Winter}
\address{Anita Winter\\ Fakult\"at f\"ur Mathematik\\ Universit\"at Duisburg-Essen\\
Universit\"atsstrasse 2\\ 45141 Essen, Germany}
\thanks{}
\email{anita.winter@uni-due.de}

\keywords{$\R$-trees, Brownian motion, Diffusions on metric measure trees,  Dirichlet forms, Spectral gap, Mixing times, Recurrence}
\subjclass[2000]{Primary: 60B05, 60J27; Secondary: 60J80,
60B99.}

\title{Brownian Motion on $\R$-trees}

\date{\today}

\begin{abstract}
The real trees form a class of metric spaces that extends
the class of trees with edge lengths by allowing behavior such as infinite
total edge length and vertices with infinite branching degree.
We use Dirichlet form methods to construct Brownian motion on any given locally compact
$\R$-tree {$(T,r)$} equipped with a Radon measure $\nu$ {on $(T,{\mathcal B}(T))$}.
We specify a criterion under which the Brownian motion is recurrent or transient.
For compact recurrent $\R$-trees we provide bounds on the mixing time.

\end{abstract}
\maketitle

\section{Introduction and main results}
\label{S:motiv}

Let $r_1,r_2\in\R\cup\{-\infty, \infty\}$ with  $r_1<r_2$ and $\nu$
be a Radon measure on $(r_1,r_2)$, i.e., $\nu$
is an inner regular non-negative Borel measure on $(r_1,r_2)$ which is finite on compact sets and positive on any ball.
Then the {\em $\nu$-Brownian motion} on $(r_1,r_2)$ is the unique (up-to $\nu$-equivalence) strong Markov process which is associated with the regular Dirichlet form
\be{e:formR}
   {\mathcal E}(f,g)
 :=
   {\tfrac{1}{2}\int_{(r_1,r_2)}\mathrm{d}\lambda\,f'\cdot g'}
\ee
with domain
\be{e:domainR}
   {\mathcal D}({\mathcal E})  := \big\{f\in L^2(\nu)\cap {\mathcal A_{\R}} : f'\in L^2(\lambda) \big\}
\ee
where $\lambda$ denotes Lebesgue measure and ${\mathcal A_{\R}}$ is the space of absolutely continuous functions that vanish at {regular boundary points}. As usual, we call the left boundary point $r_1$ {\em regular} if it is finite and there exists a point $x\in(r_1,r_2)$
with $\nu(r_1,x)<\infty$. Regularity of the right boundary point, $r_{2},$ is defined in the same way. If $\nu=\lambda$ we obtain {\em standard Brownian motion} while a general $\nu$ plays the r\^{o}le of  the {\em speed measure}. The goal of this paper is to extend this construction of Brownian motion to locally compact $\R$-trees.

In \cite{KumagaiSturm05} a sufficient condition is given to construct non-trivial diffusion processes on a locally compact metric measure space. These processes are associated with local
regular Dirichlet forms which are obtained as suitable limits of approximating
non-local Dirichlet forms. On self-similar sets which can be approximated by an increasing set $(V_m)_{m\in\mathbb{N}}$ diffusions have been studied from a probabilistic and analytic point of view. For example, \cite{Kusuoka1987,Goldstein1987,BarlowPerkins1988,Lindstrom1990} consider random walks on $V_m$ and construct  Brownian motion as the scaling limit. From an analytical point of view this corresponds to constructing the {\em Laplace operator} as the limit of the difference operators corresponding to the approximating random walks.

Tree-like objects have been studied this way as well.  An approximation scheme of the Brownian continuum random tree was exploited in \cite{Kre95}.  The notion of finite resistance forms was introduced in \cite{Kigami95} and these approximating forms yield a regular Dirichlet form on complete, locally compact $\R$-trees. More recently in \cite{Cro08} and \cite{Cro10} scaling limits of simple random walks on random discrete trees have been shown to converge  to Brownian motion on limiting compact $\R$-trees. In a couple of instances diffusions have been constructed using the specific structure of the given $\R$-tree (\cite{DJ93,Eva00}). In \cite{Eva00} the ''richest'' $\mathbb{R}$-tree is considered and a particular diffusion is constructed such that the height process (with respect to a distinguished root) is a standard one-dimensional Brownian motion which in any branch point chooses a direction according to a measure prescribed on the leaves.

The main purpose of the paper is to provide an explicit description of the Dirichlet form of Brownian motion on a given locally compact $\R$-tree without requiring an approximation scheme.  Thus providing a unifying theory from which various properties of the process can be easily read off. Towards this, we imitate the construction of Brownian motion on the real line via Dirichlet forms by  exploiting  the one-dimensional structure of the skeleton of the $\R$-tree.   The first step lies in capturing the key ingredients, namely the length measure and a notion of a gradient (Proposition \ref{P:grad}). Given these ingredients one can then define a bilinear form  similar to  the real line construction. The second step  is then to show that the above bilinear form is a  regular Dirichlet form (Proposition~\ref{P:00} and Proposition~\ref{L:04}) to  ensure the  existence (Theorem~\ref{T:01}) of  a Markov process.  In Proposition~\ref{P:prop} we obtain the characterizing identities for the  occupation measure and hitting probabilities to conclude that the Markov process so constructed is indeed the desired Brownian motion.

On complete and locally compact $\R$-trees  the Brownian motions constructed this way  are the diffusions associated with the finite resistance form introduced in \cite{Kigami95} (see Remark~\ref{Rem:03}).  As we will show in Section~\ref{s:BMdrift}  it covers all the examples of Brownian motions on particular $\R$-trees which can be found in the literature, (See Example \ref{Exp:02} and Example \ref{Exp:06}), and  can also be easily adapted  to construct diffusions with a drift as well.   Furthermore,  we are able to  provide geometric conditions under which the {Brownian motion} is recurrent and transient (Theorems~\ref{T:04} and~\ref{T:trareha}). An interesting application of this result (See Example \ref{karrayt}) generalizes the results shown for random walks on discrete trees in~\cite{Lyo90}.  Bounds on eigenvalues and mixing times (Theorem~\ref{C:mix}),  and various properties of random walks on discrete trees (Theorem~\ref{nashwillconverse}) are obtained for generic $\R$-trees. Thus highlighting the advantages of having an explicit limiting Dirichlet form along with an explicit description of its domain. \sm

We begin by stating some preliminaries in Subsection~\ref{prelim} which will be followed by statements of our main results in Subsection~\ref{mainresults}.

 \subsection{Set-up for Brownian motion on $\R$-tree} \label{prelim}
 In this subsection we discuss preliminaries that are required for constructing Brownian motion on $\R$ trees.

 \medskip

 {\bf $\R$-Tree:} A metric space $(T,r)$ is said to be a {\em real tree }($\R$- {\em tree}) if it
satisfies the following axioms.

\begin{itemize}
\item[{}]{\bf Axiom~1 (Unique geodesic) } For all ${u},{v}\in T$
there exists a unique isometric embedding
  $\phi_{{u},{v}}:[0,{r}({u},{v})]\to T$ such that $\phi_{{u},{v}}(0)={u}$
and $\phi_{{u},{v}}({r}({u},{v}))={v}$.\vspace{1mm}
\item[{}]{\bf Axiom~2 (Loop-free) } For every injective
continuous map $\kappa:[0,1]\to T$ one has
$\kappa([0,1])=\phi_{\kappa(0),\kappa(1)}([0,r(\kappa(0),\kappa(1))])$.
\end{itemize}\sm

Axiom~1 states that there is a unique ``unit speed'' path between
any two points, whereas Axiom~2 then implies  that the image
of any injective path connecting two points coincides with
the image of the unique unit speed path. Consequently any injective path between two points  can be re-parameterized
to become the unit speed path.  Thus, Axiom~1
is satisfied by many other spaces such as $\R^d$
with the usual metric, whereas Axiom~2 expresses the
property of ``tree-ness'' and is only satisfied
by $\R^d$ when $d=1$. We refer the reader to \cite{Dre84, DreMouTer96, DreTer96,
Ter97, MR2003e:20029} for background on $\R$-trees.

For $a,b\in T$, let
\begin{equation}\label{EPWarc}
   [a,b]\,:=\phi_{a,b}(\,[0,r(a,b)]\,)\quad \mbox{and}\quad
   ]a,b[\,:=\phi_{a,b}(\,]0,r(a,b)[\,)
\ee
be the unique closed and open,
respectively, {\em arc}\index{arc} between them.
An immediate consequence of both axioms together is that
real trees are $0$-hyperbolic. For a
given real tree $(T,r)$ and for all $x,a,b\in T$, this implies that there exists a unique
point $c(a,b,x)\in T$ such that
\be{e:branch}
   [a,x]\cap[a,b]=[a,c(a,b,x)].
\ee
The point $c(a,b,x)$ also satisfies $[b,x]\cap[b,a]=[b,c(a,b,x)]$ and $[x,a]\cap[x,b]=[x,c(a,b,x)]$ (see, for example, Lemma~3.20 in \cite{Eva} and compare with Figure 1).
\begin{figure}
\label{Fig:01}
\setlength{\unitlength}{0.7pt}
\begin{picture}(70,60)(-30,-20)
\put(20,32){\mbox{$a$}}
\put(24,7){\mbox{$c(a,b,x)$}}
\put(42,-15){\mbox{$b$}}
\put(-15,-15){\mbox{$x$}}
\put(20,32){\line(0,-1){25}}
\put(20,7){\line(-1,-1){21.1}}
\put(20,7){\line(1,-1){21.1}}
\end{picture}
\caption{}
\end{figure}

In this paper, we will assume that $(T,r)$ is locally compact. By virtue of Lemma~5.7 in \cite{Kigami95}  such $\R$-trees  are separable and by Lemma~5.9   in \cite{Kigami95}  the complete and bounded subsets are compact.

 \medskip

{\bf Length measure:}  We follow \cite{EvaPitWin2006} to introduce the notion of the {\em length measure}
$\lambda^{(T,r)}$ on a separable $\R$-tree $(T,r)$ which extends the
Lebesgue measure on $\R$.
Let  $\mathcal B(T)$ denote the Borel-$\sigma$-algebra of $(T,r)$.
Denote the {\em skeleton} of $(T,r)$ by
\begin{equation}
\label{sce}
   {T}^o:=\bigcup\nolimits_{a,b\in {T}}\,]a,b[.
\ee
Observe that if
${T}^\prime \subset {T}$ is a
dense countable set, then (\ref{sce}) holds with ${T}$ replaced by
${T}^\prime$. In particular, ${T}^o \in {\mathcal B}({T})$ and
${\mathcal B}({T})\big|_{{T}^o}=\sigma(\{]a,b[;\,a,b\in {T}^\prime\})$, where
${\mathcal B}({T})\big|_{{T}^o}:=\{A \cap {T}^o;\,A\in{\mathcal B}({T})\}$.
Hence, there exist a unique $\sigma$-finite measure $\lambda^{(T,r)}$ on $T$, called
{\em length measure}, such that $\lambda^{(T,r)}({T}\setminus {T}^o)=0$ and
\begin{equation}
\label{length}
   \lambda^{(T,r)}(]a,b[)=r(a,b),
\ee
for all $a,b\in T$.
In particular, $\lambda^{(T,r)}$ is the trace onto ${T}^o$ of one-dimensional
Hausdorff measure on $T$. \sm

 \medskip

{\bf Gradient:} We now introduce the notion of weak differentiability and integrability. We will proceed as in~\cite{Eva00}.

Let ${\mathcal C}(T)$ be the space of all real continuous functions on $T$. Consider the subspaces
\begin{equation}
  {\mathcal C}_0(T):=\big\{f\in {\mathcal C}(T)\mbox{ which have compact support}\big\}
\ee
and
\begin{equation}
\label{e:Cinfty}
   {\mathcal C}_{\infty}(T)
 :=
   \big\{f \in {\mathcal C}(T):\,  \forall\, \varepsilon>0\; \exists\, K \mbox{ compact  }\;\forall\, x \in T\setminus K,\;  |f(x)| \leq \varepsilon\big\}
\ee
which is oftne refered to as the space of continuous functions which {\em vanish at infinity}.
\sm

We call a function $f\in{\mathcal C}(T)$
{\em locally absolutely continuous} if and
only if for all $\varepsilon>0$ and all subsets $S\subseteq T$
with $\lambda^{(T,r)}(S)<\infty$
there exists a $\delta=\delta(\varepsilon,S)$ such
that if $[x_1,y_1],...,[x_{n},y_n]\in S$ are disjoint arcs with
$\sum_{i=1}^{n} r(x_i,y_i)<\delta$ then
$\sum_{i=1}^{n}\big|f(x_i)-f(y_i)\big|<\varepsilon$.
Put
\begin{equation}\label{mathcalA}
   {\mathcal A}={\mathcal A}^{(T,r)}
 :=
   \big\{f\in{\mathcal C}(T):\,f\mbox{ is locally absolutely continuous}\big\}.
\ee\sm

In order to define a {\em gradient} of a locally absolutely continuous function, we need the notion of directions on $(T,r)$. For that purpose from now on we fix a point $\rho\in T$ which in the following is referred to as the {\em root}.
Notice that $\rho\in T$ allows us to define a partial order (with respect to $\rho$), $\le_\rho$, on $T$ by saying
that $x\le_\rho y$ for all $x,y\in T$ with $x\in[\rho,y]$.
For all $x,y\in T$ we write
\begin{equation}
\label{e:wedge}
   x\wedge y
 :=
   c(\rho,x,y).
\ee

 The root enables an  orientation sensitive
integration given by
\begin{equation}\label{osi}
\begin{aligned}
   &\int_x^y\lambda^{(T,r)}(\mathrm{d}z)\,g(z)
  \\
  &:=   -\int_{[{x\wedge y},x ]}\lambda^{(T,r)}(\mathrm{d}z)\,g(z)+\int_{[{x\wedge y},y]}\lambda^{(T,r)}(\mathrm{d}z)\,g(z),
\end{aligned}
\ee
for all $x,y\in T$.

The definition of the gradient is then based on the following observation.
\begin{proposition}
Let $f\in\mathcal A$. \label{P:grad}
There exists a unique (up to $\lambda^{(T,r)}$-zero sets) function
$g\in L_{\mathrm{loc}}^1(\lambda^{(T,r)})$ such that
\begin{equation}\label{con.1}
   f(y)-f(x)
 =
   \int_x^y\lambda^{(T,r)}(\mathrm{d}z)\,g(z),
\ee
for all $x,y\in T$. Moreover, $g$ is already uniquely determined  (up to $\lambda^{(T,r)}$-zero sets) if we only require (\ref{con.1}) to hold for all $x,y\in T$ with $x\in[\rho,y]$.
\end{proposition}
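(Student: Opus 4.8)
The plan is to build the gradient locally along arcs emanating from the root, where the tree structure reduces everything to the one-dimensional situation, and then patch. First I would fix the root $\rho$ and consider, for each $y\in T$, the arc $[\rho,y]$, which by Axiom~1 is the isometric image of $[0,r(\rho,y)]$ under $\phi_{\rho,y}$. Pulling $f$ back along this embedding gives a function $f\circ\phi_{\rho,y}$ on $[0,r(\rho,y)]$ which is absolutely continuous in the usual sense (this is exactly what local absolute continuity of $f$, restricted to the arc, says, since $\lambda^{(T,r)}$ restricted to $]\rho,y[$ is just Lebesgue measure under $\phi_{\rho,y}$). Hence by the classical fundamental theorem of calculus there is a function $g_y\in L^1([0,r(\rho,y)])$, unique up to Lebesgue-null sets, with $f(\phi_{\rho,y}(t))-f(\rho)=\int_0^t g_y(s)\,\mathrm{d}s$. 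Transporting $g_y$ back to the arc via $\phi_{\rho,y}^{-1}$ defines a candidate gradient on $]\rho,y[$.

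The next step is consistency: if $y\le_\rho y'$ then $[\rho,y]\subseteq[\rho,y']$ and the two pullbacks agree on the common sub-arc, so by uniqueness in the one-dimensional FTC the functions $g_y$ and $g_{y'}$ agree $\lambda^{(T,r)}$-a.e. on $]\rho,y[$. More generally, for arbitrary $y,y'$ the arcs $[\rho,y]$ and $[\rho,y']$ share the initial sub-arc $[\rho,y\wedge y']$ (by \eqref{e:branch} applied with $a=\rho$, $b=y$, $x=y'$), and the same argument gives agreement on $]\rho,y\wedge y'[$. Since $T$ is separable, pick a countable dense set $T'$; then $T^o=\bigcup_{a,b\in T'}]a,b[$ and in fact $T^o=\bigcup_{y\in T'}]\rho,y[$ up to a $\lambda^{(T,r)}$-null set, so the consistent family $(g_y)_{y\in T'}$ glues to a single well-defined $g$ on $T^o$, with $g\in L^1_{\mathrm{loc}}(\lambda^{(T,r)})$ because each $g_y$ is integrable on the relevant arc and $\lambda^{(T,r)}$ is $\sigma$-finite. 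Set $g:=0$ on $T\setminus T^o$, which is a $\lambda^{(T,r)}$-null set by the definition of the length measure.

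Then I would verify the integral identity \eqref{con.1} for general $x,y$. By the definition \eqref{osi} of orientation-sensitive integration, the right-hand side is $-\int_{[x\wedge y,x]}g\,\mathrm{d}\lambda^{(T,r)}+\int_{[x\wedge y,y]}g\,\mathrm{d}\lambda^{(T,r)}$. Using the construction of $g$ on the two arcs $[\rho,x]\supseteq[\rho,x\wedge y]$ and $[\rho,y]\supseteq[\rho,x\wedge y]$ and the one-dimensional FTC along each, the first integral equals $f(x)-f(x\wedge y)$ and the second equals $f(y)-f(x\wedge y)$; subtracting gives $f(y)-f(x)$, as required. (Here one uses that $x\wedge y\le_\rho x$ and $x\wedge y\le_\rho y$, so these are genuine sub-arcs oriented away from the root.) For the final sentence of the proposition, uniqueness: if $g$ satisfies \eqref{con.1} merely for pairs $x\le_\rho y$, then applying it with $x=\rho$, $y$ ranging over $T'$ forces $\int_0^t (g\circ\phi_{\rho,y})(s)\,\mathrm{d}s=f(\phi_{\rho,y}(t))-f(\rho)$ for all $t$, and one-dimensional uniqueness pins down $g\circ\phi_{\rho,y}$ a.e. on each arc $]\rho,y[$; since these arcs cover $T^o$ up to a null set and $g$ vanishes off $T^o$, $g$ is determined $\lambda^{(T,r)}$-a.e.

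The main obstacle I anticipate is purely measure-theoretic bookkeeping rather than anything conceptual: one must check that the pointwise-consistent family $(g_y)$ genuinely assembles into a single measurable function on $T^o$ — i.e., that the a.e.\ identifications are compatible across a countable cover and that the resulting $g$ is $\mathcal B(T)$-measurable and locally integrable — and that the orientation conventions in \eqref{osi} are handled with the correct signs when $x$ and $y$ lie on different sides of $x\wedge y$. The statement about separability and the description ${\mathcal B}(T)|_{T^o}=\sigma(\{]a,b[:a,b\in T'\})$ recalled just before \eqref{length} is exactly what makes this gluing legitimate, so I would lean on it explicitly.
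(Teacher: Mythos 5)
Your proposal is correct and follows essentially the same route as the paper's proof: apply the one-dimensional fundamental theorem of calculus to the pullback of $f$ along arcs emanating from $\rho$, check consistency of the resulting derivatives on the common sub-arc $[\rho,y_1\wedge y_2]$, glue, and then reduce the case of general $x,y$ to the rooted case via the orientation-sensitive integral. Your treatment of the countable-cover gluing and of the restricted-uniqueness claim is somewhat more explicit than the paper's (which dismisses these as ``standard measure theoretic arguments''), but the underlying argument is identical.
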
\sm

\begin{definition}[Gradient] The gradient, $\nabla f=\nabla^{(T,r,\rho)} f,$ of $f\in{\mathcal A}$ is the unique up to $\lambda^{(T,r)}$-zero sets function $g$ which satisfies (\ref{con.1}) for all $x,y\in T$.
\label{Def:01}
\end{definition}\sm

{
\begin{remark}[Dependence on the choice of the root] \rm Fix a separable $\mathbb{R}$-tree $(T,r)$.
 Notice that the gradient $\nabla f$ of a function $f\in{\mathcal A}$ depends on the particular choice of the root $\rho\in T$ (compare Examples~\ref{Exp:04} and~\ref{Exp:01}). It is, however, easy to verify that  for each $\rho\in T$ there exists a $\{-1,1\}$-valued function $\sigma^\rho:T\to\{-1,1\}$ and for all $f\in{\mathcal A}$ a function $g^f:T\to\mathbb{R}$ such that $\nabla f$ is of the following form:
 \begin{equation}
 \label{e:formgrad}
   \nabla f=\sigma^\rho\cdot g^f.
 \end{equation}
 \label{Rem:06}
\end{remark}\sm
}

 \medskip
{\bf The Dirichlet form: } {Let $(T,r)$ be a separable $\mathbb{R}$-tree and $\nu$ a Borel measure on $(T,{\mathcal B}(T))$. Denote, as usual,  by $L^2(\nu)$ the space of Borel-measurable functions on $T$ which are square integrable with respect to $\nu$. As usual, for $f,g\in L^2(\nu)$ we denote by
\begin{equation}
\label{e:inner}
   \big(f,g\big)_\nu:=\int\mathrm{d}\nu\,f\cdot g
\ee
the {\em inner product} of $f$ and $g$ with respect to $\nu$.

Put}
\begin{equation}
\label{e:F}
   {\mathcal F}:=
   \big\{f \in {\mathcal A}:\, \nabla f\in L^2(\lambda^{(T,r)})\big\},
\ee
and consider the domain
\begin{equation}
\label{domainp}
   {\mathcal D}(\mathcal E)
 :=
   {\mathcal F} \cap L^2(\nu) \cap   {\mathcal C}_{\infty}(T)
\ee
together with the bilinear form
\begin{equation}\label{con.2p}
\begin{aligned}
   {\mathcal E}(f,g)
 &:=
   \frac{1}{2}\int\lambda^{(T,r)}(\mathrm{d}z)\nabla f(z)
   \nabla g(z)
\end{aligned}
\ee
for all $f,g\in{\mathcal D}({\mathcal E})$.
{Notice that this bilinear form is independent of the particular choice of $\rho$ by Remark~\ref{Rem:06}.}
\sm

\subsection{Main Results}
\label{mainresults}
%
In this subsection we shall state all our main results.
Unless stated otherwise throughout the paper we shall assume that
\begin{itemize}
\item[(A1)] $(T,r)$ is  a locally compact $\R$-tree.
\item[(A2)] $\nu$ is a Radon measure on $(T,{\mathcal B}(T))$, i.e., $\nu$ is finite on compact sets and positive on
any open ball
\begin{equation}
\label{e:ball}
   B(x,\varepsilon)
 :=
   \big\{x'\in T:\,r(x,x')<\varepsilon\big\}
\end{equation}
with $x\in T$ and $\varepsilon>0$.
\end{itemize}

Our first main result is the following:

\begin{theorem}[Brownian motion on $(T,r,\nu)$]
Assume (A1) and (A2). There exists a {unique (up to $\nu$-equivalence)}
continuous $\nu$-symmetric strong Markov process
$B=((B_t)_{t\ge 0},({\mathbf P}^x)_{x\in T})$ on $(T,r)$
whose Dirichlet form is
$({\mathcal E}, {\mathcal D}({\mathcal E}))$.
\label{T:01}\end{theorem}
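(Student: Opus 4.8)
The plan is to invoke the general theory of regular Dirichlet forms and their associated Hunt processes (as in Fukushima--Oshima--Takeda), and to appeal to the preparatory results already announced in the introduction. The argument has three parts: (i) show that $({\mathcal E},{\mathcal D}({\mathcal E}))$ is a well-defined symmetric bilinear form that is closable on $L^2(\nu)$, with closed extension a Dirichlet form; (ii) show that this Dirichlet form is regular and strongly local on the locally compact separable metric space $(T,r)$; (iii) conclude existence of an associated $\nu$-symmetric Hunt process, upgrade it to a diffusion (continuous sample paths) using strong locality, and establish uniqueness up to $\nu$-equivalence.

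For step (i), I would first check that ${\mathcal D}({\mathcal E})$ is dense in $L^2(\nu)$: since $\nu$ is Radon and $(T,r)$ is locally compact, ${\mathcal C}_0(T)$ is dense in $L^2(\nu)$, and one checks that compactly supported Lipschitz (hence locally absolutely continuous) functions with $L^2(\lambda^{(T,r)})$-gradient lie in ${\mathcal D}({\mathcal E})$ and approximate elements of ${\mathcal C}_0(T)$ in $L^2(\nu)$; this is where the $\R$-tree structure enters, via Proposition~\ref{P:grad} and the length measure. Symmetry and nonnegativity of ${\mathcal E}$ are immediate from \eqref{con.2p}. The Markovian (unit contraction) property follows because the normal contraction $f\mapsto (0\vee f)\wedge 1$ preserves ${\mathcal A}$ and only decreases $|\nabla f|$ pointwise $\lambda^{(T,r)}$-a.e., so ${\mathcal E}$ does not increase; here one uses the chain-rule description of the gradient from Remark~\ref{Rem:06}. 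Closability: if $f_n\in{\mathcal D}({\mathcal E})$, $f_n\to 0$ in $L^2(\nu)$ and $(\nabla f_n)$ is ${\mathcal E}$-Cauchy, i.e. Cauchy in $L^2(\lambda^{(T,r)})$, then along the skeleton the one-dimensional structure (integrate $\nabla f_n$ along arcs as in \eqref{con.1}) forces $\nabla f_n\to 0$ in $L^2(\lambda^{(T,r)})$; this is essentially the standard one-dimensional closability argument applied branch-by-branch, and it is exactly the content I expect to be packaged in the cited Propositions~\ref{P:00} and~\ref{L:04}.

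For step (ii), regularity means ${\mathcal D}({\mathcal E})\cap {\mathcal C}_0(T)$ is both ${\mathcal E}_1^{1/2}$-dense in ${\mathcal D}({\mathcal E})$ and uniformly dense in ${\mathcal C}_0(T)$; the uniform density part uses that tree metric distance functions $x\mapsto r(x,\cdot)$ and more generally piecewise-linear functions along the skeleton separate points and lie in ${\mathcal A}$, so a Stone--Weierstrass argument applies, while the ${\mathcal E}_1$-density part uses truncation and the local absolute continuity. Strong locality is clear: if $f,g\in{\mathcal D}({\mathcal E})$ with $g$ constant on a neighbourhood of $\operatorname{supp}(f)$, then $\nabla g=0$ there, so ${\mathcal E}(f,g)=0$. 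Given regularity, the Fukushima correspondence yields a $\nu$-symmetric Hunt process $B$ with Dirichlet form $({\mathcal E},{\mathcal D}({\mathcal E}))$; strong locality of a regular Dirichlet form implies (up to modification off an exceptional set) that $B$ is a diffusion, i.e. has continuous sample paths. That $B$ can be taken to start from every $x\in T$ (not merely q.e.) should follow because points are non-polar on a one-dimensional-type structure and the form has nice potential theory, or else the statement is read modulo the usual $\nu$-equivalence.

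The main obstacle, and the step deserving the most care, is closability (step (i)) together with the identification of the domain ${\mathcal D}({\mathcal E})={\mathcal F}\cap L^2(\nu)\cap{\mathcal C}_\infty(T)$ as genuinely the domain of a \emph{closed} form — one must rule out the pathology where a limit function acquires ``extra'' gradient supported on the countably many branch points or on sets of vanishing length measure, and one must handle the infinitely-branching vertices where the naive one-dimensional argument on a single arc does not immediately glue. The resolution is to exploit that $\lambda^{(T,r)}$ charges only the skeleton ${T}^o=\bigcup_{a,b\in T'}{]a,b[}$ for a countable dense $T'$, reducing everything to a countable union of intervals, and to use \eqref{con.1} to control function values by gradient integrals. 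Uniqueness up to $\nu$-equivalence is then the standard consequence of the bijectivity in the Fukushima correspondence between regular Dirichlet forms and ($\nu$-equivalence classes of) $\nu$-symmetric Hunt processes. I would therefore structure the proof as: ``By Propositions~\ref{P:00} and~\ref{L:04}, $({\mathcal E},{\mathcal D}({\mathcal E}))$ is a regular (strongly local) Dirichlet form on $L^2(T,\nu)$. The existence and uniqueness (up to $\nu$-equivalence) of an associated $\nu$-symmetric Hunt process $B$ is then Theorem~4.5.1 (and Theorem~7.2.1) in \cite{FOT}; strong locality gives continuity of paths, completing the proof.''
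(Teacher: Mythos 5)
Your proposal is correct and follows essentially the same route as the paper: establish that $({\mathcal E},{\mathcal D}({\mathcal E}))$ is a regular, local Dirichlet form (the paper's Propositions~\ref{P:00} and~\ref{L:04}) and then invoke Theorems~7.2.1, 4.2.7 and~7.2.2 of \cite{FukushimaOshimaTakeda1994} for existence, uniqueness up to $\nu$-equivalence, and path continuity. The only presentational differences are that the paper proves the stated domain is itself already complete under ${\mathcal E}_1$ (rather than passing to a closure), and it settles your hedged remark about starting points by citing Lemma~\ref{L:02} (singletons have positive capacity) to rule out nontrivial exceptional sets.
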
\sm

This leads to the following definition.
\begin{definition}[Brownian motion] The $\nu$-symmetric strong Markov process
$B=((B_t)_{t\ge 0},({\mathbf P}^x)_{x\in T})$ on $(T,r)$
associated with the Dirichlet form $({\mathcal E}, {\mathcal D}({\mathcal E}))$ is called $\nu$-Brownian motion on the $\R$-tree $(T,r)$.
\label{Def:05}
\end{definition}\sm

{
\begin{remark}[The role of $\nu$]\em $\nu$-Brownian motion on $(T,r)$ can be thought of as a diffusion on $(T,r)$ which is on {\em natural scale} and has
{\em speed measure} $\nu$. With a slight arbitrament we shall refer to $B$ as the {\em standard Brownian motion} if $\nu$ equals the Hausdorff measure on $(T,r)$.
\label{Rem:07}
\hfill$\qed$
\end{remark}\sm
}

{
\begin{remark}[\bf Kigami's resistance form on dendrites] \rm Let $(T,r)$ be a locally compact and complete $\R$-tree and $\nu$ a Radon measure on $(T,{\mathcal B}(T))$. Let furthermore $(V_m)_{m\in\mathbb{N}}$ be an increasing
and compatible (in the sense of Definition~0.2 in \cite{Kigami95}) family of finite subsets of $T$ such that $V^\ast:=\cup_{m\in\mathbb{N}}V_m$ is
countable and dense. For each $m\in\mathbb{N}$ and $x,y\in V_m$, let
$x\sim y$ whenever $]x,y[\cap V_m=\emptyset$, and put for all $f,g:V_m\to\R$
\begin{equation}
\label{Kigami:m}
   {\mathcal E}_m(f,g)
 :=
   \tfrac{1}{2}\sum\nolimits_{x,y\in V_m;x\sim y}\tfrac{(f(x)-f(y))(g(x)-g(y))}{r(x,y)}.
\end{equation}

In \cite{Kigami95} the bilinear form
\begin{equation}
\label{Kigami:form}
   {\mathcal E}^{\mathrm{Kigami}}(f,g)
 :=
   \lim_{m\to\infty}{\mathcal E}_m\big(f\big|_{V_m},g\big|_{V_m}\big)
\ee
with domain
\begin{equation}
\label{Kigami:label}
   {\mathcal F}^{\mathrm{Kigami}}
 :=
   \big\{f:V^\ast\to\mathbb{R}:\,\mbox{limit on r.h.s. of (\ref{Kigami:form}) exists}\big\}
\ee
is studied.

Put
\begin{equation}
\label{Kigami:labelD}
{\mathcal D}\big({\mathcal E}^{\mathrm{Kigami}}\big) := \overline{ {\mathcal F}^{\mathrm{Kigami}} \cap  {\mathcal C}_0(T)} ^{{\mathcal E}^{\mathrm{Kigami}}_{1}},
\ee
where the closure is with respect  to the ${\mathcal E}^{\mathrm{Kigami}}_1$-norm given by
\begin{equation}
\label{e:024}
   {\mathcal E}^{\mathrm{Kigami}}_{1}(f,g):={\mathcal E}^{\mathrm{Kigami}}(f,g) + (f,g)_\nu.
\ee
It is (partily) shown in
Theorem~5.4 in \cite{Kigami95} that $({\mathcal E}^{\mathrm{Kigami}},{\mathcal D}({\mathcal E}^{\mathrm{Kigami}}))$
is a regular Dirichlet form.
Notice that Theorem~5.4 in \cite{Kigami95} actually only assumes the measure $\nu$ to be a $\sigma$-finite Borel measure that charges all open sets,
and defines the domain to be  ${\mathcal F}^{\mathrm{Kigami}} \cap L^{2}(\nu)$.
In order to ensure regularity, however, one needs to indeed close the  Kigami suggested domain
$ {\mathcal F}^{\mathrm{Kigami}} \cap{\mathcal C}_0(T)$ with respect to the  ${\mathcal E}^{\mathrm{Kigami}}_1$-norm. Moreover, regularity forces
$\nu$ to be a Radon measure; a fact which is used in Kigami's proof.

We will prove  in Remark~\ref{Rem:05} that $({\mathcal E},{\mathcal D}({\mathcal E}))$ agrees with Kigami's form on complete locally compact $\R$-trees. Note that our set-up is slightly more general (do not require completness) and the notion of a gradient at hand provides an explicit description of the form. 
\label{Rem:03}
\hfill$\qed$
\end{remark}\sm
}

For all closed $A\subseteq T$, let
\begin{equation}\label{tauA}
   \tau_A
 :=
   \inf\big\{t  > 0:\,B_t\in A\big\}
\ee
denote the {\em first hitting time} of the set $A$. In particular, put $\tau_A:=\infty$ if $\cup_{t > 0}\{B_t\}\subseteq T\setminus A$. Abbreviate $\tau_x:=\tau_{\{x\}}$, $x\in T$.
\begin{definition}[Recurrence/transience]
The   $\nu$-Brownian motion $B$ on the $\R$-tree $(T,r)$
is called {\rm transient} iff
\begin{equation}\label{e:013}
 \int_0^\infty\mathrm{d}u\,\mathbf P^{\rho}\{B_u \in K \}<\infty,
\ee
for all compact subsets $K\subseteq T$.
Otherwise, the   $\nu$-Brownian motion on the $\R$-tree $(T,r)$ is called {\rm recurrent}.

We say that a recurrent $\nu$-Brownian motion on $(T,r)$ is {\rm null-recurrent} if there exists a $y\in T$ such that $\mathbf E^x[\tau_y]=\infty$, for some $x\in T$, and {\rm positive recurrent} otherwise.
\label{Def:04}
\end{definition}\sm

{
\begin{remark}\rm As we will observe in Lemma~\ref{L:02}, $B$ has a $\nu$-symmetric transition densities $p_t(x,y)$ with respect
to $\nu$ such that $p_t(x;y) > 0$ for all $x, y\in T$. Consequently, in the terminology
of \cite{FukushimaOshimaTakeda1994}, $B$ is irreducible. Therefore, by Lemma~1.6.4 of \cite{FukushimaOshimaTakeda1994}, $B$ is either
transient or recurrent.
\hfill$\qed$
\end{remark}\sm
}

To justify the name ``Brownian motion'', we
next  verify that $\nu$-Brownian motion on $(T,r)$ satisfies  the  characterizations of Brownian motion  (known  on $\R$).

\begin{proposition}[Occupation time measure]
Assume (A1) and (A2). Let $B=((B_t)_{t\ge 0},({\mathbf P}^x)_{x\in T})$ be the continuous $\nu$-symmetric strong Markov process\label{P:prop}
 on $(T,r)$
whose Dirichlet form is $({\mathcal E}, {\mathcal D}({\mathcal E}))$.
Then the following hold:
\begin{itemize}
\item[(i)]
For all $a,b,x\in T$ such that $\mathbb{P}^x\{\tau_a\wedge \tau_b<\infty\}=1$,
\be{Xhit}
   \mathbf P^x\big\{\tau_a<\tau_b\big\}
 =
   \frac{r(c(x,a,b),b)}{r(a,b)}.
\ee
\item[(ii)] Assume furthermore that the measure $\R$-tree $(T,r,\nu)$ is  such that the $\nu$-Brownian motion $(T,r)$ is recurrent.
For all $b,x\in T$ and bounded measurable $f$,
\be{Xocc}
   \mathbf{E}^x\big[\int_0^{\tau_b}\mathrm{d}t\,f(B_s)\big]=2\int_{T}\nu(\mathrm{d}y)\,r\big(c(y,x,b),b\big)f(y).
\ee
\end{itemize}
\end{proposition}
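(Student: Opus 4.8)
The plan is to reduce both identities to the corresponding well-understood statements for one-dimensional Brownian motion on intervals by exploiting the tree structure, since the process restricted to any geodesic segment, observed only while it stays on that segment, behaves like a one-dimensional diffusion on natural scale with speed measure the restriction of $\nu$. For part (i), fix $a,b,x\in T$ and let $c:=c(x,a,b)$. By Axiom~1 and \eqref{e:branch}, the arc $[a,b]$ is the unique geodesic between $a$ and $b$, and $c\in[a,b]$ is the unique point with $[x,a]\cap[x,b]=[x,c]$. The key observation is that in order to reach $a$ or $b$, the process started at $x$ must first hit $c$ (this is a purely topological consequence of $0$-hyperbolicity: any continuous path from $x$ to $a$ or to $b$ must pass through $c$, and $B$ has continuous paths). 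Hence on $\{\tau_a\wedge\tau_b<\infty\}$ we have $\tau_c\le\tau_a\wedge\tau_b$, and by the strong Markov property at $\tau_c$ together with the fact that from $c$ the process must traverse the segment $[a,b]$ to reach either endpoint, $\mathbf P^x\{\tau_a<\tau_b\}=\mathbf P^c\{\tau_a<\tau_b\}$. Now I would show that the process started at $c\in[a,b]$ and killed upon leaving $[a,b]$ (i.e. upon hitting $\{a,b\}$, since any excursion off the segment returns to its base point before $a$ or $b$ can be reached) is, after the natural identification of $[a,b]$ with an interval of length $r(a,b)$ via $\phi_{a,b}$, exactly one-dimensional Brownian motion on that interval on natural scale: this follows from the explicit description of $({\mathcal E},{\mathcal D}({\mathcal E}))$ — restricting test functions to those supported near $[a,b]$, the Dirichlet form is $\tfrac12\int |\nabla f|^2\,\mathrm d\lambda^{(T,r)}$, and along the segment $\lambda^{(T,r)}$ is Lebesgue measure and $\nabla$ is the one-dimensional derivative. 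The classical gambler's-ruin identity for natural-scale diffusions then gives $\mathbf P^c\{\tau_a<\tau_b\}=\frac{r(c,b)}{r(a,b)}$, which is \eqref{Xhit}.

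For part (ii), the assertion is that $\int_0^{\tau_b}\mathrm dt\,\delta_y(B_t)$ has expectation $2\,r(c(y,x,b),b)$ in the density sense; equivalently, the Green function of $B$ killed at $b$ is $G^b(x,y)=2\,r(c(y,x,b),b)$ with respect to $\nu$. I would prove this by the general theory (e.g. Theorem~4.4.1 / the formula for killed Green functions in \cite{FukushimaOshimaTakeda1994}): $G^b$ is the reproducing kernel, meaning that for $h(x):=\int_T\nu(\mathrm dy)\,G^b(x,y)f(y)$ one has ${\mathcal E}(h,g)=(f,g)_\nu$ for all $g\in{\mathcal D}({\mathcal E})$ vanishing at $b$. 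With the candidate $G^b(x,y)=2\,r(c(y,x,b),b)$ this is a direct computation using Definition~\ref{Def:01}: if we root the tree at $b$, then $r(c(y,x,b),b)=r(\rho,x\wedge_\rho y)$ is the height of the confluence of $x$ and $y$, and its gradient in $x$ (for fixed $y$) is the indicator $\mathbf 1_{[b,y]}$ traversed with the appropriate sign — precisely the kernel that makes $\tfrac12\int\nabla_x G^b(x,y)\nabla g(x)\,\lambda^{(T,r)}(\mathrm dx)=g(y)$, by the defining property \eqref{con.1} of the gradient. Integrating against $f\,\mathrm d\nu$ and applying Fubini yields ${\mathcal E}(h,g)=(f,g)_\nu$, and uniqueness of the killed process / Green function identifies $h$ with $\mathbf E^\cdot[\int_0^{\tau_b}f(B_t)\,\mathrm dt]$.

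The main obstacle I anticipate is the careful justification that "the process must hit $c$ before $a$ or $b$" and, more delicately, that restricting $B$ to a segment (or more precisely, running $B$ killed at a point) genuinely produces the one-dimensional object claimed — one must rule out that excursions into side-branches off the geodesic contribute to the hitting probabilities (they cannot, since to reach $a$ from a point off $[a,b]$ the path must re-enter through the branch point, which lies strictly between the excursion's base and $a$), and one must argue that the correct boundary behaviour (absorption, not reflection or killing by the measure) is inherited — here recurrence is used to guarantee $\tau_b<\infty$ a.s. so that the Green function is genuinely finite. A secondary technical point is integrability: one needs $y\mapsto r(c(y,x,b),b)$ to be $\nu$-integrable on compacts, which follows from $\nu$ being Radon and the function being bounded by $r(y,b)$ on bounded sets, but for general bounded $f$ and non-compact recurrent trees one should check $\int_T\nu(\mathrm dy)\,r(c(y,x,b),b)f(y)$ is well-defined — this is where the hypothesis that $f$ is bounded and, implicitly, that the occupation time is finite (a consequence of point-recurrence, $\mathbf P^x\{\tau_b<\infty\}=1$) enters. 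The gradient computation itself and the reproducing-kernel verification are routine given Proposition~\ref{P:grad}.
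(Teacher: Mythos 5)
Part (i) is where your proposal diverges from the paper, and it is also where there is a genuine gap. The paper never restricts the process to the segment $[a,b]$: it observes that, by the computation (\ref{e:dir.6}), ${\mathcal E}(f_{a,b},g)=0$ for every $g$ in the killed domain $\bar{\mathcal D}_{\{a,b\}}({\mathcal E})$, so that $f_{a,b}$ and $-f_{a,b}$ are excessive (Theorem~2.2.1 of \cite{FukushimaOshimaTakeda1994}); hence $f_{a,b}(B_t)$ is a bounded martingale, and optional stopping at $\tau_a\wedge\tau_b$ gives (\ref{Xhit}) directly, with no detour through $c(x,a,b)$ or through one-dimensional diffusion theory. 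Your route instead rests on the assertion that the process started on $[a,b]$ and killed at $\{a,b\}$ ``is exactly one-dimensional Brownian motion on natural scale,'' justified by ``restricting test functions to those supported near $[a,b]$.'' That justification does not work as stated: any neighbourhood of $[a,b]$ still contains the side trees hanging off the (possibly dense set of) branch points of $[a,b]$, the form evaluated on such test functions has contributions from those side trees, and the killed process genuinely spends time in them. What is true --- and what you would actually have to prove --- is that the \emph{trace} of $B$ on $[a,b]$ (Theorem~6.2.1 of \cite{FukushimaOshimaTakeda1994}) is a time change of a natural-scale diffusion on the segment, because side branches are dead ends that do not change effective resistance between points of $[a,b]$, together with the fact that hitting distributions are invariant under time change. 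That is a real argument, not a restriction of test functions, and it is exactly the machinery the authors deliberately avoid (they make the same choice again before Lemma~\ref{L:hitting}). Your preliminary steps are fine: any continuous path from $x$ to $a$ or to $b$ must pass through $c(x,a,b)$, so $\tau_{c(x,a,b)}\le\tau_a\wedge\tau_b$ and the strong Markov reduction to the starting point $c(x,a,b)$ is legitimate; but the cleanest repair is simply to replace the one-dimensional reduction by harmonicity of $f_{a,b}$ plus optional stopping, which needs no localisation at all.

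Part (ii) follows essentially the paper's own route: identify $\mathbf E^{\boldsymbol{\cdot}}[\int_0^{\tau_b}f(B_s)\,\mathrm ds]$ as the $0$-resolvent of the process killed at $b$ via the reproducing property ${\mathcal E}(h,g)=(f,g)_\nu$ for $g$ vanishing at $b$, and check that the kernel $2r(c(y,\boldsymbol{\cdot},b),b)$ reproduces by the gradient computation of Example~\ref{Exp:01}. The paper packages this as Theorem~4.4.1(ii) of \cite{FukushimaOshimaTakeda1994} together with Remark~\ref{resolvent} and Corollary~\ref{Cor:01} (the Green kernel obtained from the capacity minimizer), and, as you correctly note, recurrence enters precisely to guarantee $\mathbf P^x\{\tau_b<\infty\}=1$ so that the killed resolvent identity applies. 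No substantive objection there.
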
\sm

\begin{remark}\rm
Proposition~\ref{P:prop} has been verified for the $\nu$-Brownian motion on the Brownian CRT for two particular choices of $\nu$ in \cite{Kre95} and \cite{Cro08}
(compare also Example~\ref{Exp:02}).
\hfill$\qed$
\label{Rem:04}
\end{remark}\sm

A second goal of this paper is to give a criterion for the $\nu$-Brownian motion on $(T,r)$ to be recurrent or transient. For a subset $A\subseteq T$, denote by
\begin{equation}\label{e:diam}
    \mathrm{diam}^{(T,r)}(A)
 :=
    \sup\big\{r(x,y):\,x,y\in A\big\}
\ee
its diameter. For bounded trees (i.e.\ those with finite diameter) recurrence and transience depends on whether or not $(T,r)$ is compact.
\begin{theorem}[Recurrence/transience on bounded trees] Let $(T,r)$ be a  bounded $\R$-tree. Assume (A1) and (A2).
 \begin{itemize}
 \item[(i)] If $T$ is compact then $\nu$-Brownian motion on $(T,r)$ is positive recurrent.
 \item[(ii)] If $T$ is not compact then $\nu$-Brownian motion on $(T,r)$ is transient.
 \end{itemize}
\label{T:04}
\end{theorem}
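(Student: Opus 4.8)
The plan is to reduce Theorem~\ref{T:04} to a one-dimensional computation by using the characterizing identities for hitting probabilities and occupation measures from Proposition~\ref{P:prop}. Fix the root $\rho\in T$ and, for $R>0$, consider the closed ball $\bar B(\rho,R)$ together with its ``boundary at distance $R$'' in the tree. Since $(T,r)$ is bounded, say of diameter $D$, the whole tree lies within bounded distance of $\rho$. The key object is the hitting time of a far-away point, and the key quantity to estimate is $\mathbf E^\rho[\tau_b]$ and the Green-type kernel $g(x,y)=2r(c(y,x,b),b)$ appearing in \eqref{Xocc}.

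\textbf{Part (i): $T$ compact $\Rightarrow$ positive recurrence.} First I would show recurrence. By the dichotomy noted in the Remark after Definition~\ref{Def:04}, it suffices to rule out transience, i.e.\ to exhibit a compact $K$ with $\int_0^\infty \mathrm du\,\mathbf P^\rho\{B_u\in K\}=\infty$; taking $K=T$ (compact by hypothesis) this integral is $\int_0^\infty\mathrm du\,\mathbf P^\rho\{B_u\in T\}=\infty$ provided the process does not explode, which follows from conservativeness of $({\mathcal E},{\mathcal D}({\mathcal E}))$ when $\nu(T)<\infty$ — and $\nu(T)<\infty$ because $\nu$ is Radon and $T$ is compact. (Alternatively, and more robustly, once recurrence is in hand I get conservativeness for free.) For positive recurrence I must produce $b\in T$ with $\mathbf E^x[\tau_b]<\infty$ for all $x$. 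Pick any $b$ and apply \eqref{Xocc} with $f\equiv 1$:
\begin{equation}
\label{e:ptrec}
   \mathbf E^x[\tau_b]=2\int_T\nu(\mathrm dy)\,r\big(c(y,x,b),b\big)\le 2\,\mathrm{diam}^{(T,r)}(T)\cdot\nu(T)<\infty,
\end{equation}
since $r(c(y,x,b),b)\le\mathrm{diam}^{(T,r)}(T)\le D$ and $\nu(T)<\infty$. Thus $\mathbf E^x[\tau_b]<\infty$ for \emph{every} $b$ and $x$, so $B$ is positive recurrent. (One should first justify that $\mathbf P^x\{\tau_b<\infty\}=1$ so that \eqref{Xocc} applies; this follows from recurrence plus irreducibility, or directly from \eqref{Xhit} by a limiting argument exhausting $T$ by balls around $b$.)

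\textbf{Part (ii): $T$ not compact $\Rightarrow$ transience.} Here the strategy is to find, for the point $\rho$ and any compact $K$, a bound showing $\int_0^\infty\mathrm du\,\mathbf P^\rho\{B_u\in K\}<\infty$, equivalently that the Green kernel $G(\rho,y):=\int_0^\infty p_u(\rho,y)\,\mathrm du$ is finite and locally $\nu$-integrable. Since $(T,r)$ is locally compact, bounded, but not compact, by the cited Lemmas~5.7 and~5.9 of \cite{Kigami95} the tree fails to be complete: there is a Cauchy sequence $(x_n)$ with no limit in $T$, and along the geodesics $[\rho,x_n]$ we can find a point (or rather an ``ideal endpoint'') that the process is trying to reach. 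Concretely I would pick a ray, i.e.\ an isometric embedding $\gamma:[0,\ell)\to T$ with $\gamma(0)=\rho$, $\ell\le D$, whose image is not closed (such a ray exists precisely because $T$ is bounded, locally compact but non-compact — otherwise every ray would have an endpoint and a compactness argument would close the tree). Let $K\subseteq T$ be compact; then $K$ avoids a neighborhood of the ``missing end'' of $\gamma$, so there is $\varepsilon>0$ with $K\subseteq\{x:\, r(x,\rho)\le \ell-\varepsilon \text{ along } \gamma\text{-side}\}$ in the appropriate sense. Using \eqref{Xhit} with $a=\gamma(\ell-\delta)$ for small $\delta$ and $b$ a point beyond $K$, together with the strong Markov property, I would show the process hits points arbitrarily close to the missing end with probability one and then, by the tree structure, ``escapes'' $K$ permanently: the occupation time of $K$ before reaching $\gamma(\ell-\delta)$ is finite by an estimate like \eqref{e:ptrec} localized to $K$, and each excursion back to $K$ occurs with a probability bounded away from $1$ (from \eqref{Xhit}, the chance of returning from near the end to $K$ before escaping further is $<1$ uniformly). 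Summing the geometric series of excursion occupation times gives $\mathbf E^\rho[\int_0^\infty\mathbf 1_K(B_u)\,\mathrm du]<\infty$.

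\textbf{Main obstacle.} The routine part is the one-dimensional-style estimate \eqref{e:ptrec} and the geometric-series bookkeeping. The genuine difficulty is in part (ii): making rigorous the claim that non-compactness of a \emph{bounded} locally compact $\R$-tree forces the existence of an ``incomplete ray'' to which the Brownian motion converges, and controlling the behavior of $B$ as it approaches this missing ideal boundary point — in particular verifying that $B$ actually reaches (in the sense of converging to) the end rather than, say, oscillating, and that the hitting identity \eqref{Xhit} can be pushed to the limit $a\to$ (the end). I expect to handle this by exhausting $T$ with an increasing sequence of compact subtrees $T_n$ (complete bounded subsets are compact by Lemma~5.9 of \cite{Kigami95}), applying \eqref{Xhit} on each $T_n$ to control $\mathbf P^\rho\{\tau_{\partial T_n}<\tau_K\}$, and using that $\mathrm{diam}^{(T,r)}(T\setminus T_n)\to 0$ is \emph{not} what happens — rather the diameters stay bounded, which is exactly why transience (not recurrence) results: the ``resistance to infinity'' $\sum_n r(\text{gap}_n)$ is finite, so the series of escape probabilities converges and the process is transient. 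Identifying and cleanly stating this resistance criterion (a special case of Theorems~\ref{T:04} stronger cousins \ref{T:04} and~\ref{T:trareha}) is the crux.
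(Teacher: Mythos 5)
Part~(i) of your proposal is essentially the paper's argument: recurrence comes from the fact that for compact $T$ one has ${\mathcal C}_\infty(T)={\mathcal C}(T)$, so $\mathbf{1}_T\in{\mathcal D}({\mathcal E})$ with ${\mathcal E}(\mathbf{1}_T,\mathbf{1}_T)=0$ (this is also what underlies the conservativeness you invoke), and positive recurrence then follows exactly as you say by putting $f\equiv 1$ in (\ref{Xocc}) to get $\mathbf{E}^x[\tau_b]\le 2\nu(T)\,r(x,b)<\infty$. That half is fine.

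Part~(ii) has a genuine gap, and it sits precisely where you flag the ``main obstacle.'' Your excursion-decomposition plan is never carried out, and the two identities you want to run it on are not available in the transient regime: the occupation-time formula (\ref{Xocc}) is stated in Proposition~\ref{P:prop} \emph{under the hypothesis that the $\nu$-Brownian motion is recurrent}, so using it (even ``localized to $K$'') to bound the occupation time of $K$ per excursion in a situation you are trying to prove transient is circular; and the hitting identity (\ref{Xhit}) requires $\mathbf{P}^x\{\tau_a\wedge\tau_b<\infty\}=1$, which you never verify for points $a$ approaching the missing end, so the limiting argument ``pushing (\ref{Xhit}) to the end'' is exactly the unproved step. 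The paper avoids all of this with a short variational argument: since a bounded, locally compact, non-compact $\R$-tree is incomplete, there is a Cauchy sequence $(x_n)$ that eventually leaves every compact set, hence $f(x_n)\to 0$ for every $f\in{\mathcal D}({\mathcal E})\subseteq{\mathcal C}_\infty(T)$; combining this with the gradient representation and Cauchy--Schwarz, $f(y)\le f(x_n)+\sqrt{2\,r(y,x_n)\,{\mathcal E}(f,f)}$, and letting $n\to\infty$ gives $\big(f(y)\big)^2\le 2\,\mathrm{diam}^{(T,r)}(T)\,{\mathcal E}(f,f)$ for all $f\in\bar{\mathcal D}({\mathcal E})$, as in (\ref{e:003}). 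This Poincar\'e-type inequality forces $\mathbf{1}_T\notin\bar{\mathcal D}({\mathcal E})$, and transience follows from Theorem~1.6.3 of \cite{FukushimaOshimaTakeda1994} — the same criterion used for part~(i). If you want to keep a probabilistic proof of (ii), you would first need independent control of the killed Green kernel and of the hitting times near the missing boundary point, which amounts to redoing this capacity/resistance estimate anyway.
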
\sm

Obviously, a bounded { and locally compact} $\R$-tree is complete if and only it is compact. Therefore {Theorem~\ref{T:04} states} that the $\nu$-Brownian motion on a bounded locally compact $\R$-tree is positive recurrent if the tree is complete and transient if the tree is incomplete.
In the case of compact $\R$-trees we can also give bounds on the mixing time.

\begin{theorem}[Mixing time] {Let $(T,r)$ be a compact $\R$-tree, and $\nu$ a Radon measure on $(T,{\mathcal B}(T))$.} Let {$(P_t)_{t\ge 0}$}  be the semi-group associated with {the} $\nu$-Brownian motion on $(T,r)$.
\label{C:mix}
If $\nu'$ is a probability measure on $(T,{\mathcal B}(T))$ with $\nu'\ll\nu$ such that
$\tfrac{\mathrm d\nu'}{\mathrm d\nu}\in L^1(\nu')$, then for all $t{\ge} 0$,
\begin{equation}\label{e:mix.1}
\begin{aligned}
   &\big\|\nu'P_t-\big(\nu(T)\big)^{-1}\nu\big\|_{\mathrm{TV}}
  \\
 &\leq
   \left(1+\nu(T)\sqrt{(\mathbf{1}_T,\tfrac{\mathrm d\nu'}{\mathrm d\nu})_{\nu'}} \,\,\right)\cdot\mathrm e^{-t/2\mathrm{diam}^{(T,r)}(T)\nu(T)},
\end{aligned}
\ee
where $\|\boldsymbol{\cdot}\|_{\mathrm{TV}}$ denotes the total variation norm.
\end{theorem}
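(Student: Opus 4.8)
The plan is to run the standard spectral-gap argument for reversible Markov semigroups, using the Poincar\'e inequality that comes from the Dirichlet form. Since $(T,r)$ is compact, Theorem~\ref{T:04}(i) gives that $\nu$-Brownian motion is positive recurrent, and $\nu(T)<\infty$; the stationary probability measure is $\bar\nu:=\nu(T)^{-1}\nu$. The semigroup $(P_t)$ is $\nu$-symmetric and conservative, so $\mathbf 1_T$ spans the kernel of the generator, and by Proposition~\ref{P:prop}(ii) (applied with $f\equiv 1$) one can control $\mathbf E^x[\tau_b]$, which is exactly the ingredient needed to estimate the spectral gap from below.

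First I would establish a Poincar\'e inequality of the form
\begin{equation}
\label{e:poincare-plan}
   \mathrm{Var}_\nu(f)\le 2\,\mathrm{diam}^{(T,r)}(T)\,\nu(T)\cdot{\mathcal E}(f,f)
   \qquad\text{for all }f\in{\mathcal D}({\mathcal E}).
\ee
The cleanest route is the classical bound on the spectral gap via hitting times: for a reversible ergodic process, $\lambda_1^{-1}\le \sup_{x}\mathbf E^x[\tau_{\pi}]$ where $\tau_\pi$ is the hitting time of a point sampled from stationarity, or more simply one shows directly that for a fixed root $\rho$ and any $f$ with $(f,\mathbf 1_T)_\nu=0$,
\[
   \|f\|_{L^2(\nu)}^2=\tfrac12\int\nu(\mathrm dx)\int\nu(\mathrm dy)\,(f(x)-f(y))^2\,\nu(T)^{-1},
\]
and then bounds $(f(x)-f(y))^2$ by a Cauchy--Schwarz estimate along the geodesic $[x,y]$: writing $f(x)-f(y)=\int_x^y\lambda^{(T,r)}(\mathrm dz)\,\nabla f(z)$ via Proposition~\ref{P:grad}, one gets $(f(x)-f(y))^2\le r(x,y)\int_{[x,y]}\lambda^{(T,r)}(\mathrm dz)\,(\nabla f(z))^2\le \mathrm{diam}^{(T,r)}(T)\int_T\lambda^{(T,r)}(\mathrm dz)(\nabla f(z))^2=2\,\mathrm{diam}^{(T,r)}(T)\,{\mathcal E}(f,f)$. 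Integrating against $\nu\otimes\nu$ and dividing by $\nu(T)$ yields \eqref{e:poincare-plan}, hence the spectral gap bound $\lambda_1\ge \big(2\,\mathrm{diam}^{(T,r)}(T)\,\nu(T)\big)^{-1}$.

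Next I would convert the spectral gap into $L^2$-contraction: for any $g\in L^2(\nu)$ with $(g,\mathbf 1_T)_\nu=0$ one has $\|P_tg\|_{L^2(\nu)}\le \mathrm e^{-\lambda_1 t}\|g\|_{L^2(\nu)}$, so with the gap bound the exponent becomes $-t/(2\,\mathrm{diam}^{(T,r)}(T)\,\nu(T))$, matching \eqref{e:mix.1}. Then I would pass from $L^2$ to total variation in the usual way. Write $h:=\tfrac{\mathrm d\nu'}{\mathrm d\nu}$, so $\nu'=h\,\nu$, and let $g:=h-\nu(T)^{-1}$, which satisfies $(g,\mathbf 1_T)_\nu=\nu'(T)-1=0$. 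For a test function $\varphi$ with $\|\varphi\|_\infty\le1$,
\[
   \int\varphi\,\mathrm d(\nu'P_t-\bar\nu)=\int (P_t\varphi)\,g\,\mathrm d\nu=\int (P_t\varphi-c)\,g\,\mathrm d\nu
\]
for the constant $c=\nu(T)^{-1}\int\varphi\,\mathrm d\nu$ (using $(g,\mathbf 1_T)_\nu=0$), and by Cauchy--Schwarz this is $\le\|P_t\varphi-c\|_{L^2(\nu)}\|g\|_{L^2(\nu)}\le \mathrm e^{-\lambda_1 t}\|\varphi-c\|_{L^2(\nu)}\|g\|_{L^2(\nu)}$. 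Bounding $\|\varphi-c\|_{L^2(\nu)}\le \|\varphi\|_{L^2(\nu)}\le\sqrt{\nu(T)}$ and expanding $\|g\|_{L^2(\nu)}^2=\int h^2\mathrm d\nu-\nu(T)^{-1}=(\mathbf 1_T,h)_{\nu'}-\nu(T)^{-1}$, one arrives at the factor $1+\nu(T)\sqrt{(\mathbf 1_T,\tfrac{\mathrm d\nu'}{\mathrm d\nu})_{\nu'}}$ after also accounting for the $t=0$ term or by a slightly more careful split of the constant (the extra $+1$ absorbs the crude estimates). Taking the supremum over $\varphi$ gives the claimed TV bound.

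The main obstacle I anticipate is not the functional-analytic machinery but verifying the Poincar\'e inequality with the \emph{sharp} constant $2\,\mathrm{diam}^{(T,r)}(T)\,\nu(T)$ cleanly from the gradient representation: one must be careful that the geodesic integral estimate for $(f(x)-f(y))^2$ interacts correctly with the factor $\tfrac12$ in ${\mathcal E}$ and with the normalization of $\lambda^{(T,r)}$, and that $\mathcal D(\mathcal E)$ is a form core so the inequality extends to all $f$ in the domain of the Dirichlet form (using that ${\mathcal D}(\mathcal E)$ is dense and the form is closed, both of which are established in the propositions preceding Theorem~\ref{T:01}). A secondary bookkeeping issue is getting the precise constant $1+\nu(T)\sqrt{\cdots}$ rather than something off by a factor; this is where one must be slightly clever about choosing the centering constant $c$ and, if necessary, splitting $\|\nu'P_t-\bar\nu\|_{\mathrm{TV}}\le \|\nu'P_t-\bar\nu P_t\|_{\mathrm{TV}}+\|\bar\nu P_t-\bar\nu\|_{\mathrm{TV}}$ with the second term vanishing by stationarity, leaving only the contraction term — but then the $+1$ must come from bounding the initial TV distance $\|\nu'-\bar\nu\|_{\mathrm{TV}}\le 1$, which is where I would look to pin down the exact form of the prefactor.
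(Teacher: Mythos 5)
Your proposal is correct in substance but reaches the spectral--gap lower bound by a genuinely different route from the paper. The paper never writes down a Poincar\'e inequality: it first observes (Lemma~\ref{L:12}(ii)) that a minimizer $h^\ddagger$ of the spectral-gap problem satisfies $(h^\ddagger,\mathbf 1_T)_\nu=0$ and hence vanishes at some $b\in T$, so that $\lambda_2(T)\ge\lambda_{\{b\}}(T)$; it then bounds the principal Dirichlet eigenvalue at $b$ from below by $\inf_x\big(g^\ast_{\{b\}}(\nu,x)\big)^{-1}$ using the explicit Green kernel $g^\ast_{\{b\}}(\nu,x)=2\int\nu(\mathrm dy)\,r(c(x,y,b),b)\le 2\,\mathrm{diam}^{(T,r)}(T)\,\nu(T)$ (Proposition~\ref{P:08}). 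Your two-point variance decomposition combined with the geodesic Cauchy--Schwarz estimate $(f(x)-f(y))^2\le r(x,y)\int_{[x,y]}(\nabla f)^2\,\mathrm d\lambda^{(T,r)}$ --- which is exactly the paper's (\ref{con.3}) --- delivers the same (in fact a slightly better) constant, is more elementary, and sidesteps the existence of the minimizer $h^\ddagger$, which Lemma~\ref{L:12}(ii) tacitly assumes; what the paper's route buys is that the eigenvalue bounds of Proposition~\ref{P:08} come out as standalone results from machinery (capacities, Green kernels) already built for other purposes. The passage from the gap to total variation is the same idea in both arguments ($\nu$-symmetry plus Cauchy--Schwarz/Jensen), the only difference being that the paper bounds the $L^2(\nu/\nu(T))$-norm of the density $P_t\big(\nu(T)\tfrac{\mathrm d\nu'}{\mathrm d\nu}-\mathbf 1_T\big)$ directly rather than by duality against test functions. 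On the prefactor: your duality computation yields $\sqrt{\nu(T)\,(\mathbf 1_T,\tfrac{\mathrm d\nu'}{\mathrm d\nu})_{\nu'}}$, which does not literally sit below $1+\nu(T)\sqrt{(\mathbf 1_T,\tfrac{\mathrm d\nu'}{\mathrm d\nu})_{\nu'}}$ when $\nu(T)<1$; but the paper's own display (\ref{e:mix.3}) produces $1+\sqrt{\nu(T)}\,(\boldsymbol{\cdot})^{1/2}$ and exhibits the same cosmetic mismatch with the stated constant, so this is bookkeeping on both sides rather than a gap.
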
\sm

We next state a geometric criterion for recurrence versus
transience for {\em unbounded} trees.
As a preparation we introduce the space of ends at infinity and recall the notion of {the} {\em Hausdorff dimension}.\sm

 \medskip

{\bf The space of ends at infinity $(E_{\infty},\bar{r})$: }
If $(T,r)$ is unbounded then there exists an isometric embedding {$\phi$ from $\R_+:=[0,\infty)$ into $T$ with $\phi(0)=\rho$}.
In the following we refer to each {such isometry } as an {\em end at infinity}, and let
\begin{equation}
\label{Einfty}
   E_\infty
 :=
   \mbox{ set of all ends at infinity.}
\end{equation}

Recall that $\rho\in T$ is a fixed root which allows to define a partial order $\le_\rho$ on $T$ by saying
that $x\le_\rho y$ for all $x,y\in T$ with $x\in[\rho,y]$.
This partial order $\le_\rho$ extends to a partial order on $T\cup E_\infty$ by letting for each {$x\in T$ and $y\in E_\infty$},
$x\le_\rho y$ if and only if {$x\in y(\R_+)$. Further for $x,y \in E_{\infty}$, $x\le_\rho y$ if and only if $x=y.$ Each pair $x,y\in T\cup E_\infty$ has then a well-defined {\em greatest common lower bound}
\begin{equation}
\label{brach}
   x\wedge y
 =
   x\wedge_\rho y\in T\cup E_\infty.
\ee

We equip $E_\infty$ with the metric $\overline r(\boldsymbol{\cdot},\boldsymbol{\cdot})$ defined by
\begin{equation}\label{e:barr}
   \bar{r}(x,y)=\bar{r}_\rho(x,y)
 :=
   1\wedge \frac{1}{r(\rho,x\wedge y)},
\ee
for all $x,y\in E_\infty$.

It is not difficult to see that $(E_\infty,\bar{r})$ is ultra-metric.
Hence by Theorem~3.38 in \cite{Eva} for all subsets $E'\subseteq E_\infty$ there is a (smallest) $\R$-tree $(T',r')$ with $E'\subseteq T'$ and such that $\bar{r}(x,y)=r'(x,y)$ for all $x,y\in E'$. We will refer to this smallest $\R$-tree as the $\R$-tree {\em spanned by $(E',\bar{r})$} and denote it by \begin{equation}
\label{e:023}
   \mathrm{span}(E',\bar{r}).
\ee \sm
It is easy to see that $\mathrm{span}(E',\bar{r})$  is a  compact $\R$-tree which has the same tree-topology as $(T,r)$ {outside $B(\rho,1)$}.
\medskip

{\bf  Hausdorff dimension of $E_{\infty}$: }
 For all $\alpha\ge 0$,
the {\em $\alpha$-dimensional Hausdorff measure} ${\mathcal H}^\alpha$ on $(E_\infty,{\mathcal B}(E_\infty))$ is defined as follows:
for all $A\in{\mathcal B}(E_{\infty})$, let
\begin{equation}\label{e:inha}
\begin{aligned}
   &{\mathcal H}^\alpha(A)
   \\
 &:=
   \lim_{\varepsilon\downarrow 0}\,
   \inf\big\{\sum_{i\ge 1}\big(\mathrm{diam}^{(E_{\infty}, \overline r)}(E_i)\big)^\alpha:\;
   \bigcup_{i\ge 1}E_i\supseteq A,\,\mathrm{diam}^{(E_{\infty}, \overline r))}(E_i)\le\varepsilon\big\}.
\end{aligned}
\ee

The {\em Hausdorff dimension} of a subset $A\in{\mathcal B}(E_{\infty})$ is then defined as
\begin{equation}\label{dim}
\begin{aligned}
   \mathrm{dim}_{\mathrm{H}}^{(E_{\infty},\overline r)}(A)
 &:=
   \inf\big\{\alpha\ge 0:\,{\mathcal H}^\alpha(A)=0\big\}
  \\
 &=
   \sup\big\{\alpha\ge 0:\,{\mathcal H}^\alpha(A)=\infty\big\}.
\end{aligned}
\ee

\begin{remark}\rm
Note that $\dim_{\mathrm{H}}^{(E_\infty,\overline r)}(E_\infty)$ does not depend on the particular choice of $\rho\in T$.
\label{Rem:08}
\hfill$\qed$
\end{remark}\sm

We are now ready to  state a geometric criterion for recurrence and transience on trees with ends at infinity.
\begin{theorem}[Recurrence/transience on unbounded trees] {Let $(T,r)$  be a locally compact $\R$-tree such that $E_\infty\not = \emptyset$, and $\nu$ a Radon measure on $(T,{\mathcal B}(T))$.}
\label{T:trareha}
\begin{itemize}
\item[(i)] If $(T,r)$ is complete and ${\mathcal H}^1$ is a finite measure,
 then
the $\nu$-Brownian motion on $(T,r)$ is recurrent.
\item[(ii)] If
$\dim_{\mathrm{H}}^{(E_\infty,\overline r)}(E_\infty)>1$ or $(T,r)$ is incomplete, then
the $\nu$-Brownian motion on $(T,r)$ is transient.
\end{itemize}
\end{theorem}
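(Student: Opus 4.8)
The plan is to reduce Theorem~\ref{T:trareha} to a single key quantity—the effective resistance from the root $\rho$ to infinity—and then to compute or bound that resistance using the explicit length-measure description of the Dirichlet form. Concretely, recurrence of an irreducible symmetric Markov process associated with a regular Dirichlet form is equivalent (Theorem~1.6.3 and the transience criterion in Chapter~1 of \cite{FukushimaOshimaTakeda1994}) to the statement that $\mathbf{1}\notin{\mathcal F}_e$, the extended Dirichlet space, equivalently that the ``capacity of infinity'' vanishes; on a tree this is the familiar dichotomy that $B$ is transient iff the effective resistance $R(\rho,\infty)<\infty$. So the first step is to set up, for an exhausting sequence of compact subtrees $T_n\uparrow T$ (using local compactness to get such a sequence with the balls $\overline{B(\rho,n)}$, say), the monotone limit $R(\rho,\infty):=\lim_n R(\rho,\partial T_n)$, where each $R(\rho,\partial T_n)$ is the resistance of the finite network obtained by viewing the skeleton with conductances $1/\lambda^{(T,r)}$ along arcs (equivalently, the Dirichlet principle applied to ${\mathcal E}$ restricted to functions that are $0$ at $\rho$ and $1$ on $\partial T_n$, minimized). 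This is where Remark~\ref{Rem:03} is useful: since ${\mathcal E}$ coincides with Kigami's resistance form, $R$ really is a resistance metric and the series/parallel laws for trees apply.

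Second, I would translate the geometric hypotheses into statements about this resistance. For part~(i): assume $(T,r)$ complete and ${\mathcal H}^1(E_\infty)<\infty$. The point is that along any single end $\phi\in E_\infty$ the resistance to infinity is $\int_0^\infty\,\mathrm dt=\infty$ (a single ray is recurrent, matching the one-dimensional Brownian motion picture), and the tree is recurrent as soon as ``not too many'' ends escape; the finiteness of the one-dimensional Hausdorff measure of $(E_\infty,\bar r)$ is exactly the quantitative way to say the branching is thin enough. I would make this precise by a Nash–Williams–type lower bound on $R(\rho,\partial T_n)$: cutsets of the tree at distance $s$ from $\rho$ correspond, via the map sending $x$ to the end(s) above it, to covers of $E_\infty$ by balls of $\bar r$-radius $1/s$, and the total conductance across such a cutset is controlled by a sum of $\mathrm{diam}^{(E_\infty,\bar r)}$ to the first power, hence by ${\mathcal H}^1$. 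Integrating $1/(\text{cutset conductance})\,\mathrm ds$ over $s\in[1,\infty)$ and using ${\mathcal H}^1(E_\infty)<\infty$ forces $R(\rho,\infty)=\infty$, i.e.\ recurrence. For part~(ii), the incompleteness case is the easy one: if $(T,r)$ is incomplete there is a Cauchy sequence with no limit, giving a ``hole'' the diffusion can fall into in finite resistance—concretely a finite-resistance path to a point not in $T$, exhibited as a finite-length half-line in $T$ whose closure is not compact, along which $R<\infty$, so $\mathbf 1\notin{\mathcal F}_e$ and $B$ is transient (this is morally the content of Theorem~\ref{T:04}(ii) and I would reuse that argument). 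If instead $\dim_{\mathrm H}^{(E_\infty,\bar r)}(E_\infty)>1$, pick $1<\alpha<\dim_{\mathrm H}$ so that ${\mathcal H}^\alpha(E_\infty)=\infty$; by Frostman's lemma there is a nontrivial finite measure $\mu$ on $E_\infty$ with $\mu(B_{\bar r}(x,\delta))\le C\delta^\alpha$. I would use $\mu$ to build an explicit unit flow from $\rho$ to infinity of finite energy—route mass $\mu$ out along the ends, so the flow through a point $x$ at height $t$ is $\mu(\{\text{ends above }x\})$—and compute its ${\mathcal E}$-energy as $\tfrac12\int\lambda^{(T,r)}(\mathrm dz)\,(\text{flow})^2=\tfrac12\int_1^\infty\mathrm dt\sum_{x:\,r(\rho,x)=t}\mu(E_\infty\!\uparrow\! x)^2$, which by the Frostman bound and $\alpha>1$ is a convergent integral (essentially $\int_1^\infty t^{1-\alpha}\,\mathrm dt<\infty$ after summing the branches, which is where $\alpha>1$ enters decisively). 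Finite-energy flow to infinity $\Rightarrow$ $R(\rho,\infty)<\infty$ $\Rightarrow$ transience.

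The main obstacle I anticipate is not any single estimate but making the resistance/flow formalism fully rigorous in this generality: the tree may have vertices of infinite branching degree and infinite total length, so ``sum over $x$ with $r(\rho,x)=t$'' must be interpreted as an integral against the push-forward of $\lambda^{(T,r)}$ under the height map, the flow must be constructed as an honest element of (a completion of) ${\mathcal F}$ rather than a combinatorial object, and one must check that the abstract Dirichlet-form transience criterion of \cite{FukushimaOshimaTakeda1994} is equivalent to existence of a finite-energy flow to infinity in exactly this non-locally-finite metric-graph setting. I would handle this by first proving the equivalence on the compact exhaustion $T_n$ (where everything is a genuine—possibly still infinitely-branching but compact—resistance network and Kigami's theory from \cite{Kigami95} gives the Dirichlet principle directly), then passing to the limit: $R(\rho,\partial T_n)$ is monotone, its limit is $R(\rho,\infty)$, and $\sup_n R(\rho,\partial T_n)<\infty$ iff $\mathbf 1\notin{\mathcal F}_e$ iff (by the Remark in the excerpt, $B$ being irreducible) $B$ is transient. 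A secondary technical point in part~(i) is that the cutset at a fixed height $s$ need not be finite; I would instead work with the finite cutsets $\partial T_n$ and a telescoping/comparison argument over the annuli $T_{n+1}\setminus T_n$, bounding the annulus resistance below via a single well-chosen cut inside it, so that $R(\rho,\partial T_n)\ge\sum_{k<n}(\text{annulus lower bounds})\to\infty$; the bookkeeping relating these annulus cuts back to $\bar r$-balls covering $E_\infty$, and hence to ${\mathcal H}^1$, is the fiddliest part but is conceptually straightforward.
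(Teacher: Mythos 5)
Your overall strategy is sound, and for part~(ii) it is essentially the paper's own argument in flow language: the paper deduces from $\dim_{\mathrm H}^{(E_\infty,\bar r)}(E_\infty)>1$, via the converse of Frostman's energy theorem, a probability measure $\pi$ on $E_\infty$ with finite mutual energy $\bar{\mathcal E}_\rho(\pi,\pi)<\infty$, and your unit flow is exactly this object --- the Fubini identity (\ref{e:022}) shows that the energy of the flow ``route mass $\pi$ out along the ends'' equals $\bar{\mathcal E}_\rho(\pi,\pi)$, and your step ``finite-energy flow $\Rightarrow$ transience'' is the Cauchy--Schwarz duality of Lemma~\ref{L:07} combined with Propositions~\ref{P:07} and~\ref{P:03}. (Going through the ball-bound form of Frostman's lemma rather than the energy form is harmless but adds a computation.) Your handling of the incomplete case by recycling the argument of Theorem~\ref{T:04}(ii) is correct, and in fact the paper's written proof of Theorem~\ref{T:trareha}(ii) silently omits that case, so you are more thorough there. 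Where you genuinely diverge is part~(i): the paper works on the test-function side, building from a single finite cut set $\{x_i\}$ at scale $\varepsilon$ an explicit $h_\varepsilon\in{\mathcal D}({\mathcal E})$ with ${\mathcal E}(h_\varepsilon,h_\varepsilon)=\sum_i\mathrm{diam}^{(E_\infty,\bar r)}(E_i)$ and then arguing $\mathbf{1}_T\in\bar{\mathcal D}({\mathcal E})$, whereas you work on the dual, Nash--Williams side, lower-bounding the resistance to infinity.

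One concrete caution about part~(i): the clause ``the total conductance across such a cutset is controlled by a sum of diameters, hence by ${\mathcal H}^1$; integrating $1/(\text{cutset conductance})\,\mathrm ds$ forces $R=\infty$'' does not work as stated. What a single cut set at scale $\varepsilon$ buys you is a conductance of order ${\mathcal H}^1(E_\infty)$ for the \emph{entire} region between $\rho$ and that cut set, i.e.\ a resistance bounded below by a constant, not a divergent integrand; and since ${\mathcal H}^1(E_\infty)$ may be strictly positive (the critical $k$-ary tree with $c=k$ in Example~\ref{karrayt}), no single cut set can ever show $R(\rho,\infty)=\infty$. The telescoping you describe in your final paragraph is therefore not a ``fiddly detail'' but the actual content of part~(i): you must take nested cut sets $\Pi_1,\Pi_2,\dots$ (finite, by completeness and local compactness), use the series law $R(\rho,\infty)\ge\sum_k R(\Pi_k,\Pi_{k+1})$, and show each annulus contributes at least a fixed constant depending only on ${\mathcal H}^1(E_\infty)$; that step must be written out. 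You also need to justify $\mathrm{cap}(\rho)=\lim_n \mathrm{cap}_{\Pi_n}(\rho)$ so that the resistance computed along the exhaustion really governs recurrence via Proposition~\ref{P:03}; this follows from regularity (Proposition~\ref{L:04}) together with the pointwise energy bound $|f(x)-f(y)|^2\le 2r(x,y)\,{\mathcal E}(f,f)$ used in the paper's proof of Proposition~\ref{P:03}, but it is not automatic. With those two points made precise, your route goes through and is, if anything, more robust than the paper's single-cut-set construction, whose approximants $h_\varepsilon$ have energies tending to ${\mathcal H}^1(E_\infty)$ rather than to $0$.
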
\sm


The following example illustrates an application of Theorems~\ref{T:04} and~\ref{T:trareha}
{suggesting a duality between bounded and unbounded trees}.

\begin{example}[The $k$-ary tree] \rm We want to illustrate the theorem with the example of symmetric trees. Fix $k\ge2$ and $c>0$ and
\label{karrayt}
We want to illustrate the theorem with the example of symmetric trees. Fix $k\ge2$ and $c>0$ and
let $(T,r)$ be the
following locally compact
$\R$-tree uniquely characterized as follows:
\begin{itemize}
\item
There is a root $\rho\in T$.
\item A point $x\in T$ is a {\em branch point}, i.e., $T\setminus\{x\}$ consists of more than $2$
connected components, if and only if
$r(\rho,x)=\sum_{l=0}^mc^{l}$, for some $m\in\N \cup \{0\}$.
\item All branch points are of {\em degree} $k+1$, i.e., $T\setminus\{x\}$ consists of $k+1$
connected components.
\end{itemize}
It is easy to check that for all choices of $c>0$, the length measure $\lambda^{(T,r)}$ is Radon.
Hence $\lambda^{(T,r)}$-Brownian motion on $(T,r)$
exists by Theorem~\ref{T:01}.

Since
\begin{equation}\label{e:diamk}
   \mathrm{diam}^{(T,r)}(T)
 =
    \sum\nolimits_{l\in\N}c^l
    \left\{\begin{array}{cc}<\infty, & \mbox{ if }c<1,\\ =\infty, & \mbox{ if }c\ge 1,\end{array} \right.
\ee
the tree is bounded iff $c<1$. We discuss bounded and unbounded trees separately.

Assume first that $c<1$. By construction, $(T,r)$ is not compact and hence $\lambda^{(T,r)}$-Brownian motion is transient.
Notice that since
\begin{equation}
   \lambda^{(T,r)}(T)
 =
    \sum\nolimits_{l\in\N}k^l\cdot c^l
   \left\{\begin{array}{cc}<\infty, & \mbox{ if }c<\frac{1}{k},\\ &\\ =\infty, & \mbox{ if }c\ge \frac{1}{k},\end{array} \right.
\ee
$\lambda^{(T,r)}$-Brownian motion exists also on the completion $(\bar{T},r)$ of $(T,r)$ in the case $c\in(0,\frac{1}{k})$.
Since a complete, bounded, and locally compact $\R$-tree is compact,
$\lambda^{(T,r)}$-Brownian motion on $(\bar{T},r)$ is positive recurrent by Theorem~\ref{T:04}. Note that $\lambda^{(T,r)}$-Brownian motion on $(T,r)$ versus $(\bar{T},r)$
differ in their behaviour on the boundary $\partial T:=\bar{T}\setminus T$. While the first process gets killed on $\partial T$, the second gets reflected at $\partial T$.

Assume next that $c\ge 1$. An easy calculation shows that $\mathrm{dim_H}^{(E_\infty,\bar{r})}(E_\infty)=\log_c(k)$, and hence the
$\lambda^{(T,r)}$-Brownian motion is recurrent if
$c>k$ and transient if $c<k$ by Theorem~\ref{T:trareha}. The latter has been shown for random walks in~\cite{Lyo90}.
{Further, when $c=k$ it can be easily verified that the Hausdorff measure of $E_\infty$ is bounded by $2k<\infty$, which implies that
$\lambda^{(T,r)}$-Brownian motion is recurrent at the critical value $c=k$.}
\hfill$\qed$
\end{example}\sm

We conclude this section with a result that shows how the $\lambda^{(T,r)}$-Brownian motion on locally  compact $\R$-trees which are spanned by their ends at infinity
can be used to  decide whether or not random walks, simple or weighted, on graph-theoretical trees are recurrent.\sm

 \medskip

{\bf Graph-Theoretical Trees:}
Consider a non-empty countable set~$V$ and a family of non-negative weights $\{r_{\{x,y\}};\,x,y\in V\}$ such that $(V,E)$ is a locally finite graph-theoretical tree, where $E:=\{\{x,y\}\mbox{ with }x,y\in V;r_{\{x,y\}}>0\}$. In the following we refer to $(V,\{r_{\{x,y\}};\,x,y\in V\})$ as a {\em weighted, discrete tree}. A Markov chain $X=(X_n)_{n\in\N_0}$ on the weighted, discrete tree $(V,\{r_{\{x,y\}};\,x,y\in V\})$
allows transitions between any neighboring points $x,y\in T$ with $r_{\{x,y\}}>0$ and probabilities proportional to the {\em conductance} $c_{\{x,y\}}:=(r_{\{x,y\}})^{-1}$.

Call an infinite sequence $(x_n)_{n\in\N_0}$ of distinct vertices in $V$ with $x_0=\rho$ and $r_{\{x_n,x_{n+1}\}}>0$ for all $n\in\N$ a {\em direction} in $(V,\{r_{\{x,y\}};\,x,y\in V\})$, and denote similar to (\ref{Einfty}) by $\tilde{E}_\infty$ the set of all directions. Let for any two directions $x=(x_n)_{n\in\N}$ and $y=(y_n)_{n\in\N}$,
$k(x,y)$ denote the last index $k\in\N\cup\{\infty\}$ for which $x_k=y_k$, and define
$x\wedge y:=x_{k(x,y)}$ if $k(x,y)\in\mathbb{N}$, and $x\wedge y:=x\in\tilde{E}_\infty$ if $k(x,y)=\infty$.
Recall from (\ref{e:barr}) the metric $\bar{r}$, and define in a similar way a metric $\tilde{r}$ on $\tilde{E}_\infty$ by letting for all $x,y\in\tilde{E}_\infty$, $\tilde{r}(x,y):=(r(x\wedge y,\rho))^{-1}\wedge 1$.

\begin{theorem}[Recurrence versus transience of random walks on trees]
Let $(V,\{r_{\{x,y\}};\,x,y\in V\})$ be a weighted discrete tree such that for all directions $x=(x_n)_{n\in\N}$, $\sum_{n\in\N}r_{\{x_n,x_{n+1}\}}=\infty$.
Then the random walk $X$ is recurrent if \label{nashwillconverse}
{${\mathcal H}^1$ is  a finite measure on $(\tilde{E}_\infty,{\mathcal B}(\tilde{E}_\infty))$} and
transient if $\dim_H(\tilde{E}_\infty,\tilde r)>1$.
\end{theorem}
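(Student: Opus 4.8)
The plan is to embed the weighted discrete tree into a locally compact $\R$-tree, identify $X$ with the jump chain of the associated $\lambda^{(T,r)}$-Brownian motion, and then quote Theorem~\ref{T:trareha}. Concretely, I build $(T,r)$ by replacing each edge $\{x,y\}\in E$ by an isometric copy of $[0,r_{\{x,y\}}]$, glued along the vertices, and take $\rho\in V\subseteq T$ as root. Since $(V,E)$ is locally finite, $(T,r)$ is locally compact, and in fact proper: for $R>0$ the set $B(\rho,R)\cap V$ is the vertex set of a connected, locally finite subtree of $(V,E)$ (each of its elements is joined to $\rho$ by a geodesic that stays in $B(\rho,R)$), so if it were infinite, K\"onig's lemma would produce an infinite simple path from $\rho$, i.e.\ a direction $x=(x_n)_n$ with $\sup_n r(\rho,x_n)\le R<\infty$, contradicting $\sum_n r_{\{x_n,x_{n+1}\}}=\infty$. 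Hence $B(\rho,R)\cap V$ is finite, $\overline{B(\rho,R)}$ lies in a finite union of closed arcs and is compact, so $(T,r)$ is complete and locally compact. Moreover $\lambda^{(T,r)}$ is finite on compacts and positive on balls, hence Radon, so by Theorem~\ref{T:01} the $\lambda^{(T,r)}$-Brownian motion $B$ on $(T,r)$ exists. Finally, concatenating the arcs along a direction $x=(x_n)_n$ yields, precisely because $\sum_n r_{\{x_n,x_{n+1}\}}=\infty$, an isometric embedding of $\R_+$ with $\phi(0)=\rho$, i.e.\ an end at infinity; conversely any geodesic ray from $\rho$ traverses a sequence of whole arcs and so comes from a direction. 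This bijection is an isometry $(\tilde E_\infty,\tilde r)\to(E_\infty,\bar r)$, since for directions $x,y$ the rays split at the last common vertex $x_{k(x,y)}$ and $r(\rho,x_{k(x,y)})=r(x\wedge y,\rho)$. In particular $E_\infty\neq\emptyset$ (K\"onig again), and ${\mathcal H}^1$ and $\dim_{\mathrm H}$ on $(\tilde E_\infty,\tilde r)$ agree with those on $(E_\infty,\bar r)$.

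Next I identify $X$ with the jump chain of $B$. Put $T_0:=0$ and $T_{k+1}:=\inf\{t>T_k:\,B_t\in V\setminus\{B_{T_k}\}\}$. Started at a vertex $x$, $B$ a.s.\ hits the finite set $N(x)$ of neighbours of $x$ in finite time, so each $T_k<\infty$ a.s.; and if $T_k\uparrow T_\infty<\infty$ then $B_{T_k}\to B_{T_\infty}$, forcing the sequence $(B_{T_k})_k$ (which lies in a bounded set, hence meets $V$ in a finite set) to be eventually constant, contradicting $B_{T_{k+1}}\neq B_{T_k}$; thus $T_k\to\infty$ a.s. Since $N(x)$ separates $x$ from $V\setminus\{x\}$, $B_{T_{k+1}}$ is always a neighbour of $B_{T_k}$, and by Proposition~\ref{P:prop}(i) together with the strong Markov property --- i.e.\ the harmonic-function (Kirchhoff) computation on the star around $x$, whose two-point building block is~\eqref{Xhit}, the factor $\tfrac12$ in $\mathcal E$ being irrelevant as only ratios enter --- it equals a prescribed neighbour $y$ with probability $c_{\{x,y\}}/\sum_{y'\sim x}c_{\{x,y'\}}$. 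Hence $(B_{T_k})_{k\ge0}$ is a Markov chain on $V$ started at $\rho$ with the same transition kernel as $X$, so $(B_{T_k})_{k\ge0}$ and $(X_k)_{k\ge0}$ have the same law.

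To finish: $B$ has everywhere positive transition densities (Lemma~\ref{L:02}), hence is irreducible, hence either transient or recurrent, and the point $\rho$ is non-polar (locally $B$ is a one-dimensional diffusion). By standard potential theory $B$ is recurrent iff $B$ visits $\rho$ at arbitrarily large times a.s., which by $T_k\to\infty$ and the previous paragraph is equivalent to $X$ visiting $\rho$ infinitely often, i.e.\ to recurrence of the irreducible chain $X$. Applying Theorem~\ref{T:trareha} to the complete, locally compact $\R$-tree $(T,r)$ with $E_\infty\neq\emptyset$: if ${\mathcal H}^1$ is finite on $E_\infty$ then $B$, hence $X$, is recurrent; if $\dim_{\mathrm H}^{(E_\infty,\bar r)}(E_\infty)>1$ then $B$, hence $X$, is transient. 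By the isometry $(\tilde E_\infty,\tilde r)\cong(E_\infty,\bar r)$ these are exactly the stated hypotheses, which proves the theorem.

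The main obstacle is the second step: verifying that $X$ really is the jump chain of $B$. This rests on the hitting-probability identity on a general finite star --- which must be extracted from the two-point formula~\eqref{Xhit} by an optional-stopping/harmonicity argument --- and on the non-accumulation $T_k\to\infty$; it is the hypothesis $\sum_n r_{\{x_n,x_{n+1}\}}=\infty$ that secures both this and, via properness and $(\tilde E_\infty,\tilde r)\cong(E_\infty,\bar r)$, the applicability of Theorem~\ref{T:trareha}. An alternative to the pathwise argument is to identify the trace of the Dirichlet form $(\mathcal E,\mathcal D(\mathcal E))$ on $V$ with the discrete form $\tfrac12\sum_{\{x,y\}\in E}c_{\{x,y\}}(f(x)-f(y))^2$ and to use that recurrence and transience are preserved under taking traces.
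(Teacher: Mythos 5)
Your proof follows essentially the same route as the paper: embed the weighted discrete tree as a locally compact $\R$-tree spanned by its ends at infinity, identify $X$ with the chain embedded in the $\lambda^{(T,r)}$-Brownian motion via the star hitting probabilities (the paper's Lemmata~\ref{L:061}, \ref{L:10} and~\ref{L:hitting}, the last proved by exactly the harmonic-function/optional-stopping argument you describe), and then invoke Theorem~\ref{T:trareha}. If anything, you are more explicit than the paper about the non-accumulation of the jump times, the isometry $(\tilde E_\infty,\tilde r)\cong(E_\infty,\bar r)$, and the equivalence of recurrence of $B$ with recurrence of the embedded chain.
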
\sm

\subsection{Outline.} The rest of the paper is organized as follows. In Section~\ref{S:Dirichlet} we introduce the Dirichlet space associated with the Brownian motion.
In Section~\ref{S:capacities} we recall the relevant potential theory and apply it to give explicit
expressions for the
capacities and Green kernels associated with the Dirichlet form. In Section~\ref{S:existence} we
prove the existence of a strong
Markov process with continuous paths which is associated with the Dirichlet form.
In Section~\ref{S:compact} we study the basic long-term behavior
for Brownian motions on locally-compact and bounded $\R$-trees. More precisely,  we prove Theorem~\ref{T:04}
and give in the recurrent case lower and upper bounds for the principle eigenvalue and the spectral gap.  We prove Theorem~\ref{T:trareha} in Section~\ref{S:transinfty}.
In Section~\ref{Sub:contdisc} we recover and generalize for $\R$-trees which can be spanned by their ends at infinity results for the embedded
random walks as known from~\cite{Lyo90}. In particular, we give the proof of Theorem~\ref{nashwillconverse}. Finally in Section~\ref{s:BMdrift} we discuss examples in the literature and diffusions that are not on natural scale.

\subsection*{Acknowledgments} Michael Eckhoff passed away during the completion of this work. The core theme and ideas in the paper are in part due to him. Further, several key estimates and ideas from Dirichlet form theory were brought to our notice by him.  Our deepest condolences to his family.

 We would like to thank David Aldous for proposing a problem that initiated this project and Zhen-Qing Chen, Steve Evans, Wolfgang L\"ohr and Christoph Schumacher  for helpful discussions. Thanks are due to an anonymous referee, whose earlier  detailed report helped us in preparing this revised version of the article.  
 
 Siva Athreya was supported in part by a CSIR Grant in Aid scheme and Homi Bhaba Fellowship. Anita Winter was supported in part at the Technion by a fellowship from the Aly Kaufman Foundation.

\section{The Dirichlet space}
\label{S:Dirichlet}
Fix $(T,r)$ to be a locally compact $\R$-tree and $\nu$ a Radon measure on $(T,{\mathcal B}(T))$.
In this section we construct the Dirichlet space (to be) associated with the $\nu$-Brownian motion.
In Subsection~\ref{Sub:proof21}, we begin with giving the proof of Proposition \ref{P:grad}. In Subsection~\ref{Sub:form}
we verify that $({\mathcal E},{\mathcal D}({\mathcal E}))$ from (\ref{domainp}) and (\ref{con.2p}) is indeed a Dirichlet form.

\subsection{The gradient (Proof of Proposition~\ref{P:grad})}
\label{Sub:proof21}
\begin{proof}[Proof of Proposition~\ref{P:grad}]
Fix a root $\rho\in T$, and $x,y\in T$.

Assume for the moment that $x,y\in T$ are such that $x\in[\rho,y]$.  By Axiom~1, there is a unique isometric embedding $\phi_{x,y}:[0,r(x,y)]\to[x,y]$.  Fix $f\in{\mathcal A}$, and define the function $F_{x,y}:[0,r(x,y)]\to\R$ by
$F_{x,y}:=f\circ\phi_{x,y}$. Since $\phi_{x,y}$ is an isometry, $F_{x,y}$ is locally absolutely continuous on $\R$.
Hence by standard theory (compare, for example, Theorem~7.5.10 in \cite{AthSun09}),
$F_{x,y}$ is  almost everywhere differentiable,
its derivative $F'_{x,y}$ is Lebesgue integrable and
\begin{equation}\label{grund}
\begin{aligned}
  f(y)-f(x)
 &=
  F_{x,y}(r(x,y))-F_{x,y}(0)
  \\
 &=
  \int_{[0,r(x,y)]}\mathrm{d}t\,F_{x,y}^\prime(t)
  \\
 &=
   \int_{[x,y]}\lambda^{(T,r)}(\mathrm{d}z)\,F_{x,y}'(\phi^{-1}_{x,y}(z))
  \\
 &=
  \int_x^y\lambda^{(T,r)}(\mathrm{d}z)\,F_{x,y}'(\phi^{-1}_{x,y}(z)).
\end{aligned}
\ee

Notice that for all $z\in [x,y]$, we have $F_{x,y}(\phi^{-1}_{x,y}(z))=F_{\rho,y}(\phi^{-1}_{\rho,y}(z))$. Hence, $F_{x,y}'(\phi^{-1}_{x,y}(z))$ does not depend explicitly on $x\in[\rho,y]$. Similarly, for any $y_{1},y_{2}\in T$, $F_{\rho,y_i}(\phi^{-1}_{\rho,y_i})=F_{\rho,y_1\wedge y_2}(\phi^{-1}_{\rho,y_1\wedge y_2})$, for $i=1,2$, on $[\rho,y_1\wedge y_2]$. This implies that for all $y_1,y_2\in T$, $F'_{\rho,y_1}(\phi^{-1}_{\rho,y_1})=F'_{\rho,y_2}(\phi^{-1}_{\rho,y_2})$ on $[\rho,y_1\wedge y_2]$.  Therefore  $F'_{x,y}(\phi^{-1}_{x,y}(z))$  does not depend on the direction given through $[\rho,y]$, and so does not depend on $x,y$.
Consequently, $g:T\rightarrow \R$ given by $g(z):= F_{x,y}'(\phi^{-1}_{x,y}(z))$ when $z \in [x,y]$ satisfies (\ref{con.1}). Local integrability and uniqueness follow by standard measure theoretic arguments. \sm

Let now $x,y\in T$ be arbitrary. Then by what we have shown so far
\begin{equation}\label{grund2}
\begin{aligned}
   &f(y)-f(x)
  \\
 &=
   f(y)-f(\rho)+f(\rho)-f(x)
 \\
 &=
   -{\int_{\rho}^x}\lambda^{(T,r)}(\mathrm{d}z)\,\nabla f(z)+{\int_\rho^y}\lambda^{(T,r)}(\mathrm{d}z)\,\nabla f(z)
  \\
 &=
   \int_x^y\lambda^{(T,r)}(\mathrm{d}z)\,\nabla f(z),
\end{aligned}
\ee
and the claim follows.
\end{proof}\sm

\begin{example}[Distance to a fixed point]\rm Fix $a\in T$, and define $g_a:T\to\R_+$ as
\begin{equation}\label{e:h0}
   g_{a}(x)
 :=
   r\big(a,x\big),
\ee
for all $x\in T$.
Obviously,
$g_{a}$ is
   absolutely continuous. Observe that moving the argument outside the arc $[\rho,a]$ away from the root lets the distance grow at speed $1$, while moving the argument inside the arc $[\rho,a]$ away from the root lets the distance decrease with speed one. We therefore expect that a version of $\nabla g_{a}$ is given by
\begin{equation}\label{e:nablah0}
   \nabla g_{a}(x)
 =
   \mathbf{1}_{T}(x)-2\cdot \mathbf{1}_{[\rho,a]}(x)
\ee
for all $x\in T$.

To see this it is enough to verify (\ref{osi}) for all $x,y\in T$ with $x\in[\rho,y]$. Indeed,
{
\begin{equation}\label{e:yy0}
\begin{aligned}
   &\int^y_x\lambda^{(T,r)}(\mathrm{d}z)\,\big(\mathbf{1}_{T}(z)-2\cdot \mathbf{1}_{[\rho,a]}(z)\big)
  \\
 &=
   r(\rho,y)-r(\rho,x)-2\cdot r(\rho,a\wedge y)+2\cdot r(\rho,a\wedge x)
  \\
 &=
   r(\rho,a)+r(\rho,y)-2\cdot r(\rho,a\wedge y)-r(\rho,a)-r(\rho,x)+2\cdot r(\rho,a\wedge x)
  \\
 &=
   g_a(y)-g_a(x).\mbox{\hfill}\qed
\end{aligned}
\ee}
\label{Exp:04}
\end{example}\sm

\begin{example}[Distance between branch and end point on an arc]\rm Fix $a,b\in T$, and recall the definition of branch points
from (\ref{e:branch}). Define $f_{a,b}:T\to\R_+$ by
\begin{equation}\label{e:h}
   f_{a,b}(x)
 :=
   r\big(c(x,a,b),b\big),
\ee
for all $x\in T$.
Obviously,
$f_{a,b}$ is
   absolutely continuous. Observe that now disturbing the argument outside the arc $[a,b]$ does not change the value of the function while moving the argument away from the root along $[a,a\wedge b]$ and $[a\wedge b,b]$ let the distance grow and decrease, respectively, with speed one. We therefore expect that a version of $\nabla f_{a,b}$ is given by
\begin{equation}\label{e:nablah}
   \nabla f_{a,b}(x)
 =
   \mathbf{1}_{[a,a\wedge b]}(x)-\mathbf{1}_{[a\wedge b,b]}(x)
\ee
for all $x\in T$.

To see this it is enough to verify (\ref{osi}) for all $x,y\in T$ with $x\in[\rho,y]$.
Indeed,\label{Exp:01}
{
\begin{equation}\label{e:yy}
\begin{aligned}
   &\int^y_x\lambda^{(T,r)}(\mathrm{d}z)\,\big(\mathbf{1}_{[a,a\wedge b]}(z)-\mathbf{1}_{[a\wedge b,b]}(z)\big)
  \\
 &=
   \lambda([x,y]\cap[a,a\wedge b])-\lambda([x,y]\cap[b,a\wedge b])
  \\
 &=
   \big(\mathbf{1}\{c(x,a,b)\in[\rho,a]-\mathbf{1}\{c(x,a,b)\in[\rho,b]\}\big)\cdot r\big(c(y,a,b),c(x,a,b)\big)
  \\
 &=
   f_{a,b}(y)-f_{a,b}(x).\mbox{\hfill$\qed$}
\end{aligned}
\ee}
\end{example}\sm

\subsection{The Dirichlet form}
\label{Sub:form}
Let $(T,r)$ be a locally compact $\R$-tree and $\nu$ a {\em Radon measure} {on $(T,{\mathcal B}(T))$}.
Recall from (\ref{domainp}) and (\ref{con.2p}) the bilinear form $({\mathcal E},{\mathcal D}({\mathcal E}))$.
{
\begin{lemma} Fix $a,b\in T$, and recall the function $f_{a,b}$ from Example~\ref{Exp:01}.
Then for all $f\in {\mathcal D}({\mathcal E})$, we have that also $\tilde{f}_{a,b}:=f\cdot f_{a,b}\in  {\mathcal D}({\mathcal E})$.
\label{L:13}
In particular, if $\mathbf{1}_T\in {\mathcal D}({\mathcal E})$ then also $f_{a,b}\in  {\mathcal D}({\mathcal E})$.
\end{lemma}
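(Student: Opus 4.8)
The plan is to verify directly that $\tilde f_{a,b} = f \cdot f_{a,b}$ lies in each of the three sets whose intersection defines $\mathcal D(\mathcal E)$, namely $\mathcal A$, $L^2(\nu)$, and $\mathcal C_\infty(T)$, and in addition that $\nabla \tilde f_{a,b} \in L^2(\lambda^{(T,r)})$. First I would record the elementary facts about $f_{a,b}$: by Example~\ref{Exp:01} it is bounded (indeed $0 \le f_{a,b} \le r(a,b)$), absolutely continuous, and has gradient $\nabla f_{a,b} = \mathbf 1_{[a,a\wedge b]} - \mathbf 1_{[a\wedge b,b]}$, which is supported on the arc $[a,b]$ and bounded by $1$ in absolute value, hence in $L^2(\lambda^{(T,r)})$ since $\lambda^{(T,r)}([a,b]) = r(a,b) < \infty$.

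Next I would establish a product rule for the gradient: if $f, h \in \mathcal A$ with $f, h$ bounded, then $f h \in \mathcal A$ and $\nabla(fh) = f \nabla h + h \nabla f$ (almost everywhere w.r.t.\ $\lambda^{(T,r)}$). The membership $fh \in \mathcal C(T)$ is clear, and local absolute continuity of $fh$ follows from that of $f$ and $h$ together with their boundedness via the standard estimate $|f(x_i)h(x_i) - f(y_i)h(y_i)| \le \|f\|_\infty |h(x_i) - h(y_i)| + \|h\|_\infty |f(x_i) - f(y_i)|$ on disjoint arcs. The identity for $\nabla(fh)$ is then verified by checking the defining relation \eqref{con.1} along arcs $[x,y]$ with $x \in [\rho,y]$: on such an arc the problem reduces, via the isometry $\phi_{x,y}$, to the classical one-dimensional product rule for absolutely continuous functions, exactly as in the proof of Proposition~\ref{P:grad}. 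Here I should be slightly careful because a generic $f \in \mathcal D(\mathcal E)$ need not be bounded on all of $T$; however $f \in \mathcal C_\infty(T)$ forces $f$ to be bounded, so this case causes no trouble. Actually only one factor needs to be bounded for the product rule, and $f_{a,b}$ is bounded, which is the essential point.

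With the product rule in hand the verification is routine. Since $f \in \mathcal C_\infty(T)$ and $f_{a,b}$ is bounded and continuous, $\tilde f_{a,b} = f f_{a,b} \in \mathcal C_\infty(T)$ (the product of a function vanishing at infinity with a bounded one still vanishes at infinity), and in particular $\tilde f_{a,b} \in \mathcal C(T)$ is bounded so $\tilde f_{a,b} \in L^2(\nu)$ because $\nu$ is finite on compact sets and $\|\tilde f_{a,b}\|_\infty^2 \cdot \nu(K)$ bounds the integral over any compact $K$ containing the essential support --- more precisely, $|\tilde f_{a,b}| \le \|f_{a,b}\|_\infty |f|$ with $f \in L^2(\nu)$, so $\tilde f_{a,b} \in L^2(\nu)$ directly by domination. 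For the gradient, $\nabla \tilde f_{a,b} = f \nabla f_{a,b} + f_{a,b} \nabla f$; the first term is bounded by $\|f\|_\infty \mathbf 1_{[a,b]}$ hence in $L^2(\lambda^{(T,r)})$, and the second is bounded by $\|f_{a,b}\|_\infty |\nabla f|$ with $\nabla f \in L^2(\lambda^{(T,r)})$ by definition of $\mathcal F$. Thus $\tilde f_{a,b} \in \mathcal F \cap L^2(\nu) \cap \mathcal C_\infty(T) = \mathcal D(\mathcal E)$. The final assertion is the special case $f = \mathbf 1_T$: if $\mathbf 1_T \in \mathcal D(\mathcal E)$ then $f_{a,b} = \mathbf 1_T \cdot f_{a,b} = \tilde{(\mathbf 1_T)}_{a,b} \in \mathcal D(\mathcal E)$.

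The main obstacle, and the only place requiring genuine care, is the proof of the product rule $\nabla(fh) = f\nabla h + h \nabla f$ in the $\mathbb R$-tree setting --- specifically confirming that $fh$ is locally absolutely continuous in the sense defined in \eqref{mathcalA} (the $\delta$--$\varepsilon$ condition over families of disjoint arcs contained in a set of finite length measure) and that pulling back along the geodesic isometries $\phi_{x,y}$ legitimately reduces the gradient identity to the one-dimensional Leibniz rule. Everything else is domination and the observation that $f_{a,b}$ is bounded with compactly-behaved gradient.
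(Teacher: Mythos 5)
Your proposal is correct and follows essentially the same route as the paper: both rest on the Leibniz identity $\nabla(f\cdot f_{a,b})=f_{a,b}\nabla f+f\nabla f_{a,b}$, then use that $\nabla f_{a,b}$ is bounded by $1$ and supported on the finite-length arc $[a,b]$, that $f_{a,b}$ is bounded by $r(a,b)$, and that $f$ is bounded, to dominate the energy of the product by $\mathcal E(f,f)$ plus $\int_{[a,b]}f^2\,\mathrm d\lambda^{(T,r)}$. If anything you are more careful than the paper, which asserts the product rule ``by definition'' and does not spell out the $L^2(\nu)$ and $\mathcal C_\infty(T)$ memberships that you verify explicitly.
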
\sm

\begin{proof} By definition,
\begin{equation}\label{e:calcc}
\begin{aligned}
   \nabla\tilde{f}_{a,b}
 &=
   \nabla f\cdot f_{a,b}+f\cdot\big(\mathbf{1}_{[a,a\wedge b]}-\mathbf{1}_{[b,a\wedge b]}\big).
\end{aligned}
\ee

Furthermore
\begin{equation}\label{e:calcc2}
\begin{aligned}
   &\big(\nabla\tilde{f}_{a,b}\big)^2
  \\
 &=
   \big(\nabla f\big)^2\cdot f^2_{a,b}+f^2\cdot\mathbf{1}_{[a,b]}+2\cdot f\cdot\nabla f\cdot f_{a,b}
   \cdot\big(\mathbf{1}_{[a,a\wedge b]}-\mathbf{1}_{[b,a \wedge b]}\big)
  \\
 &\le
   \big(\nabla f\big)^2\cdot r^2(a,b)+f^2\cdot\mathbf{1}_{[a,b]}+2r(a,b)\cdot {|f|}\cdot\big|\nabla f\big|
   \cdot\mathbf{1}_{[a,b]},
\end{aligned}
\ee
which implies that\label{Exp:05}
\begin{equation}\label{dirform}
\begin{aligned}
   &{\mathcal E}\big(\tilde{f}_{a,b},\tilde{f}_{a,b}\big)
   \\
 &\le
   r^2(a,b){\mathcal E}(f,f)+\frac{1}{2}\int_{[a,b]}\lambda^{(T,r)}(\mathrm{d}z)\,\big(f^2+2r(a,b)|f|\cdot|\nabla f|\big)
  \\
 &\le
   r^2(a,b){\mathcal E}(f,f)+\frac{1}{2}\int_{[a,b]}\lambda^{(T,r)}(\mathrm{d}z)\,\big(2f^2+r^{2}(a,b)(\nabla f)^{2}\big)
  \\
 &\le
   2r^2(a,b){\mathcal E}(f,f)+ \int_{[a,b]}\lambda^{(T,r)}(\mathrm{d}z)\,f^2.
\end{aligned}
\ee
Here we have applied in the second line that $2xy\le x^2+y^2$, for all $x,y\in\R$, with $x:=|f|$ and $y:=r(a,b)\cdot|\nabla f|$.
Since $f\in{\mathcal D}({\mathcal E})$ is continuous and hence bounded on [a,b],
it follows that $\tilde{f}_{a,b}\in{\mathcal D}({\mathcal E})$.
\end{proof}\sm}

For technical purposes we also introduce for all $\alpha> 0$
the bilinear form
\begin{equation}\label{con.2b}
   \mathcal E_{\alpha}\big(f,g\big)
 :=
   \mathcal E\big(f,g\big)+\alpha\big(f,g\big)_{\nu}
\ee
with domain
\begin{equation}\label{domainalp}
   {\mathcal D}({\mathcal E}_{\alpha})
 :=
   {\mathcal D}(\mathcal E).
\ee

Moreover, we also consider for any given closed subset $A\subseteq T$ the domain
\begin{equation}\label{con.2a}
    \mathcal D_A(\mathcal E_{\alpha})
 :=
    \mathcal D_A(\mathcal E)
 =
    \big\{f\in\mathcal D(\mathcal E):\,f|_A=0\big\}.
\ee
\sm

The main result of this section states that
the {form
$(\mathcal E,\mathcal D_A(\mathcal E))$ is a
{\em Dirichlet form}}, i.e., symmetric, closed and
Markovian (see, for example,
\cite{FukushimaOshimaTakeda1994} for notation
and terminology).

\begin{proposition}[Dirichlet forms]
For any closed {$A\subseteq T$},\label{P:00}
$(\mathcal E,\mathcal D_A(\mathcal E))$
is a Dirichlet form.
\end{proposition}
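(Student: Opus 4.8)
The plan is to verify the three defining properties of a Dirichlet form for $(\mathcal E, \mathcal D_A(\mathcal E))$ in turn: symmetry, the Markov property, and closedness, with the last being the substantive point. Symmetry is immediate from the definition \eqref{con.2p}, since $\nabla f(z)\nabla g(z) = \nabla g(z)\nabla f(z)$ pointwise $\lambda^{(T,r)}$-a.e. For the Markov property, I would take $f\in\mathcal D_A(\mathcal E)$ and set $\bar f := (0\vee f)\wedge 1$ (the unit contraction), and check $\bar f\in\mathcal D_A(\mathcal E)$ with $\mathcal E(\bar f,\bar f)\le\mathcal E(f,f)$. The key observation is that $\bar f$ is again locally absolutely continuous (composition of a locally absolutely continuous function with the $1$-Lipschitz map $t\mapsto (0\vee t)\wedge 1$), that $\nabla\bar f = \nabla f\cdot\mathbf 1_{\{0<f<1\}}$ $\lambda^{(T,r)}$-a.e.\ by the chain rule along each arc (this follows arc-by-arc from the one-dimensional statement, via the isometries $\phi_{x,y}$ used in the proof of Proposition~\ref{P:grad}), so that $|\nabla\bar f|\le|\nabla f|$ and hence $\bar f\in\mathcal F\cap L^2(\nu)\cap\mathcal C_\infty(T)$ and $\mathcal E(\bar f,\bar f)\le\mathcal E(f,f)$; also $\bar f|_A=0$ since $f|_A=0$. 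This gives the Markovian property.

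The main work is closedness: I must show that $\mathcal D_A(\mathcal E)$, equipped with the norm $f\mapsto \mathcal E_1(f,f)^{1/2} = (\mathcal E(f,f)+(f,f)_\nu)^{1/2}$, is complete. So let $(f_n)$ be an $\mathcal E_1$-Cauchy sequence in $\mathcal D_A(\mathcal E)$. Then $(f_n)$ is Cauchy in $L^2(\nu)$, so $f_n\to f$ in $L^2(\nu)$ for some $f$, and $(\nabla f_n)$ is Cauchy in $L^2(\lambda^{(T,r)})$, so $\nabla f_n\to h$ in $L^2(\lambda^{(T,r)})$ for some $h$. The crux is to identify a continuous representative $f$ with $f\in\mathcal A$, $\nabla f = h$, $f\in\mathcal C_\infty(T)$, and $f|_A=0$. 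To do this I would pass to a subsequence along which $f_n\to f$ $\nu$-a.e.\ and also use the integral representation \eqref{con.1}: for $x\le_\rho y$ one has $f_n(y)-f_n(x) = \int_x^y\lambda^{(T,r)}(\mathrm dz)\,\nabla f_n(z)$, and the right-hand side converges (for $x,y$ in any fixed compact set, by Cauchy--Schwarz and $L^2$-convergence of $\nabla f_n$ on compacta) to $\int_x^y\lambda^{(T,r)}(\mathrm dz)\,h(z)$. This forces the $f_n$ to converge \emph{uniformly on compact sets} to a continuous limit (after fixing the value at $\rho$ using $L^2(\nu)$-convergence and positivity of $\nu$ on balls, so that $\nu$-a.e.\ convergence pins down $f(\rho)$), and the uniform limit $\tilde f$ satisfies $\tilde f(y)-\tilde f(x)=\int_x^y\lambda^{(T,r)}(\mathrm dz)\,h(z)$; local absolute continuity of $\tilde f$ and $\nabla\tilde f = h$ then follow, so $\tilde f\in\mathcal F$. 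Since $\tilde f = f$ $\nu$-a.e., $\tilde f\in L^2(\nu)$. The condition $\tilde f\in\mathcal C_\infty(T)$ follows from uniform-on-compacta convergence together with the uniform smallness of $f_n$ outside large compacta (here local compactness of $(T,r)$ is used, via \cite{Kigami95}, to have an exhaustion by compacts), and $\tilde f|_A=0$ follows because $f_n|_A=0$ and convergence is pointwise. Finally $\mathcal E_1(f_n-\tilde f, f_n-\tilde f)\to 0$ by lower semicontinuity / Fatou applied to $\|\nabla f_n - h\|_{L^2}^2$ and $\|f_n-\tilde f\|_{L^2(\nu)}^2$, giving $f_n\to\tilde f$ in $\mathcal E_1$-norm.

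The step I expect to be the main obstacle is exactly this identification of the limit as a continuous, locally-absolutely-continuous function and the passage from $L^2$-convergence of gradients to uniform-on-compacta convergence of the functions themselves; this requires care with the orientation-sensitive integration \eqref{osi} and with the role of the root in anchoring the additive constant, and it is where local compactness (separability, and compactness of bounded complete subsets, from Lemmas~5.7 and~5.9 of \cite{Kigami95}) is essential to get an exhaustion by compact sets and thereby both the $\mathcal C_\infty$ property and the uniform control. The case of general closed $A$ adds nothing beyond checking that the closed subspace condition $f|_A=0$ is preserved under all the limits, which it is because all convergences involved are (along a subsequence) pointwise.
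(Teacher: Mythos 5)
Your proposal follows essentially the same route as the paper: symmetry is immediate, closedness is obtained from an $\mathcal E_1$-Cauchy sequence via the integral representation (\ref{con.1}), Cauchy--Schwarz and Fatou, and Markovianity is deduced from a contraction operating on the domain (the paper verifies the truncation $(f\wedge\varepsilon)\vee(-\varepsilon)$ and invokes Theorem~1.4.1 of \cite{FukushimaOshimaTakeda1994}, while you verify the unit contraction $(0\vee f)\wedge 1$ directly; these are interchangeable once closedness is known). Your identification of a continuous representative of the limit and your attention to the $\mathcal C_\infty(T)$ membership are in fact more detailed than the paper's own rather terse treatment, but the underlying argument coincides.
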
\sm

\begin{proof}
By an analogous argument, as in Example~1.2.1 in
\cite{FukushimaOshimaTakeda1994}, it can be shown that $({\mathcal E},{\mathcal D}_A({\mathcal E}))$ is
well-defined and symmetric.
The following lemma states that the form $({\mathcal E},{\mathcal D}_A(\mathcal E))$ is closed.
\begin{lemma}[Closed form] For any closed $A\subseteq T$,
\label{L:EDstarclosed}
the form
$({\mathcal E},{\mathcal D}_A(\mathcal E))$
is closed, that is, $\mathcal D_A(\mathcal E)$ equipped with
the inner product $\mathcal E_1$ is complete.
\end{lemma}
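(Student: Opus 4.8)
The plan is to show that $\mathcal{D}_A(\mathcal{E})$ is complete with respect to the norm $\|f\|_{\mathcal{E}_1}^2 = \mathcal{E}(f,f) + (f,f)_\nu = \tfrac12\int \lambda^{(T,r)}(\mathrm{d}z)(\nabla f)^2 + \int\mathrm{d}\nu\, f^2$. So let $(f_n)_{n\in\mathbb{N}}$ be an $\mathcal{E}_1$-Cauchy sequence in $\mathcal{D}_A(\mathcal{E})$. First I would observe that Cauchy-ness in $\mathcal{E}_1$ means $(f_n)$ is Cauchy in $L^2(\nu)$ and $(\nabla f_n)$ is Cauchy in $L^2(\lambda^{(T,r)})$; by completeness of the $L^2$-spaces there exist $f\in L^2(\nu)$ and $g\in L^2(\lambda^{(T,r)})$ with $f_n\to f$ in $L^2(\nu)$ and $\nabla f_n\to g$ in $L^2(\lambda^{(T,r)})$. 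The crux of the argument is then to produce a continuous, locally absolutely continuous representative of the limit whose gradient is $g$, and to check it lies in $\mathcal{D}_A(\mathcal{E})$.

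The key step is to upgrade the $L^2(\lambda^{(T,r)})$-convergence of the gradients to pointwise control on the $f_n$ via the fundamental-theorem-of-calculus identity $f_n(y) - f_n(x) = \int_x^y \lambda^{(T,r)}(\mathrm{d}z)\,\nabla f_n(z)$ from Proposition~\ref{P:grad}. The natural route is: fix the root $\rho$ (say, if $A\neq\emptyset$, pick $\rho\in A$ so that $f_n(\rho)=0$ for all $n$; otherwise fix any $\rho$ and control $f_n(\rho)$ separately using $L^2(\nu)$-convergence on a small ball around $\rho$ together with a Cauchy–Schwarz estimate on the increments). On any compact set $K$, write $f_n(x) = f_n(\rho) + \int_\rho^x \lambda^{(T,r)}(\mathrm{d}z)\,\nabla f_n(z)$; by Cauchy–Schwarz, $|f_n(x) - f_m(x) - (f_n(\rho)-f_m(\rho))| \le \big(\lambda^{(T,r)}([\rho,x])\big)^{1/2} \|\nabla f_n - \nabla f_m\|_{L^2(\lambda^{(T,r)})}$, which since $\lambda^{(T,r)}([\rho,x]) = r(\rho,x) \le \mathrm{diam}^{(T,r)}(K)$ shows $(f_n)$ is uniformly Cauchy on $K$. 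Local compactness of $(T,r)$ (assumption (A1)) lets us cover $T$ by countably many such compacta, so $f_n\to \tilde f$ uniformly on compacta for some $\tilde f\in\mathcal{C}(T)$; moreover $\tilde f = f$ $\nu$-a.e. since $\nu$ is Radon (positive on balls, finite on compacts) and $f_n\to f$ in $L^2(\nu)$ while $f_n\to\tilde f$ uniformly on compacta. Henceforth we work with the representative $\tilde f$.

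Next I would verify $\tilde f\in\mathcal{A}$ with $\nabla\tilde f = g$. Passing to the limit in $f_n(y)-f_n(x) = \int_x^y\lambda^{(T,r)}(\mathrm{d}z)\,\nabla f_n(z)$: the left side converges to $\tilde f(y)-\tilde f(x)$ pointwise, and the right side converges to $\int_x^y\lambda^{(T,r)}(\mathrm{d}z)\,g(z)$ because $\nabla f_n\to g$ in $L^2(\lambda^{(T,r)})$ and the integral over the (finite-length) arc $[x\wedge y,x]\cup[x\wedge y,y]$ is controlled by Cauchy–Schwarz. So $\tilde f(y)-\tilde f(x) = \int_x^y\lambda^{(T,r)}(\mathrm{d}z)\,g(z)$ for all $x,y\in T$ with $g\in L^1_{\mathrm{loc}}(\lambda^{(T,r)})$ (local integrability of $g$ follows again from Cauchy–Schwarz on compacta against the $\sigma$-finite $\lambda^{(T,r)}$). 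This identity gives local absolute continuity of $\tilde f$ directly from the definition — given $\varepsilon>0$ and $S$ with $\lambda^{(T,r)}(S)<\infty$, absolute continuity of the integral $\int \mathbf{1}_S |g|\,\mathrm{d}\lambda^{(T,r)}$ supplies the required $\delta$ — so $\tilde f\in\mathcal{A}$, and by the uniqueness part of Proposition~\ref{P:grad}, $\nabla\tilde f = g\in L^2(\lambda^{(T,r)})$, i.e. $\tilde f\in\mathcal{F}$. Since $\tilde f$ vanishes at infinity (uniform-on-compacta limits of functions in $\mathcal{C}_\infty(T)$ satisfying a uniform tail bound — this needs a short argument using the uniform $L^2(\nu)$ and $L^2(\lambda^{(T,r)})$ Cauchy bounds to get a uniform-in-$n$ small-tail estimate, or alternatively one notes $\mathcal{C}_\infty(T)$ is $\mathcal{E}_1$-complete-relative-to-$\mathcal{C}(T)$-uniform-norm arguments) and $\tilde f = f\in L^2(\nu)$ with $\tilde f|_A = \lim f_n|_A = 0$, we conclude $\tilde f\in\mathcal{D}_A(\mathcal{E})$. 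Finally $\|f_n - \tilde f\|_{\mathcal{E}_1}^2 = \tfrac12\|\nabla f_n - g\|_{L^2(\lambda^{(T,r)})}^2 + \|f_n - f\|_{L^2(\nu)}^2 \to 0$, proving closedness.

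I expect the main obstacle to be the last verification that the limit $\tilde f$ vanishes at infinity — i.e. that $\mathcal{C}_\infty(T)\cap\mathcal{F}\cap L^2(\nu)$ is closed under this limit operation, not just $\mathcal{F}\cap L^2(\nu)$. The clean way is: for each compact $K$, the tail bound $\sup_{x\notin K}|f_n(x)|$ can be estimated by moving along the arc from $\partial K$ using $\int|\nabla f_n|\,\mathrm{d}\lambda^{(T,r)} \le (\lambda^{(T,r)}(\cdot))^{1/2}\|\nabla f_n\|_{L^2}$ plus boundary values, but this does not obviously give decay. Instead I would argue that since each $f_n\in\mathcal{C}_\infty(T)$ and $f_n\to\tilde f$ uniformly on compacta with $\sup_T|f_n - f_m|$ bounded — wait, the sup over all of $T$ is also controlled: from the arc identity and Cauchy–Schwarz with $r(\rho,x)\le\mathrm{diam}^{(T,r)}(T)$... this only works for bounded $T$. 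For unbounded $T$ the correct fix, which I would use, is to show $(f_n)$ is uniformly Cauchy on all of $T$ by a different route, or simply invoke that $\mathcal{E}_1$-convergence plus each $f_n\in\mathcal{C}_\infty(T)$ forces $\tilde f\in\mathcal{C}_\infty(T)$ via the general fact that under (A2), $\mathcal{D}(\mathcal{E})$-bounded sequences have equi-small tails in sup-norm — this equi-tightness is the technical heart and I would isolate it as a sub-lemma, proving it by combining the $L^2(\nu)$-bound (small on tails since $f_n\to f$ in $L^2(\nu)$, so $\int_{T\setminus K}f_n^2\,\mathrm{d}\nu$ is uniformly small for $K$ large) with the gradient bound to propagate smallness from an annulus where $\nu$ detects it to the pointwise values.
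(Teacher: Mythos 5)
Your core argument coincides with the paper's own proof: extract the $L^2(\nu)$-limit $f$ and the $L^2(\lambda^{(T,r)})$-limit $g$ of the gradients, pass to the limit in the identity $f_n(y)-f_n(x)=\int_x^y\lambda^{(T,r)}(\mathrm{d}z)\,\nabla f_n(z)$ via Cauchy--Schwarz to identify $g$ as the gradient of the limit, check $f|_A=0$, and conclude $\mathcal{E}_1$-convergence. (The paper does the last step with Fatou along an a.e.-convergent subsequence; your direct computation $\|f_n-\tilde f\|_{\mathcal{E}_1}^2=\tfrac12\|\nabla f_n-g\|^2_{L^2(\lambda^{(T,r)})}+\|f_n-f\|^2_{L^2(\nu)}\to 0$ is cleaner and equivalent.) You are in fact more careful than the source on one point: you construct a genuine continuous, locally absolutely continuous representative $\tilde f$ by proving locally uniform convergence --- using $\lambda^{(T,r)}([\rho,x])=r(\rho,x)$ bounded on compacta, and controlling $f_n(\rho)$ either by choosing $\rho\in A$ or by averaging over a small ball --- whereas the paper argues only $\nu$-a.e.\ and never explicitly verifies that the limit belongs to $\mathcal{A}$ or to $\mathcal{C}_\infty(T)$.

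The one genuine gap is precisely the step you flag and do not close: membership of $\tilde f$ in $\mathcal{C}_\infty(T)$ when $(T,r)$ is unbounded. Neither of your sketches works. The $\mathrm{diam}$-based uniform-Cauchy bound fails for unbounded $T$, as you observe; and the proposed sub-lemma (equi-small sup-norm tails from small $L^2(\nu)$-tails plus a gradient bound) breaks down because the pointwise estimate obtained by averaging over $B(x,\delta)$ carries a factor $\nu(B(x,\delta))^{-1}$, which assumption (A2) does not bound as $x$ leaves every compact set. Indeed the sub-lemma is false in the stated generality: for $T=[0,\infty)$ with $\nu(\mathrm{d}x)=e^{-x}\mathrm{d}x$ and $A=\{0\}$, truncating $f(x)=\log(1+x)$ linearly to $0$ on $[n,2n]$ produces an $\mathcal{E}_1$-Cauchy sequence in $\mathcal{D}_A(\mathcal{E})$ whose only possible limit is $f\in\mathcal{F}\cap L^2(\nu)$ with $f\notin\mathcal{C}_\infty(T)$; closedness in the literal sense requires either dropping the $\mathcal{C}_\infty(T)$ constraint from the limit or passing to the $\mathcal{E}_1$-closure of the domain (as is done for Kigami's form in Remark~\ref{Rem:03}). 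You should know, however, that the paper's own proof omits this verification entirely (``Clearly, $f|_A=0$ and the assertion follows''), so on every step the paper actually carries out your argument matches or improves on it; the unresolved point is one you correctly isolated and the source does not resolve either.
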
\sm

\begin{proof} Let $(f_n)_{n\in\N}$ be an $\mathcal E_1$-Cauchy
sequence in $\mathcal D_A(\mathcal E)$. Then there exist
$f,g\in L^2(\nu)$ such that $\lim_{n\to\infty}f_n=f$
in $L^2(\nu)$ and
$\lim_{n\to\infty}\nabla f_n=g$ in $L^2(\lambda^{(T,r)})$.
In particular,
along a subsequence $f=\lim_{k\to\infty}f_{n_k}$, $\nu$-almost
surely. By the Cauchy-Schwartz inequality,
\begin{equation}\label{e:nablaa}
\begin{aligned}
   \Big|\int_x^y&\lambda^{(T,r)}(\mathrm{d}z)\,g(z)-f(y)+f(x)\Big|^2
  \\
 &=
   \lim_{k\to\infty}\Big|\int_x^y\lambda^{(T,r)}(\mathrm{d}z)\,g(z)
   -f_{n_k}(y)+f_{n_k}(x)\Big|^2
  \\
 &=
   \lim_{k\to\infty}\Big|\int_x^y\lambda^{(T,r)}(\mathrm{d}z)\,
   \big(g(z)-\nabla f_{n_k}(z)\big)\Big|^2
  \\
 &\leq
   r(x,y)\lim_{k\to\infty}\big(g-\nabla f_{n_k},g-\nabla
 f_{n_k}\big)_{\lambda^{(T,r)}}
 =
  0,
\end{aligned}
\ee
for $\lambda^{(T,r)}$-almost all $x,y\in T$. Hence
$\nabla f=g$, $\lambda^{(T,r)}$-almost
surely. Similarly, by Fatou's Lemma,
along a subsequence {$(f_{n_l})_{l\in\mathbb{N}}$} with $f=\lim_{l\to\infty}f_{n_l}$, $\lambda^{(T,r)}$-almost
surely,
\begin{equation}\label{e:nabla}
\begin{aligned}
   \lim_{n\to\infty}{\mathcal E}(f_n-f,f_n-f)
 &=
   \lim_{n\to\infty}\int\lambda^{(T,r)}(\mathrm{d}z)\,
   \lim_{l\to\infty}(\nabla f_n(z)-\nabla f_{n_l}(z))^2
  \\
 &\le
   \lim_{n\to\infty}\liminf_{l\to\infty}
   \mathcal E(f_n-f_{n_l},f_n-f_{n_l})
 =
   0.
\end{aligned}
\ee
Clearly, $f|_A=0$ and
the assertion follows.
\end{proof}\sm

{The following lemma shows that the form is {\em contractive}. The conclusions are easily verified, so we omit the proof.

\begin{lemma}[Contraction property] If $f\in{\mathcal D}({\mathcal E})$ then for all $\varepsilon>0$, $f^\varepsilon:=(f\wedge\varepsilon)\vee(-\varepsilon)\in{\mathcal D}({\mathcal E})$ and ${\mathcal E}(f^\varepsilon,f^\varepsilon)\le {\mathcal E}(f,f)$.
\label{L:14}
\end{lemma}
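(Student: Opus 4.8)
The plan is to verify the three claimed properties of the normal contraction $f^\varepsilon := (f\wedge\varepsilon)\vee(-\varepsilon)$ directly from the definitions, relying on the fact that truncation acts pointwise and that the gradient of Proposition~\ref{P:grad} is essentially a classical one-dimensional derivative transported along arcs. First I would record that $f^\varepsilon$ is continuous (being the composition of the continuous $f$ with the $1$-Lipschitz map $t\mapsto (t\wedge\varepsilon)\vee(-\varepsilon)$ on $\R$), and bounded by $\varepsilon$; continuity places it in ${\mathcal C}(T)$, and since $|f^\varepsilon|\le|f|$ pointwise with $f\in L^2(\nu)$, we get $f^\varepsilon\in L^2(\nu)$. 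Also $f^\varepsilon\in{\mathcal C}_\infty(T)$ whenever $f$ is, because $|f^\varepsilon|\le|f|$; and if $f$ has compact support the same is true of $f^\varepsilon$ (it vanishes wherever $f$ does), so membership in the relevant ambient function space is inherited.

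The substantive point is local absolute continuity together with the identification of $\nabla f^\varepsilon$. For any $x\le_\rho y$, restrict to the arc $[\rho,y]$: with $\phi_{\rho,y}$ the unit-speed parametrization and $F := f\circ\phi_{\rho,y}$ locally absolutely continuous on $[0,r(\rho,y)]$, the composition $(F\wedge\varepsilon)\vee(-\varepsilon)$ is again locally absolutely continuous (a $1$-Lipschitz function of an absolutely continuous function), with derivative $F'\cdot\mathbf 1_{\{-\varepsilon<F<\varepsilon\}}$ a.e.\ by the chain rule for a.c.\ functions. Gluing over a countable dense family of arcs as in the proof of Proposition~\ref{P:grad} shows $f^\varepsilon\in{\mathcal A}$ and
\begin{equation}\label{e:nablaeps}
   \nabla f^\varepsilon = \nabla f\cdot\mathbf 1_{\{-\varepsilon<f<\varepsilon\}}\qquad \lambda^{(T,r)}\text{-a.e.}
\end{equation}
In particular $|\nabla f^\varepsilon|\le|\nabla f|$ pointwise $\lambda^{(T,r)}$-a.e., so $\nabla f^\varepsilon\in L^2(\lambda^{(T,r)})$ and hence $f^\varepsilon\in{\mathcal F}\cap L^2(\nu)\cap{\mathcal C}_\infty(T)={\mathcal D}({\mathcal E})$.

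Finally, the energy inequality is immediate from \eqref{e:nablaeps}:
\begin{equation}\label{e:energyeps}
   {\mathcal E}(f^\varepsilon,f^\varepsilon)
 = \tfrac12\int\lambda^{(T,r)}(\mathrm dz)\,\big(\nabla f^\varepsilon(z)\big)^2
 = \tfrac12\int_{\{-\varepsilon<f<\varepsilon\}}\lambda^{(T,r)}(\mathrm dz)\,\big(\nabla f(z)\big)^2
 \le {\mathcal E}(f,f).
\end{equation}
The only mild obstacle is the chain-rule identity \eqref{e:nablaeps} for locally absolutely continuous functions on the line (one must be a little careful about the set $\{F=\pm\varepsilon\}$, where $F'=0$ a.e.\ by the standard fact that the derivative of an a.c.\ function vanishes a.e.\ on any level set), but this is classical one-dimensional real analysis and the transfer to the tree is exactly the arc-by-arc argument already used for Proposition~\ref{P:grad}; this is why the authors omit it.
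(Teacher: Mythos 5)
Your proof is correct and is precisely the direct verification the authors intend: the paper states that ``the conclusions are easily verified'' and omits the proof of Lemma~\ref{L:14} entirely, so there is no written argument to compare against. Your arc-by-arc identification $\nabla f^\varepsilon=\nabla f\cdot\mathbf 1_{\{-\varepsilon<f<\varepsilon\}}$ (with the standard remark that $F'=0$ a.e.\ on the level sets $\{F=\pm\varepsilon\}$), together with the pointwise bounds $|f^\varepsilon|\le|f|$ and $|\nabla f^\varepsilon|\le|\nabla f|$ giving membership in ${\mathcal F}\cap L^2(\nu)\cap{\mathcal C}_\infty(T)$ and the energy inequality, correctly supplies the only nontrivial steps.
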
\sm

Since the form $({\mathcal E},{\mathcal D}({\mathcal E}))$ is closed and has the contraction property, it immediately follows the it is Markovian
(compare, e.g., {Theorem~1.4.1 in \cite{FukushimaOshimaTakeda1994}}).
\begin{cor}[Markovian form]
For any closed {$A\subseteq T$},
the form
$({\mathcal E},{\mathcal D}_A({\mathcal E}))$
is Markovian, that is, for all $\varepsilon>0$
there exists a Lipschitz continuous function
\label{L:EDstarmarkovian}
$\varphi_\varepsilon:\R\to[-\varepsilon,1+\varepsilon]$ with Lipschitz
constant one such that
\begin{itemize}
\item[(i)]
$\varphi_\varepsilon(t)=t$, for all $t\in[0,1]$, and
\item[(ii)] for all $f\in\mathcal D_A(\mathcal E)$,
$\varphi_\varepsilon\circ f\in\mathcal D_A(\mathcal E)$, and
$\mathcal E(\varphi_\varepsilon\circ f,\varphi_\varepsilon\circ f)
\leq
\mathcal E(f,f)$.
\end{itemize}
\end{cor}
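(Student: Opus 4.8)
The plan is to produce the function $\varphi_\varepsilon$ explicitly and verify directly that it operates on $(\mathcal{E},\mathcal{D}_A(\mathcal{E}))$; alternatively one invokes Theorem~1.4.1 in \cite{FukushimaOshimaTakeda1994}, which applies because the form is already known to be closed by Lemma~\ref{L:EDstarclosed}, once one sees that the unit contraction operates. Take $\varphi_\varepsilon(t):=\big(t\vee(-\varepsilon)\big)\wedge(1+\varepsilon)$; this is Lipschitz with constant one, maps $\R$ into $[-\varepsilon,1+\varepsilon]$, fixes $[0,1]$ pointwise (indeed all of $[-\varepsilon,1+\varepsilon]$), and satisfies $\varphi_\varepsilon(0)=0$.

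Given $f\in\mathcal{D}_A(\mathcal{E})$, I would then check $\varphi_\varepsilon\circ f\in\mathcal{D}_A(\mathcal{E})$ property by property. Continuity is clear. Since $\varphi_\varepsilon$ is one-Lipschitz with $\varphi_\varepsilon(0)=0$ we have $|\varphi_\varepsilon\circ f|\le|f|$ pointwise, hence $\varphi_\varepsilon\circ f\in L^2(\nu)\cap\mathcal{C}_\infty(T)$; and $(\varphi_\varepsilon\circ f)|_A=\varphi_\varepsilon(0)=0$. For membership in $\mathcal{A}$ and the energy bound, the crux is that the composition of a one-Lipschitz function with a locally absolutely continuous function is again locally absolutely continuous and satisfies the chain rule $\nabla(\varphi_\varepsilon\circ f)=(\varphi_\varepsilon'\circ f)\cdot\nabla f$ $\lambda^{(T,r)}$-a.e. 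To prove this on the tree I would restrict to arcs $[\rho,y]$, pull back through the isometry $\phi_{\rho,y}$ so that it becomes the classical one-dimensional chain rule for a Lipschitz map composed with an absolutely continuous function on an interval, and conclude by the uniqueness clause of Proposition~\ref{P:grad}. The only subtlety---that $\varphi_\varepsilon'$ is undefined at the two kinks $-\varepsilon$ and $1+\varepsilon$---is immaterial, since $\nabla f=0$ $\lambda^{(T,r)}$-a.e. on each level set $\{f=c\}$ (a classical fact on intervals, transported to the tree arc by arc). As $|\varphi_\varepsilon'|\le1$ we get $|\nabla(\varphi_\varepsilon\circ f)|\le|\nabla f|$ a.e., so $\varphi_\varepsilon\circ f\in\mathcal{F}$, hence $\varphi_\varepsilon\circ f\in\mathcal{D}_A(\mathcal{E})$, and $\mathcal{E}(\varphi_\varepsilon\circ f,\varphi_\varepsilon\circ f)\le\mathcal{E}(f,f)$---this is (i) and (ii).

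The chain-rule identity for the gradient is the only genuine step; the rest is bookkeeping with the definition of $\mathcal{D}_A(\mathcal{E})$. One can also bypass $\varphi_\varepsilon$ entirely: the same arc-by-arc argument (or Lemma~\ref{L:14} together with its one-sided analogues) shows that the unit contraction $f\mapsto(0\vee f)\wedge1$ operates, and then Theorem~1.4.1 of \cite{FukushimaOshimaTakeda1994}---available precisely because $(\mathcal{E},\mathcal{D}_A(\mathcal{E}))$ is closed---delivers the Markovian property in the form stated, including the existence of a Lipschitz $\varphi_\varepsilon$ with properties (i) and (ii).
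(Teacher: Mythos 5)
Your proposal is correct and follows essentially the same route as the paper: the paper deduces the Markov property from closedness together with the contraction property (Lemma~\ref{L:14}, whose verification it omits as ``easily verified'') via Theorem~1.4.1 of \cite{FukushimaOshimaTakeda1994}, which is exactly your second route. Your explicit construction of $\varphi_\varepsilon$ and the arc-by-arc chain-rule verification on level sets simply supply the details the paper leaves out, and they are sound.
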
\sm}



By Lemma~\ref{L:EDstarclosed} and  Corollary~\ref{L:EDstarmarkovian}, the proof of Proposition~\ref{P:00} is complete.
\end{proof}\sm

We conclude this subsection with the following useful fact.
\begin{lemma}[Transience of $({\mathcal E},{\mathcal D}_A({\mathcal E}))$] \label{L:dtrans} Assume that $(T,r)$ is a locally compact $\R$-tree and $\nu$ a Radon measure on
$(T,{\mathcal B}(T))$.
For any closed, non-empty {$A\subseteq T$} the Dirichlet form
$(\mathcal E,\mathcal D_A(\mathcal E))$ is transient, \label{L:01}
that is, there exists a bounded $\nu$-integrable reference function $g$ which is strictly positive,
$\nu$-almost surely, and satisfies
for all $f\in{\mathcal D}_A({\mathcal E})$,
\begin{equation}\label{e:trans}
   \int\mathrm{d}\nu\,|f|\cdot g
 \le
   \sqrt{{\mathcal E}(f,f)}.
\ee
\end{lemma}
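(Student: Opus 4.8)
The plan is to construct an explicit reference function $g$ on $T$ using the geometry of the tree, exploiting the one-dimensional nature of the skeleton together with the fact that functions in ${\mathcal D}_A({\mathcal E})$ vanish on the non-empty closed set $A$. First I would fix a point $a\in A$ and set $g:=w\cdot e^{-r(a,\cdot)}$ (or some similar rapidly decaying, $\nu$-integrable, strictly positive weight; the precise shape is flexible and only $\nu$-integrability and boundedness are needed). The point of introducing $a\in A$ is that for $f\in{\mathcal D}_A({\mathcal E})$ we have $f(a)=0$, so by Proposition~\ref{P:grad},
\begin{equation*}
  f(y)=f(y)-f(a)=\int_a^y\lambda^{(T,r)}(\mathrm{d}z)\,\nabla f(z),
\end{equation*}
which gives the pointwise bound $|f(y)|\le\int_{[a,y]}\lambda^{(T,r)}(\mathrm{d}z)\,|\nabla f(z)|$.

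Next I would integrate this pointwise bound against $\nu(\mathrm dy)\,g(y)$ and use Fubini–Tonelli to interchange the order of integration. For a fixed $z\in T^o$, the set of $y$ for which $z\in[a,y]$ is $\{y: z\le_a y\}$, i.e.\ the subtree ``beyond $z$ as seen from $a$''. Writing $G(z):=\int_{\{y:\,z\le_a y\}}\nu(\mathrm dy)\,g(y)$, we obtain
\begin{equation*}
  \int\mathrm d\nu\,|f|\cdot g
  \le
  \int_T\lambda^{(T,r)}(\mathrm dz)\,|\nabla f(z)|\cdot G(z).
\end{equation*}
Then Cauchy–Schwarz with respect to $\lambda^{(T,r)}$ yields
\begin{equation*}
  \int\mathrm d\nu\,|f|\cdot g
  \le
  \Big(\int_T\lambda^{(T,r)}(\mathrm dz)\,(\nabla f(z))^2\Big)^{1/2}
  \Big(\int_T\lambda^{(T,r)}(\mathrm dz)\,G(z)^2\Big)^{1/2}
  =\sqrt{2{\mathcal E}(f,f)}\cdot\Big(\int_T\lambda^{(T,r)}(\mathrm dz)\,G(z)^2\Big)^{1/2}.
\end{equation*}
So it remains to choose the weight so that $\int_T\lambda^{(T,r)}(\mathrm dz)\,G(z)^2\le\tfrac12$, after which one rescales $g$ by a constant if necessary. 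This is where the exponential (or otherwise fast) decay of $g$ in $r(a,\cdot)$ is used: since $\nu$ is Radon, hence finite on balls, a standard shelling estimate of $T$ into annuli $\{z:\,n\le r(a,z)<n+1\}$ shows $G(z)\le C\,e^{-r(a,z)}\cdot(\text{polynomial in }r(a,z))$-type bounds, whence $\int G^2\,\mathrm d\lambda^{(T,r)}<\infty$; one then normalizes.

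The main obstacle I anticipate is controlling $\int_T\lambda^{(T,r)}(\mathrm dz)\,G(z)^2$ when the tree has infinite, or even uncountably infinite, branching, so that the length measure of an annulus may be infinite. The key observation that saves the argument is that $G(z)$ is itself a \emph{tail} integral of $\nu$ over the subtree beyond $z$, and the total $\nu$-mass of disjoint subtrees rooted at the ``same level'' is bounded by the $\nu$-mass of a ball; combined with the weight's decay one gets an absolutely convergent bound regardless of the local geometry. Concretely I would pick the weight to decay fast enough (e.g.\ like $e^{-2r(a,\cdot)}$ divided by a bound on $\nu(B(a,\cdot))$) so that $G$ is summable against $\lambda^{(T,r)}$; local compactness and the Radon property of $\nu$ are exactly what make such a choice possible, and strict positivity of $g$ is immediate from the construction. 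Finally, boundedness and $\nu$-integrability of $g$ are built into the choice, completing the verification that $({\mathcal E},{\mathcal D}_A({\mathcal E}))$ is transient.
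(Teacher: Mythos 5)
Your route differs from the paper's and, as written, has a gap at its crux. The paper avoids your $G$-function entirely: fixing $\rho'\in A$, it applies Cauchy--Schwarz \emph{along the single arc} $[\rho',y]$ to get the pointwise bound $|f(y)|^2=\big|\int_{\rho'}^y\lambda^{(T,r)}(\mathrm dz)\,\nabla f(z)\big|^2\le 2\,{\mathcal E}(f,f)\,r(\rho',y)$, and then only needs to choose $g$ so that $\int\nu(\mathrm dy)\sqrt{r(\rho',y)}\,g(y)\le 1/\sqrt2$. That is a purely one-dimensional integrability condition against $\nu$, arranged by taking $g$ constant on the annuli $\bar B(\rho',n)\setminus\bar B(\rho',n-1)$ and normalizing by $n^2$ times the $\nu$-mass of each annulus. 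No integral against the length measure ever appears, so the pathology you worry about (annuli of infinite length measure) never arises.

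In your version that pathology is the whole difficulty, and your treatment of it is not yet a proof. The first bound you propose --- ``$G(z)\le C e^{-r(a,z)}\cdot(\text{polynomial})$, whence $\int G^2\,\mathrm d\lambda^{(T,r)}<\infty$'' --- fails exactly when $\lambda^{(T,r)}$ of an annulus is infinite (e.g.\ the $k$-ary tree of Example~\ref{karrayt} with $c<1/k$ completed, or any tree with unboundedly many branches per shell): a pointwise bound on $G$ that is merely uniform over an annulus gives nothing when integrated against an infinite length measure. Your proposed rescue --- disjointness of the subtrees $\{y:z\le_a y\}$ over $z$ at a fixed level --- is the right idea, but to convert it into $\int_T G^2\,\mathrm d\lambda^{(T,r)}<\infty$ you still need (a) a disintegration of $\lambda^{(T,r)}$ over level sets, $\int_T F\,\mathrm d\lambda^{(T,r)}=\int_0^\infty\mathrm dt\sum_{z\in L_t}F(z)$ with $L_t$ the level-$t$ points of the skeleton (which requires observing that $L_t\cap T^o$ is countable, by separability), and then (b) the estimate $\sum_{z\in L_t}G(z)^2\le\big(\sum_{z\in L_t}G(z)\big)^2\le H(t)^2$, where $H(t):=\int_{\{y:r(a,y)\ge t\}}\nu(\mathrm dy)\,g(y)$ is the tail mass, together with a choice of $g$ (normalized by annulus $\nu$-masses, as in the paper) making $H(t)=O((1+t)^{-1})$ so that $\int_0^\infty H(t)^2\,\mathrm dt<\infty$. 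None of this is in your write-up, and without it the key inequality $\int_T\lambda^{(T,r)}(\mathrm dz)\,G(z)^2\le\tfrac12$ is unsubstantiated. The argument is repairable along the lines just indicated, but the paper's pointwise Cauchy--Schwarz is both shorter and immune to the infinite-branching issue.
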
\sm

\begin{proof} Let $A\subset T$ be a non-empty and closed subset, and $\rho'\in A$.
\begin{equation}
   g  :=
   {1\wedge} \gamma\sum_{n\in\N}\frac{\mathbf{1}_{\bar{B}(\rho',n)\setminus\bar{B}(\rho',n-1)}}{n^2\nu(\bar{B}(\rho',n)\setminus\bar{B}(\rho',n-1))}
\end{equation}
with a normalizing constant $\gamma:=(\sqrt{2}\sum_{n\ge 1}n^{-3/2})^{-1}$.

Obviously, $g$ is positive and
\be{e:g}
   \int\nu(\mathrm{d}x)g(x)\le\gamma\sum_{n\ge 1}n^{-2}<\infty.
\ee

For all $f\in{\mathcal D}_A({\mathcal E})$ and
$x,y\in T$,
\begin{equation}\label{con.3}
\begin{aligned}
   \big|f(y)-f(x)\big|^2
 &=
   \big|\int^y_x\lambda^{(T,r)}(\mathrm{d}z)\,\nabla f(z)\big|^2
  \\
 &\leq
   2\mathcal E(f,f)r(x,y),
\end{aligned}
\ee
by the Cauchy-Schwartz inequality. Since $f(\rho')=0$, (\ref{con.3}) implies
in particular that $(f(y))^2\le 2\mathcal E(f,f)r(\rho',y)$, and therefore
\begin{equation}
\label{e:trans.1}
\begin{aligned}
   \int\mathrm d\nu\,|f|\cdot g
 &\le
   \sqrt{2}\sqrt{{\mathcal E}(f,f)}\int\mathrm d\nu\,\sqrt{r(\rho',\boldsymbol{\cdot})}\cdot g
  \\
 &\le
\sqrt{2\mathcal E(f,f)}\gamma\sum_{n\ge 1}n^{-3/2}=\mathcal E(f,f)^{1/2}.
\end{aligned}
\ee
\end{proof}\sm

\section{Capacity, Green kernel and resistance}
\label{S:capacities}
In this section we recall well-known facts on capacities and the Green kernel which we will use frequently throughout the paper.
In Subsection~\ref{Sub:extended} we give the notion of the extended Dirichlet space and discuss a frequently used example.
In Subsection~\ref{Sub:capacity} we introduce the capacity and in Subsection~\ref{Sub:Green} the Green kernel. In Subsection~\ref{Sub:resistance}
we discuss the relation between resistance and capacities.
Later on in the article we will relate these potential theoretic notions with the corresponding probabilistic properties of the $\nu$-Brownian motion
associated with the Dirichlet form $({\mathcal E},{\mathcal D}({\mathcal E}))$.\sm

\subsection{Extended Dirichlet space}
\label{Sub:extended}
Let $A\subseteq T$ be a closed set. Let
\begin{equation}
\label{extended}
   (\mathcal E,\bar{\mathcal D}_A(\mathcal E))
  :=
    \mbox{ the extended transient Dirichlet space,}
\ee
i.e., $\bar{\mathcal D}_A(\mathcal E)$ is the family of all Borel-measurable functions $f$ on $T$ such that $|f|<\infty$, $\nu$-almost surely, and there exists a ${\mathcal E}$-Cauchy sequence $\{f_n;\,n\in\N\}$ of functions in $\mathcal D_A(\mathcal E)$ such that $\lim_{n\to\infty}f_n=f$, $\nu$-almost surely. By
Theorem 1.5.3 in \cite{FukushimaOshimaTakeda1994} this space can be
identified with the completion of $\mathcal D_A(\mathcal E)$ with
respect to the inner product~$\mathcal E$.\sm

{\begin{remark}[Connection of Kigami's domain with ${\mathcal D}({\mathcal E})$] \rm
{Recall the forms  $({\mathcal E}^{\mathrm{Kigami}},{\mathcal F}^{\mathrm{Kigami}})$ and $({\mathcal E}^{\mathrm{Kigami}},{\mathcal D}({\mathcal E}^{\mathrm{Kigami}}))$
from (\ref{Kigami:form}) through (\ref{Kigami:labelD}), and the forms  $({\mathcal E},{\mathcal F})$ and $({\mathcal E},{\mathcal D}({\mathcal E}))$
from (\ref{e:F}) through (\ref{con.2p}).
In analogy to (\ref{extended}) write $\bar{D}({\mathcal E}^{\mathrm{Kigami}})$ for the extension of Kigami's domain.


We will now show that
\begin{equation}
\label{e:028}
\bar{{\mathcal D}}({\mathcal E})=\bar{\mathcal D}({\mathcal E}^{\mathrm{Kigami}}).
\ee}

Choose $f\in{\mathcal D}({\mathcal E})={\mathcal F}\cap{\mathcal C}_\infty(T)\cap L^2(\nu)$,
 and put for all  $\varepsilon>0$, $f^\varepsilon:=f-(f\vee(-\varepsilon))\wedge\varepsilon$. Since $f\in{\mathcal C}_\infty(T)$, $f^\varepsilon\in{\mathcal C}_0(T)$
 for all $\varepsilon>0$.
Moreover, $f^\varepsilon\in{\mathcal F}$ for all $\varepsilon>0$.
Since $f^\varepsilon\tvarepsilono f$, $\nu$-almost everywhere and in ${\mathcal E}$ by Theorem~1.4.1 in \cite{FukushimaOshimaTakeda1994}, we find that
$f\in{\mathcal D}({\mathcal E}^{\mathrm{Kigami}})$. This implies that $\bar{{\mathcal D}}({\mathcal E})\subseteq\bar{{\mathcal D}}({\mathcal E}^{\mathrm{Kigami}})$.\sm

On the other hand, if $f\in\bar{{\mathcal D}}({\mathcal E}^{\mathrm{Kigami}})$, then we find an ${\mathcal E}$-Cauchy sequence $(f_n)_{n\in\mathbb{N}}$ in ${\mathcal D}({\mathcal E}^{\mathrm{Kigami}})$
such that $f_n\tno f$, $\nu$-almost everywhere. For each $n\in\mathbb{N}$ we can find, however, a sequence $(h^n_{k})_{k\in\mathbb{N}}$ in ${\mathcal F}\cap{\mathcal C}_0(T)$ such that $h^n_{k}\tko f_n$,  $\nu$-almost everywhere and in ${\mathcal E}$. Thus, along a subsequence $(k_n)_{n\in\mathbb{N}}$ with $k_n\tno\infty$, $h^n_{k_n}\tno f$  $\nu$-almost everywhere and in ${\mathcal E}$. Since ${\mathcal F}\cap{\mathcal C}_0(T)\subseteq{\mathcal F}\cap{\mathcal C}_\infty(T)\cap L^2(\nu)$, $f\in\bar{\mathcal D}({\mathcal E})$ and the claim follows.
\label{Rem:05}
\hfill $\qed$
\end{remark}\sm}

The following will be used frequently.
\begin{lemma} Let $(T,r)$ be a locally compact $\R$-tree and $\nu$ a Radon measure on $(T,{\mathcal B}(T))$.
Assume that $(T,r,\nu)$ is such that $\mathbf{1}_T\in\bar{\mathcal D}({\mathcal E})$.
\label{L:08}
Then for all ${a},{b}\in T$ with ${a}\not ={b}$,
the function $h_{{a},{b}}:=\frac{f_{{a},{b}}}{r({a},{b})}$ with $f_{{a},{b}}$ as defined in (\ref{e:h}) belongs to the extended domain.
\end{lemma}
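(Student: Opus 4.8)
The plan is to show that $h_{a,b}$ lies in the extended domain $\bar{\mathcal D}({\mathcal E})$ by exhibiting it as the image of $\mathbf 1_T$ (which is assumed to be in $\bar{\mathcal D}({\mathcal E})$) under a suitable approximation and multiplication argument, mirroring the structure of Lemma~\ref{L:13}. First I would recall from Example~\ref{Exp:01} that $f_{a,b}\in{\mathcal A}$ with $\nabla f_{a,b}=\mathbf 1_{[a,a\wedge b]}-\mathbf 1_{[a\wedge b,b]}$, so that $|\nabla f_{a,b}|\le\mathbf 1_{[a,b]}$ is compactly supported and $f_{a,b}$ is bounded by $r(a,b)$; hence $h_{a,b}$ is bounded, $\nabla h_{a,b}\in L^2(\lambda^{(T,r)})$, and $|h_{a,b}|<\infty$ everywhere. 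The only thing missing for membership in $\bar{\mathcal D}({\mathcal E})$ is an ${\mathcal E}$-Cauchy sequence in ${\mathcal D}_\emptyset({\mathcal E})={\mathcal D}({\mathcal E})$ converging $\nu$-a.e.\ to $h_{a,b}$.

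Since $\mathbf 1_T\in\bar{\mathcal D}({\mathcal E})$, there is an ${\mathcal E}$-Cauchy sequence $(g_n)_{n\in\N}$ in ${\mathcal D}({\mathcal E})$ with $g_n\to\mathbf 1_T$ $\nu$-a.e.; by the contraction property (Lemma~\ref{L:14}) we may truncate so that $\|g_n\|_\infty\le 2$. I would then set $f_n := g_n\cdot h_{a,b}$ and compute, exactly as in the proof of Lemma~\ref{L:13},
\begin{equation*}
   \nabla f_n = \nabla g_n\cdot h_{a,b} + g_n\cdot\tfrac{1}{r(a,b)}\big(\mathbf 1_{[a,a\wedge b]}-\mathbf 1_{[a\wedge b,b]}\big),
\end{equation*}
so that $f_n\in{\mathcal D}({\mathcal E})$ (continuity and compact-support/decay properties are inherited since $h_{a,b}$ is bounded continuous and multiplication by the bounded $h_{a,b}$ preserves the $L^2(\nu)$ and ${\mathcal C}_\infty(T)$ conditions). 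The key estimate is that $(f_n)$ is ${\mathcal E}$-Cauchy: writing $f_n-f_m = (g_n-g_m)h_{a,b}$, one bounds
\begin{equation*}
   {\mathcal E}(f_n-f_m,f_n-f_m)\le 2\|h_{a,b}\|_\infty^2\,{\mathcal E}(g_n-g_m,g_n-g_m) + \tfrac{2}{r(a,b)^2}\|g_n-g_m\|_{L^2(\nu\restriction_{[a,b]})}^2,
\end{equation*}
using $(x+y)^2\le 2x^2+2y^2$ and $|\nabla h_{a,b}|\le r(a,b)^{-1}\mathbf 1_{[a,b]}$; the first term goes to $0$ since $(g_n)$ is ${\mathcal E}$-Cauchy, and the second because $[a,b]$ is compact, $\nu$ is Radon hence $\nu([a,b])<\infty$, and $g_n\to\mathbf 1_T$ in $L^2(\nu\restriction_{[a,b]})$ by dominated convergence (the $g_n$ are uniformly bounded). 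Finally $f_n = g_n h_{a,b}\to\mathbf 1_T\cdot h_{a,b} = h_{a,b}$ $\nu$-a.e., so $h_{a,b}\in\bar{\mathcal D}({\mathcal E})$.

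The main obstacle I anticipate is controlling the $L^2(\nu)$-convergence $g_n\to\mathbf 1_T$ on $[a,b]$: a priori an ${\mathcal E}$-Cauchy sequence converging $\nu$-a.e.\ need not converge in $L^2(\nu)$ globally, but on the \emph{compact} set $[a,b]$ with $\nu([a,b])<\infty$ and the $g_n$ uniformly bounded (after truncation via Lemma~\ref{L:14}), dominated convergence closes the gap. A secondary technical point is verifying $f_n\in{\mathcal D}({\mathcal E})={\mathcal F}\cap L^2(\nu)\cap{\mathcal C}_\infty(T)$ rather than merely in ${\mathcal F}$; here one uses that $h_{a,b}$ is bounded and compactly "active" (it is constant outside the ball $B(\rho, r(\rho,a)\vee r(\rho,b))$), so multiplying a function vanishing at infinity by $h_{a,b}$ still yields a function vanishing at infinity, and the $L^2(\nu)$ membership follows from boundedness of $h_{a,b}$.
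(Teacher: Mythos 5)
Your overall strategy is the same as the paper's: multiply the ${\mathcal E}$-Cauchy sequence approximating $\mathbf 1_T$ by $h_{a,b}$, invoke the product estimate of Lemma~\ref{L:13} to show the products are ${\mathcal E}$-Cauchy, and pass to the limit. However, there is a genuine gap in your key estimate. The Dirichlet form integrates against the \emph{length measure} $\lambda^{(T,r)}$, so when you expand $\nabla\big((g_n-g_m)h_{a,b}\big)$ the second error term is
\begin{equation*}
   \tfrac{2}{r(a,b)^2}\int_{[a,b]}\lambda^{(T,r)}(\mathrm{d}z)\,(g_n-g_m)^2,
\end{equation*}
i.e.\ the $L^2\big(\lambda^{(T,r)}\restriction_{[a,b]}\big)$-norm, not the $L^2\big(\nu\restriction_{[a,b]}\big)$-norm that you wrote. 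Your dominated-convergence argument uses exactly the two things you have for the wrong measure: $\nu([a,b])<\infty$ and $\nu$-a.e.\ convergence. Neither helps with $\lambda^{(T,r)}$: the convergence $g_n\to\mathbf 1_T$ is only $\nu$-a.e., and $\nu$ and $\lambda^{(T,r)}$ can be mutually singular (e.g.\ $\nu$ purely atomic on a countable dense set), so $\nu$-a.e.\ convergence gives no control whatsoever of $\int_{[a,b]}(g_n-g_m)^2\,\mathrm{d}\lambda^{(T,r)}$. The obstacle you flagged as the "main obstacle" is therefore not closed by your argument.

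The paper closes it differently, and this is the idea your proposal is missing: pick a single point $e\in T$ at which $(f_n-f_m)(e)\to 0$ (such a point exists because the convergence is $\nu$-a.e.\ and $\nu$ charges balls), write $(f_n-f_m)(x)=(f_n-f_m)(e)+\int_e^x\mathrm{d}\lambda^{(T,r)}\,\nabla(f_n-f_m)$, and apply Cauchy--Schwarz to get the pointwise bound $(f_n-f_m)(x)^2\le 2(f_n-f_m)(e)^2+4\,r(e,x)\,{\mathcal E}(f_n-f_m,f_n-f_m)$ uniformly in $x\in[a,b]$. Integrating this against the finite measure $\lambda^{(T,r)}\restriction_{[a,b]}$ (total mass $r(a,b)$) bounds the error term by a constant times ${\mathcal E}(f_n-f_m,f_n-f_m)+(f_n-f_m)(e)^2\to 0$. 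In other words, energy-Cauchyness plus convergence at one point upgrades to uniform control on arcs; this is the mechanism that replaces your dominated convergence. A secondary remark: your truncation of the $g_n$ via Lemma~\ref{L:14} to get a uniform sup-bound is not justified as stated, since applying a normal contraction termwise to an ${\mathcal E}$-Cauchy sequence need not preserve the Cauchy property of the differences; with the paper's argument no uniform bound on the $g_n$ is needed, so this step can simply be dropped.
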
\sm
\begin{proof}  Assume that $\mathbf{1}_T \in \bar{\mathcal D}({\mathcal E})$. Then there exists a
${\mathcal E}$-Cauchy
sequence $\{f_n;\,n\in\N\}$ of functions in ${\mathcal D}({\mathcal E})$ such that $\lim_{n\to\infty}f_n=\mathbf{1}_T$, $\nu$-almost surely.
Fix $a,b\in T$ with $a\not=b$.
For each $n\in\N$, put
$g_{n}:=f_{n}\cdot h_{{a},{b}}$.
By definition, $h_{{a},{b}}$ is a bounded function.  By Example~\ref{Exp:01}, $h_{{a},{b}}$ is absolutely continuous with
{$\nabla h_{{a},{b}}
 =\frac{1}{r({a},{b})}\big(\mathbf{1}_{[{a},{a}\wedge {b}]}-\mathbf{1}_{[{a}\wedge {b},{b}]}\big)$.}
{It follows from Lemma~\ref{L:13} that} $g_{n}\in\mathcal D(\mathcal E)$, and moreover by (\ref{dirform}),
\begin{equation}
\begin{aligned}
\label{e:cca}
   &{\mathcal E}\big(g_{n}-g_m,g_{n}-g_m\big)
  \\
 &=
   {\mathcal E}\big((f_{n}-f_m)\cdot h_{a,b},(f_{n}-f_m)\cdot h_{a,b}\big)
  \\
 &\le
   {2{\mathcal E}\big(f_{n}-f_m,f_{n}-f_m\big)+\tfrac{1}{r(a,b)^2}\int_{[a,b]}\mathrm{d}\lambda^{(T,r)}\,(f_{n}-f_m)^2}
\end{aligned}
\ee
for all $n\in\N$. Since $\{f_n;\,n\in\N\}$ is ${\mathcal E}$-Cauchy, the first summand on the right hand side of (\ref{e:cca}) goes to zero as $m,n\rightarrow\infty$.

As $(f_{n})_{n\in\mathbb{N}}$ converges $\nu$-almost everywhere, there exists $e\in T$ such that $(f_{n} - f_{m}) (e) \rightarrow 0$ as $n,m  \rightarrow \infty$.  Then the second summand is
\begin{equation}
\begin{aligned}
 &=\tfrac{1}{r(a,b)^2}\int_{[a,b]}\mathrm{d}\lambda^{(T,r)}(x) \,\Big ( (f_{n} - f_{m}) (e) + \int_{e}^{x} \mathrm{d}\lambda^{(T,r)} (z) \nabla(f_{n}-f_m)(z) \Big)^2
 \\
&\leq 2 \tfrac{1}{r(a,b)}\left ( (f_{n} - f_{m} (e) \right )^{2}
 \\
&+  2\tfrac{1}{r(a,b)^2}\int_{[a,b]}\mathrm{d}\lambda^{(T,r)}(x) \, r(e,x) \int_{e}^{x} \mathrm{d}\lambda^{(T,r)} (z) \left (\nabla(f_{n}-f_m)(z)\right)^2
\\
&\leq  c_{1} \left [ \left ( (f_{n} - f_{m}) (e) \right )^{2} + {\mathcal E}\big(f_{n}-f_m,f_{n}-f_m\big) \int_{[a,b]}\mathrm{d}\lambda^{(T,r)}(x) \, r(e,x) \right ]\\
&\leq  c_{2} \left [ {\mathcal E}\big(f_{n}-f_m,f_{n}-f_m\big)   +  \left ( (f_{n} - f_{m}) (e) \right )^{2} \right ],
\end{aligned}
\end{equation}
{for suitable constants $c_1$ and $c_2$,}
and tends to $0$ as $m,n  \rightarrow \infty$.
This shows that $h_{{a},{b}}\in\bar{\mathcal D}({\mathcal E})$.
\end{proof}\sm

\subsection{Capacity}
\label{Sub:capacity}
In this subsection we introduce the notion of capacity as a minimizing problem with respect to the Dirichlet form $({\mathcal E}_\alpha,{\mathcal D}_A({\mathcal E}))$. Furthermore we discuss various characterizations of the minimizers. In particular cases we provide explicit formulae for the minimizer.

For any closed  $A\subseteq T$ and another closed set
{$B\subset T\setminus {A}$}, put
\begin{equation}\label{e:LAB}
   \bar{\mathcal L}_{A,{B}}
 :=
   \big\{f\in\bar{\mathcal D}_A(\mathcal E):\,f|_{B}=1\big\}.
\ee
\sm

\begin{definition}[$\alpha$-capacities] For
$\alpha\ge 0$, let the {\rm $\alpha$-capacity} of any closed set ${B}\subseteq T$ with respect to
some other closed set {$A\subset T\setminus {B}$} be defined as
\begin{equation}\label{cap.1}
   \mathrm{cap}^{\alpha}_{A}({B})
 :=
   \inf\big\{\mathcal E_\alpha(f,f)\,:\,f\in\bar{\mathcal L}_{A,{B}}\big\}.
\ee
\label{Def:02}
If $\alpha=0$, we abbreviate $\mathrm{cap}_{A}({B}):=\mathrm{cap}^0_{A}({B})$.
If $A=\emptyset$, we will denote $\mathrm{cap}^{\alpha}_{A}({B})$ by $\mathrm{cap}^{\alpha}({B})$.
{Moreover, if $B=\{b\}$ is singleton, we will write $\mathrm{cap}_{A}(b)$, $\mathrm{cap}^\alpha_{A}(b)$
and $\mathrm{cap}^\alpha(b)$, and so on.}
\end{definition}\sm

We note that one is not restricted to but we shall be content with the
choice of closed sets only.\sm

\begin{lemma}[Non-empty sets have positive capacity] Let $(T,r)$ be a locally compact $\R$-tree and $\nu$ a Radon measure {on $(T,{\mathcal B}(T))$}. {
For 
any $x\in T\setminus A$, $\mathrm{cap}^1_{A}(\{x\})>0$}.
\label{L:02}
\end{lemma}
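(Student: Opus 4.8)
The plan is to exhibit an explicit test function in $\bar{\mathcal L}_{A,\{x\}}$ of finite $\mathcal E_1$-energy; by the definition of $\mathrm{cap}^1_A(\{x\})$ as an infimum, this already gives a finite upper bound, but to get strict positivity I need to argue that \emph{every} competitor has energy bounded below by a positive constant. The key geometric input is that for $f\in\bar{\mathcal D}_A(\mathcal E)$ the path estimate
\begin{equation}
\label{e:planpath}
   |f(y)-f(z)|^2=\Bigl|\int_z^y\lambda^{(T,r)}(\mathrm dw)\,\nabla f(w)\Bigr|^2\le 2\,\mathcal E(f,f)\,r(y,z)
\end{equation}
holds for all $y,z\in T$ (this is exactly (\ref{con.3}), via Cauchy--Schwarz). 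I would first reduce to a sequence in $\mathcal D_A(\mathcal E)$ approximating a given $f\in\bar{\mathcal L}_{A,\{x\}}$ in $\mathcal E$ and $\nu$-a.e., so that (\ref{e:planpath}) is available and $f(x)=1$ makes sense along the way (one must be slightly careful since extended-domain functions are only defined $\nu$-a.e.; but $\nu$ charges every ball, so a continuity/limiting argument pins the value at $x$).

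The main step is a case distinction according to whether $x$ is ``visible'' from $A$ at positive distance. Pick any $\rho'\in A$ (recall $A\ne\emptyset$; here one uses the hypothesis implicitly, since $A=\emptyset$ is excluded by $x\in T\setminus A$ being meaningful as stated — if $A=\emptyset$ one instead uses the reference function from Lemma~\ref{L:01}/Lemma~\ref{L:dtrans}). Since $T$ is locally compact, it is separable and balls are bounded, so $d:=r(\rho',x)>0$ unless $x\in A$ — but $x\notin A$ and $A$ closed gives $d_A:=r(x,A)>0$. Now for any $f\in\bar{\mathcal L}_{A,\{x\}}$, apply (\ref{e:planpath}) with $y=x$ and $z$ ranging over $A$: since $f|_A=0$ and $f(x)=1$,
\begin{equation}
\label{e:planlb}
   1=|f(x)-f(z)|^2\le 2\,\mathcal E(f,f)\,r(x,z)\qquad\text{for all }z\in A,
\end{equation}
and minimizing the right-hand side over $z\in A$ (using that $A$ is closed so the infimum $d_A$ is attained or approached) yields $\mathcal E(f,f)\ge \frac{1}{2d_A}$, hence $\mathcal E_1(f,f)\ge\mathcal E(f,f)\ge\frac{1}{2\,r(x,A)}>0$. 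Taking the infimum over $f\in\bar{\mathcal L}_{A,\{x\}}$ gives $\mathrm{cap}^1_A(\{x\})\ge\frac{1}{2\,r(x,A)}>0$, which is the claim.

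The one genuine obstacle is the measure-theoretic subtlety that elements of $\bar{\mathcal D}_A(\mathcal E)$ are equivalence classes defined only $\nu$-a.e., so the literal statements ``$f|_A=0$'' and ``$f(x)=1$'' in the definition of $\bar{\mathcal L}_{A,\{x\}}$ require interpretation; I would handle this by working with the approximating $\mathcal E$-Cauchy sequence $(f_n)\subset\mathcal D_A(\mathcal E)$ (where these functions are genuinely continuous, cf.\ $\mathcal D_A(\mathcal E)\subseteq\mathcal A\cap\mathcal C_\infty(T)$), noting that (\ref{e:planpath}) for the $f_n$ together with $\mathcal E$-Cauchyness forces uniform equicontinuity on bounded sets, so $(f_n)$ converges locally uniformly to a continuous representative $\tilde f$ of $f$; then $\tilde f|_A=0$ and $\tilde f(x)=1$ hold pointwise, and (\ref{e:planlb}) applies to $\tilde f$. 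Everything else is a short computation. I would also remark that the same bound shows $\mathrm{cap}_A(\{x\})>0$, i.e.\ the $\alpha=0$ capacity is positive, which is the form in which this lemma is typically used later.
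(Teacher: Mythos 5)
Your argument takes a genuinely different route from the paper's, and for non-empty $A$ it is correct and even quantitatively sharper: anchoring $f$ at a point $z\in A$ via the path estimate (\ref{con.3}) gives the clean bound $\mathcal E_1(f,f)\ge\mathcal E(f,f)\ge(2r(x,A))^{-1}$, consistent with the exact two-point capacity of Proposition~\ref{P:04}, and the continuous-representative argument you sketch (uniform H\"older-$\tfrac12$ equicontinuity from (\ref{con.3}) plus $\nu$-a.e.\ convergence at a dense set of points yields locally uniform convergence of the approximating sequence) is sound and is the right way to make sense of $f(x)=1$ and $f|_A=0$ for extended-domain functions. The paper instead shows that $\delta_x$ is of finite energy integral: it writes $\tfrac12 f^2(x)\le|f(x)-f(y)|^2+f^2(y)\le 2\mathcal E(f,f)\,r(x,y)+f^2(y)$ and integrates over a compact neighborhood $K_x$ against $\nu$, obtaining $f(x)^2\le C_x\,\mathcal E_1(f,f)$ with $C_x$ depending only on $\nu(K_x)>0$ and $\int_{K_x}r(x,\boldsymbol{\cdot})\,\mathrm d\nu<\infty$. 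The essential difference is which part of $\mathcal E_1$ supplies the positivity: you use the Dirichlet condition on $A$, the paper uses the $(f,f)_\nu$-term.

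That difference is where your proof has a genuine gap: the case $A=\emptyset$. The statement allows it (the paper explicitly introduces the notation $\mathrm{cap}^1(\{x\})$ for $A=\emptyset$), and this is precisely the case needed when the lemma is invoked in the proof of Theorem~\ref{T:01} to conclude that singletons are not exceptional for the Hunt process, i.e.\ that the unrestricted $1$-capacity of $\{x\}$ is positive. Your main argument then has no point $z\in A$ at which to anchor $f$ at the value $0$, and the fallback you propose --- the reference function of Lemma~\ref{L:dtrans} --- is unavailable: that lemma requires $A$ non-empty and closed, and for $A=\emptyset$ the form $(\mathcal E,\mathcal D(\mathcal E))$ need not be transient (it is recurrent on every compact tree), so no such reference function exists; even if it did, it only bounds $\int|f|\cdot g\,\mathrm d\nu$ from above by $\sqrt{\mathcal E(f,f)}$ and does not by itself yield a positive lower bound on $\mathcal E_1(f,f)$ over $\{f:\,f(x)=1\}$. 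The repair is exactly the paper's computation: replace the anchor in $A$ by an average over a compact neighborhood $K_x$ with $\nu(K_x)\in(0,\infty)$, which converts the missing boundary condition into the $(f,f)_\nu$-term of $\mathcal E_1$.
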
\sm

\begin{proof} We follow the argument in the proof of Lemma~4 in \cite{Kre95} to
show that singletons have positive capacity. By Theorem~2.2.3 in
\cite{FukushimaOshimaTakeda1994} it is enough to show that
for all $x\in T$ the Dirac measures $\delta_x$ is of finite energy integral, i.e.,
there exists a constant $C_x>0$ such that for all $f\in{\mathcal D}({\mathcal E})\cap {\mathcal C}_0(T)$,
\begin{equation}\label{fei}
   f(x)^2\le C_x\,{\mathcal E}_1(f,f)
\ee
(compare (2.2.1) in \cite{FukushimaOshimaTakeda1994}).

Fix $f\in{\mathcal D}({\mathcal E})\cap {\mathcal C}_0(T)$, $x\in T$. Then by (\ref{con.3}) together with $2ab\le a^2+b^2$ applied with $a:=f(y)$ and $b:=(f(x)-f(y))$,
for all $x,y\in T$,
\begin{equation}
\label{e:dirac}
\begin{aligned}
\tfrac{1}{2} f^2(x)&\le |f(x)-f(y)|^2+f^2(y)\\&\le 2{\mathcal E}(f,f)r(x,y)+f^2(y).
\end{aligned}
\end{equation}

Since $(T,r)$ is locally compact we can find a compact neighborhood, $K=K_x$, of $x$.
Integrating the latter over all $y$ with respect to $\mathbf{1}_{K_x}\cdot\nu$ gives
\begin{equation}
\label{e:dirac1}
\begin{aligned}
  \tfrac{1}{2}f^2(x)\nu(K_x)&\le 2{\mathcal E}(f,f)\int_{K_x}\nu(\mathrm{d}y)\,r(x,y)+(f\cdot \mathbf{1}_{K_x},f)_\nu
 \\
 &\le
   2{\mathcal E}(f,f)\int_{K_x}\nu(\mathrm{d}y)\,r(x,y)+(f,f)_\nu
\end{aligned}
\end{equation}

Hence (\ref{fei}) clearly holds with $C_x:=\frac{2\cdot\max \{2\int_{K_x}\nu(\mathrm{d}y)\,r(x,y);1\}}{\nu(K_x)}$.
\end{proof}
\sm

\begin{proposition}[Capacity between two points] Let $(T,r)$ be a locally compact $\R$-tree and a Radon measure $\nu$ {on $(T,{\mathcal B}(T))$}.  Assume furthermore that $(T,r,\nu)$ is such that $\mathbf{1}_T\in\bar{\mathcal D}({\mathcal E})$.
Then for all ${a},{b}\in T$ with ${a}\not ={b}$,
the function $h_{{a},{b}}:=\frac{f_{{a},{b}}}{r({a},{b})}$ with $f_{{a},{b}}$ as defined in (\ref{e:h}) is the unique minimizer of (\ref{cap.1}).
\label{P:04}
In particular,
\begin{equation}
\label{capxy}
   \mathrm{cap}_{{b}}({a}):=\mathrm{cap}_{\{{b}\}}(\{{a}\})=\big(2r({a},{b})\big)^{-1}.
\ee
\end{proposition}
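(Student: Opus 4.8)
The plan is to compute the minimum of $\mathcal{E}(f,f) = \tfrac12\int\lambda^{(T,r)}(\mathrm{d}z)\,(\nabla f(z))^2$ over the class $\bar{\mathcal L}_{\{b\},\{a\}}$ (note $\alpha=0$ here, so the $L^2(\nu)$-term drops out) and to identify $h_{a,b}$ as the unique minimizer. First I would observe that by Lemma~\ref{L:08}, the hypothesis $\mathbf{1}_T\in\bar{\mathcal D}({\mathcal E})$ guarantees $h_{a,b}\in\bar{\mathcal D}({\mathcal E})=\bar{\mathcal D}_\emptyset(\mathcal E)$, and since $f_{a,b}(b)=r(c(b,a,b),b)=r(b,b)=0$ while $f_{a,b}(a)=r(c(a,a,b),b)=r(a,b)$, we get $h_{a,b}\in\bar{\mathcal D}_{\{b\}}(\mathcal E)$ with $h_{a,b}(a)=1$; hence $h_{a,b}\in\bar{\mathcal L}_{\{b\},\{a\}}$, so it is an admissible competitor.

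Next I would compute $\mathcal{E}(h_{a,b},h_{a,b})$ directly from Example~\ref{Exp:01}: since $\nabla h_{a,b} = \tfrac{1}{r(a,b)}(\mathbf{1}_{[a,a\wedge b]}-\mathbf{1}_{[a\wedge b,b]})$ and the two indicator sets are disjoint arcs whose union is $[a,b]$ with total length $\lambda^{(T,r)}([a,b]) = r(a,a\wedge b)+r(a\wedge b,b) = r(a,b)$, we get
\begin{equation*}
\mathcal{E}(h_{a,b},h_{a,b}) = \frac{1}{2}\cdot\frac{1}{r(a,b)^2}\cdot\lambda^{(T,r)}([a,b]) = \frac{1}{2r(a,b)}.
\end{equation*}
This gives the upper bound $\mathrm{cap}_{b}(a)\le (2r(a,b))^{-1}$. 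For the matching lower bound, take any $f\in\bar{\mathcal L}_{\{b\},\{a\}}$, so $f(b)=0$, $f(a)=1$. Applying the Cauchy--Schwarz estimate (\ref{con.3}), which holds for all functions in the extended Dirichlet space by a density/subsequence argument as in Lemma~\ref{L:EDstarclosed}, with $x=a$, $y=b$: $1 = |f(a)-f(b)|^2 \le 2\mathcal{E}(f,f)\,r(a,b)$, hence $\mathcal{E}(f,f)\ge (2r(a,b))^{-1}$. Taking the infimum over $f$ yields $\mathrm{cap}_{b}(a) = (2r(a,b))^{-1}$, and since $h_{a,b}$ attains this value, it is a minimizer.

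Finally, for uniqueness, I would invoke the standard Hilbert-space fact that the minimizer of a quadratic form over a closed affine subspace of a Hilbert space is unique: $\bar{\mathcal D}_{\{b\}}(\mathcal E)$ equipped with the inner product $\mathcal{E}$ is a Hilbert space (this is precisely the content of the extended transient Dirichlet space, cf.\ the identification after (\ref{extended}) with Theorem~1.5.3 in \cite{FukushimaOshimaTakeda1994}, using transience from Lemma~\ref{L:01}), and $\bar{\mathcal L}_{\{b\},\{a\}}$ is a closed affine subspace (it is a translate of the closed subspace $\{f : f(a)=f(b)=0\}$, closedness of which follows again from (\ref{con.3}) showing point evaluations at $a,b$ are $\mathcal{E}$-continuous on the extended space). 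The unique minimizer is the $\mathcal{E}$-orthogonal projection of $0$ onto this affine set; since $h_{a,b}$ achieves the minimal value, it must be that projection. The main obstacle I anticipate is the bookkeeping around the extended Dirichlet space: one must make sure (\ref{con.3}) and the well-definedness of point evaluations extend from $\mathcal{D}_{\{b\}}(\mathcal E)$ to $\bar{\mathcal D}_{\{b\}}(\mathcal E)$, which requires noting that an $\mathcal{E}$-Cauchy sequence in the original domain, convergent $\nu$-a.e.\ to $f$, is in fact Cauchy in the supremum norm on any arc (again by (\ref{con.3})), so that pointwise values are unambiguous and the estimate passes to the limit; transience (Lemma~\ref{L:01}) is what guarantees this all fits together without the form degenerating.
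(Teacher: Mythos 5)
Your proof is correct, and it reaches the conclusion by a genuinely different route than the paper. The paper's proof verifies the orthogonality (harmonicity) condition $\mathcal E(h_{a,b},g)=0$ for all $g\in\bar{\mathcal D}_{\{a,b\}}(\mathcal E)$ by a direct computation with $\nabla h_{a,b}$ (see (\ref{e:dir.6})), and then invokes the equivalence (a)$\Leftrightarrow$(b) of Lemma~\ref{L:05}(i) to conclude that $h_{a,b}$ is the unique minimizer, reading off the value $\mathcal E(h_{a,b},h_{a,b})=(2r(a,b))^{-1}$ at the end. You instead prove a matching lower bound: admissibility of $h_{a,b}$ plus the explicit energy computation gives $\mathrm{cap}_b(a)\le(2r(a,b))^{-1}$, and the Cauchy--Schwarz estimate (\ref{con.3}) applied to any competitor $f$ with $f(b)=0$, $f(a)=1$ gives the reverse inequality; uniqueness then comes from the projection theorem in the Hilbert space $(\bar{\mathcal D}_{\{b\}}(\mathcal E),\mathcal E)$, which is essentially Lemma~\ref{L:05}(ii). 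Both arguments rest on Lemma~\ref{L:08} for membership of $h_{a,b}$ in the extended domain (and, as you note, the approximating sequence $f_n\cdot h_{a,b}$ constructed there vanishes at $b$, so membership in $\bar{\mathcal D}_{\{b\}}(\mathcal E)$ and not merely $\bar{\mathcal D}(\mathcal E)$ does hold). Your route is more elementary in that it bypasses the variational characterization (a) of Lemma~\ref{L:05}(i) entirely; what the paper's route buys is precisely that orthogonality relation, which is reused verbatim in the Green-kernel computation (\ref{e:core.1}) of Proposition~\ref{P:06}, whereas in your approach it would only be recovered a posteriori via the implication (b)$\Rightarrow$(a). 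Your care about extending (\ref{con.3}) and point evaluations to the extended Dirichlet space is warranted and is in fact handled more explicitly in your write-up than in the paper itself.
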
\sm

Before providing a proof of the above proposition, we state well-known characterizations of the solution of the minimizing problem (\ref{cap.1}).
\begin{lemma}[Characterization of minimizers; Capacities] Fix a locally compact $\R$-tree $(T,r)$ and a Radon measure $\nu$ on $(T,{\mathcal B}(T))$. Let $A$ be a closed subset,
${B}\subseteq T\setminus A$  be another {non-empty} closed subset and $\alpha\ge 0$.
\begin{itemize}
\item[(i)]
For a function $h^\ast\in\bar{\mathcal L}_{A,B}$ the
following are equivalent: \label{L:05}
\begin{itemize}
\item[(a)] For all $g\in\bar{\mathcal D}_{A\cup B}({\mathcal E})$,
   $\mathcal E_\alpha(h^\ast,g)=0$.
\item[(b)] For all $h\in\bar{\mathcal L}_{A,B}$,
${\mathcal E}_\alpha(h^\ast,h^\ast)\le{\mathcal E}_\alpha(h,h)$.
\end{itemize}
\item[(ii)] If $\bar{\mathcal L}_{A,B} \neq \emptyset$ then there exists a unique function $h^\ast\in\bar{\mathcal L}_{A,B}$ with $h^\ast$ is $[0,1]$-valued and
$\mathcal{E}_\alpha(h^\ast,h^\ast)=\mathrm{cap}_{A}^\alpha({B})$.
\end{itemize}
\end{lemma}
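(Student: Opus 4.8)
The statement to prove is Lemma~\ref{L:05}, a standard characterization of capacity minimizers (Dirichlet principle) for the form $({\mathcal E}_\alpha,\bar{\mathcal D}_A({\mathcal E}))$. Let me sketch a proof.

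For part (i), the plan is to use the affine structure of $\bar{\mathcal L}_{A,B}$ together with the quadratic nature of ${\mathcal E}_\alpha$. Key observation: if $h^\ast\in\bar{\mathcal L}_{A,B}$ and $h\in\bar{\mathcal L}_{A,B}$, then $g:=h-h^\ast$ satisfies $g|_{A\cup B}=0$, hence $g\in\bar{\mathcal D}_{A\cup B}({\mathcal E})$; conversely $h^\ast + g\in\bar{\mathcal L}_{A,B}$ for any such $g$. So for (a)$\Rightarrow$(b), write ${\mathcal E}_\alpha(h,h)={\mathcal E}_\alpha(h^\ast+g,h^\ast+g)={\mathcal E}_\alpha(h^\ast,h^\ast)+2{\mathcal E}_\alpha(h^\ast,g)+{\mathcal E}_\alpha(g,g)={\mathcal E}_\alpha(h^\ast,h^\ast)+{\mathcal E}_\alpha(g,g)\ge {\mathcal E}_\alpha(h^\ast,h^\ast)$, using (a) to kill the cross term and nonnegativity of ${\mathcal E}_\alpha$. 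For (b)$\Rightarrow$(a), fix $g\in\bar{\mathcal D}_{A\cup B}({\mathcal E})$ and $t\in\R$; then $h^\ast+tg\in\bar{\mathcal L}_{A,B}$, so $t\mapsto {\mathcal E}_\alpha(h^\ast+tg,h^\ast+tg)={\mathcal E}_\alpha(h^\ast,h^\ast)+2t{\mathcal E}_\alpha(h^\ast,g)+t^2{\mathcal E}_\alpha(g,g)$ has a minimum at $t=0$, which forces the linear coefficient ${\mathcal E}_\alpha(h^\ast,g)=0$.

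For part (ii), I would argue existence by the standard Hilbert space projection/direct method. When $\alpha>0$, ${\mathcal E}_\alpha={\mathcal E}_1$-type inner product makes $\bar{\mathcal D}_A({\mathcal E})$ a Hilbert space (completeness from Lemma~\ref{L:EDstarclosed}/Theorem~1.5.3 in \cite{FukushimaOshimaTakeda1994}), and $\bar{\mathcal L}_{A,B}$ is a closed affine subspace (nonempty by hypothesis), so the projection theorem gives a unique norm-minimizer $h^\ast$; the characterization (a) of part (i) then identifies it as the point where ${\mathcal E}_\alpha(h^\ast,\cdot)$ vanishes on the tangent space $\bar{\mathcal D}_{A\cup B}({\mathcal E})$. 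When $\alpha=0$ one works directly in the extended transient space $(\bar{\mathcal D}_A({\mathcal E}),{\mathcal E})$, which is a genuine Hilbert space by transience (Lemma~\ref{L:01} and Theorem~1.5.3), and the same projection argument applies. To get that the minimizer is $[0,1]$-valued: given any minimizer $h^\ast$, apply the unit contraction $h^{\ast\ast}:=(0\vee h^\ast)\wedge 1$; since ${\mathcal E}$ is Markovian (Corollary~\ref{L:EDstarmarkovian}, or the contraction Lemma~\ref{L:14}), ${\mathcal E}_\alpha(h^{\ast\ast},h^{\ast\ast})\le{\mathcal E}_\alpha(h^\ast,h^\ast)$; and $h^{\ast\ast}$ is still in $\bar{\mathcal L}_{A,B}$ because $h^\ast=1$ on $B$ and (for $\alpha>0$, or via the reference function for $\alpha=0$) the $\nu$-term only decreases, so $h^{\ast\ast}$ is again a minimizer; uniqueness (strict convexity of the quadratic form on the affine set, since ${\mathcal E}_\alpha(g,g)=0$ with $g\in\bar{\mathcal D}_{A\cup B}$ forces $g=0$ by positivity of the reference function in the transient/$\alpha>0$ case) then gives $h^\ast=h^{\ast\ast}\in[0,1]$.

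The main obstacle I anticipate is the careful treatment of the $\alpha=0$ case: one must invoke transience of $(\mathcal E,\bar{\mathcal D}_A(\mathcal E))$ (Lemma~\ref{L:01}) to ensure ${\mathcal E}$ itself is a genuine inner product (no null vectors) so that the extended space is complete and the projection argument and uniqueness go through; one also has to check that applying the unit contraction keeps a function in $\bar{\mathcal L}_{A,B}$ and in the extended space, which uses that contractions of ${\mathcal E}$-Cauchy sequences remain ${\mathcal E}$-Cauchy (a consequence of the Markov property at the level of the extended Dirichlet space, Theorem~1.5.2 in \cite{FukushimaOshimaTakeda1994}). Everything else is routine Hilbert space geometry.
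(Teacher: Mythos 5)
Your proof of part (i) is correct and is essentially the paper's own argument: the same expansion of the quadratic form along the affine set $\bar{\mathcal L}_{A,B}$, with the perturbation $h^\ast\pm\varepsilon g$ (your $t\mapsto h^\ast+tg$) forcing the cross term ${\mathcal E}_\alpha(h^\ast,g)$ to vanish, and conversely the decomposition $h=h^\ast-g_h$ giving the minimality. For part (ii) the paper simply cites Theorem~2.1.5 of \cite{FukushimaOshimaTakeda1994}, and your projection-plus-unit-contraction argument (using transience from Lemma~\ref{L:01} to make ${\mathcal E}$ a genuine inner product when $\alpha=0$, and the Markov property on the extended space to keep the contracted minimizer in $\bar{\mathcal L}_{A,B}$) is precisely the standard proof behind that citation, so it is consistent with the paper.
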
\sm

\begin{proof}  (i) {\bf (b) $\Longrightarrow$ (a).}
 Assume that $h^\ast\in\bar{\mathcal L}_{A,B}$ is such that ${\mathcal E}_\alpha(h^\ast,h^\ast)\le{\mathcal E}_\alpha(h,h)$ for all $h\in\bar{\mathcal L}_{A,B}$.  Choose a function  $g\in\bar{\mathcal D}_{A\cup B}({\mathcal E}_\alpha)$, and put $h^{\pm}=h^\ast\pm\varepsilon g$. Then $h^\pm\in\bar{\mathcal L}_{A,B}$ and
\begin{equation}
\begin{aligned}
   {\mathcal E}_\alpha\big(h^\ast,h^\ast\big)
 &\le
   {\mathcal E}_\alpha\big(h^{\pm},h^{\pm}\big)
  \\
 &=
   {\mathcal E}_\alpha\big(h^\ast,h^\ast\big)+\varepsilon^2{\mathcal E}_\alpha\big(g,g\big)\pm 2\varepsilon {\mathcal E}_\alpha\big(g,h^\ast\big),
\end{aligned}
\ee
or equivalently,
\begin{equation}\label{emin3}
\begin{aligned}
   2\big|{\mathcal E}_\alpha\big(g,h^\ast\big)\big|
 &\le
   \varepsilon{\mathcal E}_\alpha\big(g,g\big).
\end{aligned}
\ee
Letting $\varepsilon\downarrow 0$ implies that ${\mathcal E}_\alpha\big(g,h^\ast\big)=0$, which proves (a) since
$g\in\bar{\mathcal D}_{A\cup B}({\mathcal E}_\alpha)$ was chosen arbitrarily.
\sm


{\bf (a) $\Longrightarrow$ (b).} Assume that (a) holds. Then for each $h\in\bar{\mathcal L}_{A,B}$,
$g_{h}:=h^\ast-h\in\bar{\mathcal D}_{A\cup B}({\mathcal E})$. Therefore
\begin{equation}\label{emin}
\begin{aligned}
   {\mathcal E}_\alpha\big(h,h\big)
 &=
   {\mathcal E}_\alpha\big(h^\ast-g_h,h^\ast-g_h\big)
  \\
 &=
   {\mathcal E}_\alpha\big(h^\ast,h^\ast\big)+{\mathcal E}_\alpha\big(g_h,g_h\big)
  \\
 &\ge
    {\mathcal E}_\alpha\big(h^\ast,h^\ast\big).
\end{aligned}
\ee\sm

 (ii) See Theorem~2.1.5 in \cite{FukushimaOshimaTakeda1994}.
\end{proof}\sm

\begin{proof}[Proof of Proposition~\ref{P:04}] Fix $a,b\in T$ with $a\not=b$. Recall from Lemma~\ref{L:08} that under the assumption $\mathbf{1}_T\in \bar{\mathcal D}({\mathcal E})$, also
$h_{{a},{b}}\in \bar{\mathcal D}({\mathcal E})$.

Since for any $g\in\bar{\mathcal D}_{\{{a},{b}\}}(\mathcal E)$,
\begin{equation}\label{e:dir.6}
\begin{aligned}
  \mathcal E\big(h_{{a},{b}},g\big)
 &=
  \frac{1}{2r({a},{b})}\int\mathrm{d}\lambda^{(T,r)}\,\big(\mathbf{1}_{[{a},{a}\wedge {b}]}-\mathbf{1}_{[{b},{a}\wedge {b}]}\big)\cdot \nabla g
  \\
 &=
  \frac{g({a})-g({a}\wedge {b})-g({b})+g({a}\wedge {b})}{2r({a},{b})}
  \\
 &=0,
\end{aligned}
\ee
by (\ref{e:nablah}),
$h_{{a},{b}}$ is the unique minimizer by Lemma~\ref{L:05}. In particular,
it follows from Lemma~\ref{L:13} that
$\mathrm{cap}_{b}(a)={\mathcal E}(h_{a,b},h_{a,b})=(2r(a,b))^{-1}$.
\end{proof}\sm

\subsection{Green kernel}
\label{Sub:Green}
To prove  the characterization of occupation time measure of the process associated with the Dirichlet form $({\mathcal E},{\mathcal D}({\mathcal E}))$ as stated in Proposition~\ref{P:prop} we introduce a more general variational
problem. Its solution corresponds to the {\em Green kernel}. Consider a closed  subset  $A\subset T$.
Let $\kappa$ be a positive finite measure with $\int\mathrm{d}\kappa\,r({\rho},\boldsymbol{\cdot})<\infty$. For each $\alpha\ge 0$ consider the following
variational problem:
\begin{equation}\label{greenvar}
   H^{\alpha,{\rho},\kappa}_{A}
 :=
   \inf\big\{\mathcal E_\alpha(g,g)-{2}\int{\mathrm{d}\kappa\, g};\,g\in\bar{\mathcal D}_A(\mathcal E_\alpha)\big\}.
\ee\sm

There is a well-known characterization of the unique solution to (\ref{greenvar}). \sm

\begin{lemma}[Characterization of minimizers; Green kernel] Let $(T,r)$ be a locally compact $\R$-tree, {$\nu$ a Radon measure on $(T,{\mathcal B}(T))$,}
$A\subseteq T$ be a  closed subset,
$\kappa$ be a positive and finite measure with $\int\mathrm{d}\kappa\,
r(\rho,\boldsymbol{\cdot})<\infty$, {for some (and therefore all) $\rho\in T$,} and $\alpha\ge 0$.
\begin{itemize}
\item[(i)]
For a function $g^\ast\in\bar{{\mathcal D}}_{A}({\mathcal E}_\alpha)$ the
following are equivalent: \label{L:03}
\begin{itemize}
\item[(a)] For all $g\in\bar{\mathcal D}_{A}({\mathcal E}_\alpha)$,
$\mathcal E_\alpha(g^\ast,g)=\int\mathrm{d}\kappa\,g$.
\item[(b)] For all $g\in\bar{{\mathcal D}}_{A}({\mathcal E}_\alpha)$,
${\mathcal E}_\alpha(g^\ast,g^\ast)-{2}\int\mathrm{d}\kappa\,g^\ast\le
{\mathcal E}_\alpha(g,g)-{2}\int\mathrm{d}\kappa\,g$.
\end{itemize}
\item[(ii)] Assume $\bar{{\mathcal D}}_{A}({\mathcal E}_\alpha) \not = \emptyset$. There exists a unique minimizer $g^\ast\in \bar{{\mathcal D}}_{A}({\mathcal E}_\alpha)$ for (\ref{greenvar}).
\item[(iii)] If  $g^{\ast,\alpha,\kappa}\in\bar{{\mathcal D}}_{A}({\mathcal E}_\alpha)$
is {the} minimizer for (\ref{greenvar}) then $g^{\ast,\alpha,\kappa}$ is non-negative.
\end{itemize}
\end{lemma}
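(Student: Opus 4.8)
This is a standard quadratic-minimization characterization for a symmetric, closed, coercive bilinear form on a Hilbert space, together with a contraction/projection argument to get non-negativity. The space $\bar{\mathcal D}_A({\mathcal E}_\alpha)$ is a Hilbert space under ${\mathcal E}_\alpha$ (for $\alpha>0$; for $\alpha=0$ one uses the extended transient space from Theorem~1.5.3 in \cite{FukushimaOshimaTakeda1994}, which is complete under ${\mathcal E}$), and $g\mapsto\int\mathrm d\kappa\,g$ is a bounded linear functional on it because $\int\mathrm d\kappa\,r(\rho,\boldsymbol\cdot)<\infty$ combined with the pointwise bound $(g(y))^2\le 2{\mathcal E}(g,g)\,r(\rho,y)$ from (\ref{con.3}) (using $g(\rho)=0$ in the extended transient space, or the analogous estimate via a reference point when $\rho\notin A$) gives $|\int\mathrm d\kappa\,g|\le\sqrt{2{\mathcal E}(g,g)}\int\mathrm d\kappa\sqrt{r(\rho,\boldsymbol\cdot)}\le C\,\sqrt{{\mathcal E}_\alpha(g,g)}$ by Cauchy--Schwarz. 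Once this is in place, the three items are routine.

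\textbf{Step for (i).} I would write, for fixed $g^\ast\in\bar{\mathcal D}_A({\mathcal E}_\alpha)$ and arbitrary $g\in\bar{\mathcal D}_A({\mathcal E}_\alpha)$ and $t\in\R$,
\begin{equation*}
   \Phi(g^\ast+tg)={\mathcal E}_\alpha(g^\ast,g^\ast)+2t\big({\mathcal E}_\alpha(g^\ast,g)-\textstyle\int\mathrm d\kappa\,g\big)+t^2{\mathcal E}_\alpha(g,g)-2\int\mathrm d\kappa\,g^\ast,
\end{equation*}
where $\Phi(h):={\mathcal E}_\alpha(h,h)-2\int\mathrm d\kappa\,h$. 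For (a)$\Rightarrow$(b): if the linear-in-$t$ coefficient vanishes for all $g$, then $\Phi(g^\ast+tg)-\Phi(g^\ast)=t^2{\mathcal E}_\alpha(g,g)\ge 0$, and every element of $\bar{\mathcal D}_A({\mathcal E}_\alpha)$ is of the form $g^\ast+g$, so $g^\ast$ is a minimizer. For (b)$\Rightarrow$(a): if $g^\ast$ minimizes, then $\frac{\mathrm d}{\mathrm dt}\Phi(g^\ast+tg)\big|_{t=0}=0$, i.e. ${\mathcal E}_\alpha(g^\ast,g)=\int\mathrm d\kappa\,g$ for all $g$.

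\textbf{Step for (ii).} Existence and uniqueness is the Riesz representation theorem: the bounded linear functional $g\mapsto\int\mathrm d\kappa\,g$ is represented by a unique $g^\ast\in\bar{\mathcal D}_A({\mathcal E}_\alpha)$ with ${\mathcal E}_\alpha(g^\ast,g)=\int\mathrm d\kappa\,g$ for all $g$, and by part (i) this $g^\ast$ is the unique minimizer of (\ref{greenvar}) (uniqueness because any other minimizer satisfies the same variational identity (a), and strict convexity of $\Phi$ — owing to positive-definiteness of ${\mathcal E}_\alpha$ — forces equality). I would cite Theorem~2.1.5 in \cite{FukushimaOshimaTakeda1994} in parallel, as the authors already do for the analogous Lemma~\ref{L:05}(ii).

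\textbf{Step for (iii), and the main obstacle.} For non-negativity I would use the Markovian/contraction property: given the minimizer $g^\ast$, set $h:=g^\ast\vee 0=(g^\ast)^+$. Since $\kappa\ge 0$ one has $\int\mathrm d\kappa\,h\ge\int\mathrm d\kappa\,g^\ast$, and the unit contraction lowers energy, so $\mathcal E_\alpha(h,h)\le\mathcal E_\alpha(g^\ast,g^\ast)$, hence $\Phi(h)\le\Phi(g^\ast)$; by uniqueness $h=g^\ast$, i.e. $g^\ast\ge 0$. The genuine technical point to check carefully is that $h=(g^\ast)^+$ again lies in $\bar{\mathcal D}_A({\mathcal E}_\alpha)$ — the extended Dirichlet space is not obviously closed under the positive-part operation for $\alpha=0$. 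This is handled by the normal-contraction stability of extended Dirichlet spaces (Theorem~1.5.3 and the surrounding discussion in \cite{FukushimaOshimaTakeda1994}, together with Lemma~\ref{L:14}/Corollary~\ref{L:EDstarmarkovian} of this paper applied along an approximating ${\mathcal E}$-Cauchy sequence from $\mathcal D_A({\mathcal E})$); one truncates $g^\ast$ at levels $\pm N$, takes positive parts at the $\mathcal D_A$ level, and passes to the limit in ${\mathcal E}$. I expect this approximation argument — verifying membership in the extended space after applying the contraction — to be the only place requiring care; everything else is the standard Hilbert-space/Riesz picture.
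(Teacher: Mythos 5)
Your proposal is correct and follows essentially the same route as the paper: part (i) is the same first-variation/quadratic-expansion argument (the paper simply defers to its proof of Lemma~\ref{L:05}(i)), part (iii) is the identical unit-contraction-plus-uniqueness argument, and part (ii) differs only cosmetically in that the paper extracts the minimizer from an ${\mathcal E}_\alpha$-Cauchy minimizing sequence via a parallelogram-type inequality and invokes Riesz representation only for uniqueness, whereas you invoke Riesz for existence as well --- both rest on the same boundedness estimate $\big(\int\mathrm{d}\kappa\,|f|\big)^2\le 2\kappa(T)\int\mathrm{d}\kappa\,r(\rho,\boldsymbol{\cdot})\cdot{\mathcal E}(f,f)$ coming from (\ref{con.3}). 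Your extra care in verifying that $(g^\ast)^+$ remains in the extended space is a point the paper glosses over (it cites the Markov property only at the level of ${\mathcal D}_A({\mathcal E})$), so no gap there.
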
\sm

\begin{proof}
(i) Proof is very similar to that of Lemma \ref{L:05} (i). So we omit it here.\sm

(ii)  Assume that if $(f_n)_{n\in\N}$  is a minimizing sequence { in $\bar{\mathcal D}_A(\mathcal E_\alpha)$, i.e.,
\begin{equation}
\label{e:minim}
   {\mathcal E}_\alpha(f_n,f_n)-{2}\int\mathrm{d}\kappa\,f_n\tno H_A^{\alpha,{\rho},\kappa}.
\ee}

Notice first that
for all $f\in{\mathcal D}_A(\mathcal E_{\alpha})$,
\begin{equation}\label{e:squareenerint}
\begin{aligned}
   \big(\int_{{T}}\mathrm d\kappa\,|f|\big)^2
 &\leq
   \kappa(T)\cdot\int_{{T}}\mathrm{d}\kappa\,f^2
  \\
 &\leq
   2\kappa(T)\int_{{T}}\mathrm{d}\kappa\,r(\rho,\boldsymbol{\cdot})\cdot \mathcal E(f,f),
\end{aligned}
\ee
where we have applied (\ref{con.3}) with $y:={\rho}$ and used that $f(\rho)=0$.
The latter implies that, in particular, $(\int\mathrm{d}\kappa\, f_n)_{n\in\N}$ is bounded.

Hence, for all $n,l\in\mathbb{N}$,
{\begin{equation}
\begin{aligned}
    &\big (
    \mathcal E_\alpha\big(\tfrac{f_n-f_{n+l}}{2},\tfrac{f_n-f_{n+l}}{2}\big)-{2}\int\mathrm{d}\kappa\,\tfrac{f_n-f_{n+l}}{2}\big )+
    H^{\alpha,{\rho},\kappa}_{A}
  \\
 &\le
    \big (
    \mathcal E_\alpha\big(\tfrac{f_n-f_{n+l}}{2},\tfrac{f_n-f_{n+l}}{2}\big)-{2}\int\mathrm{d}\kappa\,\tfrac{f_n-f_{n+l}}{2}\big)
  \\
 &\qquad+
    \big(
    \mathcal E_\alpha\big(\tfrac{f_n+f_{n+l}}{2},\tfrac{f_n+f_{n+l}}{2}\big)-{2}\int\mathrm{d}\kappa\,\tfrac{f_n+f_{n+l}}{2}\big)
  \\
 &=
   \mathcal E_\alpha\big(\tfrac{f_n}{2},\tfrac{f_n}{2}\big)+\mathcal E_\alpha\big(\tfrac{f_{n+l}}{2},\tfrac{f_{n+l}}{2}\big)-{2}\int\mathrm{d}\kappa\,\tfrac{f_n}{2}-{2}\int\mathrm{d}\kappa\,\tfrac{f_{n+l}}{2}
   -{2}\int\mathrm{d}\kappa\,\tfrac{f_n-f_{n+l}}{2}.
 \end{aligned}
\ee}

It follows from (\ref{e:minim}) that
\begin{equation}\label{e:dir.4al}
\begin{aligned}
    \limsup_{n\to\infty}\sup_{l\in\mathbb{N}}\mathcal E_\alpha\big(f_n-f_{n+l},f_n-f_{n+l}\big)
 &=0,
 \end{aligned}
\ee
i.e.,\ and $(f_n)_{n\in\N}$ is proven to be ${\mathcal E}_1$-Cauchy. By completeness, a limit $f\in\bar{\mathcal D}_A({\mathcal E}_\alpha)$ exists.\sm

Uniqueness, follows easily by an application of Riesz representation Theorem (see Theorem 13.9 \cite{AliBor1999}).\sm

(iii) Since the form $({\mathcal E}_\alpha,{\mathcal D}_A({\mathcal E}))$ is Markovian,
$(0\vee h)\in {\mathcal D}_A({\mathcal E})$ whenever
$h\in{\mathcal D}_A({\mathcal E})$
(See, Theorem~1.4.2 in \cite{FukushimaOshimaTakeda1994}).
Furthermore,
\begin{equation}
\label{e:gwedge0}
\begin{aligned}
   &{\mathcal E}_\alpha(0{\vee} g^{\ast,\alpha,\kappa},0{\vee} g^{\ast,\alpha,\kappa})-{2}\int\mathrm{d}\kappa\,0{\vee} g^{\ast,\alpha,\kappa}
  \\
 &\le
   {{\mathcal E}_\alpha(g^{\ast,\alpha,\kappa},g^{\ast,\alpha,\kappa})-{2}\int\mathrm{d}\kappa\,g^{\ast,\alpha,\kappa}}
\end{aligned}
\ee
where equality holds if $0{\vee} g^{\ast,\alpha,\kappa}=g^{\ast,\alpha,\kappa}$, $\kappa$-, $\nu$-almost surely.
This however implies that $0{\vee} g^{\ast,\alpha,\kappa}=g^{\ast,\alpha,\kappa}$, which proves the claim.
\end{proof}\sm

Consequently, we arrive at the following definition.
\begin{definition}[Green kernel] Let $(T,r)$ be a locally compact $\R$-tree,
$A\subseteq T$ a
closed subset,
$\kappa$ a positive and finite measure with $\int\mathrm{d}\kappa\,r({\rho},\boldsymbol{\cdot})<\infty$,
and $\alpha\ge 0$.
A {\em Green kernel}  $g_A^\alpha\big(\kappa,\boldsymbol{\cdot}\big)$ is the minimizer
for (\ref{greenvar}).
\label{Def:03}
For $x\in T$, we use the abbreviations $g^{\ast,\alpha}_A(x,\boldsymbol{\cdot}):=
g^{\ast,\alpha}_A(\delta_x,\boldsymbol{\cdot})$ and
$g^{\ast,\alpha}_x(\kappa,\boldsymbol{\cdot}):=g^{\ast,\alpha}_{\{x\}}(\kappa,\boldsymbol{\cdot})$.
{For $A:=\emptyset$, we simply write $g^{\ast,\alpha}(x,\boldsymbol{\cdot})$ and $g^{\ast,\alpha}(\kappa,\boldsymbol{\cdot})$, respectively.}
\end{definition}\sm

We conclude this section with providing an explicit formula for the Green kernel in some specific cases.
\begin{proposition}[Green kernel; an explicit formula]
{Let $(T,r)$ be a locally compact $\R$-tree and $\nu$ a {\em Radon measure} on $(T,{\mathcal B}(T))$.}
Fix $A\subseteq T$ non-empty and closed.    Let  $\kappa$ a positive and finite measure with $\int\mathrm{d}\kappa\,r({\rho},\boldsymbol{\cdot})<\infty$, for some (and therefore all) $\rho\in A$, and $\alpha\ge 0$. Assume further  that $h^{\ast,\alpha}_{A,\boldsymbol{\cdot}}$, the unique minimizer to (\ref{cap.1}), exists. The Green kernel is given by\label{P:06}
\begin{equation}\label{GBmin}
   g_A^{\ast,\alpha}\big(\kappa,\boldsymbol{\cdot}\big)
 :=
   \int\kappa(\mathrm{d}x)\,\frac{h^{\ast,\alpha}_{A,\boldsymbol{\cdot}}(x)}
   {\mathrm{cap}^\alpha_{A}(\boldsymbol{\cdot})}.
\end{equation}
\end{proposition}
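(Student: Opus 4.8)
The plan is to verify directly that the candidate function on the right-hand side of (\ref{GBmin}) satisfies the variational characterization (a) of Lemma~\ref{L:03}(i); by uniqueness (Lemma~\ref{L:03}(ii)) this forces it to coincide with the Green kernel $g_A^{\ast,\alpha}(\kappa,\boldsymbol{\cdot})$. Write $G(y):=\int\kappa(\mathrm{d}x)\,h^{\ast,\alpha}_{A,y}(x)/\mathrm{cap}^\alpha_A(y)$, where $y\mapsto h^{\ast,\alpha}_{A,y}$ is, for each fixed $y\in T\setminus A$, the unique $[0,1]$-valued minimizer of (\ref{cap.1}) for $B=\{y\}$, so that by Lemma~\ref{L:05}(i) we have $\mathcal E_\alpha(h^{\ast,\alpha}_{A,y},g)=0$ for all $g\in\bar{\mathcal D}_{A\cup\{y\}}(\mathcal E)$, together with $h^{\ast,\alpha}_{A,y}(y)=1$ and $\mathcal E_\alpha(h^{\ast,\alpha}_{A,y},h^{\ast,\alpha}_{A,y})=\mathrm{cap}^\alpha_A(y)$. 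First I would record the pointwise reproducing identity: for every $g\in\bar{\mathcal D}_A(\mathcal E_\alpha)$ and every $y\in T\setminus A$,
\begin{equation}
\label{e:reproducing}
   \mathcal E_\alpha\big(h^{\ast,\alpha}_{A,y},g\big)=g(y)\cdot\mathrm{cap}^\alpha_A(y).
\end{equation}
This follows by decomposing $g=\big(g-g(y)h^{\ast,\alpha}_{A,y}\big)+g(y)h^{\ast,\alpha}_{A,y}$: the first summand lies in $\bar{\mathcal D}_{A\cup\{y\}}(\mathcal E)$ (it vanishes on $A$ and at $y$), so $\mathcal E_\alpha$ of it against $h^{\ast,\alpha}_{A,y}$ is zero by Lemma~\ref{L:05}(i)(a), and the second contributes $g(y)\,\mathcal E_\alpha(h^{\ast,\alpha}_{A,y},h^{\ast,\alpha}_{A,y})=g(y)\,\mathrm{cap}^\alpha_A(y)$.

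Next I would check $G\in\bar{\mathcal D}_A(\mathcal E_\alpha)$ and compute $\mathcal E_\alpha(G,g)$ for arbitrary $g\in\bar{\mathcal D}_A(\mathcal E_\alpha)$ by interchanging the $\kappa$-integral with the bilinear form. Using (\ref{e:reproducing}),
\begin{equation}
\label{e:Gpairing}
   \mathcal E_\alpha(G,g)
 =
   \int\kappa(\mathrm{d}x)\,\frac{1}{\mathrm{cap}^\alpha_A(x)}\,\mathcal E_\alpha\big(h^{\ast,\alpha}_{A,x},g\big)
 =
   \int\kappa(\mathrm{d}x)\,\frac{1}{\mathrm{cap}^\alpha_A(x)}\,g(x)\,\mathrm{cap}^\alpha_A(x)
 =
   \int\mathrm{d}\kappa\,g,
\end{equation}
which is precisely condition (a) of Lemma~\ref{L:03}(i). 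Hence $G$ is the minimizer of (\ref{greenvar}), i.e.\ $G=g_A^{\ast,\alpha}(\kappa,\boldsymbol{\cdot})$, proving (\ref{GBmin}). The membership $G\in\bar{\mathcal D}_A(\mathcal E_\alpha)$ can be obtained along the way: since $\kappa$ is finite and $\int\mathrm d\kappa\, r(\rho,\boldsymbol{\cdot})<\infty$ with $\rho\in A$, the bound $\mathcal E_\alpha(h^{\ast,\alpha}_{A,x},h^{\ast,\alpha}_{A,x})=\mathrm{cap}^\alpha_A(x)\le \mathcal E_\alpha(h_{x},h_{x})$ for any convenient competitor $h_x$ (e.g.\ a normalized version of $f_{x,\rho'}$ with $\rho'\in A$, as in Lemma~\ref{L:08}), together with $\mathrm{cap}^\alpha_A(x)$ bounded away from $0$ on the relevant range, gives an $\mathcal E_\alpha$-integrable dominating function, so the integral defining $G$ converges in $\bar{\mathcal D}_A(\mathcal E_\alpha)$ as a Bochner integral.

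The main obstacle is making the interchange of $\int\kappa(\mathrm dx)$ with $\mathcal E_\alpha(\cdot,g)$ rigorous, i.e.\ justifying that $x\mapsto h^{\ast,\alpha}_{A,x}$ is a well-defined, suitably measurable map into the Hilbert space $\big(\bar{\mathcal D}_A(\mathcal E_\alpha),\mathcal E_\alpha\big)$ and that $\int\kappa(\mathrm dx)\,\mathrm{cap}^\alpha_A(x)^{-1}h^{\ast,\alpha}_{A,x}$ exists as an $\mathcal E_\alpha$-convergent (Bochner) integral; once that is in place, continuity of the linear functional $u\mapsto\mathcal E_\alpha(u,g)$ on this Hilbert space lets the bilinear form pass through the integral, and (\ref{e:reproducing}) finishes it. Measurability of $x\mapsto h^{\ast,\alpha}_{A,x}$ follows from uniqueness of the minimizer and standard arguments (e.g.\ the resolvent/projection representation of $h^{\ast,\alpha}_{A,x}$, or approximation of $\kappa$ by finitely supported measures for which the identity is an elementary finite sum and then passing to the limit using the energy bounds above). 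A small additional point to address is the uniform lower bound on $\mathrm{cap}^\alpha_A(x)$ over $x$ in a set of full $\kappa$-measure, which for $\alpha>0$ is immediate and for $\alpha=0$ uses the transience estimate of Lemma~\ref{L:dtrans}; this ensures the integrand $\mathrm{cap}^\alpha_A(x)^{-1}$ is not too large where $\kappa$ has mass.
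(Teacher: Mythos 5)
Your argument is correct in substance and rests on exactly the same two ingredients as the paper's proof: the orthogonality $\mathcal E_\alpha(h^{\ast,\alpha}_{A,y},g)=0$ for $g\in\bar{\mathcal D}_{A\cup\{y\}}(\mathcal E)$ from Lemma~\ref{L:05}(i), and the normalization $\mathcal E_\alpha(h^{\ast,\alpha}_{A,y},h^{\ast,\alpha}_{A,y})=\mathrm{cap}^\alpha_A(y)$; your identity (\ref{e:reproducing}) is precisely the computation the paper performs. The difference is one of direction, and it matters for the technical overhead. You \emph{synthesize} the candidate $G=\int\kappa(\mathrm dx)\,\mathrm{cap}^\alpha_A(\boldsymbol{\cdot})^{-1}h^{\ast,\alpha}_{A,\boldsymbol{\cdot}}(x)$ and then verify condition (a) of Lemma~\ref{L:03}(i), which forces you to push the $\kappa$-integral through $\mathcal E_\alpha(\cdot,g)$ --- the Bochner-integral and measurability issues you rightly flag as the main obstacle. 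The paper instead starts from the minimizer $g^{\ast,\alpha}_A(\kappa,\boldsymbol{\cdot})$, whose existence is already guaranteed by Lemma~\ref{L:03}(ii), and evaluates the single pairing $\mathcal E_\alpha\bigl(h^{\ast,\alpha}_{A,y},g^{\ast,\alpha}_A(\kappa,\boldsymbol{\cdot})\bigr)$ in two ways: Lemma~\ref{L:03}(i)(a) gives $\int\kappa(\mathrm dx)\,h^{\ast,\alpha}_{A,y}(x)$, while your own reproducing decomposition $g=\bigl(g-g(y)h^{\ast,\alpha}_{A,y}\bigr)+g(y)h^{\ast,\alpha}_{A,y}$ applied to $g=g^{\ast,\alpha}_A(\kappa,\boldsymbol{\cdot})$ gives $g^{\ast,\alpha}_A(\kappa,y)\cdot\mathrm{cap}^\alpha_A(y)$. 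Equating the two yields (\ref{GBmin}) pointwise in $y$ with no interchange of integral and bilinear form at all. (The paper writes this out for $\kappa=\delta_x$, but the same pairing works verbatim for general $\kappa$.) Your sketch of the interchange is workable --- the needed domination $\mathrm{cap}^\alpha_A(x)^{-1/2}\le\sqrt{2r(\rho,x)}$ follows from (\ref{con.3}) with $\rho\in A$ rather than from Lemma~\ref{L:dtrans}, and $\int\mathrm d\kappa\,\sqrt{r(\rho,\boldsymbol{\cdot})}<\infty$ then follows from the hypothesis by Cauchy--Schwarz --- but it is work you can avoid entirely by running the argument in the paper's direction.
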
\sm

\begin{proof} Fix $x,y\in T$ with $y\not\in A$. Since
{$h_{A,y}^{\ast,\alpha}\in\bar{\mathcal L}_{A,\{y\}}(\mathcal E_\alpha)$ and ${g_A^{\ast,\alpha}(x,\boldsymbol{\cdot})}\in\bar{\mathcal D}_{A}(\mathcal E_\alpha)$},
\begin{equation}
\label{e:025}
   g_A^{\ast,\alpha}(x,\boldsymbol{\cdot})-g_A^{\ast,\alpha}(x,y)\cdot h_{A,y}^{\ast,\alpha}\in
\bar{\mathcal D}_{\{y\}\cup A}(\mathcal E_\alpha).
\ee

Furthermore, by Lemmata~\ref{L:05} and~\ref{L:03} we find that
\begin{equation}
\label{e:core.1}
\begin{aligned}
   &h^{\ast,\alpha}_{A,y}(x)
  \\
 &=
   {\mathcal E}_\alpha\big(h^{\ast,\alpha}_{A,y}(\boldsymbol{\cdot}),g_A^{\ast,\alpha}(x,{\boldsymbol{\cdot}})\big)
  \\
 &=
   {\mathcal E}_\alpha\big(h^{\ast,\alpha}_{A,y},g_A^{\ast,\alpha}(x,{\boldsymbol{\cdot}})-g_A^{\ast,\alpha}(x,y)\cdot h^{\ast,\alpha}_{A,y}\big)
   +{\mathcal E}_\alpha\big(h^{\ast,\alpha}_{A,y},g_A^{\ast,\alpha}(x,y)\cdot h^{\ast,\alpha}_{A,y}\big)
  \\
 &=
   g_A^{\ast,\alpha}(x,y)\cdot{\mathcal E}_\alpha\big(h^{\ast,\alpha}_{A,y},h^{\ast,\alpha}_{A,y}\big)
  \\
 &=
   g_A^{\ast,\alpha}(x,y)\cdot\mathrm{cap}^\alpha_A(y),
\end{aligned}
\ee
which implies (\ref{GBmin}).
\end{proof}\sm

\begin{cor}[Green kernel; $\alpha=0$, two points]
 Let $(T,r)$ be a locally compact $\R$-tree
 and $\nu$ a Radon measure  on $(T,{\mathcal B}(T))$. Assume furthermore that $(T,r,\nu)$ is such that
 $\mathbf{1}_{T}\in\bar{\mathcal D}({\mathcal E})$. Then for all
 $x,y\in T$ with $x\not =y$, the Green kernel is given by \label{Cor:01}
\begin{equation}\label{GBmin2}
   g^\ast_y\big(x,\boldsymbol{\cdot}\big)
 :=
   2\cdot r\big(c(\boldsymbol{\cdot},x,y),y\big).
\ee
\end{cor}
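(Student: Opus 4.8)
The plan is to apply Proposition~\ref{P:06} with $A=\{y\}$, $\alpha=0$, and $\kappa=\delta_x$, which immediately gives the formula
\[
   g^\ast_y(x,\boldsymbol{\cdot})
 =
   \frac{h^\ast_{\{y\},\boldsymbol{\cdot}}(x)}{\mathrm{cap}_{\{y\}}(\boldsymbol{\cdot})},
\]
provided we can identify the minimizer $h^\ast_{\{y\},\boldsymbol{\cdot}}$ of the capacity problem~(\ref{cap.1}) and the capacity $\mathrm{cap}_{\{y\}}(\boldsymbol{\cdot})$ itself. So the real content is the computation of these two quantities for an arbitrary point in place of $\boldsymbol{\cdot}$.

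First I would fix $z\in T$ with $z\neq y$ and apply Proposition~\ref{P:04}: under the standing assumption $\mathbf 1_T\in\bar{\mathcal D}({\mathcal E})$, the unique minimizer of $\mathrm{cap}_{\{y\}}(\{z\})$ is $h_{z,y}=f_{z,y}/r(z,y)$, where $f_{z,y}(\cdot)=r(c(\cdot,z,y),y)$ as in Example~\ref{Exp:01}, and moreover $\mathrm{cap}_{\{y\}}(z)=(2r(z,y))^{-1}$. Plugging these into the displayed formula from Proposition~\ref{P:06}, evaluated at the point $z$, yields
\[
   g^\ast_y(x,z)
 =
   \frac{h_{z,y}(x)}{\mathrm{cap}_{\{y\}}(z)}
 =
   \frac{f_{z,y}(x)/r(z,y)}{(2r(z,y))^{-1}}
 =
   2\,f_{z,y}(x)
 =
   2\,r\big(c(x,z,y),y\big).
\]
Using the symmetry of the branch point, $c(x,z,y)=c(z,x,y)$ (which follows from~(\ref{e:branch}) and the companion identities stated right after it), this equals $2\,r(c(z,x,y),y)$, which is exactly~(\ref{GBmin2}) with the roles of the variables matched up. Since $z\in T\setminus\{y\}$ was arbitrary, the identity holds for all such $z$, and on $\{y\}$ both sides vanish.

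The one point that needs a little care — and which I expect to be the main (though minor) obstacle — is checking the hypotheses of Proposition~\ref{P:06} are genuinely met: one needs $A=\{y\}$ nonempty and closed (clear), the measure $\kappa=\delta_x$ positive, finite, and satisfying $\int\mathrm d\kappa\,r(\rho,\cdot)=r(\rho,x)<\infty$ for $\rho=y$ (clear, since $T$ is a metric space), and, crucially, the existence of the capacity minimizer $h^{\ast}_{\{y\},\boldsymbol{\cdot}}$. The latter is precisely what Proposition~\ref{P:04} supplies, but only under the hypothesis $\mathbf 1_T\in\bar{\mathcal D}({\mathcal E})$, which is why this corollary carries that assumption in its statement; so I would explicitly invoke Proposition~\ref{P:04} (and Lemma~\ref{L:08} behind it) to justify that the minimizer exists for every choice of the free endpoint. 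Once these bookkeeping checks are in place the proof is a two-line substitution.
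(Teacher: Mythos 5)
Your proposal is correct and follows essentially the same route as the paper's own proof: invoke Proposition~\ref{P:06} with $A=\{y\}$, $\alpha=0$, $\kappa=\delta_x$, identify the capacity minimizer and $\mathrm{cap}_{y}(z)=(2r(z,y))^{-1}$ via Proposition~\ref{P:04} (with Lemma~\ref{L:08} guaranteeing existence under the hypothesis $\mathbf 1_T\in\bar{\mathcal D}({\mathcal E})$), and substitute. Your extra remark on the symmetry of the branch point $c(\cdot,\cdot,\cdot)$ is a harmless bookkeeping step the paper leaves implicit.
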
\sm

\begin{proof} {Fix $y,z\in T$, and let $h_{y,z}$ be as defined in Lemma~\ref{L:08}. Since $h_{y,z}\in\bar{\mathcal L}_{y,z}$
by assumption of the corollary together with
Lemma~\ref{L:08}, we can follow from part(ii) of Lemma~\ref{L:05} that a unique minimizer $h^\ast_{y,z}$ to (\ref{cap.1}) exists.}   So we {are in a position to} apply Proposition~\ref{P:06} with $A:=\{y\}$, $\alpha:=0$ and $\kappa:=\delta_{x}$. {Thus,} $g^\ast_y(x,z)=\tfrac{h^\ast_{y,z}(x)}{\mathrm{cap}_{y}(z)}$.
By Proposition~\ref{P:04}, $h^\ast_{y,z}(x)=\frac{r(c(z,x,y),y)}{r(z,y)}$ and $\mathrm{cap}_{y}(z)=\tfrac{1}{2r(z,y)}$. The result {therefore} follows immediately.
\end{proof}\sm

\begin{remark}[Resolvent]\rm \label{resolvent} For $x,y \in T$ and a bounded measurable $f: T \rightarrow \R$, put
\begin{equation}
\label{e:026}
   G^{y}f(x)
 :=
   \int_T\mathrm{d}\nu\, {g^\ast_y\big(\boldsymbol{\cdot},x\big)}\cdot f.
\ee

By Lemma~\ref{L:03}(i),
\begin{equation}
\label{e:027}
   {\mathcal E}\big(G^{y}f, h\big)
 :=
   \int_T\mathrm{d}\nu\, h\cdot f,
\ee
for all $h\in\bar{\mathcal D}_{y}({\mathcal E})$. As usual, we refer to $G^{y}$ as the resolvent corresponding to ${\mathcal E}$.
{\hfill$\qed$}
\end{remark}\sm

\subsection{Relation between resistance and capacity}
\label{Sub:resistance}
In this subsection we define a notion of resistance and discuss its connection to capacity. We will use this in Section~\ref{S:transinfty} where we provide the proof of Theorem~\ref{T:trareha}.

Fix a root $\rho\in T$, assume  $E_{\infty} \not = \emptyset$, and recall from (\ref{brach}) the last common lower bound $x\wedge y$ for any two $x,y\in  E_{\infty}$. We define the {\em mutual energy},
$\bar{\mathcal E}_\rho(\pi,\mu)$, of two probability measures $\pi$ and $\mu$ on $(E_\infty,\mathcal{B}(E_\infty))$  by
\begin{equation}
\label{e:014b}
   \bar{\mathcal E}_\rho\big(\pi,\mu\big)
 :=
   2\int\pi(\mathrm{d}x)\,\int\mu(\mathrm{d}y)\,r\big(\rho,x\wedge y\big).
\ee

Moreover, we introduce
the corresponding {\em resistance} of $T$ {with respect to $\rho$} by
\begin{equation}
\label{e:resis}
   \bar{\mathrm{res}}_{\rho}
 :=
   \inf\big\{\bar{\mathcal E}_\rho(\pi,\pi):\,\pi\in \mathcal
   M_1(E_\infty)\big\},
\ee
where ${\mathcal M}_1(E_\infty)$ denotes the space of all probability measure on $(E_\infty,{\mathcal B}(E_\infty))$.
\sm


\begin{proposition}
Let $(T,r)$ be a locally compact and unbounded $\R$-tree and $\nu$ be a Radon measure on $(T,{\mathcal B}(T))$.  $\rho\in T$ a distinguished root. Then for all $\rho\in T$, \label{P:07}
\begin{equation}\label{P:07.1a}
   \bar{\mathrm{res}}_{\rho}
 \ge
   {\big(\mathrm{cap}(\rho)\big)^{-1}}.
\ee
\end{proposition}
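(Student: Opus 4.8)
The plan is to establish the dual bound
\[
\mathcal E(f,f)\ge\big(\bar{\mathcal E}_\rho(\pi,\pi)\big)^{-1}
\]
for every $\pi\in\mathcal M_1(E_\infty)$ and every $f\in\bar{\mathcal L}_{\emptyset,\{\rho\}}=\{f\in\bar{\mathcal D}(\mathcal E):f(\rho)=1\}$, and then to take infima. Indeed, for fixed $\pi$ the infimum over $f$ turns this into $\mathrm{cap}(\rho)\ge(\bar{\mathcal E}_\rho(\pi,\pi))^{-1}$ (by \eqref{cap.1} with $\alpha=0$ and $A=\emptyset$), equivalently $\bar{\mathcal E}_\rho(\pi,\pi)\ge(\mathrm{cap}(\rho))^{-1}$; since $\pi$ is arbitrary, passing to the infimum over $\mathcal M_1(E_\infty)$ yields \eqref{P:07.1a}. (Reciprocals of $0$ and $\infty$ are read in $[0,\infty]$, so the recurrent case $\mathrm{cap}(\rho)=0$, which forces $\bar{\mathcal E}_\rho(\pi,\pi)=\infty$ for every $\pi$, is covered automatically.) We may therefore assume $\bar{\mathcal E}_\rho(\pi,\pi)<\infty$ and, in the infimum, $\mathcal E(f,f)<\infty$.

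First I would rewrite the mutual energy as a flow energy on the skeleton. For $z\in T$ and $e\in E_\infty$ write $z\le_\rho e$ when $z$ lies on the ray $e(\R_+)$. Since for $x,y\in E_\infty$ the common initial segment of the rays to $x$ and $y$ is $[\rho,x\wedge y)$, one has $r(\rho,x\wedge y)=\int_T\lambda^{(T,r)}(\mathrm dz)\,\mathbf 1\{z\le_\rho x\}\,\mathbf 1\{z\le_\rho y\}$. Substituting this into \eqref{e:014b} and applying Tonelli gives
\[
\bar{\mathcal E}_\rho(\pi,\pi)=2\int_T\lambda^{(T,r)}(\mathrm dz)\,J_\pi(z)^2,\qquad J_\pi(z):=\pi\big(\{e\in E_\infty:z\le_\rho e\}\big),
\]
so that $\bar{\mathcal E}_\rho(\pi,\pi)<\infty$ means precisely $J_\pi\in L^2(\lambda^{(T,r)})$.

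The central step is the pairing identity $\int_T\lambda^{(T,r)}(\mathrm dz)\,\nabla f(z)\,J_\pi(z)=-f(\rho)=-1$. Since $\int_T|\nabla f|\,J_\pi\,\mathrm d\lambda^{(T,r)}\le(2\mathcal E(f,f))^{1/2}(\tfrac12\bar{\mathcal E}_\rho(\pi,\pi))^{1/2}<\infty$ by Cauchy--Schwarz, Fubini applies and the left-hand side equals $\int\pi(\mathrm de)\,\Xi(e)$ with $\Xi(e):=\int_T\lambda^{(T,r)}(\mathrm dz)\,\nabla f(z)\,\mathbf 1\{z\le_\rho e\}$. By Proposition~\ref{P:grad} and \eqref{osi}, $\int_\rho^{e(t)}\lambda^{(T,r)}(\mathrm dz)\,\nabla f(z)=f(e(t))-f(\rho)$, so $\Xi(e)=\lim_{t\to\infty}(f(e(t))-f(\rho))=L(e)-f(\rho)$, where $L(e):=\lim_{t\to\infty}f(e(t))$ exists for $\pi$-a.e.\ $e$ because $\int\pi(\mathrm de)\int_\rho^{\,e}|\nabla f|\,\mathrm d\lambda^{(T,r)}<\infty$. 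It remains to show $\int L\,\mathrm d\pi=0$. Choose an $\mathcal E$-Cauchy sequence $(g_n)$ in $\mathcal D(\mathcal E)\subseteq\mathcal C_\infty(T)$ with $g_n\to f$ in $\mathcal E$ and $\nu$-a.e.; then $\nabla g_n\to\nabla f$ in $L^2(\lambda^{(T,r)})$, and since each $g_n$ vanishes at infinity, $L(e)=\lim_t(f-g_n)(e(t))=(f-g_n)(\rho)+\int_\rho^{\,e}\nabla(f-g_n)\,\mathrm d\lambda^{(T,r)}$. Integrating against $\pi$ gives $\int L\,\mathrm d\pi=(f-g_n)(\rho)+\int_T J_\pi\,\nabla(f-g_n)\,\mathrm d\lambda^{(T,r)}=(f-g_n)(\rho)+o(1)$. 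Finally $g_n(\rho)\to f(\rho)$: picking a point $y_0$ with $g_n(y_0)\to f(y_0)$ (possible $\nu$-a.e.) and using $g_n(\rho)=g_n(y_0)-\int_\rho^{y_0}\nabla g_n\,\mathrm d\lambda^{(T,r)}$, the integral over the fixed finite arc $[\rho,y_0]$ converges to $f(y_0)-f(\rho)$, whence $g_n(\rho)\to f(\rho)$ and $\int L\,\mathrm d\pi=0$.

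With both steps in place, Cauchy--Schwarz finishes the argument:
\[
1=\Big(\int_T\lambda^{(T,r)}(\mathrm dz)\,\nabla f(z)\,J_\pi(z)\Big)^2\le\int_T\lambda^{(T,r)}(\mathrm dz)\,(\nabla f(z))^2\cdot\int_T\lambda^{(T,r)}(\mathrm dz)\,J_\pi(z)^2=\mathcal E(f,f)\,\bar{\mathcal E}_\rho(\pi,\pi),
\]
which is the dual bound. The main obstacle is the pairing identity of the previous paragraph, specifically the passage to the extended Dirichlet space: one must show that $f$ ``vanishes at infinity'' along $\pi$-almost every ray, i.e.\ $\int L\,\mathrm d\pi=0$. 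This is where connectedness of $(T,r)$ and the $L^2(\lambda^{(T,r)})$-convergence of the gradients are used, both to identify $L$ and to transport the $\nu$-a.e.\ convergence through a typical reference point $y_0$ in order to control the boundary contribution $(f-g_n)(\rho)$.
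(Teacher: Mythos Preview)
Your proof is correct and follows essentially the same route as the paper's Lemma~\ref{L:07}: rewrite $\bar{\mathcal E}_\rho(\pi,\pi)$ as $2\int J_\pi^2\,\mathrm d\lambda^{(T,r)}$ via Fubini, pair $\nabla f$ against $J_\pi$, and apply Cauchy--Schwarz. Your justification that the boundary contribution $\int L\,\mathrm d\pi$ vanishes for $f$ in the \emph{extended} Dirichlet space (by approximating with $g_n\in\mathcal D(\mathcal E)\subset\mathcal C_\infty(T)$ and controlling both the gradient term and the value at $\rho$) is in fact more careful than the paper, which simply asserts $\big(\int\pi(\mathrm dy)\int_{y(\R_+)}\nabla h\,\mathrm d\lambda^{(T,r)}\big)^2=\big(\int\pi(\mathrm dy)\,h(\rho)\big)^2$ without further comment.
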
\sm

The proof of Proposition~\ref{P:07} relies on the following lemma.
\begin{lemma}
Let $(T,r)$ be a locally compact $\R$-tree
such that $E_\infty \not = \emptyset$, { and $\nu$ be a Radon measure on $(T,{\mathcal B}(T))$.}
\label{L:07}
{For all $\pi\in{\mathcal M}_1(E_\infty)$ and  $h\in\bar{\mathcal D}({\mathcal E})$ with $h(\rho)=1$, }
\begin{equation}\label{e:020}
  \bar{\mathcal E}_\rho\big(\pi,\pi\big)\cdot {\mathcal E}\big(h,h\big)\ge 1.
\ee
\end{lemma}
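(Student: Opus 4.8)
The plan is to build a test function on $T$ out of the measure $\pi$ and the tree structure, and then compare its energy with the mutual energy $\bar{\mathcal E}_\rho(\pi,\pi)$. Given $\pi\in{\mathcal M}_1(E_\infty)$, the natural candidate is the ``potential'' $u(z) := \pi\{y\in E_\infty : z\le_\rho y\}$, i.e.\ the $\pi$-mass of the ends lying above $z$ in the $\rho$-rooted order. Note $u(\rho)=1$. First I would check that $u$ is locally absolutely continuous with gradient supported on the skeleton, and compute $\nabla u$: along any ray from the root, $u$ is nonincreasing, and the amount of mass ``lost'' between levels corresponds exactly to branching, so that, writing $s=r(\rho,\cdot)$ along a geodesic from $\rho$, $u$ decreases and $\int\lambda^{(T,r)}(\mathrm d z)\,(\nabla u(z))^2$ can be expressed via a layer-cake / Fubini computation over pairs of ends. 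The key identity I expect is
\begin{equation}
\label{e:plan1}
   {\mathcal E}(u,u)=\tfrac{1}{2}\int\lambda^{(T,r)}(\mathrm d z)\,(\nabla u(z))^2
 =
   \tfrac{1}{\bar{\mathcal E}_\rho(\pi,\pi)}\quad\text{or, more robustly,}\quad
   {\mathcal E}(u,u)\le \tfrac{1}{\bar{\mathcal E}_\rho(\pi,\pi)},
\end{equation}
obtained by writing $(\nabla u(z))^2 = \big(\pi\{y: z\le_\rho y\}\big)^2 = \int\!\!\int \mathbf 1\{z\le_\rho x\}\mathbf 1\{z\le_\rho y\}\,\pi(\mathrm d x)\pi(\mathrm d y)$ and integrating in $z$ along the skeleton, using that $\{z : z\le_\rho x \text{ and } z\le_\rho y\}=[\rho,x\wedge y]$ has length measure $r(\rho,x\wedge y)$. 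This last step gives $\int\lambda^{(T,r)}(\mathrm d z)(\nabla u(z))^2 = 2\int\!\!\int r(\rho,x\wedge y)\,\pi(\mathrm d x)\pi(\mathrm d y) = \bar{\mathcal E}_\rho(\pi,\pi)$, hence ${\mathcal E}(u,u)=\tfrac12\bar{\mathcal E}_\rho(\pi,\pi)$ — wait, that is off; the correct normalization is that $u$ is \emph{not} the minimizer but $u/\bar{\mathcal E}_\rho(\pi,\pi)$ essentially is, so the genuinely useful inequality is Cauchy--Schwarz applied the other way.

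More carefully: for any $h\in\bar{\mathcal D}({\mathcal E})$ with $h(\rho)=1$, and for $\pi$-a.e.\ end $x$ (with $\phi_x:\R_+\to T$ the associated ray), I would write $1 = h(\rho) = h(\rho) - \lim_{t\to\infty}\mathbf 1\{\cdots\}$... the clean route is: integrate the identity $h(\rho)-h(z) = -\int_\rho^z \lambda^{(T,r)}(\mathrm d w)\nabla h(w)$ against $\pi$, localize along rays, and use that $u(z)=\pi\{y:z\le_\rho y\}$ is exactly the Radon--Nikodym factor that turns a double integral over $E_\infty\times E_\infty$ into a single integral over the skeleton. Concretely,
\begin{equation}
\label{e:plan2}
   1 = \int\pi(\mathrm d x)\,h(\rho)
     = \int\lambda^{(T,r)}(\mathrm d z)\,\big(-\nabla h(z)\big)\,u(z)
\end{equation}
(using $h$ vanishing suitably at infinity along $\pi$-a.e.\ ray, or truncating and passing to the limit along the approximating sequence defining $h\in\bar{\mathcal D}({\mathcal E})$), and then by Cauchy--Schwarz
\begin{equation}
\label{e:plan3}
   1 \le \Big(\int\lambda^{(T,r)}(\mathrm d z)\,(\nabla h(z))^2\Big)^{1/2}
        \Big(\int\lambda^{(T,r)}(\mathrm d z)\,u(z)^2\Big)^{1/2}
     = \sqrt{2{\mathcal E}(h,h)}\cdot\sqrt{\tfrac12\bar{\mathcal E}_\rho(\pi,\pi)}
     = \sqrt{{\mathcal E}(h,h)\,\bar{\mathcal E}_\rho(\pi,\pi)},
\end{equation}
which is exactly (\ref{e:020}) after squaring. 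Here I used the Fubini identity $\int\lambda^{(T,r)}(\mathrm d z)\,u(z)^2 = 2\int\!\!\int r(\rho,x\wedge y)\pi(\mathrm d x)\pi(\mathrm d y) = \bar{\mathcal E}_\rho(\pi,\pi)$.

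The main obstacle I anticipate is the justification of (\ref{e:plan2}) — i.e.\ that one may ``integrate by parts'' a general element $h$ of the \emph{extended} Dirichlet space $\bar{\mathcal D}({\mathcal E})$ against the measure $\pi$ on ends, which live at infinity and are not points of $T$. For $h\in{\mathcal D}({\mathcal E})\subseteq{\mathcal C}_\infty(T)$ this is fine since $h(\phi_x(t))\to 0$ as $t\to\infty$ for every end, so $h(\rho)=\lim_t\big(h(\rho)-h(\phi_x(t))\big)=-\int_0^\infty \nabla h(\phi_x(t))\,\mathrm d t$ and Fubini (justified by $\nabla h\in L^2(\lambda^{(T,r)})$ together with $u\le 1$ and $\mathrm{supp}\,u$ having $\sigma$-finite length) gives (\ref{e:plan2}). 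For general $h\in\bar{\mathcal D}({\mathcal E})$ with $h(\rho)=1$ one takes an ${\mathcal E}$-Cauchy sequence $h_n\in{\mathcal D}({\mathcal E})$ with $h_n\to h$ $\nu$-a.e.; I would normalize so that $h_n(\rho)\to 1$ (possible since pointwise evaluation at $\rho$ is ${\mathcal E}_1$-bounded by Lemma~\ref{L:02}'s proof, or by passing to $h_n/h_n(\rho)$), apply (\ref{e:plan3}) to each $h_n$, and let $n\to\infty$ using ${\mathcal E}(h_n,h_n)\to{\mathcal E}(h,h)$. The only genuinely delicate point is checking that $u\in\bar{\mathcal D}({\mathcal E})$-type regularity (local absolute continuity and the gradient formula) holds for this ``indicator-potential'' $u$ and that its gradient really is $-\mathbf 1_{[\rho,\cdot]}$-weighted by $\pi$-mass; this follows from the definition of the length measure (\ref{length}) and Axiom~1, by verifying the orientation-sensitive integration identity (\ref{osi}) for $u$ exactly as in Example~\ref{Exp:04}, now integrated against $\pi$.
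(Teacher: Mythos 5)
Your proposal is correct and follows essentially the same route as the paper: both define the potential $u(z)=\pi\{x\in E_\infty:\,z\le_\rho x\}$, use Fubini to get $\bar{\mathcal E}_\rho(\pi,\pi)=2\int\lambda^{(T,r)}(\mathrm{d}z)\,u(z)^2$, and apply Cauchy--Schwarz to $\int\lambda^{(T,r)}(\mathrm{d}z)\,\nabla h\cdot u=\pm h(\rho)$ (the paper credits the idea to Lyons, and in fact glosses over the boundary-behaviour issue you rightly flag for general $h\in\bar{\mathcal D}({\mathcal E})$). One small bookkeeping slip: your closing ``Fubini identity'' $\int\lambda^{(T,r)}(\mathrm{d}z)\,u(z)^2=2\iint r(\rho,x\wedge y)\,\pi(\mathrm{d}x)\pi(\mathrm{d}y)$ has a spurious factor of $2$ (the correct value is $\tfrac12\bar{\mathcal E}_\rho(\pi,\pi)$, which is what your main inequality chain actually uses, so the argument stands).
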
\sm

\begin{proof}  We follow an idea of \cite{Lyo90}.
Notice first that by Fubini's theorem,
\begin{equation}\label{e:022}
\begin{aligned}
   \bar{\mathcal E}_\rho(\pi,\pi)
 &=
   2\int\pi(\mathrm{d}x)\int\pi(\mathrm{d}y)\,
   \int_{[\rho,x\wedge y]}\lambda^{(T,r)}(\mathrm{d}z)
  \\
 &=
   2\int\lambda^{(T,r)}(\mathrm{d}z)\,\pi\big\{x\in E_\infty:\,{z\in x(\R_+)}\big\}^2.
\end{aligned}
\ee

By the Cauchy-Schwarz inequality,
\begin{equation}\label{e:021}
\begin{aligned}
   {\mathcal E}\big(h,h\big)\bar{\mathcal E}_\rho\big(\pi,\pi\big)
 &\ge
   \Big(\int\lambda^{(T,r)}(\mathrm{d}z)\nabla h(z)
   \pi\{x\in E_\infty:\,{z\in x(\R_+)}\Big)^2
  \\
 &=
   \Big(\int\pi(\mathrm{d}y)\,\int_{{y(\R_+)}}\lambda^{(T,r)}(\mathrm{d}z)\nabla
   h(z)\Big)^2
  \\
 &=
   \big(\int\pi(\mathrm{d}y)\, h(\rho) \big)^2 = 1,
\end{aligned}
\ee
and the claim follows.
\end{proof}\sm

\begin{proof}[Proof of Proposition~\ref{P:07}]
The statement holds trivially when  $ \bar{\mathrm{res}}_{\rho}=\infty$.

Assume therefore that  $\bar{\mathrm{res}}_{\rho}<\infty$. By (\ref{e:020}),
\begin{equation}
\label{e:proofres}
\begin{aligned}
   \bar{\mathrm{res}}_{\rho}
 &=\inf\big\{\bar{{\mathcal E}}_\rho(\pi,\pi):\,\pi\in{\mathcal M}_1(E_\infty)\big\}
  \\
 &\ge
   \big(\inf\{{\mathcal E}(h,h):\,h\in\bar{{\mathcal D}}({\mathcal E}),h(\rho)=1\}\big)^{-1}
  \\
 &=\big(\mathrm{cap}({\rho})\big)^{-1}.
\end{aligned}
\end{equation}
\end{proof}\sm

\section{Existence, uniqueness, basic properties (Proof of Theorem~\ref{T:01})}
\label{S:existence}
In this section we establish existence and uniqueness (up to $\nu$-equivalence) of a strong Markov process associated with the Dirichlet form $({\mathcal E},{\mathcal D}({\mathcal E}))$.
The proof will rely on regularity as specified by the following proposition.
{\begin{proposition}[Regularity]
Let $(T,r)$ be a locally compact $\R$-tree
and $\nu$ a Radon measure on $(T,{\mathcal B}(T))$. 
Then the Dirichlet form $({\mathcal E},{\mathcal D}({\mathcal E}))$ is {\em regular}, i.e.,
\begin{itemize}
\item[(i)] ${\mathcal D}({\mathcal E})\cap {\mathcal C}_0(T)$ is dense in ${\mathcal D}({\mathcal E})$ with respect to the topology generated by ${\mathcal E}_1$.
\item[(ii)] ${\mathcal D}({\mathcal E})\cap {\mathcal C}_0(T)$ is dense in ${\mathcal C}_0(T)$ with respect to the uniform topology.
\end{itemize}
\label{L:04}
\end{proposition}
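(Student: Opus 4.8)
The plan is to establish the two density statements in the natural order, using the explicit functions $f_{a,b}$ and $g_a$ from Examples~\ref{Exp:04} and~\ref{Exp:01} as the building blocks, since these have piecewise-constant gradients and are therefore easy to control in the ${\mathcal E}_1$-norm. First I would reduce (i) to (ii) in the usual way: if ${\mathcal D}({\mathcal E})\cap{\mathcal C}_0(T)$ is ${\mathcal E}_1$-dense in ${\mathcal D}({\mathcal E})$, then for (ii) it suffices to show that this algebra separates points of $T$, contains a function nonvanishing at each point, and is closed under the operations allowing a Stone--Weierstrass argument on one-point compactifications; the functions $x\mapsto r(a,x)\wedge c$ and their truncations live in ${\mathcal D}({\mathcal E})\cap{\mathcal C}_0(T)$ once we know $\mathbf{1}_K$-type cutoffs are available, and they clearly separate points because $r(a,\cdot)$ does. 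So the real content is (i).

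For (i), fix $f\in{\mathcal D}({\mathcal E})={\mathcal F}\cap{\mathcal C}_\infty(T)\cap L^2(\nu)$ and $\varepsilon>0$; I want to produce $g\in{\mathcal D}({\mathcal E})\cap{\mathcal C}_0(T)$ with ${\mathcal E}_1(f-g,f-g)<\varepsilon$. The idea is to multiply $f$ by a cutoff. Since $(T,r)$ is locally compact and separable, fix a root $\rho$ and consider $\psi_n(x):=\big(1-(r(\rho,x)-n)_+\big)_+$, which equals $1$ on $\bar B(\rho,n)$, vanishes outside $\bar B(\rho,n+1)$, and is $1$-Lipschitz; by Example~\ref{Exp:04} its gradient is bounded by $1$ in absolute value and supported on the annulus $\bar B(\rho,n+1)\setminus\bar B(\rho,n)$, so $\psi_n\in{\mathcal F}$, and $\psi_n\in{\mathcal C}_0(T)$ because closed bounded sets in a locally compact $\R$-tree are compact (Lemma~5.9 of \cite{Kigami95}). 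A computation parallel to (\ref{e:calcc2})--(\ref{dirform}) gives
\begin{equation}
\label{e:cutoffbound}
   {\mathcal E}(f-\psi_n f,f-\psi_n f)
 \le
   2\int_{T\setminus\bar B(\rho,n)}\lambda^{(T,r)}(\mathrm{d}z)\,(\nabla f)^2
 + 2\,\|f\|_\infty^2\,\lambda^{(T,r)}\big(\bar B(\rho,n+1)\setminus\bar B(\rho,n)\big),
\end{equation}
where $\|f\|_\infty<\infty$ since $f\in{\mathcal C}_\infty(T)$, using $\nabla(\psi_n f)=\psi_n\nabla f+f\nabla\psi_n$ and the support properties of $\nabla\psi_n$. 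The first term on the right of (\ref{e:cutoffbound}) tends to $0$ as $n\to\infty$ because $\nabla f\in L^2(\lambda^{(T,r)})$; likewise $\|f-\psi_n f\|_{L^2(\nu)}^2\le\int_{T\setminus\bar B(\rho,n)}|f|^2\,\mathrm d\nu\to0$. The one genuinely delicate point is the second term on the right of (\ref{e:cutoffbound}): we do not know that the annular length measures go to zero along a fixed sequence of radii. But $\lambda^{(T,r)}$ is $\sigma$-finite, and to conclude I would instead choose the cutoff radius from a good set: since $\int_{T\setminus\bar B(\rho,1)}\big((\nabla f)^2+|f|^2\mathbf 1_{\{\lambda^{(T,r)}(\cdot)<\infty\}}\big)$ decomposes over the annuli $A_k:=\bar B(\rho,k+1)\setminus\bar B(\rho,k)$ and $\lambda^{(T,r)}(A_k)<\infty$ for each $k$ (local compactness), one can pass to a telescoping cutoff that interpolates over several annuli of small total $f$-weighted length; more simply, replace the sharp annulus by a cutoff $\tilde\psi_{n,m}$ that decreases linearly from $1$ at radius $n$ to $0$ at radius $m>n$, whose gradient has absolute value $1/(m-n)$ on $\bar B(\rho,m)\setminus\bar B(\rho,n)$, so that the analogue of the second term becomes $2\|f\|_\infty^2\lambda^{(T,r)}(\bar B(\rho,m)\setminus\bar B(\rho,n))/(m-n)^2$, which can be made small by choosing $m-n$ large along a subsequence where $\lambda^{(T,r)}(\bar B(\rho,m))$ does not grow too fast — and if it does grow fast, one iterates over blocks. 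I expect this quantitative choice of cutoff to be the main obstacle, and the cleanest route is the linear-ramp cutoff $\tilde\psi_{n,m}$ together with the observation that $\sum_k \lambda^{(T,r)}(A_k)^{-1}$-type weights are irrelevant once we allow the ramp width to depend on the data.

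Granting the cutoff step, $g:=\tilde\psi_{n,m}f\in{\mathcal F}\cap{\mathcal C}_0(T)\subseteq{\mathcal D}({\mathcal E})\cap{\mathcal C}_0(T)$ and ${\mathcal E}_1(f-g,f-g)<\varepsilon$, proving (i). For (ii), I would now run Stone--Weierstrass on $T\cup\{\infty\}$: the set ${\mathcal A}_0:={\mathcal D}({\mathcal E})\cap{\mathcal C}_0(T)$ contains, for every $a\in T$ and $c>0$, the function $x\mapsto\big(c-r(a,x)\big)_+$ — it is $1$-Lipschitz with gradient bounded by $1$ (Example~\ref{Exp:04} plus the chain rule / Lemma~\ref{L:14}-type truncation), has compact support since closed balls are compact, hence lies in ${\mathcal F}\cap{\mathcal C}_0(T)\cap L^2(\nu)={\mathcal D}({\mathcal E})\cap{\mathcal C}_0(T)$. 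These functions separate points of $T$ and, for each $x\in T$, some member is nonzero at $x$; moreover ${\mathcal A}_0$ is stable under $\min$, $\max$, and multiplication by Lipschitz functions of a single coordinate (using $2xy\le x^2+y^2$ exactly as in the proof of Lemma~\ref{L:13}), so the lattice version of the Stone--Weierstrass theorem on the locally compact space $T$ yields that $\overline{{\mathcal A}_0}^{\,\|\cdot\|_\infty}={\mathcal C}_0(T)$. This establishes (ii) and completes the proof of regularity.
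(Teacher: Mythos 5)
Your plan founders on step (i), and the obstruction is genuine. A first problem: $\psi_n$ is supported in $\bar B(\rho,n+1)$, but closed balls in a locally compact $\R$-tree are compact only when the tree is \emph{complete} (Lemma~5.9 of \cite{Kigami95} assumes completeness), and the proposition is explicitly meant to cover incomplete trees; so the claim $\psi_nf\in{\mathcal C}_0(T)$ is already unjustified. The more serious problem is the error term $\int|f|^2|\nabla\psi|^2\,\mathrm d\lambda^{(T,r)}$. You only know $f\in L^2(\nu)\cap{\mathcal C}_\infty(T)$, which gives no control on $\int|f|^2\,\mathrm d\lambda^{(T,r)}$ over unbounded regions, and the length measure of annuli can grow arbitrarily fast with the radius (exponentially already for a regular tree with unit edge lengths), or even be infinite on bounded sets in the incomplete case. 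A linear ramp of width $m-n$ yields the bound $\sup_{T\setminus\bar B(\rho,n)}|f|^2\cdot\lambda^{(T,r)}\big(\bar B(\rho,m)\setminus\bar B(\rho,n)\big)/(m-n)^2$, and nothing forces this to be small for any admissible choice of $n,m$: the decay of $f$ and the growth of $\lambda^{(T,r)}$ are completely unrelated. ``Iterating over blocks'' does not repair this.

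The paper avoids the spatial cutoff altogether. Since ${\mathcal D}({\mathcal E})\cap{\mathcal C}_\infty(T)={\mathcal D}({\mathcal E})$, Lemma~1.4.2(i) of \cite{FukushimaOshimaTakeda1994} reduces regularity --- both (i) and (ii) --- to the single statement that ${\mathcal D}({\mathcal E})$ is uniformly dense in ${\mathcal C}_\infty(T)$. The ${\mathcal E}_1$-density (i) then comes from the general theory, the underlying mechanism being a truncation of the \emph{values} rather than of the space: $f^\varepsilon:=f-(f\vee(-\varepsilon))\wedge\varepsilon$ lies in ${\mathcal C}_0(T)$ because $\{|f|\ge\varepsilon\}$ is compact for $f\in{\mathcal C}_\infty(T)$, and $f^\varepsilon\to f$ in ${\mathcal E}_1$ by the normal-contraction property (Theorem~1.4.1 in \cite{FukushimaOshimaTakeda1994}); this is exactly the step your annulus estimate cannot deliver. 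The remaining uniform density is proved constructively in the paper: finite $\delta_n$-nets $V_n$ closed under taking branch points, piecewise-linear interpolations $h_{p,V_n\setminus\{p\}}$ on the spanned subtree, multiplied by cutoffs from Corollary~\ref{Cor:06}, which are built via Urysohn's lemma on a compact subtree $\bar U$ rather than via metric balls. Your Stone--Weierstrass route to (ii) is workable in spirit but relies on the same unavailable compactness of balls; the FOT reduction and the value-truncation are the ingredients your argument is missing.
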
\sm

The proof will rely on the following lemma:

\begin{lemma} Let $(T,r)$ be a locally compact and complete $\R$-tree, and $A\subseteq T$ non-empty and closed.
${\mathcal F}\cap {\mathcal C}_0(T)$ is dense in ${\mathcal C}_0(T)$ with respect to the uniform topology.
\label{L:09}
\end{lemma}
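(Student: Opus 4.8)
I would prove this by reducing it to the Stone--Weierstrass theorem for algebras in $C_0$ of a locally compact Hausdorff space. (The set $A$ in the hypothesis plays no role in the conclusion, so I ignore it.)

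\emph{Step 1: ${\mathcal F}\cap{\mathcal C}_0(T)$ is a subalgebra of ${\mathcal C}_0(T)$.} It is clearly a linear subspace, and if $f,g\in{\mathcal F}\cap{\mathcal C}_0(T)$ then $fg$ again has compact support and $\nabla(fg)=f\nabla g+g\nabla f\in L^2(\lambda^{(T,r)})$, because $f,g$ are bounded while $\nabla f,\nabla g\in L^2(\lambda^{(T,r)})$. By the $C_0$-version of Stone--Weierstrass it therefore suffices to show that ${\mathcal F}\cap{\mathcal C}_0(T)$ separates points of $T$ and vanishes at no point. Both will follow once I construct, for each $x\in T$ and each small enough $\delta>0$, a function $\psi\in{\mathcal F}\cap{\mathcal C}_0(T)$ with $0\le\psi\le 1$, $\psi(x)\ge\tfrac12$ and $\operatorname{supp}\psi\subseteq\bar B(x,2\delta)$: nonvanishing is then immediate, and for $x\neq y$ one takes $2\delta<r(x,y)$, so that $\psi(y)=0\neq\psi(x)$.

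\emph{Step 2: the key point and the building blocks.} The subtlety is that the naive cut-off $(\delta-r(x,\cdot))^{+}$ need not lie in ${\mathcal F}$: a ball in an $\R$-tree can carry infinite length measure, so such a radial cut-off can have infinite Dirichlet energy. To get around this I use the functions $f_{a,b}$ of Example~\ref{Exp:01}: their gradient $\mathbf 1_{[a,a\wedge b]}-\mathbf 1_{[a\wedge b,b]}$ is supported on the single arc $[a,b]$, of finite length $r(a,b)$, so $f_{a,b}\in{\mathcal F}$, and any polynomial in finitely many $f_{a_i,b_i}$ again lies in ${\mathcal F}$ (its gradient is bounded and supported on the finite-length set $\bigcup_i[a_i,b_i]$). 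Since $(T,r)$ is complete and locally compact, for small $\delta$ the set $S:=\bar B(x,2\delta)$ is compact; being a closed ball it is geodesically convex, hence a sub-$\R$-tree of $T$, so its length measure is $\lambda^{(T,r)}$ restricted to its skeleton. On the compact $\R$-tree $S$ the functions $f_{a,b}^{S}$, $a,b\in S$, lie in ${\mathcal F}^{(S,r)}\cap{\mathcal C}(S)$, separate points of $S$, and (via $f_{a,b}^{S}+f_{b,a}^{S}\equiv r(a,b)$) generate an algebra containing the constants; by Stone--Weierstrass on the compact space $S$ this algebra is uniformly dense in ${\mathcal C}(S)$.

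\emph{Step 3: assembling the cut-off.} Fix a Urysohn function $u\in{\mathcal C}(S)$ with $0\le u\le 1$, $u\equiv 1$ on $\bar B(x,\delta)$ and $u\equiv 0$ on $S\setminus B(x,2\delta)$ (which contains $\partial_{T}S$), choose $\psi_{0}\in{\mathcal F}^{(S,r)}\cap{\mathcal C}(S)$ with $\|\psi_{0}-u\|_{\infty,S}<\tfrac14$, and put $\psi_{1}:=((\psi_{0}-\tfrac14)^{+})\wedge 1$. Truncations preserve membership in ${\mathcal F}^{(S,r)}$ (cf. Lemma~\ref{L:14}), so $\psi_{1}\in{\mathcal F}^{(S,r)}\cap{\mathcal C}(S)$; moreover $\psi_{1}\ge\tfrac12$ on $\bar B(x,\delta)$, and since $\psi_{0}<\tfrac14$ on the neighbourhood $S\setminus B(x,2\delta)$ of $\partial_{T}S$, continuity of $\psi_{0}$ shows that, once $\psi_{1}$ is extended by $0$ on $T\setminus S$, the resulting function $\psi$ vanishes on a $T$-open neighbourhood of $\partial_{T}S$; in particular $\psi$ is continuous on $T$, with $\operatorname{supp}\psi\subseteq S$ compact, so $\psi\in{\mathcal C}_{0}(T)$. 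Finally $\psi$ is locally absolutely continuous on $T$ (it is $\psi_{1}$ near $S$ and the constant $0$ elsewhere) and its gradient equals $\nabla^{(S,r)}\psi_{1}$ on the skeleton of $S$ and $0$ off $S$, whence $\|\nabla\psi\|_{L^{2}(\lambda^{(T,r)})}=\|\nabla^{(S,r)}\psi_{1}\|_{L^{2}(\lambda^{(S,r)})}<\infty$, i.e.\ $\psi\in{\mathcal F}\cap{\mathcal C}_{0}(T)$. This $\psi$ has the properties demanded in Step~1, which completes the proof. The only genuinely delicate point is the one flagged in Step~2: finiteness of energy has to be secured through the arc-supported generators $f_{a,b}$ together with a compact-subtree Stone--Weierstrass argument, rather than through distance cut-offs, which may fail to have finite energy.
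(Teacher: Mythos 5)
Your proof is correct, but it takes a genuinely different route from the paper's. The paper follows Kigami's Lemma~5.13: for each $n$ it picks a finite $\delta_n$-net $V_n$ of a compact neighbourhood of $\mathrm{supp}(f)$ that is closed under taking branch points, forms the piecewise-linear hat functions $h_{p,V_n\setminus\{p\}}$ (linear on the finite-length subtree spanned by $V_n$, locally constant off it), and sets $\tilde f_n:=\sum_{p\in V_n}f(p)h_{p,V_n\setminus\{p\}}$; uniform continuity of $f$ then gives $\|f-\tilde f_n\|_\infty\le 2/n$. You instead verify the hypotheses of the locally compact Stone--Weierstrass theorem, manufacturing the required bump functions from the arc-supported generators $f_{a,b}$ of Example~\ref{Exp:01} on a compact closed ball $S=\bar B(x,2\delta)$ and then truncating to re-enter ${\mathcal F}$ after the uniform approximation (which is necessary, since ${\mathcal F}$ is not uniformly closed). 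Both arguments hinge on the same observation, which you flag explicitly: finite Dirichlet energy has to be secured by gradients supported on sets of finite length (single arcs, or the subtree spanned by a finite set), because radial cut-offs can have infinite energy when a ball carries infinite length measure. Your version is shorter and more structural; what it gives up is the explicit approximants $\tilde f_n$, which the paper reuses verbatim in the proof of Proposition~\ref{L:04} (multiplied there by the cut-offs $\phi_n$ from Corollary~\ref{Cor:06}) to obtain regularity on general, possibly incomplete, locally compact trees. Two points you gloss over but which are easily repaired: (a) continuity and local absolute continuity of the glued function $\psi$ across the sphere $\partial_T S$ deserve a sentence --- any arc meets $S$ in a single subarc because $r(x,\boldsymbol{\cdot})$ is unimodal along geodesics, and $\psi$ vanishes on a neighbourhood of the sphere, so the absolute-continuity estimates pass through; (b) your remark that the set $A$ in the hypothesis is vacuous is accurate --- it plays no role in the paper's proof either.
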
}\sm

For the proof we shall borrow the ideas from the proof of Lemma~5.13 in~\cite{Kigami95}.  A semi-direct quoting of the above proof might suffice as well but for completeness and to also illustrate the benefit of the explicit limiting form we present the proof in (more) detail.

\begin{proof}
Fix $\rho\in T$, and $f\in {\mathcal C}_0(T)$. Then there exists $R>0$ such that $f\big|_{B^c(\rho,R)}\equiv 0$. For each $n\in N$ choose $\delta_n>0$ such that
$|f(y)-f(x)|<\tfrac{1}{n}$ whenever $x,y\in B(\rho,R+5\delta_n)$ with $r(x,y)<\delta_n.$

Choose for all $n\in\N$ a finite subset $V_{n} \subset T$ with three properties:
\begin{itemize}
\item[(i)] for all three points $x,y,z\in V_n$ the branch point $c(x,y,z)\in {V_n}$ and;
\item[(ii)] $\cup_{z \in V_{n}}  B(z, \frac{\delta_{n}}{2}) \supset \bar{U}_{n}$
\item[(iii)] If $W$ is a connected component of $T\setminus V_n$ with $\mbox{diam}{W}^{(T,r)} > \delta_{n}$ then $W \cap U_{n} =\emptyset$.
\end{itemize}

Denote
\begin{equation}
\label{e:DV}
   D(V_n)
 :=
   \big\{\bar{W}:\,W\mbox{ is a connected component of }T\setminus V_n\big\},
\ee
and let  $\partial{W}:=\bar{W}\cap V$ for all $\bar{W}\in D(V_n)$.
Notice that by the above properties of $V_{n}$ the $\partial{W}$ is either one or two points.

Consider for each $p\in V_n$ the function $h_{p,V_n\setminus\{p\}}$ on $T$ which is the linear interpolation on the subtree, $\mathrm{span}(V_n)$, spanned by $V_n$ with respect to the constrain $h_{p,V_n\setminus\{p\}}\big|_{V_n}=\mathbf{1}_p$ and which satisfies $\nabla h_{p,V_n\setminus\{p\}}\big|_{V^c_n}\equiv 0$. In particular, on each portion of $W$ not in the subtree spanned by $V_{n}$ it is extended as a constant by its value at the appropriate branch point.

Put for all $n\in\mathbb{N}$,
\begin{equation}
\label{e:tildef}
   \tilde{f}_n
 :=
   \sum_{p\in V_n}f(p){h}_{p,V_n\setminus\{p\}}.
\end{equation}
Clearly $\tilde{f}_{n} \in {\mathcal F}.$
Let  $W $ be such that $\mbox{diam}^{(T,r)}(W) \leq \delta_{n}.$
For $x\in W$ and $p \in \partial W$
\begin{equation}
\label{e:ftildef}
\begin{aligned}
   \big|f(x)-\tilde{f}_n(x)\big|
 &\le
   \big|f(x)-f(p)\big|+\big|\tilde{f}_n(p)-\tilde{f}_n(x)\big|
  \\
 &\le
   \big|f(x)-f(p)\big|+\sup_{p'\in\partial W}\big|\tilde{f}_n(p)-\tilde{f}_n(p')\big|\le
   \tfrac{2}{n}.
\end{aligned}
\end{equation}

On the other hand, if $\bar{W}\in D(V_n)$ is such that $\mbox{diam}^{(T,r)}(W) > \delta_{n}$ then ${W}\cap K_{n}=\emptyset$ (see, for example, Lemma~5.12 in \cite{Kigami95}). Therefore
$\tilde{f}_{n} = 0 $ on $W$, and the support of $\tilde{f}_{n}$ is contained in $K_{n}$. Thus
\begin{equation}
\label{e:030}
   \sup_{x \in T} \big|f(x)-\tilde{f}_n(x)\big| \leq \tfrac{2}{n}.
\end{equation}


\end{proof}\sm

{\begin{lemma}[Regularity; compact tree] Let $(T,r)$ be a compact $\R$ tree, and $\nu$ a Radon measure on $(T,{\mathcal B}(T))$. Then Proposition~\ref{L:04} holds, i.e the Dirichlet form $({\mathcal E},{\mathcal D}({\mathcal E}))$ is {\em regular}.
\end{lemma}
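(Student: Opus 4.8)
The plan is to reduce everything to the compactness of $T$ together with Lemma~\ref{L:09}, so that essentially no new work is needed. First I would record the simplifications that compactness forces. Since $T$ is compact, every closed subset of $T$ is compact, so every $f\in{\mathcal C}(T)$ has compact support; hence ${\mathcal C}_0(T)={\mathcal C}_\infty(T)={\mathcal C}(T)$ (for ${\mathcal C}_\infty(T)$ one simply takes $K=T$ in the defining condition). Moreover a continuous function on a compact space is bounded, and since $\nu$ is Radon and $T$ is compact we have $\nu(T)<\infty$, so ${\mathcal C}(T)\subseteq L^\infty(\nu)\subseteq L^2(\nu)$. Because ${\mathcal F}\subseteq{\mathcal A}\subseteq{\mathcal C}(T)$ by definition, it follows that
\[
  {\mathcal D}({\mathcal E})={\mathcal F}\cap L^2(\nu)\cap{\mathcal C}_\infty(T)={\mathcal F},
\]
and in particular ${\mathcal D}({\mathcal E})\subseteq{\mathcal C}(T)={\mathcal C}_0(T)$, so that ${\mathcal D}({\mathcal E})\cap{\mathcal C}_0(T)={\mathcal D}({\mathcal E})$.

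With this in hand, part (i) of Proposition~\ref{L:04} is immediate: ${\mathcal D}({\mathcal E})\cap{\mathcal C}_0(T)={\mathcal D}({\mathcal E})$ is trivially ${\mathcal E}_1$-dense in ${\mathcal D}({\mathcal E})$. For part (ii) I would invoke Lemma~\ref{L:09}: a compact $\R$-tree is in particular locally compact and complete, so the lemma applies (with, say, $A=\{\rho\}$) and yields that ${\mathcal F}\cap{\mathcal C}_0(T)$ is uniformly dense in ${\mathcal C}_0(T)$. Since ${\mathcal F}\cap{\mathcal C}_0(T)={\mathcal F}={\mathcal D}({\mathcal E})$ by the previous step, ${\mathcal D}({\mathcal E})\cap{\mathcal C}_0(T)$ is uniformly dense in ${\mathcal C}_0(T)$, which is precisely (ii).

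I do not anticipate any real obstacle: all the genuine content lies in Lemma~\ref{L:09} (the piecewise-linear interpolation argument borrowed from Kigami), and the only point requiring a little care is the observation that compactness collapses the three constraints ${\mathcal F}$, $L^2(\nu)$, ${\mathcal C}_\infty(T)$ defining ${\mathcal D}({\mathcal E})$ down to just ${\mathcal F}$; once that is noted the statement follows formally. Should one wish to avoid quoting Lemma~\ref{L:09}, one could instead rerun its proof directly, using that on a compact tree the approximating interpolations $\tilde f_n$ automatically belong to ${\mathcal F}$ and converge uniformly to $f$, but this appears unnecessary.
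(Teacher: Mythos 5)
Your proposal is correct and follows essentially the same route as the paper: part (i) is the same observation that compactness makes ${\mathcal D}({\mathcal E})\cap{\mathcal C}_0(T)={\mathcal D}({\mathcal E})$, and part (ii) is the same appeal to Lemma~\ref{L:09} combined with the remark that continuous functions on a compact tree lie in $L^2(\nu)$ because $\nu$ is Radon. Your preliminary step spelling out why the three constraints defining ${\mathcal D}({\mathcal E})$ collapse to ${\mathcal F}$ is slightly more explicit than the paper's one-line version, but the substance is identical.
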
\sm}

\begin{proof} (i) If $(T,r)$ is compact, then ${\mathcal D}({\mathcal E})\cap{\mathcal C}_0(T)={\mathcal D}({\mathcal E})$ and (i) trivially holds. \sm

(ii) Fix $f\in{\mathcal C}_0(T)={\mathcal C}(T)$. Applying Lemma~\ref{L:09} we can find a sequence $(f_n)_{n\in\mathbb{N}}$ in ${\mathcal F}$ such that $\|f_n-f\|_\infty\tno 0$. Since $(T,r)$ is compact and $\nu$ Radon, $f_n\in L^2(\nu)$ for all $n\in\mathbb{N}$, and thus also the second claim immediately follows.
\end{proof}\sm

For general (not necessarily complete) $\R$-trees we will make use of the follow:

\begin{cor} Let $(T,r)$ be a locally compact $\R$-tree and $\nu$ a Radon measure on $(T,{\mathcal B}(T))$.  If $K\subseteq T$ is a compact subset of $T$ and $U\supset K$ an open subset of $T$ such that $\bar{U}$ is compact, then
there exists a function $\psi^{K,U}\in {{\mathcal D}}({\mathcal E})$ such that $0\le \psi^{K,U}\le 1$, $\psi^{K,U}|_K\equiv 1$ and $\mathrm{supp}(\psi^{A,K,U})\subseteq\bar{U}$.
\label{Cor:06}
\end{cor}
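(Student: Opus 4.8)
The goal is to construct a bump function $\psi^{K,U}\in{\mathcal D}({\mathcal E})$ which is identically $1$ on the compact set $K$, takes values in $[0,1]$, and is supported in the compact set $\bar U$. Since $(T,r)$ is locally compact but not necessarily complete, we cannot apply Lemma~\ref{L:09} directly on $T$; the plan is to pass to the completion and transport the function back. Let $(\hat T,r)$ denote the completion of $(T,r)$, which is again an $\R$-tree and is locally compact on bounded sets (by the remarks in Section~\ref{S:motiv}, complete and bounded subsets of locally compact $\R$-trees are compact).

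\textbf{Step 1: Separate $K$ from the boundary/complement inside $\bar U$.} Since $K$ is compact, $U$ is open and $\bar U$ is compact, we have $\mathrm{dist}(K,T\setminus U)=:3\eta>0$. Consider the set $\hat U:=\bar U\subseteq\hat T$, which is a compact $\R$-tree (the closure of $U$ in $\hat T$ equals the closure in $T$ together with limit points, still contained in the compact set $\bar U$ since $\bar U$ is complete, being compact). Define the continuous function $f:\hat T\to[0,1]$ by $f(x):=\big(1-\eta^{-1}\,\mathrm{dist}(x,K)\big)\vee 0$, so that $f\equiv 1$ on $K$, $f$ vanishes outside the $\eta$-neighbourhood of $K$ (hence outside $U$), and $f$ has compact support contained in $\bar U$.

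\textbf{Step 2: Approximate $f$ by functions of finite Dirichlet energy.} Apply Lemma~\ref{L:09} on the compact (hence complete, locally compact) $\R$-tree $\bar U$: there is a sequence $(g_n)$ in ${\mathcal F}^{(\bar U,r)}\cap{\mathcal C}_0(\bar U)={\mathcal F}^{(\bar U,r)}$ with $\|g_n-f|_{\bar U}\|_\infty\to 0$. Inspecting the construction of $\tilde f_n$ in the proof of Lemma~\ref{L:09}, the approximants are piecewise-linear interpolations on a finite spanning subtree, extended as constants; since $f$ already vanishes near the topological boundary of $\bar U$ (outside the $\eta$-neighbourhood of $K$, while $\mathrm{dist}(K,T\setminus U)\ge 3\eta$), for $n$ large the support of $g_n$ stays within a set whose closure is a compact subset of $U$. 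Here one should use the same localization trick as in Lemma~\ref{L:13}: replace $g_n$ by $g_n\cdot h_{a,b}$ or, more simply, multiply by a fixed piecewise-linear cutoff $\chi$ on $T$ with $\chi\equiv 1$ on the $2\eta$-neighbourhood of $K$ and $\chi\equiv 0$ outside $U$; by Lemma~\ref{L:13} (applied with the function $f_{a,b}$ along the finitely many arcs defining $\chi$, or a direct analogue) $\chi\cdot g_n$ has finite energy and is supported in a compact subset of $U$. Truncating to $[0,1]$ via Lemma~\ref{L:14} (the contraction property) keeps the energy finite and the values in $[0,1]$.

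\textbf{Step 3: Extract the bump function.} For $n$ large enough set $\psi^{K,U}:=\big((\chi\cdot g_n)\wedge 1\big)\vee 0$. Then $0\le\psi^{K,U}\le 1$; on $K$ we have $g_n$ close to $1$ uniformly, so after the truncation and a harmless further modification (e.g.\ replace $g_n$ by $\tfrac{1}{1-\varepsilon_n}(g_n-\varepsilon_n)^+\wedge 1$ with $\varepsilon_n\to0$, again using Lemma~\ref{L:14} and the Markovian property) we get $\psi^{K,U}|_K\equiv 1$; and $\mathrm{supp}(\psi^{K,U})\subseteq\mathrm{supp}(\chi)\subseteq \bar U$ is compact in $T$. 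Finally $\psi^{K,U}\in{\mathcal A}$ (locally absolutely continuous, being continuous and piecewise of the above form), $\nabla\psi^{K,U}\in L^2(\lambda^{(T,r)})$ since its gradient is bounded with support of finite length measure, $\psi^{K,U}\in L^2(\nu)$ since $\nu$ is Radon and the support is compact, and $\psi^{K,U}\in{\mathcal C}_\infty(T)\cap{\mathcal C}_0(T)$; hence $\psi^{K,U}\in{\mathcal D}({\mathcal E})$.

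\textbf{Main obstacle.} The delicate point is Step 2: ensuring that the approximants coming from Lemma~\ref{L:09} on $\bar U$ can be arranged to have supports that are compact \emph{in $T$} (not merely in $\bar U$), i.e.\ bounded away from the part of the topological boundary of $\bar U$ that does not belong to $T$. This is exactly where the choice $3\eta=\mathrm{dist}(K,T\setminus U)$ and the fixed cutoff $\chi$ (together with the localization lemma, Lemma~\ref{L:13}) are used; the truncation/Markovian lemmata then only serve to normalize the values. Everything else is a routine verification that the resulting function lies in ${\mathcal D}({\mathcal E})$.
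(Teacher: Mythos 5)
Your construction is correct in outline but follows a genuinely different route from the paper. The paper does not approximate an explicit bump function by hand: it observes that $(\bar U,r)$ is a compact $\R$-tree on which $({\mathcal E},{\mathcal D}({\mathcal E}))$ is regular (the preceding lemma), picks intermediate open sets $K\subset V_1\subset\bar V_1\subset V_2\subset\bar V_2\subset U$, invokes Theorem~4.4.3 of \cite{FukushimaOshimaTakeda1994} to get regularity of the \emph{restricted} form $({\mathcal E},{\mathcal D}_{\bar U\setminus V_2}({\mathcal E}))$ on $L^2(V_2,\nu)$, takes a Urysohn function $f$ with $f|_K\equiv 1$, $f|_{\bar U\setminus V_1}\equiv 0$, approximates it within $\tfrac12$ in the uniform norm by some $g\in{\mathcal D}_{\bar U\setminus V_2}({\mathcal E})$, and sets $\psi^{K,U}:=\min\{1,2g\}$. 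The crucial support control ($\psi^{K,U}=0$ off $V_2$) is thus delegated entirely to the abstract restriction theorem, whereas you re-derive it by inspecting the explicit net-interpolation approximants of Lemma~\ref{L:09}. Both are legitimate; the paper's version is shorter, yours is more self-contained and makes visible where the localization actually comes from.

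One caveat you should nail down: your fallback device, a ``piecewise-linear cutoff $\chi$ with $\chi\equiv 1$ on the $2\eta$-neighbourhood of $K$ and $\chi\equiv 0$ outside $U$,'' does \emph{not} exist in ${\mathcal F}$ if ``piecewise linear'' means a Lipschitz function of $\mathrm{dist}(\cdot,K)$. On a compact $\R$-tree the annulus between the $2\eta$-neighbourhood of $K$ and $T\setminus U$ can have infinite length measure (think of the CRT), so a gradient that is merely bounded and supported there need not lie in $L^2(\lambda^{(T,r)})$; for the same reason your closing remark that the gradient has ``support of finite length measure'' is only true because the approximants of Lemma~\ref{L:09} have gradients supported on the finite spanning subtree $\mathrm{span}(V_n)$, not because the support of $\psi^{K,U}$ is compact. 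So either drop $\chi$ entirely and rely on your primary observation (for $\delta_n<\eta$ the construction of Lemma~\ref{L:09} applied to your $f$ already produces approximants supported in the $2\eta$-neighbourhood of $K$, since $f(p)=0$ for all net points $p$ outside the $\eta$-neighbourhood and the adjacent components meeting that region have diameter at most $\delta_n$), or build $\chi$ itself as a net interpolation of the type $\sum_p\chi(p)h_{p,V\setminus\{p\}}$. Finally, your normalization $\tfrac{1}{1-\varepsilon_n}(g_n-\varepsilon_n)^+\wedge 1$ gives only $\tfrac{1-2\varepsilon_n}{1-\varepsilon_n}<1$ on $K$; use $\big(\tfrac{g_n-\varepsilon_n}{1-2\varepsilon_n}\big)^+\wedge 1$ (or the paper's $\min\{1,2g\}$ with $\|g-f\|_\infty<\tfrac12$) to get exactly $1$ there.
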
\sm

\begin{proof} By assumption,    $(\bar{U},r)$ is a compact $\R$-tree.  The Dirichlet form $({\mathcal E},{\mathcal D}({\mathcal E}))$ is a regular Dirichlet form on $L^{2}(\bar{U},\nu)$.   We can find an open subsets $V_{1} , V_{2}$ of $S$ such that $K\subset V_{1}\subset\bar{V_{1}}\subset V_{2}\subset\bar{V_{2}}\subset U.$  By Theorem~4.4.3 in \cite{FukushimaOshimaTakeda1994} the form  $({\mathcal E},{\mathcal D}_{\bar{U}\setminus{V_{2}}}({\mathcal E}))$ is a regular Dirichlet form on $L^{2}(V_{2},\nu)$.

By Urysohn's lemma a continuous function $f: V_{2}\rightarrow \R$ with $f\big|_K\equiv 1$ and $f\big|_{\bar{U}\setminus V_{1}}\equiv 0$. Since, in particular, the Dirichlet form $({\mathcal E},{\mathcal D}_{\bar{U}\setminus{V_{2}}}({\mathcal E}))$ is regular and $f\in{\mathcal C}_0(V_{2})$, we find a function $g\in{\mathcal D}({\mathcal E})_{\bar{U}\setminus{V_{2}}}$ such that $\|g-f\|_\infty<\tfrac{1}{2}$. Put $\psi^{K,U}:=\min\{1,2g\}$ on ${\bar{V_{2}}}$. Obviously, $g\big|_K\ge \tfrac{1}{2}$ and $g\big|_{S\setminus V_{2}}\equiv 0$, and therefore $\psi^{K,U}$ can be extended to all of $T$ such that $\psi^{K,U} \in {{\mathcal D}}({\mathcal E}),$ $\psi^{K,U}\big|_K\equiv 1$ and $\psi^{K,U}\big|_{T\setminus U}\equiv 0$.
\end{proof}\sm


{\begin{proof}[Proof of Proposition \ref{L:04}] Recall that ${\mathcal D}({\mathcal E})\cap {\mathcal C}_\infty(T)={\mathcal D}({\mathcal E})$. Applying Lemma~1.4.2(i) in  \cite{FukushimaOshimaTakeda1994}, $({\mathcal E}, {\mathcal D}({\mathcal E}))$ is proved to be regular if we show that
\begin{equation} \label{inreg}
{\mathcal D}({\mathcal E}) \mbox{ is dense in } {\mathcal C}_\infty(T) \mbox{ with respect to the uniform topology.}
\end{equation}

Fix therefore $f\in {\mathcal C}_\infty(T)$. For each $n \in \N$, we can then a choose a compact set $K_n$ such that $f\big|_{K_{n}^{c}}\le\tfrac1n$.
Moreover, since $(T,r)$ is locally compact, we can find also an open set $U_n\supset K_n$ such that $\bar{U}_{n}$ is compact. We can then choose
a  $\delta_n>0$ such that $|f(y)-f(x)|<\tfrac{1}{n}$ whenever $r(x,y)<\delta_n$ and $x,y\in U_n$.

We generalize the reasoning and the notation of the proof of Lemma~\ref{L:09}, and choose again for all $n\in\N$ a finite subset $V_{n} \subset T$ satisfying the properties~(i) through~(iii) and consider the corresponding piecewise linear functions $h_{p,V_n\setminus\{p\}}$.

 By Corollary~\ref{Cor:06}, for each $n\in\mathbb{N}$ there exists a $[0,1]$-function $\phi_{n}\in \mathcal{D}({\mathcal E})$ such that $\phi_{n}=1 $ on $K_{n}$ and $\phi_{n}  =0$ on $U_{n}^{c}$. This time we put
\begin{equation}
\label{e:tildefn}
   \tilde{f}_n
 :=
   \sum_{p\in V_n}f(p) \phi_{n}{h}_{p,V_n\setminus\{p\}}.
\end{equation}

By the same reasoning we can show that for all $n\in\N$, $\tilde{f}_n\in{\mathcal D}(\mathcal E)$, and that
$\|\tilde{f}_n-f\|_\infty\le\tfrac{1}{n}$.
\end{proof}\sm}

\begin{proof}[Proof of Theorem~\ref{T:01}]
By Proposition ~\ref{L:04} the form $({\mathcal E}, {\mathcal D}({\mathcal E}))$ is regular. Therefore  by  Theorem~7.2.1 in ~\cite{FukushimaOshimaTakeda1994} there exists a $\nu$-symmetric Hunt process\footnote{For an introduction to Hunt processes, see Section A.2 in \cite{FukushimaOshimaTakeda1994}} $B$ on $(T, {\mathcal B}(T))$ whose Dirichlet form is ${\mathcal E}$.
By Theorem~4.2.7, the process $B$ is unique (i.e., the transition probability function is determined up to an exceptional set).  Also the Dirichlet form
$({\mathcal E},{\mathcal D}({\mathcal E}))$
possesses the {\em local property}, i.e., if
$f,g\in{\mathcal D}({\mathcal E})$ have disjoint compact
support then ${\mathcal E}(f,g)=0$. Hence by Theorem~7.2.2 in~\cite{FukushimaOshimaTakeda1994} the process $B$ has  continuous paths.
{Finally, by Lemma~\ref{L:02} there are no trivial exceptional sets, and
the above therefore imply that $B$ is a continuous $\nu$-symmetric strong Markov process.}
\end{proof}\sm

We conclude this section with providing a proof for Proposition~\ref{P:prop} which
identifies the Brownian motion on the real line as the
$\lambda^{(T,r)}$-Brownian motion on $\mathbb{R}$.
\begin{proof} [Proof of Proposition \ref{P:prop}] Let $(T,r)$ be a locally compact $\R$-tree and $\nu$ a Radon-measure on $(T,{\mathcal B}(T))$. Assume that $(T,r,\nu)$ are such that the $\nu$-Brownian motion on $(T,r)$ is recurrent.

(i) Let $(P_t)_{t\ge 0}$ be the semi-group associated with the process.
By (\ref{e:dir.6}) together with Theorem~2.2.1 in
\cite{FukushimaOshimaTakeda1994}, $f_{a,b}$ and $-f_{a,b}$ are excessive (i.e., $P_t f_{a,b}\ge f_{a,b}$ and $P_t(- f_{a,b})\ge -f_{a,b}$) and hence the process $Y:=(Y_t)_{t\ge 0}$ given by
\be{e:YY}
   Y_t
 :=
   f_{a,b}\big(X_t\big)
\ee
is a bounded non-negative martingale.
Hence by the stopping theorem,
$\mathbf{E}^x[Y_0]=\mathbf{E}^x[Y_{\tau_a\wedge\tau_b}]$, for all $x\in T$. Thus,
\be{stopp}
\begin{aligned}
   f_{a,b}(x)
 &=
   r\big(c(x,a,b),b\big)
  \\
 &=
   f_{a,b}(a)\cdot\mathbf{P}^x\big\{\tau_a<\tau_b\big\}+f_{a,b}(b)\cdot\big(1-\mathbf{P}^x\big\{\tau_a<\tau_b\big\}\big),
\end{aligned}
\ee
and hence since $f_{a,b}(b)=0$,
\be{allgemein}
   \mathbf{P}^x\big\{\tau_a<\tau_b\big\}
 =
   \frac{r\big(c(x,a,b),b\big)}{r(a,b)},
\ee
which proves (\ref{Xhit}). \sm

(ii) As the $\nu$-Brownian motion is recurrent by Lemma~\ref{L:02} and Theorem ~4.6.6(ii) in \cite{FukushimaOshimaTakeda1994} $\mathbf{P}^{x}\{\tau_{b} < \infty\} = 1$.   Therefore by Theorem~4.4.1(ii) in \cite{FukushimaOshimaTakeda1994} ,  $Rf(x) = \mathbf{E}^{x}[ \int_{0}^{\tau_{b}} f(B_{s}) ds]$ is the resolvent of the $\nu$-Brownian motion killed on hitting $b$, i.e.,
\begin{equation}
\label{e:031}
  {\mathcal E}(Rf,h) = \int  \mathrm{d}\nu\, h\cdot f,
\ee
for all $h \in \bar{\mathcal D}_{y}(\mathcal E)$.
Consequently, using the uniqueness of the resolvent (see Theorem~1.4.3 in \cite{FukushimaOshimaTakeda1994}), Remark~\ref{resolvent} and Corollary~\ref{Cor:01}, (\ref{Xocc}) follows.
 \end{proof}\sm

\section{Bounded Trees (Proof of Theorems~\ref{T:04} and~\ref{C:mix})}
\label{S:compact}

In this section we consider bounded $\R$-trees. We start by providing the proof for the basic long-term behavior stated in Theorem~\ref{T:04}.
We then restrict to compact $\R$-trees, or equivalently, to recurrent Brownian motions.
In Subsection~\ref{Sub:eigen} we
provide bounds on the spectral gap. In
Subsection~\ref{Sub:mix} we apply the latter to study mixing times,
and provide the proof of Theorem~\ref{C:mix}.

\begin{proof}[Proof of Theorem~\ref{T:04}]  Let $(T,r)$ be a locally compact and bounded $\R$-tree and $\nu$ a Radon measure on $(T,{\mathcal B}(T))$. We will rely on Theorem~1.6.3 in \cite{FukushimaOshimaTakeda1994} which states that the $\nu$-Brownian motion $B$ on $(T,r)$
is recurrent if and only if $\mathbf{1}_T\in\bar{\mathcal D}({\mathcal E})$ and ${\mathcal E}(\mathbf{1}_T,\mathbf{1}_T)=0$. \sm

Assume first that $(T,r)$ is {\em compact}. In this case, ${\mathcal C}_\infty(T)={\mathcal C}(T)$, and thus $\mathbf{1}_T\in{\mathcal D}({\mathcal E})$. {Clearly}, ${\mathcal E}(\mathbf{1}_T,\mathbf{1}_T)=0$. Hence $B$ is recurrent. Moreover it follows from Proposition~\ref{P:prop}  (with the choice $f\equiv 1$ in (\ref{Xocc}))
that $\mathbf{E}^x[\tau_b]\le 2\nu(T)\cdot r(x,b)<\infty$ for all $b,x\in T$. Hence $\nu$-Brownian motion on compact $\R$-trees is positive recurrent.
\sm

If $(T,r)$ is {\em not compact}, then we can find an $x\in\partial T:=\bar{T}\setminus T$, where $\bar{T}$ here denotes the completion of $T$. Let $x$ be such a ``missing boundary point'' and fix a
Cauchy-sequence $(x_n)_{n\in\N}$ in $(T,r)$ which converges to $x$ in $\bar{T}$. Then for each compact subset $K\subset T$ there are only finitely many points of $(x_n)_{n\in\N}$ covered by $K$. It therefore follows for any
$f\in{\mathcal D}({\mathcal E})\subset{\mathcal C}_{\infty}(T)$ that $\lim_{n\to\infty}f(x_n)=0$.

By the definition of the gradient we have for all $y\in T$ and $n\in\N$,
\begin{equation}
\label{e:eqq}
\begin{aligned}
   f(y)
 &=
   f(x_n)+\int_{x_n}^y\mathrm{d}\lambda^{(T,r)}\,\nabla f
  \\
 &\le
   f(x_n)+\sqrt{2\cdot r(y,x_n)\cdot{\mathcal E}(f,f)}.
\end{aligned}
\end{equation}

Letting $n\to\infty$ implies that for all $y\in T$ and $f \in \bar{{\mathcal D}}({\mathcal E})$,
\begin{equation}
\label{e:003}
   \big(f(y)\big)^2\le 2\mathrm{diam}^{(T,r)}(T) \cdot{\mathcal E}(f,f),
\ee
which implies that $\mathbf{1}_T\not \in\bar{\mathcal D}({\mathcal E})$ and that $B$ is transient.
\end{proof}\sm

\subsection{Principle eigenvalue}
\label{Sub:eigen}
In this subsection we
give estimates on the principal
eigenvalue of the $\nu$-Brownian motion on an locally compact and bounded $\R$-tree $(T,r)$.

For a closed and non-empty subset $A\subseteq T$,
denote by
\begin{equation}\label{specvar}
   \lambda_A(T)
 :=
   \inf\big\{\mathcal E(f,f)\,:\,f\in\bar{\mathcal D}_A(\mathcal E),(f,f)_{\nu}=1\big\}
\ee
the {\em principal eigenvalue} (with respect to $A$).

\begin{lemma}[Estimates on the principal eigenvalue]
Fix a locally compact and bounded $\R$-tree $(T,r)$ and a Radon measure $\nu$  on $(T,{\mathcal B}(T))$.
\label{L:11}
Let $A\subseteq T$  be  closed, non-empty and connected subset. Assume that  $h^\ast_{A,x}$  the unique minimizer of (\ref{cap.1}) with $B:=\{x\}$, and $\alpha=0$ exists. Then
\begin{equation}\label{e:11.0}
   \inf_{x\in T}\frac{\mathrm{cap}_{A}(x)}{(\mathbf 1_T,h^\ast_{A,x})_{\nu}}
 \leq
   \lambda_A(T)
 \leq
   \inf_{x\in T}\frac{\mathrm{cap}_{A}(x)}{(h^\ast_{A,x},h^\ast_{A,x})_{\nu}},
\ee
\end{lemma}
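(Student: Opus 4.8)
The plan is to obtain the two inequalities in \eqref{e:11.0} separately, exploiting the variational characterization \eqref{specvar} of $\lambda_A(T)$ together with the fact (Lemma~\ref{L:05}) that $h^\ast_{A,x}$ is, up to the normalization by $\mathrm{cap}_A(x)$, the ${\mathcal E}$-orthogonal projection onto functions vanishing on $A$. For the \emph{upper bound}, I would simply use $h^\ast_{A,x}$ (for an arbitrary $x\in T$) as a test function in \eqref{specvar}: since $h^\ast_{A,x}\in\bar{\mathcal D}_A({\mathcal E})$ vanishes on $A$, after rescaling to unit $\nu$-norm it is admissible, and ${\mathcal E}(h^\ast_{A,x},h^\ast_{A,x})=\mathrm{cap}_A(x)$ by Lemma~\ref{L:05}(ii). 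Hence $\lambda_A(T)\le \mathrm{cap}_A(x)/(h^\ast_{A,x},h^\ast_{A,x})_\nu$; taking the infimum over $x$ gives the right inequality.

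For the \emph{lower bound}, let $f\in\bar{\mathcal D}_A({\mathcal E})$ with $(f,f)_\nu=1$ be near-optimal for $\lambda_A(T)$. The idea is to bound $1=(f,f)_\nu$ from above in terms of ${\mathcal E}(f,f)$ using the harmonic functions $h^\ast_{A,x}$. Concretely, for $\nu$-a.e.\ $x$, write $f(x)^2$ using the defining relation $f(x)=f(x)-f(a_x)$ for a point $a_x\in A$ on the geodesic, and—following the computation in \eqref{con.3} and \eqref{e:dirac}—control $f(x)^2$ by pairing $f$ against $h^\ast_{A,x}$: by Lemma~\ref{L:05}(i)(a), ${\mathcal E}(h^\ast_{A,x},f)=0$ for the part of $f$ vanishing on $A\cup\{x\}$, which after unravelling yields an identity of the shape $f(x)=\mathrm{cap}_A(x)^{-1}{\mathcal E}(h^\ast_{A,x},f)$ up to a boundary term, hence $f(x)^2\le \mathrm{cap}_A(x)^{-1}\,{\mathcal E}(f,f)$ by Cauchy--Schwarz applied to the energy form (using ${\mathcal E}(h^\ast_{A,x},h^\ast_{A,x})=\mathrm{cap}_A(x)$). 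Integrating this pointwise bound against $\nu$ gives
\[
   1=(f,f)_\nu=\int\nu(\mathrm{d}x)\,f(x)^2\le {\mathcal E}(f,f)\int\nu(\mathrm{d}x)\,\frac{(\mathbf 1_T,h^\ast_{A,x})_\nu}{\mathrm{cap}_A(x)}\cdot\big(\text{something}\big),
\]
and after identifying the correct combination one reads off $\lambda_A(T)\ge \inf_x \mathrm{cap}_A(x)/(\mathbf 1_T,h^\ast_{A,x})_\nu$. An alternative, probably cleaner, route for the lower bound is to use the Green kernel: by Proposition~\ref{P:06} the resolvent/Green operator has kernel $g^\ast_A(x,y)=h^\ast_{A,y}(x)/\mathrm{cap}_A(y)$, and $\lambda_A(T)^{-1}$ is bounded above by the operator norm of this kernel on $L^2(\nu)$, which in turn is bounded by $\sup_y \int\nu(\mathrm{d}x)\,g^\ast_A(x,y)=\sup_y (\mathbf 1_T,h^\ast_{A,y})_\nu/\mathrm{cap}_A(y)$ by the Schur test (symmetry of the kernel makes the row and column sums coincide). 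This gives exactly $\lambda_A(T)\ge\inf_y\mathrm{cap}_A(y)/(\mathbf 1_T,h^\ast_{A,y})_\nu$.

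The main obstacle is making the lower bound rigorous: one has to justify that $\lambda_A(T)^{-1}$ is controlled by the Green kernel's $L^2(\nu)$-operator norm (i.e.\ that the spectral bottom is governed by the inverse of the Green operator on $\bar{\mathcal D}_A({\mathcal E})$, which requires knowing the form is transient so the Green operator exists — guaranteed by Lemma~\ref{L:dtrans} — and relating its norm to $\lambda_A(T)$ via the spectral theorem), and then invoking the Schur test with the explicit kernel from Proposition~\ref{P:06}. The connectedness of $A$ is used to ensure $c(\cdot,x,a)\in A$ so that the geodesic-based formula for $h^\ast_{A,x}$ behaves well; this is where Corollary~\ref{Cor:01}-type explicit expressions enter. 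The upper bound is essentially immediate once the minimizer $h^\ast_{A,x}$ is known to exist and lie in $\bar{\mathcal D}_A({\mathcal E})$, which is part of the hypothesis.
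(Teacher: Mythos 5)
Your upper bound is exactly the paper's: insert the normalized equilibrium potential $h^\ast_{A,x}/(h^\ast_{A,x},h^\ast_{A,x})_\nu^{1/2}$ into (\ref{specvar}) and use ${\mathcal E}(h^\ast_{A,x},h^\ast_{A,x})=\mathrm{cap}_A(x)$. For the lower bound, discard your first sketch: the pointwise estimate $f(x)^2\le\mathrm{cap}_A(x)^{-1}{\mathcal E}(f,f)$ integrates to $\lambda_A(T)\ge\big(\int\nu(\mathrm dx)\,\mathrm{cap}_A(x)^{-1}\big)^{-1}$, which is a genuinely \emph{weaker} bound than the one claimed (since $h^\ast_{A,x}\le 1$ and the Green kernel is symmetric, $\sup_x(\mathbf 1_T,h^\ast_{A,x})_\nu/\mathrm{cap}_A(x)\le\int\nu(\mathrm dy)\,\mathrm{cap}_A(y)^{-1}$), so the ``correct combination'' you were hoping to identify is not there. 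Your second route (Schur test on the Green kernel $g^\ast_A(x,y)=h^\ast_{A,y}(x)/\mathrm{cap}_A(y)$ together with $\lambda_A(T)^{-1}=\|G_A\|_{L^2(\nu)}$) does give the stated bound, but it differs from the paper's argument: the paper avoids the spectral theorem and the operator-norm identification entirely by taking a non-negative minimizer $\varphi_A$ of (\ref{specvar}) (sign-definiteness from Lemma~\ref{L:06}(iii)) and the potential $g^\ast_A(\nu,\boldsymbol{\cdot})$ of the measure $\nu$ from (\ref{greenvar}), and then combining the two variational characterizations: Lemma~\ref{L:06}(i)(a) gives ${\mathcal E}(\varphi_A,g^\ast_A(\nu,\boldsymbol{\cdot}))=\lambda_A(T)(\varphi_A,g^\ast_A(\nu,\boldsymbol{\cdot}))_\nu$ while Lemma~\ref{L:03}(i)(a) gives the same energy equals $(\varphi_A,\mathbf 1_T)_\nu$; positivity of $\varphi_A$ then yields $\lambda_A(T)\ge\inf_x(g^\ast_A(\nu,x))^{-1}$, and Proposition~\ref{P:06} identifies $g^\ast_A(\nu,x)=(\mathbf 1_T,h^\ast_{A,x})_\nu/\mathrm{cap}_A(x)$. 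Your Schur-test route buys you independence from the existence of a principal eigenfunction (which the paper tacitly assumes), at the cost of having to justify that the Green operator is the inverse of the generator on $L^2(\nu)$ and that its norm equals $\lambda_A(T)^{-1}$; the paper's route is more elementary but leans on the sign-definite minimizer.
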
\sm

To prepare the proof we provide characterizations of the principle eigenvalue which are
very similar to Lemmata~\ref{L:05} and~\ref{L:03}.

\begin{lemma}[Characterization of minimizers; Principle Eigenvalue]
Let $(T,r)$ be a locally compact and bounded $\R$-tree,
$\nu$ a Radon measure on $(T,{\mathcal B}(T))$, and $A\subset T$ a closed and non-empty subset. \label{L:06}
$\lambda_A(T)$ is well-defined and $\lambda_A(T)$ is positive.
\begin{itemize}
\item[(i)] For all $h^\dagger\in\bar{\mathcal D}_A(\mathcal E)$ with $(h^\dagger,h^\dagger)_{\nu}=1$ the following are equivalent.
\begin{itemize}
\item[(a)] For all $g\in\bar{\mathcal D}_{A}({\mathcal E})$,
$\mathcal E(h^\dagger,g)=\lambda_A(T)(h^\dagger,g)_{\nu}$.
\item[(b)] For all $h\in\bar{{\mathcal D}}_{A}(\mathcal E)$ with $(h,h)_{\nu}=1$,
${\mathcal E}(h^\dagger,h^\dagger)\le{\mathcal E}(h,h)$.
\item[(c)] ${\mathcal E}(h^\dagger,h^\dagger)=\lambda_A(T)(h^\dagger,h^\dagger)_\nu$.
\end{itemize}
\item[(ii)] $\lambda_A(T)$ is positive.
\item[(iii)] Any minimizer of (\ref{specvar}) is sign definite.
\end{itemize}
\end{lemma}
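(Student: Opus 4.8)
The plan is to prove part~(i) by essentially the variational bookkeeping already used for Lemmata~\ref{L:05} and~\ref{L:03}, and to establish the positivity~(ii) and the sign-definiteness~(iii) separately. First, well-definedness: unless $A=T$ (in which case $\bar{\mathcal D}_A(\mathcal E)=\{0\}$ and the infimum is $+\infty$, trivially), $T\setminus A$ is a non-empty open set, so by local compactness I can pick a closed ball $K\subset U\subset\bar U\subset T\setminus A$ with $\bar U$ compact, and Corollary~\ref{Cor:06} provides $\psi^{K,U}\in{\mathcal D}({\mathcal E})$ with $\psi^{K,U}\big|_K\equiv 1$ and $\mathrm{supp}(\psi^{K,U})\subseteq\bar U$; then $\psi^{K,U}\in{\mathcal D}_A({\mathcal E})$ and $(\psi^{K,U},\psi^{K,U})_\nu\ge\nu(K)>0$ since $\nu$ is positive on balls. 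Normalising $\psi^{K,U}$ in $L^2(\nu)$ shows the admissible set in~(\ref{specvar}) is non-empty, so $\lambda_A(T)$ is a well-defined element of $[0,\infty)$.

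Part~(i) is then a routine Lagrange-multiplier argument. The equivalence (b)$\Leftrightarrow$(c) is immediate, since $(h^\dagger,h^\dagger)_\nu=1$ turns (c) into the statement ${\mathcal E}(h^\dagger,h^\dagger)=\lambda_A(T)$, i.e.\ that $h^\dagger$ attains the infimum. For (b)$\Rightarrow$(a), given $g\in\bar{\mathcal D}_A({\mathcal E})$ I would differentiate ${\mathcal E}$ at $t=0$ along the admissible curve obtained by normalising $h^\dagger+tg$ in $L^2(\nu)$ and conclude from minimality that the linear term vanishes, which—after inserting (c)—reads ${\mathcal E}(h^\dagger,g)=\lambda_A(T)(h^\dagger,g)_\nu$. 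For (a)$\Rightarrow$(b), write an arbitrary admissible $h$ as $h^\dagger+g$ with $g:=h-h^\dagger\in\bar{\mathcal D}_A({\mathcal E})$; the constraint $(h,h)_\nu=1=(h^\dagger,h^\dagger)_\nu$ gives $2(h^\dagger,g)_\nu=-(g,g)_\nu$, so bilinearity together with (a) yields ${\mathcal E}(h,h)={\mathcal E}(h^\dagger,h^\dagger)+{\mathcal E}(g,g)-\lambda_A(T)(g,g)_\nu\ge{\mathcal E}(h^\dagger,h^\dagger)$ by the very definition of $\lambda_A(T)$; taking $g=h^\dagger$ in (a) also recovers (c).

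The crux is~(ii), equivalently a Poincar\'e-type inequality $(f,f)_\nu\le C\,{\mathcal E}(f,f)$ on $\bar{\mathcal D}_A({\mathcal E})$. Fix $\rho'\in A$; every $f\in\bar{\mathcal D}_A({\mathcal E})$ vanishes at $\rho'$, so~(\ref{con.3}) gives $|f(y)|^2\le 2\,{\mathcal E}(f,f)\,r(\rho',y)$ for all $y$, hence the uniform bound $\|f\|_\infty^2\le 2\,\mathrm{diam}^{(T,r)}(T)\,{\mathcal E}(f,f)$ because $T$ is bounded. When $T$ is compact this already gives $(f,f)_\nu\le\|f\|_\infty^2\,\nu(T)\le 2\,\mathrm{diam}^{(T,r)}(T)\,\nu(T)\,{\mathcal E}(f,f)$, so $\lambda_A(T)\ge\bigl(2\,\mathrm{diam}^{(T,r)}(T)\,\nu(T)\bigr)^{-1}>0$. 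I expect the general (merely bounded) case to be the main obstacle: there $\nu(T)$ may be infinite and this crude estimate fails, so one has to integrate the pointwise bound $|f(y)|^2\le 2{\mathcal E}(f,f)\,r(\rho',y)$ along the geodesics issuing from $A$ against a suitable weight—a Hardy-type inequality on the tree, exploiting that $f\in{\mathcal C}_\infty(T)$ also forces decay of $f$ towards the missing boundary—or exploit the transience of $({\mathcal E},{\mathcal D}_A({\mathcal E}))$ from Lemma~\ref{L:01} through its strictly positive reference function; producing a clean constant is where the real work lies.

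Finally, part~(iii). A minimiser $h^\dagger$ is $\tfrac{1}{2}$-H\"older by~(\ref{con.3}), hence continuous; moreover $|h^\dagger|\in\bar{\mathcal D}_A({\mathcal E})$ with ${\mathcal E}(|h^\dagger|,|h^\dagger|)\le{\mathcal E}(h^\dagger,h^\dagger)$ because $({\mathcal E},{\mathcal D}_A({\mathcal E}))$ is Markovian (Proposition~\ref{P:00}) and $t\mapsto|t|$ is a normal contraction, while $(|h^\dagger|,|h^\dagger|)_\nu=(h^\dagger,h^\dagger)_\nu=1$; thus $|h^\dagger|$ is again a minimiser and, by~(i), a non-negative eigenfunction, ${\mathcal E}(|h^\dagger|,g)=\lambda_A(T)(|h^\dagger|,g)_\nu$ for all $g\in\bar{\mathcal D}_A({\mathcal E})$, so $P^A_t|h^\dagger|=\mathrm e^{-\lambda_A(T)t}|h^\dagger|$ for the $L^2(\nu)$-semigroup associated with $({\mathcal E},{\mathcal D}_A({\mathcal E}))$. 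Since the corresponding transition density is strictly positive on its domain of irreducibility (cf.\ Lemma~\ref{L:02} and the remark following Definition~\ref{Def:04}), $|h^\dagger|=\mathrm e^{\lambda_A(T)t}P^A_t|h^\dagger|>0$ there, so the continuous function $h^\dagger$ cannot change sign on that set and is sign definite. The only delicate point is the bookkeeping when $A$ disconnects $T$ and the killed process decomposes over the components of $T\setminus A$; this does not occur in the intended application, where $A$ is connected.
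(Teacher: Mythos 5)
Your part~(i) is correct and follows essentially the paper's own perturbation argument, and the well-definedness remark via Corollary~\ref{Cor:06} is fine. The genuine gap is part~(ii): you prove positivity of $\lambda_A(T)$ only when $T$ is compact (so that $\nu(T)<\infty$) and explicitly leave the merely bounded case open, which is precisely the generality the lemma claims. The paper disposes of (ii) in one line by citing the transience estimate (\ref{e:trans}) of Lemma~\ref{L:01}: there is a bounded, $\nu$-integrable, strictly positive reference function $g$ with $\int\mathrm{d}\nu\,|f|\cdot g\le\sqrt{{\mathcal E}(f,f)}$ for all $f\in{\mathcal D}_A({\mathcal E})$. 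On a bounded tree this $g$ is a finite sum of weighted annulus indicators, hence bounded below by some $c_0>0$, so combining $\int\mathrm{d}\nu\,|f|\le c_0^{-1}\sqrt{{\mathcal E}(f,f)}$ with the sup-bound $\|f\|_\infty^2\le 2\,\mathrm{diam}^{(T,r)}(T)\,{\mathcal E}(f,f)$ that you already derived from (\ref{con.3}) gives $(f,f)_\nu\le\|f\|_\infty\int\mathrm{d}\nu\,|f|\le c_0^{-1}\sqrt{2\,\mathrm{diam}^{(T,r)}(T)}\;{\mathcal E}(f,f)$, i.e.\ the Poincar\'e inequality you were missing. This is exactly the route you name (``exploit the transience \dots through its strictly positive reference function'') but decline to execute, so as written the proposal does not establish (ii).

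For part~(iii) you take a genuinely different route. You pass to $|h^\dagger|$, note it is again a minimizer by the Markov property, and then invoke strict positivity of the transition density of the \emph{killed} semigroup to get $|h^\dagger|>0$ off $A$. The paper instead argues by contradiction: assuming $\nu(S_+)\cdot\nu(S_-)>0$ it forms an explicit normalized combination $\tilde h$ of $h^\dagger_+$ and $h^\dagger_-$ that is $L^2(\nu)$-orthogonal to $h^\dagger$, and plays the identity ${\mathcal E}(h^\dagger,\tilde h)=0$ obtained from (i)(a) against a direct expansion of ${\mathcal E}(h^\dagger,\tilde h)$, landing on $2\lambda_A(T)(h^\dagger_-,h^\dagger_-)_\nu(h^\dagger_+,h^\dagger_+)_\nu=0$, a contradiction with (ii). Your route is standard, but it imports an ingredient the paper never proves: Lemma~\ref{L:02} and the remark after Definition~\ref{Def:04} give positivity of the density of the \emph{unkilled} process, not of the part process on $T\setminus A$, so you would still owe an irreducibility argument there. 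You correctly flag that when $A$ disconnects $T$ the conclusion only holds componentwise; that caveat applies to both arguments.
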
\sm

\begin{proof}
(i)  Fix $h^\dagger\in\bar{\mathcal D}({\mathcal E})$ such that $(h^\dagger,h^\dagger)_\nu=1$. \sm

{\bf (b) $\Longrightarrow$ (a).}
Assume that for all $h\in\bar{\mathcal D}_{A}({\mathcal E})\setminus\{h^\dagger\}$ with $(h,h)_\nu=1$, {${\mathcal E}(h^\dagger,h^\dagger)\le{\mathcal E}(h,h)$}.
Fix
$g\in\bar{\mathcal D}_A({\mathcal E})$, and put $h^{\pm}:=h^\dagger\pm\varepsilon (g-(h^\dagger,g)_{\nu}\cdot h^\dagger)$.
Then $h^\pm\in\bar{\mathcal D}_A({\mathcal E})$, and
{\begin{equation}\label{emindag}
\begin{aligned}
   &{\mathcal E}\big(h^\dagger,h^\dagger\big)
  \\
 &\le
   {\mathcal E}\big(h^{\pm},h^{\pm}\big)/ (h^\pm,h^\pm)_\nu
  \\
 &=
   {\mathcal E}\big(h^{\pm},h^{\pm}\big)/ \big(1+\varepsilon^2(g,g)^2_\nu-\varepsilon^2(h^\dagger,g)^2_\nu\big).
\end{aligned}
\ee}

Hence
{\begin{equation}\label{emindag0}
\begin{aligned}
 &{\mathcal E}\big(h^\dagger,h^\dagger\big)\big(1+\varepsilon^2(g,g)^2_\nu-\varepsilon^2(h^\dagger,g)^2_\nu\big)
  \\
 &\leq
   {\mathcal E}\big(h^\dagger,h^\dagger\big)+
\varepsilon^2{\mathcal E}\big(g-(h^\dagger,g)_{\nu}\cdot h^\dagger,g-(h^\dagger,g)_{\nu}\cdot h^\dagger\big)
  \\
 &\qquad\pm 2\varepsilon {\mathcal E}\big(g-(h^\dagger,g)_{\nu}\cdot h^\dagger,h^\dagger\big),
\end{aligned}
\ee}
or equivalently,
{\begin{equation}\label{emin3dag}
\begin{aligned}
   &2\big|{\mathcal E}\big(g-(h^\dagger,g)_{\nu}\cdot h^\dagger,h^\dagger\big)\big|
  \\
 &\le
   \varepsilon{\mathcal E}\big(g-(h^\dagger,g)_{\nu}\cdot h^\dagger,g-(h^\dagger,g)_{\nu}\cdot h^\dagger\big)\\&-\varepsilon(g,g)^2_\nu{\mathcal E}(h^\dagger,h^\dagger)+\varepsilon{\mathcal E}(h^\dagger,h^\dagger)(h^\dagger,g)^2_\nu.
\end{aligned}
\ee}

Letting $\varepsilon\downarrow 0$ implies that ${\mathcal E}(g,h^\dagger)=\lambda_A(T)\cdot(h^\dagger,g)_{\nu}$, which proves (a) since
$g\in\bar{\mathcal D}_{A}({\mathcal E})$ was chosen arbitrarily.\sm

{\bf (a) $\Longrightarrow$ (c).} Assume (a) holds. Then (c) follows with the particular choice $g:=h^\dagger$. \sm

{\bf (c) $\Longrightarrow$ (b).} This is an immediate consequence of the definition (\ref{specvar}). \sm

(ii) Transience (\ref{e:trans}) implies $\lambda_A(T)>0$ if
$A$ is non-empty.\sm

(iii)
 Let $h^\dagger$ be a minimizer. Let
$S_\pm:=\{x\in T\,|\,\pm h^\dagger(x)>0\}$, and put
$h_\pm^\dagger:=\pm\mathbf 1_{S_\pm} h^\dagger = \pm \frac{h^\dagger \pm \mid h^\dagger\mid}{2}$, i.e., $h^\dagger=h^\dagger_+-h^\dagger_-$.

 To  verify that $h^\dagger$ is sign definite, we proceed by contradiction and
assume to the contrary that  $\nu(S_-)\cdot\nu(S_+)>0$.
In this case we can define
\begin{equation}
\label{e:004}
  \tilde{h}
 :=
  \frac{(h_-^\dagger,h_-^\dagger)_\nu h_+^\dagger+(h^\dagger_+,h^\dagger_+)_\nu h_-^\dagger}{\sqrt{2(h_-^\dagger,h_-^\dagger)_\nu(h^\dagger_+,h^\dagger_+)_\nu}} .
\ee
It is easy to see that $\tilde{h}\in\bar{{\mathcal D}}_A({\mathcal E})$ is orthogonal to $h^\dagger$ and that
$(\tilde{h},\tilde{h})_\nu=1$.

Orthogonality together with (a) applied on $g:=\tilde{h}$ implies ${\mathcal E}(h^\dagger,\tilde{h})=0$, while we can also read off from (a) that
\begin{equation}
\label{e:readoff}
\begin{aligned}
0=   &\sqrt{2(h_-^\dagger,h_-^\dagger)_\nu(h^\dagger_+,h^\dagger_+)_\nu}{\mathcal E}(h^\dagger,\tilde{h})
 \\
 &=
   {\mathcal E}(h^\dagger_+-h^\dagger_{-},(h_-^\dagger,h_-^\dagger)_\nu h_+^\dagger+(h^\dagger_+,h^\dagger_+)_\nu h_-^\dagger)
  \\
 &=
   2\lambda_A(T)(h_-^\dagger,h_-^\dagger)_\nu(h^\dagger_+,h^\dagger_+)_\nu.
\end{aligned}
\ee
This, of course, is a contradiction since $\lambda_A(T)>0$.
\end{proof}\sm

\begin{proof}[Proof of Lemma~\ref{L:11}] Let $\varphi_A$ be
a non-negative minimizer of (\ref{specvar}) and  $g^\ast_A(\nu,\boldsymbol{\cdot})$
the unique minimizer of (\ref{greenvar}) with $\kappa:=\nu$ and $\alpha:=0$.
Then by Lemma~\ref{L:06} together with Lemma~\ref{L:07},
\begin{equation}\label{e:11.1}
\begin{aligned}
   \lambda_A(T)
 &=
   \frac{{\mathcal E}(\varphi_A,g^\ast_A(\nu,,\boldsymbol{\cdot}))}{(\varphi_A,g_A^\ast(\nu,\boldsymbol{\cdot}))_{\nu}}
 = \frac{(\varphi_A,\mathbf{1}_T)_{\nu}}{(\varphi_A,g^\ast_A(\nu,\boldsymbol{\cdot}))_{\nu}}
 \geq
   \inf_{x\in T}\big(g_A^\ast(\nu,x)\big)^{-1}.
\end{aligned}
\ee

Moreover, by Proposition~\ref{P:06},
\begin{equation}\label{e:11.4}
   g_A^\ast(\nu,x)
 =
   \frac{(h^\ast_{A,x},\mathbf{1}_T)_{\nu}}{\mathrm{cap}_{A}(x)},
\ee
where $h^\ast_{A,x}$ is the unique minimizer of (\ref{cap.1}) with $\alpha:=0$. This together with (\ref{e:11.1})
implies the {\em lower bound} in (\ref{e:11.0}).

To obtain the {\em upper bound} insert $f_{A,x}:=h^\ast_{A,x}/(h^\ast_{A,x},h^\ast_{A,x})_{\nu}^{1/2}$, $x\not\in A$,
into (\ref{specvar}). Then for all $x\in T\setminus\{A\}$,
\begin{equation}\label{e:11.2}
   \lambda_A(T)
 \leq
   {\mathcal E}(f_{A,x},f_{A,x})
 =
   \frac{{\mathrm{cap}_{A}(x)}}{(h^\ast_{A,x},h^\ast_{A,x})_{\nu}},
\ee
which gives the claimed upper bound.
\end{proof}\sm

The following proposition is an immediate consequence for compact $\R$-trees. The lower bound  in (\ref{P:08.1}) {has been verified for $\alpha$-stable trees} in the proof of Lemma 2.1 of \cite{CroydonHumbly2010}.

\begin{proposition} Fix a compact $\R$ tree and a Radon measure $\nu$ on $(T,{\mathcal B}(T))$.
For all $b\in T$,
\label{P:08}
{\begin{equation} \label{P:08.1}
    \tfrac{1}{2\left (\mathrm{diam}^{(T,r)}(T)\cdot\nu(T)\right)}
 \le
    \lambda_{\{b\}}(T)
  \le
   \tfrac{1}{2}\inf_{x\in T\setminus\{b\}}\big(\nu\big\{y\in T:\,x\in[y,b]\big\}\cdot r(x,b)\big)^{-1}.
\ee}
\end{proposition}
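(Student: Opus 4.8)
The plan is to apply Lemma~\ref{L:11} with $A:=\{b\}$, which is certainly closed, non-empty and connected. Since $(T,r)$ is compact, $\mathbf{1}_T\in{\mathcal D}({\mathcal E})\subseteq\bar{\mathcal D}({\mathcal E})$ (as already noted in the proof of Theorem~\ref{T:04}), so Proposition~\ref{P:04} is available: for each $x\in T\setminus\{b\}$ the unique minimizer $h^\ast_{\{b\},x}$ of (\ref{cap.1}) with $B:=\{x\}$, $\alpha=0$ exists and equals $h_{x,b}=f_{x,b}/r(x,b)$ with $f_{x,b}(y)=r(c(y,x,b),b)$, and $\mathrm{cap}_{\{b\}}(x)=(2r(x,b))^{-1}$. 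Note also that $0\le h_{x,b}\le 1$, since $c(y,x,b)\in[x,b]$ forces $r(c(y,x,b),b)\le r(x,b)$. Plugging these explicit quantities into the two-sided estimate (\ref{e:11.0}) reduces the proof to bounding the inner products $(\mathbf{1}_T,h_{x,b})_\nu$ and $(h_{x,b},h_{x,b})_\nu$.

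For the lower bound, from $h_{x,b}\le 1$ we get $(\mathbf{1}_T,h_{x,b})_\nu\le\nu(T)$, and hence, using $r(x,b)\le\mathrm{diam}^{(T,r)}(T)$,
\[
   \frac{\mathrm{cap}_{\{b\}}(x)}{(\mathbf{1}_T,h_{x,b})_\nu}
 =
   \frac{1}{2r(x,b)\,(\mathbf{1}_T,h_{x,b})_\nu}
 \ge
   \frac{1}{2\,\mathrm{diam}^{(T,r)}(T)\,\nu(T)}
\]
for every $x$; taking the infimum over $x\in T\setminus\{b\}$ and invoking the left-hand inequality of (\ref{e:11.0}) gives the lower bound in (\ref{P:08.1}).

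For the upper bound, the one structural observation needed is that if $x\in[y,b]$ then $[y,x]\subseteq[y,b]$, so $[y,x]\cap[y,b]=[y,x]$ and thus $c(y,x,b)=x$; consequently $h_{x,b}(y)=1$ whenever $x\in[y,b]$. Therefore
\[
   (h_{x,b},h_{x,b})_\nu
 =
   \int_T\Big(\tfrac{r(c(y,x,b),b)}{r(x,b)}\Big)^2\,\nu(\mathrm{d}y)
 \ge
   \nu\big\{y\in T:\,x\in[y,b]\big\},
\]
which yields $\mathrm{cap}_{\{b\}}(x)/(h_{x,b},h_{x,b})_\nu\le\tfrac12\big(\nu\{y\in T:\,x\in[y,b]\}\cdot r(x,b)\big)^{-1}$; taking the infimum over $x\in T\setminus\{b\}$ and using the right-hand inequality of (\ref{e:11.0}) gives the upper bound in (\ref{P:08.1}).

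The whole argument is essentially bookkeeping once Lemma~\ref{L:11} and the explicit formulae of Proposition~\ref{P:04} are in place; the only points requiring a little care are verifying the admissibility of $A=\{b\}$ and the membership $\mathbf{1}_T\in\bar{\mathcal D}({\mathcal E})$ so that Proposition~\ref{P:04} applies, and correctly identifying $c(y,x,b)=x$ on the set $\{y\in T:\,x\in[y,b]\}$ while keeping the argument order in $f_{a,b}$ and $c(\cdot,\cdot,\cdot)$ consistent. No serious obstacle is anticipated.
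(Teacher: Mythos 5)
Your proposal is correct and follows essentially the same route as the paper: both apply Lemma~\ref{L:11} with $A=\{b\}$, substitute the explicit minimizer $h_{x,b}=f_{x,b}/r(x,b)$ and $\mathrm{cap}_{\{b\}}(x)=(2r(x,b))^{-1}$ from Proposition~\ref{P:04}, and then bound the two inner products exactly as you do (the paper phrases the lower bound via the Green kernel $g^\ast_{\{b\}}(\nu,\boldsymbol{\cdot})$ of Corollary~\ref{Cor:01}, but by (\ref{e:11.4}) that is the same quantity as your ratio $\mathrm{cap}_{\{b\}}(x)/(\mathbf{1}_T,h_{x,b})_\nu$). Your explicit verification that compactness gives $\mathbf{1}_T\in\bar{\mathcal D}({\mathcal E})$, and the identification $c(y,x,b)=x$ on $\{y:\,x\in[y,b]\}$, match the paper's use of (\ref{P:08.4}).
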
\sm

\begin{proof} When $A= \{b\}$, the minimizer to (\ref{cap.1}),  $h^{\star}_{b,x}$ exists.  So the assumptions of Lemma \ref{L:11} and Corollary~\ref{Cor:01} are satisfied.
For the {\em lower bound}, recall from (\ref{e:11.1}) together with Corollary~\ref{Cor:01} that for all $x\in T$ with $x\not=b$,
%
\begin{equation}\label{newe}
\begin{aligned}
   \lambda_{\{b\}}(T)
 &\ge
   \inf_{x\in T\setminus\{b\}}\big(g^\ast_{\{b\}}(\nu,x)\big)^{-1}
  \\
 &=
   \inf_{x\in T\setminus\{b\}}\big(2\int\nu(\mathrm{d}y)\,r(c(x,y,b),b)\big)^{-1}
  \\
 &\ge
   \big(2\cdot\mathrm{diam}^{(T,r)}(T)\cdot\nu(T)\big)^{-1},
\end{aligned}
\ee
as claimed. \sm

For the {\em upper bound}, recall from (\ref{e:11.2}) together with Proposition~\ref{P:04} that
\begin{equation}\label{upper}
\begin{aligned}
  \lambda_b(T)
 &\le
   \inf_{x\in T\setminus\{b\}}\frac{{\mathrm{cap}_{b}(x)}}{(h^\ast_{x,b},h^\ast_{x,b})_{\nu}}
  \\
 &=
   \inf_{x\in T\setminus\{b\}}\frac{r(x,b)}{2\cdot (r(c(\boldsymbol{\cdot},x,b),b),r(c(\boldsymbol{\cdot},x,b),b))_{\nu}}
  \\
 &\le
   \tfrac{1}{2}\inf_{x\in T\setminus\{b\}}\big(\nu\big\{y\in T:\,x\in[y,b]\big\}\cdot r(x,b)\big)^{-1},
\end{aligned}
\ee
where we have used that for all $x\in T\setminus\{b\}$,
\begin{equation}\label{P:08.4}
\begin{aligned}
   \int\nu(\mathrm{d}y)\,r\big(c(y,x,b),b\big)^2
  &\ge
   \nu\big\{y\in T:\,x\in[y,b]\big\}\cdot r(x,b)^2.
\end{aligned}
\ee
\end{proof}\sm

\subsection{Mixing times}
\label{Sub:mix}
In this subsection we give the proof of Theorem~\ref{C:mix}
based on estimates of the spectral gap of the process associated with the Dirichlet form.

Denote by
\begin{equation}
\label{specgapvar}
\begin{aligned}
   \lambda_2(T)
 &:=
   \inf\big\{\mathcal E(f,f)\,:\,f\in\bar{\mathcal D}(\mathcal E),(f,f)_{\nu}=1,(f,\mathbf{1}_T)_{\nu}=0\big\}
\end{aligned}
\ee
{\em the spectral gap}.

Here is a useful characterization of the spectral gap.
\begin{lemma}[Characterization of minimizers; Spectral gap]
Let $(T,r)$ be a compact $\R$-tree and $\nu$ a Radon measure  on $(T,{\mathcal B}(T))$. \label{L:12}
\begin{itemize}
\item[(i)]
For all $h^\ddagger\in\bar{\mathcal D}(\mathcal E)$ with $(h^\ddagger,h^\ddagger)_{\nu}=1$
and $(h^\ddagger,\mathbf{1}_T)_{\nu}=0$ the following are equivalent.
\begin{itemize}
\item[(a)] For all $g\in\bar{\mathcal D}(\mathcal E)$ with $(g,\mathbf{1}_T)_{\nu}=0$,
$\mathcal E(h^\ddagger,g)=\lambda_2(T)(h^\ddagger,g)_{\nu}$.
\item[(b)] For all $h\in\bar{{\mathcal D}}(\mathcal E)$ with $(h,h)_{\nu}=1$
and $(h,\mathbf{1}_T)_{\nu}=0$,
${\mathcal E}(h^\ddagger,h^\ddagger)\le{\mathcal E}(h,h)$.
\item[(c)] ${\mathcal E}(h^\ddagger,h^\ddagger)=\lambda_2(T)$.
\end{itemize}
\item[(ii)] If $h^\ddagger$ is a minimizer to the minimum problem (\ref{specgapvar}), then $\lambda_2(T)\ge\lambda_b(T)$ for all $b\in T$ with $h^\ddagger(b)=0$.
\end{itemize}
\end{lemma}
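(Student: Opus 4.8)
The plan is to treat part (i) by exactly the same variational/perturbation technique already used for Lemma~\ref{L:05}(i) and Lemma~\ref{L:06}(i), and to obtain part (ii) by recognizing a spectral-gap minimizer that vanishes at $b$ as an admissible competitor in the variational problem (\ref{specvar}) defining $\lambda_{\{b\}}(T)$. First I would dispose of the two cheap implications in (i): (c) $\Rightarrow$ (b) is nothing but the definition (\ref{specgapvar}), since $\mathcal E(h^\ddagger,h^\ddagger)=\lambda_2(T)$ is the infimum over all admissible competitors; and (b) $\Rightarrow$ (c) holds because $h^\ddagger$ is itself admissible, so $\mathcal E(h^\ddagger,h^\ddagger)\ge\lambda_2(T)$, while (b) gives $\mathcal E(h^\ddagger,h^\ddagger)\le\mathcal E(h,h)$ for every admissible $h$ and hence $\mathcal E(h^\ddagger,h^\ddagger)\le\lambda_2(T)$.

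The substantive step of (i) is the equivalence with (a). For (b) $\Rightarrow$ (a): given $g\in\bar{\mathcal D}(\mathcal E)$ with $(g,\mathbf 1_T)_\nu=0$, put $\tilde g:=g-(h^\ddagger,g)_\nu h^\ddagger$, which again satisfies $(\tilde g,\mathbf 1_T)_\nu=0$ (here the hypothesis $(h^\ddagger,\mathbf 1_T)_\nu=0$ is used) and moreover $(\tilde g,h^\ddagger)_\nu=0$, so $h^\pm:=h^\ddagger\pm\varepsilon\tilde g$ is admissible after dividing by $\|h^\pm\|_\nu=(1+\varepsilon^2(\tilde g,\tilde g)_\nu)^{1/2}$. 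Inserting $h^\pm/\|h^\pm\|_\nu$ into (b), expanding, clearing denominators and letting $\varepsilon\downarrow 0$ forces $\mathcal E(h^\ddagger,\tilde g)=0$, i.e.\ $\mathcal E(h^\ddagger,g)=(h^\ddagger,g)_\nu\,\mathcal E(h^\ddagger,h^\ddagger)$, and invoking (c) (already derived) gives (a). For (a) $\Rightarrow$ (b): for any admissible $h$ set $g:=h-h^\ddagger$, which satisfies $(g,\mathbf 1_T)_\nu=0$; expanding $\mathcal E(h,h)=\mathcal E(h^\ddagger,h^\ddagger)+2\mathcal E(h^\ddagger,g)+\mathcal E(g,g)$, using (a) together with the elementary identity $(g,g)_\nu=-2(h^\ddagger,g)_\nu$ coming from $(h,h)_\nu=(h^\ddagger,h^\ddagger)_\nu=1$, and the rescaled bound $\mathcal E(g,g)\ge\lambda_2(T)(g,g)_\nu$ valid for every $g$ with $(g,\mathbf 1_T)_\nu=0$, one arrives at $\mathcal E(h,h)\ge\mathcal E(h^\ddagger,h^\ddagger)$.

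For part (ii), let $h^\ddagger$ be a minimizer of (\ref{specgapvar}) and $b\in T$ with $h^\ddagger(b)=0$. The only point requiring genuine care is that $h^\ddagger\in\bar{\mathcal D}_{\{b\}}(\mathcal E)$. To establish this I would take an $\mathcal E$-Cauchy sequence $(f_n)$ in $\mathcal D(\mathcal E)$ with $f_n\to h^\ddagger$ $\nu$-almost everywhere; by the Cauchy--Schwarz estimate (\ref{con.3}) the functions $f_n$ are uniformly $\tfrac12$-H\"older with a common constant, hence equicontinuous, and since $(T,r)$ is compact and $\nu$ charges every ball the convergence is actually uniform and $h^\ddagger$ is continuous. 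Setting $\tilde f_n:=f_n-f_n(b)$, which lies in $\mathcal D_{\{b\}}(\mathcal E)$ (constants belong to $\mathcal D(\mathcal E)$ on a compact tree), we have $\nabla\tilde f_n=\nabla f_n$, so $(\tilde f_n)$ is $\mathcal E$-Cauchy, and $\tilde f_n\to h^\ddagger-h^\ddagger(b)=h^\ddagger$; thus $h^\ddagger\in\bar{\mathcal D}_{\{b\}}(\mathcal E)$. Since $(h^\ddagger,h^\ddagger)_\nu=1$, $h^\ddagger$ is admissible in (\ref{specvar}) with $A=\{b\}$, whence $\lambda_{\{b\}}(T)\le\mathcal E(h^\ddagger,h^\ddagger)=\lambda_2(T)$, which is the assertion.

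I expect the main (and only mildly annoying) obstacle to be exactly this last verification that a spectral-gap minimizer which vanishes at a point automatically belongs to the corresponding restricted extended Dirichlet space $\bar{\mathcal D}_{\{b\}}(\mathcal E)$; part (i) is a routine repetition of the perturbation argument used twice before in the paper, with the only new feature being the bookkeeping of the extra linear constraint $(\cdot,\mathbf 1_T)_\nu=0$, which is preserved under all the operations above.
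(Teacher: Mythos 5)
Your proposal is correct and follows essentially the same route as the paper: part (i) is the same perturbation argument the paper invokes by reference to Lemma~\ref{L:06}, and part (ii) rests, as in the paper, on the observation that a minimizer vanishing at $b$ lies in $\bar{\mathcal D}_{\{b\}}(\mathcal E)$ and is therefore admissible in (\ref{specvar}). The only difference is that you actually justify the membership $h^\ddagger\in\bar{\mathcal D}_{\{b\}}(\mathcal E)$ (via the uniform H\"older bound from (\ref{con.3}) and subtraction of the constant $f_n(b)$), a step the paper merely asserts.
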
\sm

\begin{proof}
(i) The proof is very similar to that of Lemma \ref{L:06}. We do not repeat it here.

(ii) Fix $h^\ddagger\in\bar{\mathcal D}({\mathcal E})$ such that $(h^\dagger,h^\dagger)_\nu=1$ and $(h^\ddagger,\mathbf{1}_T)_\nu=0$.
Let $h^\ddagger\in\bar{\mathcal D}(\mathcal E)$ be a minimizer corresponding to (\ref{specgapvar}).
Since $(h^\ddagger,\mathbf{1}_T)_\nu=0$, the zero set $S_0:=\{x\in T:\,h^\ddagger(x)=0\}\not=\emptyset$. Moreover,
if $b\in S_0$ then $h^\ddagger\in\bar{\mathcal D}_b({\mathcal E})$ and therefore by Definition~(\ref{specvar}),
$\lambda_{2}(T)={\mathcal E}(h^\ddagger,h^\ddagger)\ge\lambda_{\{b\}}(T)$.
\end{proof}\sm

We close the section with the proof of Theorem~\ref{C:mix}.

\begin{proof}[Proof of Theorem~\ref{C:mix}] Notice first that since $\nu$-Brownian motion is recurrent on compact $\R$-trees, it is conservative.
Consequently, if $(P_t)_{t\ge 0}$ denote the semi-group  then $P_t(\mathbf{1}_T)=\mathbf{1}_T$ for all $t\ge 0$. Thus  we can conclude by $\nu$-symmetry,
for
all probability measures $\nu'$ on $(T,{\mathcal B}(T))$ such that $\nu'\ll\nu$ with  $\tfrac{\mathrm{d}\nu'}{\mathrm{d}\nu}\in L^1(\nu')$,
\begin{equation}\label{e:mix.2}
\begin{aligned}
   \big\|\nu' P_t-\tfrac{\nu}{\nu(T)}\big\|_{\mathrm{TV}}
 &=
   \big\|(P_t\tfrac{\mathrm{d}\nu'}{\mathrm{d}\nu}\nu(T)-\mathbf 1_T)\tfrac{\nu}{\nu(T)}\big\|_{\mathrm{TV}}
  \\
 &=
   \big\|(P_t(\tfrac{\mathrm{d}\nu'}{\mathrm{d}\nu}\nu(T)-\mathbf 1_T)\tfrac{\nu}{\nu(T)}\big\|_{\mathrm{TV}}.
\end{aligned}
\ee

By Jensen's inequality, the assumption that $(\mathbf 1_T,f)_\nu=1$ and
the spectral theorem applied to $P_t$ (see discussion on page 2 in \cite{wang00} and references there in)
\begin{equation}\label{e:mix.3}
\begin{aligned}
   \big\|\nu' P_t-\tfrac{\nu}{\nu(T)}\big\|_{\mathrm{TV}}
 &\leq
   \Big(\int\tfrac{\mathrm{d}\nu}{\nu(T)}\,\big|P_t(\tfrac{\mathrm{d}\nu'}{\mathrm{d}\nu}\nu(T)-\mathbf 1_T)\big|^2\Big)^{1/2}
  \\
 &\leq
   \mathrm e^{-\lambda_2(T) t}\big(\sqrt{\nu(T)}(\mathbf{1}_T,\tfrac{\mathrm{d}\nu'}{\mathrm{d}\nu})_{\nu'}^{1/2}+1\big).
\end{aligned}
\ee

The assertion now follows from (\ref{P:08.1}) and Lemma \ref{L:12}~(ii).
\end{proof}\sm

\section{Trees with infinite diameter}
\label{S:transinfty}

In this section we consider the $\nu$-Brownian motion on a locally compact and unbounded $\R$-trees $(T,r)$. We shall give the proof of Theorem~\ref{T:trareha} which is based on the following criterion for recurrence and transience
relating the potential theoretic and the dynamic
approach in a transparent way. Recall from (\ref{Einfty}) the set $E_\infty$ of ends at infinity.
The following proposition relates transience with a positive capacity between the root and the ends at ``infinity''.

\begin{proposition} \label{P:03}
Let $(T,r)$ be a locally compact $\R$ tree and $\nu$ a Radon measure on $(T,{\mathcal B}(T))$. Then the following are equivalent.
\begin{itemize}
\item[(a)] The $\nu$-Brownian motion on $(T,r)$ is recurrent.
\item[(b)] $\mathrm{cap}(\rho)=0$.
\end{itemize}
\end{proposition}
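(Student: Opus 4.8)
The plan is to combine the standard characterization of recurrence of a regular Dirichlet form with the variational description of capacity. Recall that, by Theorem~1.6.3 in \cite{FukushimaOshimaTakeda1994}, the $\nu$-Brownian motion $B$ is recurrent if and only if $\mathbf 1_T\in\bar{\mathcal D}({\mathcal E})$ and ${\mathcal E}(\mathbf 1_T,\mathbf 1_T)=0$, and that by definition $\mathrm{cap}(\rho)=\mathrm{cap}^0(\{\rho\})=\inf\{{\mathcal E}(h,h):h\in\bar{\mathcal D}({\mathcal E}),\,h(\rho)=1\}$ (see (\ref{cap.1}) and Definition~\ref{Def:02}). Since $B$ is irreducible (cf.\ the remark following Definition~\ref{Def:04}) it is, by Lemma~1.6.4 in \cite{FukushimaOshimaTakeda1994}, either recurrent or transient, so it will be enough to prove the two implications below.

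For the implication (a)$\Rightarrow$(b) there is essentially nothing to do: if $B$ is recurrent then $\mathbf 1_T\in\bar{\mathcal D}({\mathcal E})$ with ${\mathcal E}(\mathbf 1_T,\mathbf 1_T)=0$, and since $\mathbf 1_T(\rho)=1$ this function is admissible in the infimum defining $\mathrm{cap}(\rho)$, whence $0\le\mathrm{cap}(\rho)\le{\mathcal E}(\mathbf 1_T,\mathbf 1_T)=0$.

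For (b)$\Rightarrow$(a) I would argue by contradiction, assuming $\mathrm{cap}(\rho)=0$ and $B$ transient. From $\mathrm{cap}(\rho)=0$ I would take $h_n\in\bar{\mathcal D}({\mathcal E})$ with $h_n(\rho)=1$ and ${\mathcal E}(h_n,h_n)\tno 0$ and replace each $h_n$ by $(h_n\vee 0)\wedge 1$, which keeps $h_n(\rho)=1$ and does not increase ${\mathcal E}(h_n,h_n)$ by the Markovian property, so that $0\le h_n\le 1$ on $T$. Using that every element of $\bar{\mathcal D}({\mathcal E})$ has a version in ${\mathcal A}$ satisfying (\ref{con.1}) with its gradient, the Cauchy--Schwarz estimate (\ref{con.3}) applies to $h_n$ and yields, for each fixed $x\in T$,
\[
   \big|h_n(x)-1\big|^2=\Big|\int_\rho^x\lambda^{(T,r)}(\mathrm{d}z)\,\nabla h_n(z)\Big|^2\le 2\,r(\rho,x)\,{\mathcal E}(h_n,h_n)\tno 0 ,
\]
so $h_n\to\mathbf 1_T$ pointwise on $T$. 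On the other hand, transience of $B$ means the Dirichlet form $({\mathcal E},{\mathcal D}({\mathcal E}))$ is transient (cf.\ Lemma~\ref{L:01} and \cite{FukushimaOshimaTakeda1994}), so there is a bounded, $\nu$-integrable reference function $g$, strictly positive $\nu$-a.e., with $\int\mathrm{d}\nu\,|f|\cdot g\le\sqrt{{\mathcal E}(f,f)}$ for all $f\in\bar{\mathcal D}({\mathcal E})$. Applying this to $h_n$ gives $\int\mathrm{d}\nu\,h_n\cdot g\le\sqrt{{\mathcal E}(h_n,h_n)}\tno 0$; but $0\le h_n g\le g\in L^1(\nu)$ and $h_n g\to g$ pointwise, so dominated convergence forces $\int\mathrm{d}\nu\,h_n\cdot g\to\int\mathrm{d}\nu\,g>0$, a contradiction. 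Hence $B$ is recurrent. (One could equally well conclude directly from $h_n\to\mathbf 1_T$ pointwise and ${\mathcal E}(h_n,h_n)\to 0$ that $\mathbf 1_T\in\bar{\mathcal D}({\mathcal E})$ with ${\mathcal E}(\mathbf 1_T,\mathbf 1_T)=0$, and invoke Theorem~1.6.3 in \cite{FukushimaOshimaTakeda1994}.)

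The step that will need the most care is the one fact used above that is not literally stated in the excerpt: that every $h\in\bar{\mathcal D}({\mathcal E})$ admits a representative in ${\mathcal A}$ for which (\ref{con.1}), and hence the pointwise bound (\ref{con.3}), holds — this is what transfers the estimate from ${\mathcal D}({\mathcal E})$ to the extended domain. I would establish it as in the proof of Lemma~\ref{L:EDstarclosed}: given an ${\mathcal E}$-Cauchy sequence $f_m\in{\mathcal D}({\mathcal E})$ with $f_m\to h$ $\nu$-a.e., the gradients $\nabla f_m$ converge in $L^2(\lambda^{(T,r)})$ to some $g$, hence $\int_x^y\nabla f_m\to\int_x^y g$ for every $x,y$ by Cauchy--Schwarz; fixing one point $x_0$ at which $(f_m(x_0))_m$ converges — which exists because $\nu$ is positive on balls — then forces $f_m(y)$ to converge for every $y\in T$ to a continuous, locally absolutely continuous version of $h$ with gradient $g$. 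Everything else is routine, and the usual ``$\nu$-a.e.\ versus everywhere'' nuisance is avoided because the displayed estimate gives convergence at every point of $T$.
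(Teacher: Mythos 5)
Your proof is correct, but it routes both implications differently from the paper. For (a)$\Rightarrow$(b) the paper does not simply test the capacity infimum with $\mathbf 1_T$; it takes an approximating sequence $h_k\in{\mathcal D}({\mathcal E})$ with $h_k\to 1$ $\nu$-a.e.\ and ${\mathcal E}(h_k,h_k)\to 0$, uses positivity of $\nu$ on balls to find a point $a$ near $\rho$ with $h_k(a)\to 1$, transports this to $\rho$ via the gradient representation and Cauchy--Schwarz, and then normalizes $f_k:=h_k/h_k(\rho)$ to get admissible competitors. Your one-line version, using the other half of Theorem~1.6.3 of \cite{FukushimaOshimaTakeda1994} (recurrence iff $\mathbf 1_T\in\bar{\mathcal D}({\mathcal E})$ with ${\mathcal E}(\mathbf 1_T,\mathbf 1_T)=0$), is cleaner and avoids the normalization step. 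For (b)$\Rightarrow$(a) the paper argues directly: from the capacity-zero sequence it extracts $\nabla h_{n_k}\to 0$ and concludes $h_{n_k}\to\mathbf 1_T$ pointwise from $h_{n_k}(x)=1+\int_\rho^x\nabla h_{n_k}\,\mathrm d\lambda^{(T,r)}$, then invokes the recurrence criterion; your Cauchy--Schwarz bound $|h_n(x)-1|^2\le 2r(\rho,x){\mathcal E}(h_n,h_n)$ is in fact the more careful way to justify that step (a.e.\ convergence of the gradients alone does not control the integrals). Your subsequent contradiction via the transience reference function is a genuinely different mechanism: it buys you freedom from worrying whether the approximating sequence lives in ${\mathcal D}({\mathcal E})$ or only in $\bar{\mathcal D}({\mathcal E})$ when applying the recurrence criterion, at the price of needing the recurrence/transience dichotomy for irreducible forms and the extension of the reference-function inequality to the extended domain (note that Lemma~\ref{L:01} as stated covers only non-empty $A$, so you should cite the definition of transience of the form in \cite{FukushimaOshimaTakeda1994} together with the dichotomy rather than that lemma). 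Finally, the fact you flag as needing the most care --- that every element of $\bar{\mathcal D}({\mathcal E})$ has a locally absolutely continuous version satisfying (\ref{con.1}), so that pointwise evaluation at $\rho$ and the bound (\ref{con.3}) make sense on the extended domain --- is indeed used silently by the paper as well, and your sketch of it via the argument of Lemma~\ref{L:EDstarclosed} is sound.
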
\sm

{
\begin{proof}  By Theorem~1.6.3 in [FOT94], $\nu$ Brownian motion on $(T,r)$ is recurrent if and only if
there exists a sequence $(h_{{k}})_{k\in\mathbb{N}}$ in ${\mathcal D}({\mathcal E})$ such that $h_{{k}}\to 1$, $\nu$-almost everywhere, and
$\mathcal E(h_{{k}},h_{{k}})\to 0$, as $k\to\infty$.

{ $\mathbf (b) \Longrightarrow (a)$: }
Suppose $\mathrm{cap}(\rho)=0$. Then there exists for each $n\in\mathbb{N}$ a function $h_{n}\in\bar{\mathcal D}(\mathcal E)$ with $h_{n}(\rho) = 1$ and such that $\mathcal E(h_{n},h_{n}) \rightarrow 0$, as $n\to\infty$. By standard $L^{2}$-theory there exists a subsequence $\nabla h_{n_{k}}\rightarrow 0$, $\lambda^{(T,r)}$-almost everywhere, as $k\to\infty$. As for each $k\in\mathbb{N}$ and $x\in T$,
\begin{equation}
\label{l:031}
   h_{n_{k}}(x)
 =
   1+\int_{\rho}^{x}\mathrm{d}\lambda^{T,r}\, \nabla   h_{n_{k}},
\ee
$h_{n_{k}}\to 0$ pointwise, as $k\to\infty$.
Thus, $\nu$-Brownian motion on $(T,r)$ is recurrent. \sm

{$\mathbf  (a) \Longrightarrow (b)$: }  Suppose $\nu$-Brownian motion on $(T,r)$ is recurrent, then we
can choose a sequence $(h_{{k}})_{k\in\mathbb{N}}$ in ${\mathcal D}({\mathcal E})$ such that $h_{{k}}\to 1$, $\nu$-almost everywhere, and
$\mathcal E(h_{{k}},h_{{k}})\to 0$, as $k\to\infty$.

Since $\nu$ is Radon there exists an $a\in B(\rho, 1)$ such that $h_{{k}}(a)\to 1$, as $k\to\infty$.
As
\begin{equation}
\label{l:032}
   h_{{k}}(\rho)
 =
   h_{{k}}(a)-\int_{\rho}^{a}\mathrm{d}\lambda^{T,r}\, \nabla   h_{n_{k}},
\ee
Cauchy-Schartz inequality implies that
\begin{equation}
\label{l:033}
   |h_{{k}}(\rho)-1|
 \leq
   |h_{k}(a) -1|  + 2\mathcal E\big(h_{{k}},h_{{k}}\big).
\ee

Therefore $h_{{k}}(\rho)\to 1$, as $k\to\infty$. Consequently, we can assume without loss of generality that $h_{k}(\rho) >0$, for all $k\in\mathbb{N}$.
Put $f_{k}:=\frac{h_{k}}{h_{k}(\rho)}$. It is easy to verify that $f_{k} \in{\mathcal D}({\mathcal E})$ with $f_{k}(\rho)=1$ and such that
$\mathcal E(f_{{k}},f_{{k}})\to 0$, as $k\to\infty$.
This implies that $\mathrm{cap}(\rho)=0$.
\end{proof}\sm}

We conclude this section with the proof of Theorem~\ref{T:trareha}.

\begin{proof}[Proof of Theorem \ref{T:trareha}] (i) Recall from (\ref{Einfty}) and (\ref{e:barr}) the set $E_\infty$ of ends at infinity equipped with the distance $\bar{r}$, and from (\ref{e:inha}) the $1$-dimensional Hausdorff measure ${\mathcal H}^1$ on $(E_\infty,\bar{r})$.
Assume that $(T,r,\nu)$ is such that
\begin{equation}
\label{e:010}
   {\mathcal H}^1\big(E_\infty,\bar{r}\big)<\infty.
\ee

Then for all $\varepsilon\in(0,1)$,
there exists a disjoint finite covering of $E_\infty$ by sets $E_i\subseteq E_\infty$, $i=1,...,m=m(\varepsilon)$, with $\mathrm{diam}^{(E_\infty,\bar{r})}(E_i)\le\varepsilon$ and furthermore the sequence can be chosen so that
\begin{equation}
\label{finitelength}
   \lim_{\varepsilon\rightarrow 0}\sum_{i=1}^{m(\varepsilon)}\mathrm{diam}^{(E_\infty,\bar{r})}(E_i)<\infty.
\ee

For each such collection a so-called finite {\em cut set} $\{x_n;\,n=1,...,m=m(\varepsilon)\}$ in $T$ is given by letting $x_i:=\min E_i$. Note that
$\mathrm{diam}^{(E_\infty,\bar{r})}(E_i)\le\varepsilon$ if and only if $r(\rho,x_i)\ge \varepsilon^{-1}$.
Let $y_{i} \in T$ be such that $y_{i}\in[\rho,x_{i}]$ and $r(\rho,y_i)=\tfrac{r(\rho,x_i)}{2}$. Put $V:=\{x_{i},y_{i}:\,i=1,2,...,m(\varepsilon)\}$, recall from (\ref{e:DV}), the set $D(V)$ of the closure of the connected components of $T\setminus V$. As before, let for all $\bar{W}\in D(V)$, $\partial \bar{W}:=\bar{W}\cap V$. Let for any $p,q\in T$ with $p\not=q$, $h^\ast_{p,q}$ be the minimizer of (\ref{cap.1}) with $\alpha:=0$, $A:=\{q\}$ and $B:=\{p\}$.
Let for each $\varepsilon>0$,
\begin{equation}
\label{e:V1}
  h_{\varepsilon}(x)
 :=
  \sum_{i=1}^{m(\varepsilon)}\mathbf{1}_{\bar{W}_{x_i,y_i}}\cdot h^{\ast}_{y_i,x_i}+\mathbf{1}_{T\setminus\cup_{i=1}^{m(\varepsilon)}(\bar{W}_{x_i,y_i}\cup E_i)}
\end{equation}

Since $\{E_{i};\,i=1,...,m\}$ cover $E_{\infty}$,  the  support of $h_{\varepsilon}$ is a compact set, and therefore in particular, $h_{\varepsilon}\in{\mathcal D}({\mathcal E})$.  Furthermore,
\begin{equation}
\label{e:intnabla1}
\begin{aligned}
 {\mathcal E}\big(h_\varepsilon,h_\varepsilon\big)
 &=
   \tfrac{1}{2}\int\mathrm{d}\lambda^{(T,r)}(\nabla h_\varepsilon)^2
  \\
 &=
    \tfrac{1}{2}\sum\nolimits_{i=1}^{m(\varepsilon)}\big(r(y_{i},x_i)\big)^{-1}
  \\
 &=
    \sum\nolimits_{i=1}^{m(\varepsilon)}\big(r(\rho,x_i)\big)^{-1}
  \\
 &=
    \sum\nolimits_{i=1}^{m(\varepsilon)}\mathrm{diam}^{(E_\infty,\bar{r})}(E_i).
\end{aligned}
\end{equation}

In particular, $h_\varepsilon\in{\mathcal D}({\mathcal E})$ and  $\limsup_{\varepsilon\to 0}\int d\lambda^{(T,r)}(\nabla h_\varepsilon)^2<\infty$ by (\ref{finitelength}).
Moreover, $h_\varepsilon\to\mathbf{1}_T$, as $\varepsilon\to 0$, and an application of H\"older's inequality will yield that  $h_\varepsilon$ is ${\mathcal E}$-Cauchy.  Therefore $h_\varepsilon\to\mathbf{1}_T$ in ${\mathcal E}_1$ as $\varepsilon\to 0$.
That is, $\mathbf{1}_T\in\bar{\mathcal D}({\mathcal E})$ and therefore the $\nu$-Brownian motion is recurrent.  \sm

(ii) Next assume that $\mathrm{dim}_H(E_\infty,\overline r)>1$. Then by the converse of Frostman's energy theorem (compare, e.g., Theorem~4.13(ii) in \cite{Fal2003}) there exists $\pi \in{\mathcal M}_1(E_\infty)$ with $\bar{\mathcal E}(\pi,\pi)<\infty$, and hence $\mathrm{res}_\rho<\infty$. Thus $\nu$-Brownian motion is transient by Proposition~\ref{P:03} together with Proposition~\ref{P:07}.
 \end{proof}\sm

\section{Connection to the discrete world (Proof of Theorem~\ref{T:trareha})}
\label{Sub:contdisc}

In this section we give the proof of Theorem~\ref{nashwillconverse}. It will be concluded from Theorem~\ref{T:trareha} by considering
the embedded Markov chains.  For that notice that we can associate any weighted discrete tree $(V,\{r_{\{x,y\}};\,x,y\in V\})$
with the following locally compact $\R$-tree:  fix a root $\rho\in V$ and introduce the metric
$r_V(x,y)=\sum_{e\in |x,y|}r_e$, $x,y\in V$, where $|x,y|$ is the set of
edges of the self avoiding path connecting $x$ and $y$. Notice that $(V,r_V)$ is a $0$-hyperbolic space, or equivalently,  $r(v_1,v_2)+r(v_3,v_4)\le\max\{r(v_1,v_3)+r(v_2,v_4);r(v_1,v_4)+r(v_2,v_3)\}$ for all $v_1,v_2,v_3,v_4\in V$.
By Theorem~3.38 in~\cite{Eva} we can find a smallest $\R$-tree $(T,r)$
such that $r(x,y)=r_V(x,y)$ for all $x,y\in V$. The following lemma complements the latter to a  one-to-one correspondence between
rooted weighted discrete tree and rooted $\R$-trees.

\begin{lemma}[Locally compact $\R$-trees induce weighted discrete trees]
Let $(T,r,\rho)$ be locally compact rooted $\R$-tree which is spanned by its ends at infinity.
Then the following holds:
\begin{itemize}
\item[(i)] All $x\in T$ are of finite degree, i.e.,the number of
connected components of $T\setminus\{x\}$ is finite.
\item[(ii)] Any ball contains only finitely many branch points, i.e, points of degree as least $3$.
\end{itemize}
\label{L:061}

In particular, $\lambda^{(T,r)}(B(\rho,n))<\infty$, for all $n\in\N$.
\end{lemma}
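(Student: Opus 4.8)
The plan is to exploit the defining property that $(T,r,\rho)$ is spanned by its ends at infinity, which forces a strong structural rigidity on the tree. First I would recall that ``spanned by $E_\infty$'' means that $T=\mathrm{span}(E_\infty,\bar r)$ in the sense of the discussion after \eqref{e:023}; equivalently, every point of $T$ lies on the geodesic ray $y(\R_+)$ of some end $y\in E_\infty$ (otherwise it could be pruned from the tree without affecting the metric on $E_\infty$). In particular, for every $x\in T$ and every connected component $C$ of $T\setminus\{x\}$, the component $C$ must contain at least one end at infinity: if it contained none, then $C$ would consist entirely of points not on any ray to infinity, contradicting that $T$ is the minimal tree spanning $E_\infty$. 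This is the key reduction that converts both claims into finiteness statements about $E_\infty$ near a given point.

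For part (i), I would argue by contradiction. Suppose $x\in T$ has infinite degree, so $T\setminus\{x\}$ has infinitely many components $C_1,C_2,\dots$. By the previous paragraph each $C_k$ contains an end $y_k\in E_\infty$, and we may pick a point $z_k\in C_k$ on the ray $y_k(\R_+)$ with $r(x,z_k)=1$ (possible since the ray leaves every bounded set, shrinking to $r(x,z_k)=1$ by the intermediate value property of the arc from $x$ into $C_k$; note $r(x,z_k)\le 1$ suffices — take $z_k$ at distance $\min\{1,\text{length of }C_k\text{-portion}\}$, but since $y_k$ is an end the full unit length is available). The points $z_1,z_2,\dots$ are pairwise at distance $r(z_j,z_k)=r(x,z_j)+r(x,z_k)=2$ for $j\ne k$, since the geodesic between them passes through $x$. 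Thus $\{z_k\}_{k\in\N}$ is an infinite $2$-separated set contained in the closed ball $\bar B(x,1)$. But $(T,r)$ is locally compact, so $\bar B(x,\varepsilon)$ is compact for small $\varepsilon$; more carefully, by local compactness a suitable closed ball around $x$ is compact, and a compact metric space cannot contain an infinite uniformly separated set — contradiction. (If only a small ball $\bar B(x,\varepsilon_0)$ is compact, replace the unit length above by $\varepsilon_0/2$ throughout.)

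For part (ii), fix $n\in\N$ and suppose for contradiction that $\bar B(\rho,n)$ contains infinitely many branch points $b_1,b_2,\dots$. At each branch point $b_k$ there is a connected component $D_k$ of $T\setminus\{b_k\}$ not containing $\rho$ and not containing the ``main'' direction; by part (i) and the spanning property, $D_k$ contains an end $y_k\in E_\infty$, and $b_k\in y_k(\R_+)$ with $r(\rho,b_k)\le n$. Consider the ends $y_k$: by compactness of $\mathrm{span}(E_\infty,\bar r)$ (noted right after \eqref{e:023}) and hence of $(E_\infty,\bar r)$ (it is bounded and, being a subset of a compact tree... actually $E_\infty$ need not be closed, so instead I would work directly in $T$). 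Cleaner: the points $b_k\in\bar B(\rho,n)$, and by local compactness $\bar B(\rho,n)$ is... not necessarily compact since $T$ need not be complete. So instead I would extract a convergent subsequence of the $b_k$ within the \emph{complete} setting or argue via the rays: each ray $[\rho,y_k]$ passes through $b_k$ and continues past it. Look at where the rays $[\rho,y_k]$ first branch off from one another. Since infinitely many distinct branch points sit in $\bar B(\rho,n)$, infinitely many rays must pairwise separate at pairwise-distinct points, all within distance $n$ of $\rho$. Passing to the portion at distance exactly $n+1$ from $\rho$ along these rays (available since $y_k\in E_\infty$), one again produces an infinite $2$-separated (in fact separated by at least $2\cdot\text{(gap)}$) family inside $\bar B(\rho,n+1)$ — reaching a contradiction with local compactness exactly as in part (i), after passing to a subball on which compactness holds. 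The final sentence, $\lambda^{(T,r)}(B(\rho,n))<\infty$, then follows: by (ii) the ball $B(\rho,n)$ decomposes into finitely many arcs between consecutive branch points (plus terminal segments), so it is covered by finitely many arcs of length at most $n$ each, whence $\lambda^{(T,r)}(B(\rho,n))\le$ (number of such arcs)$\cdot n<\infty$, using \eqref{length}.

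The main obstacle I anticipate is the completeness issue: $T$ is only assumed locally compact, not complete, so closed bounded balls need not be compact, and one cannot directly invoke sequential compactness of $\bar B(\rho,n)$. The fix is to never use a large ball: local compactness gives a compact \emph{neighborhood} of any single point, hence a compact ball $\bar B(p,\varepsilon_p)$ of some radius $\varepsilon_p>0$; all separation arguments above should be localized to such a small compact ball around the accumulation point (for (i), around $x$ itself; for (ii), one should first locate a limit point of the branch points — which requires a little care since $T$ is incomplete, so I would instead argue that the branch points themselves, being infinitely many in $\bar B(\rho,n)$, force infinitely many disjoint ``side branches'' each carrying length bounded below, or reuse the $2$-separated-family trick on unit-further points along the rays and localize near $\rho$ after noting all these points lie in a fixed compact ball which one arranges by choosing $\varepsilon$ small in the definition of end and rescaling). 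I would also double-check that the spanning hypothesis is genuinely used and correctly interpreted — this is the linchpin that guarantees each relevant component reaches infinity.
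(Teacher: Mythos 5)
Your part~(i) is essentially the paper's argument (infinitely many components at $x$, each reaching an end by the spanning hypothesis, hence an infinite uniformly separated family near $x$), and your localization to a small compact ball $\bar B(x,\varepsilon_0)$ is in fact cleaner than the paper's, which places the separated points in the large ball $\bar B(\rho,r(\rho,x)+2)$ and tacitly uses its compactness. The concluding estimate on $\lambda^{(T,r)}(B(\rho,n))$ is also handled the same way in both.

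Part~(ii), however, has a genuine gap. You correctly sense the difficulty — under local compactness alone $\bar B(\rho,n)$ need not be compact — but none of your proposed fixes closes it. An infinite $2$-separated family sitting in $\bar B(\rho,n+1)$ is \emph{not} by itself a contradiction with local compactness (think of $\mathbb{Z}^2\subset\mathbb{R}^2$): you get a contradiction only if the family lies in a single compact set, or accumulates at a point of $T$. Your separated points $z_k$ at distance $n+1$ from $\rho$ are spread over the whole sphere of radius $n+1$ and need not cluster anywhere, so ``passing to a subball on which compactness holds'' as in part~(i) does not apply; likewise the suggestion to ``rescale'' or to choose $\varepsilon$ small ``in the definition of end'' does not produce a compact set containing the family. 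The paper resolves exactly this point by invoking Lemma~5.9 of \cite{Kigami95}: a locally compact, \emph{complete} metric space has compact closed balls; it then extracts an accumulation point $y$ of the branch points inside the compact ball $\bar B(\rho,n)$ and builds the $2\varepsilon$-separated family inside the small ball $B(y,2\varepsilon)$, where compactness bites. To complete your proof you must either import that same completeness/compactness fact or otherwise show that bounded subsets of $T$ are totally bounded. A secondary, smaller issue: your assertion that ``infinitely many rays must pairwise separate at pairwise-distinct points within distance $n$'' is stated without proof; it is true, but requires choosing \emph{two} ends per branch point in distinct components of $T\setminus\{b_k\}$ and checking that the meets of points on the sphere of radius $n+1$ depend only on their ``branches'', so that finitely many such classes could account for only finitely many branch points.
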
\sm

\begin{remark}[Locally compact $\R$-trees induce weighted discrete trees]\rm
Given a locally compact rooted $\R$-tree which is spanned by its ends at infinity, let $V$ be the set of branch points in $(T,r)$ and $r_{\{x,y\}}=r_{x,y}$
\label{Rem:02}
for all $x,y\in V$ such that $[x,y]\cap V=\emptyset$. Obviously, $(V,\{r_{\{x,y\}};\,x,y\in V\})$ is a weighted discrete tree.
\hfill$\qed$
\end{remark}\sm

\begin{proof} Recall from  Lemma~5.9 in \cite{Kigami95}
that in a locally compact and complete
metric space all closed balls are compact.

(i) We give an indirect proof and assume to the contrary that $x\in T$ is a point of infinite degree. Then
$T\setminus\{x\}$ decomposes in at least countably many connected components, $T_1,T_2,...$ with only leaves in infinite distance to the root, i.e., $T_n=T_n^o$. We can therefore pick points $\{y_1,y_2,...\}$ with $y_i\in T_i$ and $r(x,y_i)=1$, $i=1,...$. Thus the mutual distances between any two of  $\{y_1,y_2,...\}\subseteq B(\rho,r(\rho,x)+2)$ is $2$. This implies that the closed ball $\bar{B}(\rho,r(\rho,x)+2)$ can not be compact. The latter, however, contradicts the local compactness of $(T,r)$.\sm

(ii) Let $n\in\N$ be arbitrary. Assume that $B(\rho,n)$ contains an infinite sequence of mutually distinct branch points $\{y_1,y_2,...\}$.
Since the closed ball $\bar{B}(\rho,n)$ is compact, we can find a subsequence $(n_k)_{k\in\N}$ and a limit point $y\in\bar{B}(\rho,n)$ such that $y_{n_k}\to y$, as $k\to\infty$.  Fix $\varepsilon\in(0,\frac{n}{2})$. Then there is $K=K(\varepsilon)$ such that $y_{n_k}\in B(y,\varepsilon)$ for all $k\ge K$. Moreover, we can pick for any $k\ge K$ a point $z_{n_k}$ such that $y_{n_k}\in[\rho,z_{n_k}]$, $r(y_{n_k},z_{n_k})=\varepsilon$ and $r(z_{n_k},z_{n_l})\ge 2\varepsilon$ for all $l\not =k\ge K$. This, however, again contradicts the fact that $\bar{B}(\rho,n)$ is compact. Since $n$ was chosen arbitrarily, this implies the claim.\sm

Combining the two facts, we can upper estimate $\lambda^{(T,r)}(B(\rho,n))$ by $n$ times the number of branch points in $\lambda^{(T,r)}(B(\rho,n))$
times their maximal degree times $n$. This finishes the proof.
\end{proof}\sm

It follows immediately that $\lambda^{(T,r)}$-Brownian motion $B:=(B_t)_{t\ge 0}$ is well-defined on locally compact $\R$-trees $(T,r)$
which are spanned by their ends at infinity. Let $(V,\{r_{\{x,y\}};\,x,y\in V\})$ be the corresponding weighted discrete tree.

\begin{lemma}[Embedded Markov chain]
Let $(T,r)$ be a locally compact $\R$-tree which is spanned by its ends at infinity and $B:=(B_t)_{t\ge 0}$ the
$\lambda^{(T,r)}$-Brownian motion on $(T,r)$. We introduce $\tau_0:=\inf\{t\ge 0:\,B_t\in V\}$, and put
$Y_0:=B_{\tau_0}$. Define then recursively for all $n\in\N$,\label{L:10}

\begin{equation}\label{e:f4}
   \tau_n
 :=
   \inf\big\{t>\tau_{n-1}\,|\,B_t\in V\setminus\{X_{n-1}\}\big\}.
\ee
and put
\begin{equation}\label{e:f4Y}
   Y_n
 :=
   B_{\tau_n}.
\ee

Then the stochastic process $Y=(Y_n)_{n\in\mathbb{N}_0}$ is a weighted Markov chain on the
weighted, discrete tree $(V,\{r_{\{x,y\}};\,x,y\in V\})$ .
\end{lemma}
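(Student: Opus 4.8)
The plan is to recognise $Y$ as the chain obtained by watching the strong Markov process $B$ on the locally finite set $V$, and then to identify its one‑step law by an electrical‑network (harmonic‑function) computation on a compact star‑shaped subtree, mirroring the martingale argument used in the proof of Proposition~\ref{P:prop}(i).

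\emph{Step 1: the embedded chain.} By Lemma~\ref{L:061}(ii) the set $V$ of branch points is locally finite, hence closed and discrete, so $V\setminus\{x\}$ is closed for each $x\in V$, the $\tau_n$ are stopping times, and path‑continuity together with $Y_{n-1}\notin V\setminus\{Y_{n-1}\}$ forces $\tau_n>\tau_{n-1}$. Fix $x\in V$, let $N(x)=\{x_1,\dots,x_d\}$ be the (finitely many, and pairwise lying in distinct components of $T\setminus\{x\}$) $V$‑neighbours of $x$, i.e.\ the $x_i$ with $]x,x_i[\cap V=\emptyset$, and put $S:=\mathrm{span}(\{x,x_1,\dots,x_d\})=\bigcup_{j}[x,x_j]$, a compact $\R$‑tree (a ``star'' with centre $x$ and leaves $x_1,\dots,x_d$). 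With $\sigma:=\inf\{t>0:\,B_t\in V\setminus\{x\}\}$, I would first observe that under $\mathbf P^x$ the path $(B_t)_{0\le t<\sigma}$ stays in $S$: once $B$ enters a component $C_j$ of $T\setminus\{x\}$ it cannot leave the arc $[x,x_j]$ without first hitting $x_j\in V\setminus\{x\}$ or returning to $x$; the only alternative is that $B$ escapes to infinity along a component of $T\setminus\{x\}$ containing no branch point, which does not occur for the trees in Theorem~\ref{nashwillconverse} (and in general one restricts to $\{\sigma<\infty\}$, exactly as in Proposition~\ref{P:prop}(i)). Hence $\sigma$ is the first hitting time of $\{x_1,\dots,x_d\}$ by the part of $B$ on $S$; since the $\lambda^{(T,r)}$‑Brownian motion on the compact tree $S$ is positive recurrent by Theorem~\ref{T:04}(i), $\sigma<\infty$ $\mathbf P^x$‑a.s.\ and $B_\sigma\in\{x_1,\dots,x_d\}$. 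Applying the strong Markov property at $\tau_0,\tau_1,\dots$ then shows $Y=(Y_n)_{n\ge0}$ is a time‑homogeneous Markov chain on $V$ that from $x$ jumps only to $V$‑neighbours of $x$, with $P(x,x_i)=\mathbf P^x\{B_\sigma=x_i\}$.

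\emph{Step 2: transition probabilities.} Put $v_i:=\big(r(x,x_i)\big)^{-1}\big/\sum_{j=1}^d\big(r(x,x_j)\big)^{-1}=c_{\{x,x_i\}}\big/\sum_j c_{\{x,x_j\}}$ and let $\varphi_i\in{\mathcal C}(S)$ interpolate affinely along each arc $[x,x_j]$ between $\varphi_i(x)=v_i$ and $\varphi_i(x_j)=\mathbf 1_{\{i=j\}}$. As in Examples~\ref{Exp:04} and~\ref{Exp:01}, $\varphi_i$ is locally absolutely continuous with $\nabla\varphi_i$ equal (up to the orientation sign, which cancels in products $\nabla\varphi_i\,\nabla g$) to the constant ``outward slope'' $m_j:=(\mathbf 1_{\{i=j\}}-v_i)/r(x,x_j)$ on $[x,x_j]$; computing as in (\ref{e:dir.6}) gives, for every $g$ in the Dirichlet space of the $\lambda^{(T,r)}$‑Brownian motion on $S$ which vanishes on $\{x_1,\dots,x_d\}$,
\[
   {\mathcal E}(\varphi_i,g)=\tfrac12\sum_{j=1}^d m_j\big(g(x_j)-g(x)\big)=-\tfrac12\,g(x)\sum_{j=1}^d m_j=0,
\]
because $g(x_j)=0$ and the Kirchhoff balance $\sum_j m_j=\big(r(x,x_i)\big)^{-1}-v_i\sum_j\big(r(x,x_j)\big)^{-1}=0$ holds by the choice of $v_i$. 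Hence, by Theorem~2.2.1 in \cite{FukushimaOshimaTakeda1994}, $\varphi_i$ and $-\varphi_i$ are excessive for the $\lambda^{(T,r)}$‑Brownian motion on $S$ killed on $\{x_1,\dots,x_d\}$, which by Step~1 agrees with $B$ run up to $\sigma$; thus $(\varphi_i(B_{t\wedge\sigma}))_{t\ge0}$ is a bounded $\mathbf P^x$‑martingale and optional stopping at $\sigma$ yields
\[
   v_i=\varphi_i(x)=\mathbf E^x\big[\varphi_i(B_\sigma)\big]=\sum_{j=1}^d\varphi_i(x_j)\,\mathbf P^x\{B_\sigma=x_j\}=\mathbf P^x\{B_\sigma=x_i\}.
\]
So $P(x,x_i)=c_{\{x,x_i\}}\big/\sum_j c_{\{x,x_j\}}$, which is precisely the transition rule of the weighted random walk on $(V,\{r_{\{x,y\}};\,x,y\in V\})$, and the lemma follows.

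The main obstacle is Step~2: one must guess the correct harmonic function $\varphi_i$ (the equilibrium potential of the star), verify through the Kirchhoff identity $\sum_j m_j=0$ that it is harmonic for ${\mathcal E}$ off $V\setminus\{x\}$, and — in order to apply optional stopping via a genuine element of a Dirichlet space rather than of the (possibly larger) extended space — pass to the compact subtree $S$, which relies on the observation that $B$ does not leave $S$ before time $\sigma$. Everything else (the Markov property, finiteness of the $\tau_n$, and the fact that the jumps land on $V$‑neighbours) is routine once local finiteness of $V$ and path‑continuity are in hand.
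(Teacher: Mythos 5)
Your proof is correct and follows essentially the same route as the paper: the paper reduces Lemma~\ref{L:10} to Lemma~\ref{L:hitting}, whose proof constructs exactly your piecewise-linear equilibrium potential (with value $1$ at $x_i$ and value $v_i=\frac{(r(x,x_i))^{-1}}{\sum_j (r(x,x_j))^{-1}}$ at the centre $x$), verifies ${\mathcal E}(h_i,g)=0$ via the same Kirchhoff balance, and applies optional stopping to the resulting bounded martingale. The only cosmetic differences are that the paper extends $h_i$ slightly beyond $x_i$ (using auxiliary finite cut sets $V_i$) so as to run the martingale argument for the global process rather than for the part process on the star $S$, and establishes finiteness of the exit time via a Green-kernel bound on the compact subtree spanned by $x,x_1,\dots,x_n$ rather than by quoting positive recurrence from Theorem~\ref{T:04}.
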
\sm

One can perhaps use the ``Trace Theorem'', Theorem 6.2.1 in \cite{FukushimaOshimaTakeda1994},  to prove the above lemma but we present a direct proof instead. As a preparation, we state the following lemma.
\begin{lemma}[Hitting times]
Fix a locally compact $\R$-tree $(T,r)$ {spanned by its ends at infinity} and a Radon measure $\nu$ on $(T,{\mathcal B}(T))$.
Let $B=((B_t)_{t\ge 0},({\mathbf P}^x)_{x\in T})$ be the continuous $\nu$-symmetric strong Markov process \label{L:hitting}
 on $(T,r)$
whose Dirichlet form is $({\mathcal E}, {\mathcal D}({\mathcal E}))$. Consider a branch point $x\in T$ and the finite family $\{x_1,...,x_n\}$ in $T$, for some $n\in\N$, of all branch points adjacent to $x$, i.e., $r(x_i,x_j)=r(x_i,x)+r(x,x_j)$, for all $1\le i<j\le n$ and for all $i=1,...,n$ the open arc $]x_i,x[$ does not contain further branch points.
Then the following holds:
\begin{itemize}
\item[(i)] $\mathbf{P}^x\{\wedge_{i=1}^n\tau_{x_i}<\infty\}=1$.
\item[(ii)]
For all $1\le i<j\le n$ and all $x$ in the subtree spanned by $\{x_1,...,x_n\}$,
\be{Xhit1}
   \mathbf P^x\big\{\tau=\tau_{x_i}\big\}
 =
   \frac{(r(x_i,x))^{-1}}{\sum_{j=1}^n(r(x_j,x))^{-1}}.
\ee
\end{itemize}
\end{lemma}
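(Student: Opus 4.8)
The plan is to reduce both statements to the already-established facts about the $\nu$-Brownian motion $B$, namely the hitting-probability identity \eqref{Xhit} from Proposition~\ref{P:prop}(i) and the positivity-of-capacity Lemma~\ref{L:02}, together with the local structure of $(T,r)$ around a branch point. Fix the branch point $x$ and its adjacent branch points $x_1,\dots,x_n$, and set $A:=\{x_1,\dots,x_n\}$ and $\tau:=\tau_A=\wedge_{i=1}^n\tau_{x_i}$. The key topological observation is that, by definition of the $x_i$ as the branch points adjacent to $x$, the star-shaped subtree $S:=\bigcup_{i=1}^n[x,x_i]$ has the property that $S\setminus A$ is an open neighbourhood of $x$ in which $x$ is the only branch point; in particular, starting from any point $y$ of this subtree, the path of $B$ must exit $S\setminus A$ through one of the $x_i$, because $(T,r)$ is loop-free (Axiom~2) and $B$ has continuous paths.

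For part~(i): I would first treat the case $y=x$, the branch point itself. Consider the two adjacent points $x_1,x_2$. Since $B$ has continuous paths and $x\in\,]x_1,x_2[$, by the strong Markov property and the fact (from Lemma~\ref{L:02}) that $\mathrm{cap}^1_{\{x_1,x_2\}}(\{x\})>0$ — equivalently that points are non-polar — one shows that $\mathbf P^x\{\tau_{x_1}\wedge\tau_{x_2}<\infty\}=1$: the process started at $x$, being recurrent on the compact subtree $[x_1,x_2]$ (or by a direct resistance estimate using \eqref{con.3} and the explicit Green kernel of Corollary~\ref{Cor:01} for the process killed at $\{x_1,x_2\}$, which has finite total mass), must hit $\{x_1,x_2\}$ in finite time almost surely. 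Hence a fortiori $\mathbf P^x\{\tau_A<\infty\}=1$. For a general starting point $y$ in the spanned subtree, one uses that $c(y,x_i,x_j)\in S$ for all $i,j$, applies the strong Markov property at the first time $B$ reaches $x$ (which is finite a.s.\ by the same argument applied on the arc $[x, c(y,x_1,x_2)]\subseteq S$ plus the continuity of paths and $0$-hyperbolicity), and then invokes the case $y=x$.

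For part~(ii): I would proceed by induction on $n$. For $n=2$ the claim $\mathbf P^x\{\tau=\tau_{x_1}\}=\frac{r(x_1,x)^{-1}}{r(x_1,x)^{-1}+r(x_2,x)^{-1}}=\frac{r(x_2,x)}{r(x_1,x_2)}$ is exactly \eqref{Xhit} with $a:=x_1$, $b:=x_2$, since $x\in\,]x_1,x_2[$ gives $c(x,x_1,x_2)=x$ and $r(c(x,x_1,x_2),x_2)=r(x,x_2)$; the hypothesis $\mathbf P^x\{\tau_{x_1}\wedge\tau_{x_2}<\infty\}=1$ needed in Proposition~\ref{P:prop}(i) is supplied by part~(i). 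For the inductive step, condition on whether $B$ hits $\{x_n\}$ before $\{x_1,\dots,x_{n-1}\}$ or not: by \eqref{Xhit} applied to the pair consisting of $x_n$ and (a suitable point representing) the rest, one gets the probability of reaching $x_n$ first, and on the complementary event the strong Markov property at the last visit to $x$ before $\tau$ reduces the problem to $n-1$ adjacent points. More cleanly, one can argue directly: for each $i$, $\mathbf P^x\{\tau=\tau_{x_i}\}=\mathbf P^x\{\tau_{x_i}<\min_{j\neq i}\tau_{x_j}\}$; apply \eqref{Xhit} with $a:=x_i$ and $b$ chosen so that $c(x,x_i,b)=x$ and $r(x,b)$ equals the harmonic-type combination — this is most transparently done via the electrical-network interpretation, where a star with edge resistances $r(x,x_i)$ has the property that the probability of exiting at node $i$ is proportional to the conductance $r(x,x_i)^{-1}$, a fact which here follows from the explicit form of the harmonic function $h^\ast$ minimizing \eqref{cap.1} on the star subtree, computed exactly as in Proposition~\ref{P:04}. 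Verifying that this harmonic function is the unique bounded harmonic function with the prescribed boundary values, and hence equals $\mathbf P^\cdot\{\tau=\tau_{x_i}\}$ by the optional stopping / excessivity argument used in the proof of Proposition~\ref{P:prop}(i), completes the step.

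The main obstacle is the finiteness-of-hitting-time statement in part~(i): one must rule out that $B$ started at $x$ wanders off to infinity (or to a missing boundary point) along one of the other branches before ever returning to hit $\{x_1,\dots,x_n\}$. This is not automatic from recurrence of the global process (the tree here may be transient); the right fix is to work with the process killed upon hitting $A=\{x_1,\dots,x_n\}$, note that its Green kernel $g^\ast_A(x,\cdot)$ is given explicitly by Proposition~\ref{P:06} and Corollary~\ref{Cor:01} and has finite $\nu$-mass on a neighbourhood of $x$, and combine this with the fact that the killed process, restricted to the star, leaves every compact subset of $S\setminus A$ in finite time because $\mathbf 1_{S}\notin\bar{\mathcal D}_A(\mathcal E)$ by the transience Lemma~\ref{L:01}. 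Alternatively — and this is probably the cleanest route — one observes that outside $B(x,\varepsilon)$ for small $\varepsilon$ the branch of $T$ not containing any $x_i$ is irrelevant by continuity of paths, so the question is local and reduces to the one-dimensional statement that $\nu$-Brownian motion on a finite arc hits the endpoints a.s., which is classical.
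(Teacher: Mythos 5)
Your treatment of part~(ii) by the ``direct'' route --- constructing the explicit harmonic function on the star whose value at the centre is $(r(x_i,x))^{-1}/\sum_{j}(r(x_j,x))^{-1}$, identifying it as the minimizer of (\ref{cap.1}) exactly as in Proposition~\ref{P:04}, and then running the excessivity/optional-stopping argument from the proof of Proposition~\ref{P:prop}(i) --- is precisely the paper's proof of that part; the induction on $n$ and the conditioning at ``the last visit to $x$ before $\tau$'' should be discarded (the last visit is not a stopping time, and (\ref{Xhit}) compares two points, not a point against a set). One detail you should not gloss over: to make the star function an admissible element of ${\mathcal D}({\mathcal E})$, the paper extends it linearly to a further finite layer of vertices $V_i$ beyond each $x_i$ (using Lemma~\ref{L:061}), so that it is continuous and compactly supported.

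The genuine gap is in part~(i). Your primary route reduces the claim to $\mathbf P^x\{\tau_{x_1}\wedge\tau_{x_2}<\infty\}=1$, and for $n\ge 3$ this intermediate statement is false in general: the hypotheses of the lemma allow transient trees (cf.\ Theorem~\ref{T:trareha}(ii) and Example~\ref{karrayt}), and on such a tree the process started at $x$ can hit $x_3$ before $\{x_1,x_2\}$ and then escape to infinity inside the component of $T\setminus\{x\}$ containing $x_3$ without ever returning, so that $\tau_{x_1}\wedge\tau_{x_2}=\infty$ with positive probability even though $\wedge_{i=1}^n\tau_{x_i}<\infty$. The same phenomenon defeats your ``cleanest route'': the branches hanging off $x$ that do not contain $x_1$ or $x_2$ are \emph{not} irrelevant, since the process enters them through $x$ and need not come back, so the question does not localize to the arc $[x_1,x_2]$. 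The correct localization uses the \emph{full} set $A=\{x_1,\dots,x_n\}$: because each component of $T\setminus\{x\}$ is separated from $x$ by exactly one $x_i$ and the open arcs $]x,x_i[$ contain no further branch points, the process killed on $A$ is confined to the compact star $D=\cup_{i=1}^n[x,x_i]$; hence $g^\ast_A(x,\cdot)$ is supported and bounded on $D$, and $\mathbf E^x\big[\wedge_{i=1}^n\tau_{x_i}\big]=\int_D\nu(\mathrm dy)\,g^\ast_A(x,y)<\infty$ since $\nu(D)<\infty$. This is the paper's (one-line) argument. Your ``fix'' paragraph names the right object --- the Green kernel of the process killed on $A$ --- but draws the wrong conclusion from it: finiteness of its $\nu$-mass ``on a neighbourhood of $x$'' together with transience of $({\mathcal E},\bar{\mathcal D}_A({\mathcal E}))$ does not yield an a.s.\ finite hitting time; what is needed, and what confinement to $D$ delivers, is finiteness of the expected lifetime of the killed process.
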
\sm

\begin{proof}[Proof of Lemma~\ref{L:hitting}]
Let $(T,r)$, $\nu$, $n\in\mathbb{N}$, and $x_1,...,x_n$ be as by assumption.

(i) Let $D$ be the compact sub-tree formed by $x$ along with $x_{1},..., x_{n}$, and $\tau_D$ denote the exit time of $B$ from $D$, i.e. $\tau_D:=\wedge_{i=1}^{n}\tau_{x_{i}} $.  Reasoning as in the proof of Proposition \ref{P:prop}, it follows that
\begin{equation}
  \mathbf{E}^{x}\big[\int_{0}^{\tau_{D}}\mathrm{d}s\, f(B_{s})\big] = \int_{D}\nu(\mathrm{d}y)\, g^{\ast}_{D}(x,y) f(y)
   \ee
whenever $f \in L^{1}(\nu)$ and $g^{\ast}_{D}(x,\cdot)$ is the Green kernel as defined in  Definition \ref{Def:03}.  As $D$ is a non-empty compact subset of $T$, $g^{\ast}_{D} (x,\boldsymbol{\cdot})$ is a bounded function on $D$.  The result follows if we choose specifically {$f:=\mathbf{1}$.}\sm

(ii)
By Lemma~\ref{L:061}, we can choose for all $i=1,...,n$ a finite set $V_i\subset T$ such that  for all $v\in V_i$, $x_i\in[v,x]$ and $]v,x[$
does not contain any branch points.
Define then for all $i=1,...,n$ a function $h_{i}:\,T\to[0,1]$
by the following requirements: $h_i(x_i)=1$, $h_i(x):=\frac{(r(x_i,x))^{-1}}{\sum_{j=1}^n(r(x_j,x))^{-1}}$,
$h_i$ is supported on the subtree spanned by $V_i\cup \{x_1,...,x_n\}\setminus\{x_i\}$, and is linear on the arcs $[x,x_j]$, for all $j=1,...,n$, and $[v,x_i]$ for all $v\in V_i$.
%
Obviously, $h_i\in {\mathcal L}_{V_i\cup\{x_1,...,x_n\}\setminus\{x_i\},\{x_i\}}$.
Moreover, if we choose $x$ as the root,
\begin{equation}
\label{e:005}
\begin{aligned}
   &\nabla h_i
  \\
 &:=\sum_{v\in V_i}\tfrac{h_i(v)-h_i(x_i)}{r(x_i,v)}\mathbf{1}_{[v,x_i]}+\tfrac{h_i(x_i)-h_i(x)}{r(x_i,x)}\mathbf{1}_{[x,x_i]}+\sum_{j=1,j\not =i}^n\tfrac{h_i(x_j)-h_i(x)}{r(x_j,x)}\mathbf{1}_{[x,x_j]}
  \\
 &=
   -\sum_{v\in V_i}r^{-1}(x_i,v)\mathbf{1}_{[v,x_i]}+\frac{\sum_{j=1,j\not =i}^n(r(x_j,x))^{-1}}{r(x,x_i)\cdot \sum_{j=1}^n(r(x_j,x))^{-1}}\mathbf{1}_{[x,x_i]}
  \\
 &\;-\sum_{j=1,j\not =i}^n\tfrac{(r(x_i,x))^{-1}}{r(x_j,x)\cdot \sum_{k=1}^n(r(x_k,x))^{-1}}\mathbf{1}_{[x,x_j]}.
\end{aligned}
\end{equation}

Hence, for
all $g\in{\mathcal D}_{V_i\cup\{x_1,...,x_n\}}({\mathcal E})$,
\begin{equation}
\label{e:hn}
\begin{aligned}
   {\mathcal E}\big(h_i,g\big)
 &=
   \tfrac{1}{2}\sum_{v\in V_i}r^{-1}(x_i,v)\big(g(x_i)-g(v)\big)\\
&\;\ +\tfrac{1}{2}\frac{\sum_{j=1,j\not =i}^n(r(x_j,x))^{-1}}{r(x,x_i)\cdot \sum_{j=1}^n(r(x_j,x))^{-1}}\big(g(x_i)-g(x)\big)
  \\
 &\;\;+\tfrac{1}{2}\sum_{j=1,j\not =i}^n\tfrac{(r(x_i,x))^{-1}}{r(x_j,x)\cdot \sum_{k=1}^n(r(x_k,x))^{-1}}
   \big(g(x)-g(x_j)\big)
  \\
 &=
  0.
\end{aligned}
\end{equation}

By part(i) of Proposition~\ref{L:05}, this identifies $h_i$ as the unique minimizer of (\ref{cap.1}) with
$\alpha=0$, $A:=V_i\cup\{x_1,...,x_n\}\setminus\{x_i\}$ and $B:=\{x_i\}$. Hence we can conclude similarly as in the proof of Proposition~\ref{P:prop} that for all $i\in\{1,...,n\}$, the process $Y^i_t:=h_i(B_t)$ is a bounded martingale. Thus by the optional sampling theorem applied with
$\tau:=\tau_1\wedge ...\wedge\tau_{x_n}<\infty$, $\mathbf{P}^x$-almost surely. Thus
\begin{equation}
\label{equation}
   \frac{(r(x_i,x))^{-1}}{\sum_{j=1}^n(r(x_j,x))^{-1}}=\mathbf{E}^x\big[Y^{i}_0\big]=\mathbf{E}^x\big[Y^{i}_\tau\big]
 =
   \mathbf{P}^x\big\{\tau=\tau_{x_i}\big\},
\end{equation}
for all $i=1,...,n$ and the claim follows.
\end{proof}\sm

\begin{proof}[Proof of Lemma~\ref{L:10}] Without loss of generality,  we may assume that $B_0=x$ is a branch point.
Fix a vertex $x\in V$ and let $x_1,...,x_k\in V$ be the collection of all vertices
incident to $x$. It suffices to prove for $\tau:=\tau_{x_1}\wedge ...\wedge\tau_{x_k}$
and all $i\leq k$,
\begin{equation}\label{e:09.1}
   \mathbf P^x\{\tau_{x_i}=\tau\}=\big(r_{\{x,x_i\}}\pi(x)\big)^{-1},
\ee
where $\pi(x):=\sum_{x'\sim x}\tfrac{1}{r(x,x')}$,
which is the claim of Lemma~\ref{L:hitting}.
\end{proof}\sm

We conclude this section by giving the proof of Theorem~\ref{nashwillconverse}.

\begin{proof}[Proof of Theorem \ref{nashwillconverse}] By Remark~\ref{Rem:02} we can
construct a locally compact $\R$-tree which is spanned by its leaves at infinity and
with branch points in $V$ such that
on $V$ its metric coincides with $r$.
The
assertion now follows from the previous lemma in combination with
Theorem~\ref{T:trareha}.
\end{proof}\smallskip

\section{Examples and diffusions with more general scale function}
\label{s:BMdrift}

As suggested by Proposition~\ref{P:prop}, $\nu$-Brownian motion can be thought of as a diffusion on natural scale with speed measure $\nu$. We begin by listing a couple of examples, which can be found in the literature:

\begin{example}[Time changed Brownian motion on (subsets of) $\R$]\rm
Let $-\infty\le a<b\le\infty$, and let the $\R$-trees $(T,r)$ be $(a,b)$, $[a,b)$, $(a,b]$ or $[a,b]$ equipped with the Euclidian distance.
Consider the solution of the stochastic differential equation
\begin{equation}
\label{e:BM}
   \mathrm{d}X_t=\sqrt{a(X_t)}\mathrm{d}B_t,
\end{equation}
where $B:=(B_t)_{t\ge 0}$ is standard Brownian motion on the real line and $a:T\to\R_+$ a measurable function such that
\begin{equation}
\label{e:speed}
   \nu(\mathrm{d}x):=\tfrac{1}{a(x)}\mathrm{d}x
\end{equation}
defines a Radon-measure on $(T,{\mathcal B}(T))$. It is well-known that under (\ref{e:speed}), the equation (\ref{e:BM}) has a unique weak solution $X:=(X_t)_{t\ge 0}$
whose Dirichlet form is given by (\ref{con.2p}) with domain ${\mathcal D}({\mathcal E}):=L^2(\nu)\cap{\mathcal A}_\R$ where ${\mathcal A}_\R$ is the space of absolutely continuous functions that vanish at infinity.
\hfill$\qed$
\end{example}\sm

A less standard example is the Brownian motion on the CRT.
\begin{example}[$\nu$-Brownian motion on the CRT]\sm
Let $(T,r)$ be the CRT coded as an $\R$-tree. That is, let $B^{\mathrm{exc}}$ denote a standard Brownian excursion on $[0,1]$. Define an equivalence relation $\sim$ on $[0,1]$
be letting \label{Exp:02}
\begin{equation}
\label{e:CRT1}
   u\sim v\hspace{1cm}\mbox{ iff }\hspace{1cm}B^{\mathrm{exc}}_{u}=B^{\mathrm{exc}}_v=\inf_{u'\in[u\wedge v,u\vee v]}B^{\mathrm{exc}}_{u'}.
\end{equation}
Consider the following pseudo-metric on the quotient space $T:=[0,1]\big|_\sim$:
\begin{equation}
\label{e:CRT2}
   r(u,v):=2\cdot B^{\mathrm{exc}}_{u}+2\cdot B^{\mathrm{exc}}_v-4\cdot B^{\mathrm{exc}}_v=\inf_{u'\in[u\wedge v,u\vee v]}B^{\mathrm{exc}}_{u'}.
\end{equation}
By Lemma~3.1 in \cite{EvaPitWin2006} the CRT is compact, almost surely,  and thus $\nu$-Brownian motion exists if $\nu$ is a finite measure on $(T,{\mathcal B}(T))$
with $\mathrm{supp}(\nu)=T$. The following two choices for $\nu$ can be found in the literature.
\begin{itemize}
\item In \cite{Kre95} first an enumerated countable dense subset $\{e_1,e_2,...\}$ of the set $T\setminus T^o$ of boundary points is fixed, and then $\nu$ is chosen to be $\nu:=\sum_{i=1}^\infty 2^{-i}\lambda^{[\rho,e_i]}$.
\item In \cite{Cro07,Cro08} $\nu$ is chosen to be the uniform distribution on $(T,r)$ defined as the push forward of the Lebesgue measure on $[0,1]$ under the map which sends $u\in [0,1]\big|_\sim$ into the CRT as coded above.
\end{itemize}\hfill$\qed$
\end{example}\sm

In this section we consider diffusions that are not on natural scale. That is, we look for conditions on a measure $\mu$ on $(T,{\mathcal B}(T))$ such that
the form
\begin{equation}
\label{con.2pmu}
\begin{aligned}
   {\mathcal E}(f,g)
 &:=
   \frac{1}{2}\int\mu(\mathrm{d}z)\nabla f(z)
   \nabla g(z)
\end{aligned}
\ee
for all $f,g\in{\mathcal D}({\mathcal E})$ with the same domain ${\mathcal D}({\mathcal E})$ as before (compare (\ref{domainp}))
defines again a regular Dirichlet form. If this is the case we would like to refer to the corresponding diffusion as $(\mu,\nu)$-Brownian motion.

\begin{example}[Diffusion on $\R$]\rm Let $X:=(X_t)_{t\ge 0}$ be the diffusion on $\R$ with differentiable scale function $s:\R\to\R_{+}$ and speed measure $\nu:{\mathcal B}(\R)\to\R_+$. Then $X$ is the
continuous strong Markov process associated with the Dirichlet form
\begin{equation}
\label{e:007}
     {\mathcal E}(f,g)
 :=
     \tfrac{1}{2}\int\tfrac{\mathrm{d}z}{s'(z)}\cdot f'(z)\cdot g'(z)
\end{equation}
for all $f,g\in L^2(\nu)\cap{\mathcal A}{_\R}$ such that ${\mathcal E}(f,g)<\infty$ with ${\mathcal A}{_\R}$ denoting the set of all absolutely continuous functions which vanish at infinity.

It is well-known for regular diffusions that one can do a ``scale change'' resulting in a diffusion on the natural scale.
For that purpose, let for all $x,y\in\R$,
\begin{equation}
\label{e:rc}
   r_{s}(x,y)
 :=
   \int_{[x\wedge y,x\vee y]}\mathrm{d}z\,s'(z).
\ee
It is easy to see that $(\R,r_s)$ is isometric to a connnected subset of $\R$ {and therefore a locally compact} $\R$-tree which
has length measure $\mathrm{d}\lambda^{(\R,r_s)}=s'(x)\,\mathrm{d}x$.
We find that
\begin{equation}
\label{e:006}
\begin{aligned}
    {\mathcal E}(f,g)
 &=
    \frac{1}{2}\int\tfrac{\mathrm{d}z}{s'(z)}\,(s'(z)\nabla_{r_c} f(z))\cdot (s'(z)\nabla_{r_c} g(z)),
   \\
 &=
    \frac{1}{2}\int\mathrm{d}\lambda^{(\R,r_c)}\,\nabla_{r_c} f\cdot \nabla_{r_c} g,
\end{aligned}
\end{equation}
where $f,g\in L^2(\nu)\cap{\mathcal A}{_\R}$ such that ${\mathcal E}(f,g)<\infty$.
This implies that the $\nu$-Brownian motion, $B^{s}$, on $(\R,r_{s})$ has the same distribution as $X$ on $(\R,\mid\boldsymbol{\cdot}\mid)$.
Moreover, Theorems~\ref{T:04} and~\ref{T:trareha} imply that $X$ is recurrent iff $\int_0^\infty\mathrm{d}y\,s(y)=\infty$ and  $\int_{-\infty}^0\mathrm{d}y\,s(y)=\infty$.
\label{Exp:03}

Specifically, if $X^c_{t} = B_{t} + c\cdot t$ is the (standard) Brownian motion on $\R$ with drift $c\in\R$, then its
scale function is $s(x):=\int^x_{\cdot}e^{-2cy}\mathrm{d}y$ and its speed measure is $\nu(\mathrm{d}x):=e^{2cx}\mathrm{d}x$. Thus with the choice
\begin{equation}
   r_{c}(x,y)
 :=
   \tfrac{1}{2c}e^{-2c x\wedge y}(1-e^{-2c|y-x|)}),
\ee
for all $x,y\in\R$,
$X^c$ on $(\R,\mid\boldsymbol{\cdot}\mid)$ has the same distribution as $e^{2cx}\mathrm{d}x$-Brownian motion on $(\R,r_c)$.
Since
$(\R,r_c)$ is isometric
to $(\tfrac{1}{2c},\infty)$ if $c<0$ and $(-\infty,\tfrac{1}{2c})$ if $c>0$, $X^c$ is recurrent iff $c=0$.
\hfill$\qed$
\end{example}\sm

We want to formalize the notion of a ``scale change'' discussed in Example~\ref{Exp:03} on general separable $\R$-trees $(T,r)$, and consider a method by which
we could construct diffusions on $(T,r)$ which are not necessarily on natural scale.

Assume we are given a separable $\R$-tree $(T,r)$, a Radon measure $\nu$ on $(T,{\mathcal B}(T))$ and a further measure $\mu$ on  $(T,{\mathcal B}(T))$ which is absolutely continuous {with density $e^{-2\phi}$} with respect to the length measure $\lambda^{(T,r)}$. Define the form $({\mathcal E},{\mathcal D({\mathcal E})})$ with ${\mathcal E}$ as in (\ref{con.2pmu}) and ${\mathcal D}({\mathcal E})$ as in (\ref{domainp}).
In the following we will refer to a {\em potential} as a function $\phi:T\to\R$ such that {for all $a,b\in T$,}
\begin{equation}\label{e:rphi}
   r_\phi(a,b)
 :=
   \int_{[a,b]}\lambda^{(T,r)}(\mathrm{d}x)\,\mathrm e^{-2\phi(x)}<\infty,
\ee
for all $a,b\in T$. An implicit assumption in the definition being that the function $\phi$ has enough regularity for the integral above to make sense.

It is easy to check that $r_\phi$ is a metric on $T$
which generates the same topology
as $r$ and that the metric space $(T,r_\phi)$ is
also an $\R$-tree.
If the potential $\phi$ is such that
the $\R$-tree $(T,r_\phi)$ is locally compact,
then $({\mathcal E},{\mathcal D}({\mathcal E}))$ is a regular Dirichlet form, and the corresponding
$(\mu,\nu)$-Brownian motion on $(T,r)$ agrees in law with
$\nu$-Brownian motion on $(T,r_\phi)$. \sm

We close this section with the example of a diffusion which is extensively studied in \cite{Eva00}.
\begin{example}[Evans's Brownian motion on THE $\R$-tree] \rm
In \cite{Eva00} Evans constructs a continuous path Markov process on the ``richest'' $\R$-tree, which branches ``everywhere'' in ``all possible'' directions. More formally, consider the set $T$ of all bounded subsets of $\R$ that contain their supremum. Denote for all $A,B\in T$ by
\label{Exp:06}
\begin{equation}
\begin{aligned}
   &\tau(A,B)
  \\
 &:=
   \sup\big\{t\le \sup(A)\wedge\sup(B):\,(A\cap(-\infty,t])\cup\{t\}=(B\cap(-\infty,t])\cup\{t\}\big\}
\end{aligned}
\end{equation}
the ``generation'' at which the lineages of $A$ and $B$ diverge, and put
\begin{equation}
   r(A,B)
 :=
   \sup(A)+\sup(B)-2\cdot\tau(A,B).
\end{equation}
Then $(T,r)$ is a $\R$-tree which is spanned by its ends at ``infinity''. Note that $(T,r)$ is not locally compact.

Suppose that $\mu$ is a $\sigma$-finite Borel measure on $E_\infty$ such that $0<\mu(B)<\infty$ for every ball $B$ in the metric $\bar{r}(\xi,\eta):=2^{-\sup(\xi\wedge\eta)}$. In particular, the support of $\mu$ is all of $E_\infty$.
Distinguish an element $\rho\in E_\infty$. The ``root'' $\rho$ defines a partial order on $(T,r)$ in a canonical way by saying that $x\le y$ if $x\in[\rho,y]$.
For each $x\in T$, denote by $S^x:=\{\xi\in E_\infty:\,x\in[\rho,\xi]\}$, and consider the measure $\nu(\mathrm{d}x):=\mu(S^x)\lambda^{T,r}(\mathrm{d}x)$. It was shown in Section~5 in \cite{Eva00} that the measure $\nu$ is Radon. Moreover, a continuous path Markov process was constructed which is a $(\nu,\nu)$-Brownian motion on $(T,r)$ in our notion.
Hence if $\int^b_a\mathrm{d}\lambda^{(T,r)}(\mu(S^x))^{-1}<\infty$, for all $a,b\in T$, and if $(T,r_{\mathrm{natural}})$ is locally compact, where
\begin{equation}
\label{e:008}
   r_{\mathrm{natural}}(x,y):=\int_{[x\wedge y,x\vee y]}\tfrac{\lambda^{(T,r)}(\mathrm{d}z)}{\mu(S^z)},\hspace{1cm}z\in T,
\ee
then its law is the same as that of {$\nu$}-Brownian motion on $(T,r_{\mathrm{natural}})$.
\hfill$\qed$
\end{example}\sm

\bibliographystyle{alpha}

\end{document}